%
\documentclass[10pt]{article}
\usepackage{hyperref}
\usepackage{xcolor}
\usepackage{amssymb}
\usepackage{amsmath}
\usepackage{cancel}
\input{liemacs10.sty} 
\addtolength\textwidth{3cm}
\addtolength\textheight{1cm}
\addtolength\oddsidemargin{-2cm}
\addtolength\evensidemargin{-2cm}

\usepackage{color}

\renewcommand{\down}{{\mathop{\downarrow}}}
\newcommand\be{{\bf{e}}}

\renewcommand\up{{\uparrow}}

\newcommand{\sH}{{\sf H}}
\newcommand{\sK}{{\sf K}}

\newcommand{\sV}{{\tt V}}
\newcommand{\sE}{{\tt E}}
\newcommand{\sa}{\mathop{{\rm sa}}\nolimits}

\newcommand\RR{{\mathbb R}}

\newcommand\bC{{\mathbb C}}

\newcommand{\fri}{\mathfrak{i}} 

\newcommand{\Mob}{{\rm\textsf{M\"ob}}}

\renewcommand{\phi}{\varphi}

\newcommand{\Stand}{\mathop{{\rm Stand}}\nolimits}

\newcommand{\nc}{\text{nc}}
\newcommand{\cc}{\text{cc}}

\newcommand{\m}{\mathrm{max}}

\renewcommand\mlabel{\label}

\begin{document}

\title{From local nets to Euler elements}
\author{Vincenzo Morinelli, Karl-Hermann Neeb} 

\maketitle
\abstract{
  Various aspects of the geometric 
  setting of Algebraic Quantum Field Theory (AQFT) models
  related to 
  representations of the Poincar\'e group  can be studied
for general Lie groups, whose Lie algebra contains an Euler element,
i.e., $\ad h$ is diagonalizable with eigenvalues in $\{-1,0,1\}$. 
    This has been explored by the authors and their collaborators during recent years.
A key property in this construction is the Bisognano--Wichmann property
(thermal property for wedge region algebras)
{concerning the geometric implementation of
  modular groups of local algebras}.

In the present paper we prove that under a natural regularity condition,
geometrically implemented modular groups
arising from the Bisognano--Wichmann property, are always generated by
    Euler elements. We also show the converse, namely that in presence of
    Euler elements  and the Bisognano--Wichmann property,
    regularity and localizability hold in a quite general setting.
Lastly we show that, in this generalized AQFT,
in the vacuum representation,  under analogous
assumptions (regularity and Bisognano--Wichmann), the von Neumann algebras associated to wedge regions are type III$_1$ factors, 
a property that is well-known in the AQFT context.

\tableofcontents

\section{Introduction}

This paper is part of a project by the authors and collaborators aiming to deepen the relations between geometric properties of Algebraic Quantum Field Theory (AQFT), Lie theory and unitary
representation theory; see \cite{MN21, MNO23a, MNO23b, NO21, NOO21, FNO23}.

Starting from  fundamental properties of a relativistic quantum theory, the
Bisognano--Wichmann (BW) property and the PT symmetry, 
a generalized setting 
{to study AQFT models  has been developed, that starts from the}
geometry 
and representations of the symmetry group as fundamental input.
Through this
description, it was possible to present a new large set of
mathematical models in an abstract way (nets on abstract
wedge spaces) or a geometric way (nets on open subsets of homogeneous spaces).
A key role is played by the Bisognano--Wichmann property which
in AQFT models ensures
that the vacuum state is thermal 
for any geodesic observer in a wedge region (see e.g.~\cite{Lo97}
and references therein).
In our context the Bisognano-Wichmann property serves
to provide a geometric implementation of
modular groups of some local algebras. 
Along this analysis, a fundamental role has been played by Euler
elements that {also have} been extensively studied in Lie
theory (see e.g.~\cite{MN21} and \cite{MNO23a}) and here  creates  a bridge between Lie theory,
the AQFT localization properties, and the modular theory of operator algebras. 

Nets of standard subspaces (in the one-particle representation)
are fundamental objects to analyze properties of
AQFT Models. In particular, they play a {central} role in the recent study of
entropy and energy inequalities (see  \cite{MTW22,Lo20, CLRR22, CLR20} and references therein),
new constructions in AQFT (\cite{MN22,LL15,LMPR19,MMTS21}),
and in a very large family of models (see references in \cite{DM20}).   Due to the Bisognano--Wichmann property and the PCT
symmetry, the language of standard subspaces deeply relates the geometry of the symmetry group with its representation theory and the algebraic objects related to the local von Neumann algebras. 

\smallskip

To introduce the main ideas of this paper, we first recall
the key steps to understand the setting we developed 
for this generalized AQFT. 

\smallskip

\nin\textit{Geometric setting}: In the physics context, the underlying manifolds are
relativistic spacetimes, {i.e., time-oriented Lorentzian manifolds}.
In Minkowski or de Sitter spacetime
localization regions are called wedges and they are
specified by one-parameter groups of Lorentz boosts fixing them. 
On $2$-dimensional Minkowski spacetime, the conformal chiral
components yield fundamental localization regions, corresponding
to circle intervals, which are also
specified by one-parameter groups of dilations of the M\"obius group.
So one can describe fundamental localization regions in terms of generators of certain one-parameter groups
in the Lie algebra of the symmetry group. 
This framework can be generalized to the context where $G$ is a (connected) Lie group whose Lie algebra $\fg$
contains an Euler element $h$ ($\ad h$ is diagonalizable with eigenvalues in $\{-1,0,1\}$)
to construct an abstract version of the correspondence between wedge regions and boost generators. 
In particular, one can associate to every connected simple Lie group $G$ and 
any  Euler element $h \in \g$ a non-compactly causal symmetric space $M=G/H$
(see Section~\ref{subsec:ncc} and \cite{MNO23a} for details).
{For the Lorentz group $G = \SO_{1,d}(\R)_e$,
  we thus obtain de Sitter space $M = \dS^d$.
In this case we associate to every boost generator
(=Euler element) the corresponding wedge region,
and, in the general context, }
a wedge region in $M$ associated to $h$ is a connected component of the
open subset on which the flow of $h$ is ``future directed''
(timelike in the Lorentzian case). 
More generally, for an Euler element in a reductive Lie algebra $\fg$,
  there exists a non-compactly causal symmetric space $G/H$
  in which one can identify wedge regions $W$,
  but localization extends to general non-empty open subsets,
  see Section~\ref{sect:pregeom} for details. 

\smallskip

\nin\textit{AQFT setting}: Models in AQFT are determined by nets of von Neumann algebras indexed by open regions of the spacetime satisfying fundamental quantum and relativistic assumptions, in particular isotony, locality, Poincar\'e covariance, positivity of the energy, and existence of the vacuum vector
with Reeh-Schlieder property.
Nets of standard subspaces arise at least in two natural ways:
as the one-particle nets in irreducible Poincar\'e representations,
from which  the free fields are obtained by second quantization, and
by  acting with the self-adjoint part of the local von Neumann algebras on a cyclic  separating vacuum vector.
The Bisognano--Wichmann property and the
anti-unitary PCT symmetry determine the wedge subspaces and the
key role in this identification is played by Tomita--Takesaki theory.
This technique has been established by Brunetti, Guido and Longo in \cite{BGL02} for cases of physical relevance.}

This construction has been realized in a much wider generality by the authors  in the current project (cf.~the references above)
with the  following idea: given an involutive automorphism $\sigma$ of a Lie group $G$,
an (anti-)unitary representation $U$ of the extended group $G_\sigma = G \rtimes \{\1,\sigma\}$  on an Hilbert space $\cH$,  
an Euler element $h$ in the  Lie algebra $\fg$ of $G$, 
and a $G$-transitive family $\cW_+$
of abstract wedges (fiber-ed over the adjoint orbit of $h$), 
then one can associate an ``abstract net'' $(\sH(W))_{W \in \cW_+}$
of standard subspaces of $\cH$ giving 
a net only depending on the symmetry group.
This construction builds on the 
Brunetti--Guido--Longo (BGL) construction (\cite{BGL02}
and \cite{LRT78}). 

Often this net can be realized geometrically on a causal
homogeneous space $M$,  
in such a way that the
abstract wedge acquires a geometric interpretation as wedge regions in~$M$.
Here we call a $G$-space {\it causal} if
it contains a family $C_m \subeq T_m(M)$ of a pointed, generating, closed
convex cones which is invariant under the $G$-action.
Typical examples are time-oriented Lorentzian manifolds on which $G$
acts by time-orientation preserving symmetries or conformal maps.
Given a  representation of~$G_\sigma$, 
one can then try to extend the canonical net of standard subspaces 
from the set of wedge regions to arbitrary open subsets $\cO \subeq M$. 
A net of real subspaces
associates to open regions of a causal homogeneous space
real subspaces of localized states satisfying
properties that are  analogous to those of nets of von Neumann algebras: 
For a unitary representation $(U,\cH)$ of a connected a Lie group~$G$ 
and a homogeneous space $M = G/H$, we consider 
families $(\sH(\cO))_{\cO \subeq M}$ of closed real subspaces of~$\cH$, 
indexed by open subsets $\cO \subeq M$ with the following properties:
\begin{itemize}
\item[(Iso)] {\bf Isotony:} $\cO_1 \subeq \cO_2$ 
implies $\sH(\cO_1) \subeq \sH(\cO_2)$ 
\item[(Cov)] {\bf Covariance:} $U(g) \sH(\cO) = \sH(g\cO)$ for $g \in G$. 
\item[(RS)] {\bf Reeh--Schlieder property:} 
$\sH(\cO)$ is cyclic  if $\cO \not=\eset$. 
\item[(BW)] {\bf Bisognano--Wichmann property:}
There exists an open subset $W \subeq M$ (called a {\it wedge region}),  
such that $\sH(W)$ is standard
with modular group $\Delta_{\sH(W)}^{-it/2\pi} = U(\exp th)$, $t\in \R$,
  for some $h \in \g$,  for which $\exp(\R h).W \subeq W$.
\end{itemize}

So one has to specify the real
subspaces associated to wedge regions and
identify their properties. There are
different possibilities to extend to larger classes of open subsets,
that in general do not coincide. One
is based on specifying certain generator spaces in which a linear basis
may correspond to components of a field on $M$ and then obtain local subspaces in terms of test functions, {see \cite{NO21, FNO23} for irreducible
  representations and Theorem~\ref{thm:local-reduct} for general
  representations of reductive groups).} 
Alternatively,  one can specify maximal covariant nets which are isotonic
and have the (BW) property, here discussed in Section~\ref{sect:maxnet}.

\smallskip

\textit{In this paper we discuss the necessity and the consequences of considering Euler elements as fundamental
  objects for our constructions.}
We will further see how this choice will be consistent with AQFT models.
This will be done by facing the following three questions:

\smallskip

\nin\textit{Question 1. Is it necessary to consider Euler elements 
  determining fundamental localization regions for one particle nets?}

Yes, it is a consequence of the Bisognano--Wichmann property and
a natural regularity property: 
Given a standard subspace $\sH$ whose modular group
corresponds to a one-parameter subgroup
$(\exp th)_{t \in \R}$ of $G$ (BW property), in Theorem \ref{thm:2.1} we show
that $h$ is an Euler element if there exists an
$e$-neighborhood $N \subeq G$ for which $\bigcap_{g \in N} U(g)\sV$ is cyclic. 
This result is abstract and does not refer
to any geometry of wedges or subregions
but can be applied to any net of real subspaces
satisfying a minimal set of of axioms,
such as (Iso), (Cov), (RS) and (BW). 
Our Euler Element Theorem
(Theorem~\ref{thm:2.1})
has particularly striking consequences for such nets.
In this setting, it implies in particular that all modular groups 
  that are geometrically implementable by 
  one-parameter subgroups of finite-dimensional Lie groups
in the sense of the (BW) property, 
are generated by Euler elements.
Similar regularity conditions are satisfied in many
  AQFT models and an analogous property has been used
also in  \cite[Def.~3.1]{BB99} and \cite[Sect.~IV.B]{Str08}.

\smallskip
The second question concerns the converse implication:

\nin\textit{Question 2: Are the nets of standard subspaces associated to Euler elements regular?} {More precisely, let
  $h \in \g$ be an Euler element, $\tau_h = e^{\pi i \ad h}$
  the corresponding involution on $\g$, and suppose that
  this involution on $\g$ integrates to an involution
  $\tau_h^G$ on $G$, so that we can form the group 
  $G_{\tau_h} := G \rtimes \{\id_G, \tau_h^G\}$.
Given an (anti-)unitary representation of this group $G_{\tau_h}$,
we consider the canonical standard subspace $\sV = \sV(h,U) \subeq \cH$,
specified by
\[ \Delta_\sV = e^{2 \pi i \partial U(h)} \quad \mbox{ and } \quad
  J_\sV = U(\tau_h^G) \]
(cf.\ \cite{BGL02}). 
A natural way to address such regularity questions is to
associate to $\sV$ a net $\sH^{\rm max}$ defined on open subsets
of a homogeneous space $M = G/H$ by
\[ \sH^{\rm max}(\cO) := \bigcap_{\cO \subeq g.W} U(g)\sV \] 
(cf.\ \eqref{eq:def-ho}).
If every $g \in G$ with $g.W \subeq W$ satisfies $U(g)\sV \subeq \sV$,
this leads to a covariant, isotone map with $\sH^{\rm max}(W) = \sV$.
Regularity now corresponds to the existence of open subsets
$\cO \subeq W$ with $N.\cO \subeq W$ for which $\sH^{\rm max}(\cO)$
is cyclic (Reeh--Schlieder property).
We show that regularity follows} if the representation satisfies certain 
positivity conditions, namely that the ``positive energy'' cones
$C_\pm$ in the abelian Lie subalgebras {$\g_{\pm 1}(h)
= \ker(\ad h \mp \1)$} are generating;
see Theorem~\ref{thm:reg-posen}.
This requirement can be weakened as follows.
If $G = N \rtimes L$ is a semidirect product and we know already
that the restriction $U\res_L$ satisfies the regularity condition,
then it suffices that the intersections $C_\pm \cap \fn_{\pm 1}(h)$
generate $\fn_{\pm 1}(h)$ (Theorem \ref{thm:posreg}). 
This is in particular the case 
for positive energy representations of the
connected Poincar\'e group  $G = \cP = \R^{1,d} \rtimes \cL^\up_+$.
That representations of linear reductive groups always satisfy 
the regularity condition can be derived from some
localizability property asserting for every (anti-)unitary
representation the existence of a net on the associated
non-compactly causal symmetric space, satisfying
(Iso), (Cov), (RS) and (BW) (Theorem \ref{thm:local-reduct}).
In particular, the maximal net $\sH^{\rm max}$ has this property.
As every algebraic linear Lie group 
is a semidirect product $G= N\rtimes L$,
where $N$ is unipotent and $L$ is reductive \cite[Thm. VIII.4.3]{Ho81}, 
many questions related to regularity can be reduced to representations
of nilpotent groups.  
These {regularity} results include all the physically
relevant one-particle models;  for instance
the $\U(1)$-current and its derivatives
(covariant under the M\"obius group) satisfy the hypotheses of
Theorem~\ref{thm:reg-posen} and
so do the one-particle representations of the Poincar\'e group,
to which Theorem~\ref{thm:posreg} applies,
but not  Theorem \ref{thm:reg-posen}.

\smallskip

\nin\textit{Question 3: What can we say on nets of von Neumann algebras?} Once fundamental localization regions are specified, 
it is natural to discuss nets of von Neumann algebras on causal homogeneous
spaces as above. Such nets exist because second quantization of
one-particle nets on causal homogeneous spaces
produces nets of operator algebras. 
For a systematic construction of twisted second quantization
functors, we refer to \cite{CSL23}. 
Second quantization nets 
correspond to bosonic second quantization in AQFT, in general a spin-statistics result is still to be obtained. 
The results on von Neumann algebras presented here
apply to general geometric relative position of
von Neumann algebras, and second quantization
provides examples of nets on  causal $G$-spaces.
In Section~\ref{sect:vna}, Theorem \ref{thm:3.8} implies that,
given a connected Lie group $G$, when the BW property and a
suitable regularity property hold, and there is a
unique $G$-fixed state (the vacuum state), then the wedge algebras are
factors of type III$_1$ with respect to
Connes' classification of factors \cite{Co73}. This extends the
known results in AQFT dealing with more specific groups and
  spaces (see for instance \cite{Dr77, Lo82, Fr85, BDF87, BB99}
  and references therein).
   Here the key property for an Euler element $h\in\fg$
  implementing the modular group through the BW property 
  is to be \textit{anti-elliptic}, i.e.,
  any quotient $\fq = \g/\fn$ ($\fn \trile \g$ an ideal), 
  for which the image of $h$ in $\fq$ is elliptic\begin{footnote}
    {We call $x \in \g$ {\it elliptic} if $\ad x$ is semisimple
      with purely imaginary spectrum, i.e., diagonalizable over
      $\C$ with purely   imaginary eigenvalues.}
  \end{footnote}
  is at most one-dimensional and linearly generated by the image of~$h$.
  If $\g$ is simple, then $\g$ has no non-trivial quotients, so that 
  any Euler element $h \in \g$ is anti-elliptic, but Theorem~\ref{thm:3.8}
  covers much more general situations.
  We actually do not need to start this discussion with a vacuum vector, but with a vector that is invariant under {$U(\exp(\R h))$}.
  The case of non-unique invariant vector  is discussed in 
  Section~\ref{sect:disnet} in terms of a direct integral decomposition
  taking all structures into account.   

\smallskip 
Along the paper, only very few comments on locality, or its twisted version, will come up.
This is because the regularity property as well as the localization property merely refer to a subspace, resp., a subalgebra.
To implement (twisted-) locality conditions, suitable wedge complements  have to be introduced (cf.~\cite{MN21}). 
In our  general setting,  some work still has to be done to adapt the second quantization procedure. 

\smallskip

Recently, operator algebraic techniques have been very fruitful for
the study of energy inequalities. In many of these results
 the modular Hamiltonian is instrumental.
This object corresponds to the logarithm of the modular operator of a local algebra of a specific ``wedge region'',  which in some cases
can be identified with the
generator of a one-parameter group of spacetime symmetries 
by the Bisognano--Wichmann property (see for instance \cite{MTW22,Lo20, CLRR22,Lo19, CLR20, Ara76,LX18,LM23}).  
In our setting, we start with a general Lie algebra element $h\in\fg$
specifying the flow implemented by the modular operator through the
(BW) property. Then
\[ \log\Delta_{\sH(W)}=2\pi i\cdot \partial U(h) \]
is the corresponding modular Hamiltonian. 
In this case, we know from Theorems~\ref{thm:2.1} and \ref{thm:3.8} that $h$ has to be an Euler element.
\textit{In particular we {obtain} an abstract
  {algebraic} characterization of {those elements in the Lie algebra
    of the symmetry group
    that may correspond to} modular Hamiltonians}. The study of the modular flow on the manifold is particularly relevant. In order to find regions where to prove energy inequalities, one may also need to deform the modular flow (\cite{MTW22, CF20}). Due to the recent characterization of modular flows on homogeneous space, a specific geometric analysis is expected to be possible.

\smallskip

This paper is structured as follows:
In Section~\ref{sec:2} we recall  the fundamental geometry
of Euler elements, both abstractly and on causal homogeneous spaces. 
In Section~\ref{sect:pregeom}  we recall the geometry of standard subspaces, properties of nets of standard subspaces and the axioms
(Iso), (Cov), (BW) and (RS). In particular, Section~\ref{sect:maxnet} introduces
minimal and maximal nets on open subsets of a causal homogeneous space
$M = G/H$ that are associated to an Euler element $h \in \g$ and a
corresponding wedge region $W \subeq M$. 

In Sections~\ref{sect:modEul}, \ref{sec:loc-reg} and~\ref{sect:vna}
we discuss Questions 1,2 and 3, respectively. 
Our key result, the Euler Element Theorem (Theorem~\ref{thm:2.1})
is proved in Subsection~\ref{subsec:3.1}.
In Subsection~\ref{subsec:strich-opalg} we describe
{its implications for} operator algebras
with cyclic separating vectors 
(Theorems~\ref{thm:2.1-alg} and ~\ref{thm:3.2}).
The main results of Subsection~\ref{subsec:4.1}
are Theorems~\ref{thm:reg-posen} and \ref{thm:posreg},
  deriving regularity from positive spectrum conditions.
In Subsection~\ref{subsec:4.2}, we turn to localizability
  aspects of nets of real subspaces. Here our main results are
  Theorem~\ref{thm:local-reduct}, asserting localizability 
  for linear reductive groups in all representations in all
  non-empty open subsets of the associated non-compactly causal
  symmetric space for a suitably chosen wedge region. 
  This allows us to derive that,
  for the Poincar\'e group, localizability in spacelike cones
  is equivalent to the positive energy condition
  (Theorem~\ref{thm:poinloc}).
 In Section~\ref{sect:vna} we continue the discussion of  
 applications of our results to standard subspace and
 von Neumann algebras $\cM$ by systematically using
 Moore’s Eigenvector Theorem~\ref{thm:moore}. 
  The first main result in this section are Theorem~\ref{thm:vg-ident},
  characterizing for (anti-)unitary representation
  $(U,\cH)$ of $G_{\tau_h}$ the subspace 
  $\sV_G = \bigcap_{g \in G} U(g) \sV$ as the set of fixed points
  of a certain normal subgroup specified in Moore's Theorem. 
  The second one is Theorem~\ref{thm:3.8} that combines Moore's Theorem
  with Theorem~\ref{thm:2.1-alg} to obtain a criterion for $\cM$ to be
  a factor of type III$_1$. If $\cM$ is not a factor,
  but $\cM'$ and $\cM$ are conjugate under~$G$, we show
  that all the structure we discuss survives the
  central disintegration of $\cM$.
  
We conclude this paper with an outlook section and
four appendices, concerning background on operator algebras,
unitary Lie group representations, direct integrals,
and some more specific observations needed to discuss examples.

\vspace{8mm}
\nin {\bf Notation} 

\begin{itemize}
\item Strips in $\C$: $\cS_\beta = \{  z \in \C \: 0 < \Im z < \beta \}$ and
  $\cS_{\pm\beta} = \{  z \in \C \: |\Im z| < \beta \}$. 
\item The neutral element of a group $G$ is denoted $e$, and
  $G_e$ is the identity component.
\item The Lie algebra of a Lie group $G$ is denoted $\L(G)$ or $\g$.   
\item For an involutive automorphism $\sigma$ of $G$, we write
  $G^\sigma =  \{ g\in G \:  \sigma(g) = g \}$ for the subgroup of fixed points
  and $G_\sigma := G \rtimes \{\id_G, \sigma\}$ for the corresponding
 group extension. 
\item $\AU(\cH)$ is the group  of unitary or antiunitary operators
    on a complex Hilbert space. 
\item An (anti-)unitary representation of $G_\sigma$ is a homomorphism
    $U \: G_\sigma \to \AU(\cH)$ with $U(G) \subeq \U(\cH)$ for which 
    $J := U(\sigma)$ is antiunitary, i.e., a conjugation.
\item Unitary or (anti-)unitary representations on the complex Hilbert space
  $\cH$ are denoted as pairs $(U,\cH)$.
\item $\oline U$ is the canonical unitary representation on the
    complex conjugate space $\oline\cH$, where the operators
    $\oline U(g) = U(g)$ are the same, but the complex structure is given by
    $I\xi := -i\xi$.
\item If $(U,\cH)$ is a unitary representation of $G$ and
  $J$ a conjugation with $J U(g) J = U(\sigma(g))$ for $g \in G$,
  the canonical extension $U^\sharp$ of $U$ to $G_\sigma$ is specified
  by $U^\sharp(\sigma) := J$ (cf.\ Definition~\ref{def:types}).
\item If $G$ is a group acting on a set $M$ and $W \subeq M$ a subset, then
  the stabilizer subgroup of~$W$ in $G$ is denoted 
  $G_W := \{ g \in G \:  g.W = W\}$, and $S_W := \{ g \in G \:  g.W\subeq W\}$.
  \item A closed real subspace $\sV$ of a complex Hilbert space $\cH$ is called
    {\it standard} if $\sV \cap i \sV = \{0\}$ and $\sV + i \sV$ is dense in $\cH$.
  \item If $\g$ is a Lie algebra and $h \in \g$, then
    $\g_\lambda(h) = \ker(\ad h - \lambda \1)$ is the $\lambda$-eigenspace of $\ad h$ and $\g^\lambda(h) = \bigcup_k \ker(\ad h - \lambda \1)^k$ is the
    generalized $\lambda$-eigenspace.
  \item An element $h$ of a Lie algebra $\g$ is called
    \begin{itemize}
    \item     {\it hyperbolic} if $\ad h$ is diagonalizable over $\R$
    \item {\it elliptic} or {\it compact} if $\ad h$ is semisimple
      with purely imaginary spectrum, i.e., $\oline{e^{\R \ad h}}$ is a
      compact subgroup of $\Aut(\g)$. 
    \end{itemize}
  \item {A {\it causal $G$-space} is a smooth $G$-space $M$,
      endowed with a $G$-invariant {\it causal structure}, i.e.,
    a field $(C_m)_{m \in M}$ of closed convex cones
    $C_m \subeq T_m(M)$.}
  \item For a unitary representation $(U,\cH)$ of $G$ we write:
  \begin{itemize}
  \item $\partial U(x) = \derat0 U(\exp tx)$ for the infinitesimal
    generator of the unitary one-parameter group $(U(\exp tx))_{t \in\R}$
    in the sense of Stone's Theorem. 
  \item $\dd U \colon \g \to \End(\cH^\infty)$ for the representation of
    the Lie algebra $\g$ on the space $\cH^\infty$ of smooth vectors. Then
    $\partial U(x) = \oline{\dd U(x)}$ (operator closure) for $x \in \g$. 
  \end{itemize}
\end{itemize}

\nin{\bf Acknowledgment:} {The authors thank
  Roberto Longo and Detlev Buchholz} for helpful discussions.
VM was partially supported by the University of Rome through the MUN Excellence Department Project 2023-2027, the “Tor Vergata” CUP E83C23000330006, Fondi di Ricerca Scientifica d’Ateneo 2021, OAQM, CUP E83C22001800005, and the European Research Council Advanced Grant 669240 QUEST. VM also thanks INdAM-GNAMPA. The research of K.-H. Neeb was partially supported by DFG-grant NE 413/10-2.

\section{Preliminaries}
\mlabel{sec:2}

In this section we recall 
  fundamental geometric structures
  related to Euler elements of Lie algebras and corresponding
  symmetric spaces. Its main purpose is to introduce notation
  and some general techniques that will be used throughout the paper.
  Subsection~\ref{sect:pregeom} deals with abstract
  wedge spaces of graded Lie groups $G_\sigma$ and how they can be
  related to sets of wedge regions in homogeneous causal
  $G$-spaces $M = G/H$.
  Subsection~\ref{sect:geoss}  then turns to nets of real subspaces
  $\sH(\cO)$, associated to open subsets $\cO$ of some
  homogeneous space of~$G$. Here we introduce the basic axioms
  (Iso), (Cov), (RS) and (BW). We also show that,
  if (BW) holds for some $h \in \g$ and some wedge region
  $W \subeq M$, for which $g.W \subeq W$ implies $g.\sH(W) \subeq \sH(W)$,
  we obtain minimal and maximal isotone, covariant nets
  $\sH^{\rm min}$ and $\sH^{\rm max}$ satisfying (BW), such that
  any other net $\sH$ with these properties satisfies
  \[ \sH^{\rm min}(\cO) \subeq \sH(\cO) \subeq \sH^{\rm max}(\cO) \]
  on all open subsets $\cO \subeq M$. 
  We also study basic properties of intersections of standard subspaces
  in $G$-orbits.

\subsection{The geometry of Euler elements}
\label{sect:pregeom}

In this subsection we recall some fundamental geometric structures related to Euler elements in the Lie algebra $\fg$ of a Lie group $G$.
For more details and background, we refer to \cite{MN21,MNO23a,MNO23b,NO22}.

\subsubsection{Euler elements}\label{sect:ee}

Let $G$ be a connected Lie group, the Lie algebra of a Lie group $G$ is denoted $\L(G)$ or $\g$.  For an involutive automorphism $\sigma$ of $G$, we write
  $G^\sigma =  \{ g\in G \:  \sigma(g) = g \}$ for the subgroup of fixed points
  and $G_\sigma := G \rtimes \{\id_G, \sigma\}$ for the corresponding
  group extension. Then
  \[ \eps  \: G_\sigma \to (\{\pm 1\}, \cdot), \quad 
    (g,\id_G) \mapsto 1, \quad    (g,\sigma) \mapsto -1 \]
  is a group homomorphism that defines on $G_\sigma$ the structure of a $\Z_2$-{\it graded Lie group}.

{
  \begin{rem} \mlabel{rem:gsigma-same}
 (a)    The group $G_\sigma$ depends on $\sigma$, 
    but two involutive automorphisms $\sigma_1$ and $\sigma_2$
    lead to isomorphic extensions $G_{\sigma_1} \cong G_{\sigma_2}$ if
    and only if $\sigma_2\sigma_1^{-1}$ is an inner automorphism
    $c_y(x) = yxy^{-1}$  for some $y \in G$ with $\sigma_1(y) = y^{-1}$
    (hence also $\sigma_2(y) = y^{-1}$). Then
    \[ \Phi \:  G_{\sigma_2} \to G_{\sigma_1}, \quad
      (g,\id_G) \mapsto (g,\id_G), \quad
      (e,\sigma_2) \mapsto (y, \sigma_1) \]
    defines an isomorphism because
    \[ (y,\sigma_1)(g,\id_G)(y,\sigma_1)^{-1}  = (y \sigma_1(g) y^{-1},\id_G)
      = (\sigma_2(g), \id_G)\]
    and
    \[ (y,\sigma_1)^2 = (y \sigma_1(y), \id_G) = (e,\id_G).\] 

    \nin (b) If $\sigma$ is inner, then the above argument shows that
    $G_\sigma \cong G \times \{\pm 1\}$ is a product group. Therefore
    (anti-)unitary representations $(U,\cH)$ of $G_\sigma$
    restrict to  
    unitary representations $U$ of $G$ for which there exists a conjugation $J$
    commuting with $U(G)$. Then the real Hilbert space $\cH^J$ is $U(G)$-invariant,
    and $(U,\cH)$ is simply the complexification of the so-obtained
    real orthogonal representation of $G$ on which $J$ acts by complex conjugation. 
  \end{rem}
}
  
\begin{defn} \mlabel{def:euler}
 (a) We call an element $h$ of the finite dimensional 
real Lie algebra $\g$ an
{\it Euler element} if $\ad h$ is non-zero and diagonalizable with 
$\Spec(\ad h) \subeq \{-1,0,1\}$. In particular the eigenspace 
decomposition with respect to $\ad h$ defines a $3$-grading 
of~$\g$: 
\[ \g = \g_1(h) \oplus \g_0(h) \oplus \g_{-1}(h), \quad \mbox{ where } \quad 
\g_\nu(h) = \ker(\ad h - \nu \id_\g)\] 
Then $\tau_h(y_j) = (-1)^j y_j$ for $y_j \in \g_j(h)$ 
defines an involutive automorphism of $\g$. 

We write $\cE(\g)$ for the set of Euler elements in~$\g$. 
The orbit of an Euler element  $h$ under the group 
$\Inn(\g) = \la e^{\ad \g} \ra$ of 
{\it inner automorphisms} 
is denoted with $\cO_h = \Inn(\g)h \subeq \g$.
We say that $h$ is {\it symmetric} if $-h \in \cO_h$.

\nin (b) The set 
  \[ \cG {:= \cG({G_\sigma})} :={ \{ (h,\tau)\in\g \times G_\sigma \:
      \: \tau^2 = e, \eps(\tau) = -1, \Ad(\tau)h = h\}}\] 
  is called the {\it abstract wedge space of ${G_\sigma}$.
An element $(h,\tau) \in \cG$ is called an 
{\it Euler couple} or {\it Euler wedge}   
if  $h\in\cE(\fg)$ and 
\begin{equation}\label{eq:eul}
  \Ad(\tau)=\tau_h. 
\end{equation} 
Then $\tau$ is called an {\it Euler involution}. 
We write $\cG_E\subeq \cG$ for the subset of Euler couples.}

\nin (c) {On $\g$ we consider the 
{\it twisted adjoint action} of $G_\sigma$} which changes the sign on odd group elements: 
\begin{equation}
  \label{eq:adeps}
  \Ad^\eps \: G_\sigma \to \Aut(\g), \qquad 
\Ad^\eps(g) := \eps(g) \Ad(g).
\end{equation}
It extends to an action of $G_\sigma$ on $\cG$  by 
\begin{equation}
  \label{eq:cG-act}
 g.(h,\tau) := (\Ad^\eps(g)h, g\tau g^{-1}).
\end{equation}

\nin (d) (Order structure on $\cG$) For a given 
$\Ad^\eps(G)$-invariant pointed closed 
convex cone $C_\g \subeq \g$, we obtain an order 
structure on $\cG$ as follows (\cite[Def.~2.5]{MN21}).
We associate to $W = (h,\tau) \in \cG$
a semigroup $S_W$ whose unit group is
$S_W \cap S_W^{-1} = G_{W}$, the stabilizer of $W$.  
It is specified by 
\[ S_W := \exp(C_+) G_{W} \exp(C_-) 
  = G_{W} \exp\big(C_+ + C_-\big).\]
Here the convex cones $C_\pm$ are the intersections 
\begin{equation}
  \label{eq:cw}
 C_\pm := \pm C_\g \cap \g^{-\tau} \cap \g_{\pm 1}(h), 
\quad \mbox{ where } \quad \g^{\pm \tau} 
:= \{ y \in \g \: \Ad(\tau)(y) = \pm y \}.
\end{equation} 
That $S_W$ is a semigroup follows from
  \cite[Thm.~2.16]{Ne22}, applied to the Lie subalgebra
  \[ L_W := (C_+ - C_+) + \g_0(h)^\tau + (C_- - C_-),\]
in which $h$ is an Euler element. That $L_W$ is a Lie algebra follows from
  ${[C_+, C_+] = [C_-, C_-] =  \{0\}}$. To see this, observe that 
  $\g_+ := \sum_{\lambda >0} \g_\lambda(h)$ is a nilpotent Lie algebra,
  so that the subspace $\fn := (C_U \cap \g_+) - (C_U \cap \g_+)$ is a nilpotent 
  Lie algebra generated by the pointed invariant cone $C_U \cap \g_+$,
  hence abelian by \cite[Ex.~VII.3.21]{Ne99}.

Then 
$S_W$ defines a $G$-invariant partial order on the orbit 
$G.W \subeq \cG$ by 
\begin{equation}
  \label{eq:cG-ord}
g_1.W \leq g_2.W  \quad :\Longleftrightarrow \quad 
g_2^{-1}g_1 \in S_W.
\end{equation}
In particular, $g.W \leq W$ is equivalent to $g \in S_W$. 

\nin (e) (Duality operation) 
The notion of a  
``causal complement'' is defined on the abstract wedge space as follows: 
For $W = (h,\tau) \in \cG$, we define the {\it dual wedge} by   
$W' := (-h,\tau) {= \tau.W}$. 
Note that $(W')' = W$ and $(gW)' = gW'$ for $g \in G$ 
by \eqref{eq:cG-act}.  
This relation fits the geometric interpretation in the context
of wedge domains in spacetime manifolds.
\end{defn}

{
  \begin{rem} If $h \in \g$ is an Euler element in a simple real Lie algebra,
    then the cases where the involution $\tau_h$ is inner are classified
    in \cite{MNO23c}.
  \end{rem}
  }

\begin{rem} \mlabel{rem:stab} Let $W = (h,\tau) \in \cG$ 
and consider $y \in \g$. Then $\exp(\R y)$ fixes $W$ if and only if 
\[ [y,h] = 0 \quad \mbox{ and  } \quad y = \Ad(\tau)y.\] 
If $(h,\tau)$ is an Euler couple, then 
{$\Ad(\tau)y = \tau_h y = y$ follows from $y\in \g_0(h)$}, so that 
\begin{equation}
  \label{eq:centralizercond}
\g_W := \{ y \in \g \:  \exp(\R y) \subeq G_W\}  
=\g_0(h) =  \ker(\ad h).
\end{equation}
\end{rem}

\begin{defn} {\rm(The abstract wedge space)}
  \mlabel{def:2.6}
For a fixed couple $W_0 = (h,\tau) \in \cG$, the orbits 
\[ \cW_+ (W_0):= G.W_0 \subeq \cG  \quad \mbox{ and } \quad 
\cW(W_0) := G_\sigma.W_0 \subeq \cG  \] 
are called the {\it positive} and the {\it full
 {abstract} wedge space containing $W_0$}. 
\end{defn}

Here is a classification theorem of real Lie algebra supporting Euler elements. The families are determined by their root system:

\begin{theorem} \mlabel{thm:classif-symeuler} {\rm(\cite[Thm.~3.10]{MN21})}
Suppose that $\g$ is a non-compact simple 
real Lie algebra and that  $\fa \subeq \g$ is maximal $\ad$-diagonalizable 
with restricted root system 
$\Sigma = \Sigma(\g,\fa) \subeq \fa^*$ of type $X_n$. 
We follow the conventions of the tables in {\rm\cite{Bo90}}
for the classification of irreducible root systems and the enumeration 
of the simple roots $\alpha_1, \ldots, \alpha_n$. 
For each $j \in \{1,\ldots, n\}$, we consider the uniquely determined element 
$h_j \in \fa$ satisfying $\alpha_k(h_j) =\delta_{jk}$.
Then every Euler element in $\g$ is conjugate under inner automorphism 
to exactly one~$h_j$. For every irreducible root system, 
the Euler elements among the $h_j$ are  the following: 
\begin{align} 
&A_n: h_1, \ldots, h_n, \quad 
\ \ B_n: h_1, \quad 
\ \ C_n: h_n, \quad \ \ \ D_n: h_1, h_{n-1}, h_n, \quad 
E_6: h_1, h_6, \quad 
E_7: h_7.\label{eq:eulelts2}
\end{align}
For the root systems $BC_n$, $E_8$, $F_4$ and $G_2$ no Euler element exists 
(they have no $3$-grading). 
The symmetric Euler elements (see {\rm Definition~\ref{def:euler}(a)})
are 
\begin{equation}
  \label{eq:symmeuler}
A_{2n-1}: h_n, \qquad 
B_n: h_1, \qquad C_n: h_n, \qquad 
D_n: h_1, \qquad 
D_{2n}: h_{2n-1},h_{2n}, \qquad 
E_7: h_7.  
\end{equation}
\end{theorem}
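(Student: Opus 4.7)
The plan is to reduce the classification to root-theoretic data, specifically the coefficients of the simple roots in the highest root of $\Sigma$ and the opposition involution. The argument proceeds in three stages.

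First I would reduce to $\fa$. An Euler element is hyperbolic, so by standard conjugacy results for real semisimple Lie algebras, every Euler element is $\Inn(\g)$-conjugate to an element of $\fa$, and two elements of $\fa$ are $\Inn(\g)$-conjugate iff they lie in a single orbit of the restricted Weyl group $W$. Hence I may assume $h$ lies in the closed fundamental Weyl chamber, yielding both existence and uniqueness of a canonical representative. I then translate the Euler condition: $\Spec(\ad h) \subeq \{-1,0,1\}$ becomes $\alpha(h) \in \{-1,0,1\}$ for every $\alpha \in \Sigma$, which combined with dominance forces $\alpha_i(h) \in \{0,1\}$ for each simple root, so $h = \sum_j c_j h_j$ with $c_j \in \{0,1\}$. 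Writing the highest root $\tilde\alpha = \sum_i n_i \alpha_i$, the inequality $\tilde\alpha(h) \leq 1$ with positive integers $n_i$ forces exactly one $c_j = 1$, for an index $j$ satisfying $n_j = 1$. Consulting the tables in \cite{Bo90}, the simple roots having coefficient $1$ in the highest root are precisely those listed in \eqref{eq:eulelts2}. For $BC_n, E_8, F_4, G_2$ every $n_i \geq 2$ (with an extra contradiction from the long root $2e_i$ in the $BC_n$ case), so no Euler element exists; equivalently, $\g$ admits no $3$-grading.

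Finally I characterize symmetry. By definition, $h_j$ is symmetric iff $-h_j$ lies in the $W$-orbit of $h_j$. The unique dominant representative of $W(-h_j)$ is $-w_0 h_j = h_{\sigma_0(j)}$, where $w_0$ is the longest element of $W$ and $\sigma_0$ the opposition involution induced on the Dynkin diagram (a direct calculation: $\alpha_k(-w_0 h_j) = -(w_0 \alpha_k)(h_j) = \alpha_{\sigma_0(k)}(h_j) = \delta_{k,\sigma_0(j)}$). Hence $h_j$ is symmetric iff $\sigma_0(j) = j$. Since $\sigma_0$ is trivial for $B_n, C_n, D_{2n}, E_7$ and is the obvious Dynkin-diagram flip for $A_n$, $D_{2n+1}$, and $E_6$, a case-by-case inspection yields \eqref{eq:symmeuler}.

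The main obstacle is combinatorial rather than conceptual: one must cross-reference $\sigma_0$-fixed indices against the Euler indices of the preceding stage across several families with differing enumerations (for instance in $A_n$, $\sigma_0(j) = n+1-j$ has a fixed point only when $n$ is odd, explaining why $A_{2n-1}$ rather than $A_n$ appears in \eqref{eq:symmeuler}, and for $D_n$ one must separate the parity of $n$ to see that $h_{n-1}, h_n$ are symmetric precisely when $n$ is even). The underlying machinery is standard semisimple structure theory and Weyl-group combinatorics; the content of the theorem is the packaging of this data.
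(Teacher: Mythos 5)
Your argument is correct, and it is essentially the proof given in the cited source: the paper itself offers no proof of Theorem~\ref{thm:classif-symeuler} (it is imported from \cite[Thm.~3.10]{MN21}), and the proof there proceeds exactly as you do --- conjugate the hyperbolic element into $\fa$, reduce $\Inn(\g)$-conjugacy to restricted Weyl group orbits, read off the Euler condition as ``coefficient $1$ in the highest root'' (with the separate $BC_n$ obstruction from the roots $2e_i$), and detect symmetry via $-w_0h_j = h_{\sigma_0(j)}$ and the opposition involution. The only standard facts you lean on implicitly (that $\Inn(\g)$-conjugacy of elements of $\fa$ coincides with conjugacy under the little Weyl group $N_K(\fa)/Z_K(\fa)$, and that the restricted roots span $\fa^*$ so the $h_j$ form a basis) are exactly the ones the cited proof also takes for granted.
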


\begin{example} \label{ex:desit}
  (Wedge regions in Minkowski and de Sitter spacetimes)  
The Minkowski spacetime is the manifold $\RR^{1,d}$ endowed with the Minkowski metric $$ds^2=dx_0^2-dx_1^2-\ldots-dx_d^2.$$ The de Sitter spacetime is the Minkowski submanifold 
$\dS^{d}=\{(x_0,\bx)\in\RR^{1,d}: \bx^2-x_0^2=1\}$, 
endowed with the metric obtained by restriction of 
the Minkowski metric  to $\dS^d$.
In the literature the $x_0$-coordinate is often denoted 
$t$ as it is interpreted as a time coordinate.
The symmetry groups of isometries for these spaces are
the (proper) Poincar\'e group ${\cP_+ = \RR^{1,d}\rtimes
\SO_{1,d}(\R)}$ on Minkowski
space $\R^{1,d}$ and the (proper) Lorentz group $\cL_+ = \SO_{1,d}(\R)$ on $\dS^d$.

The generator $h \in \so_{1,d}(\R)$ of the Lorentz boost on the 
$(x_0,x_1)$-plane 
\[  h(x_0,x_1,x_2, \ldots, x_{d}) = (x_1, x_0, 0, \ldots, 0)\] 
is an Euler element. It combines with the spacetime 
reflection 
\[ j_h(x) =(-x_0,-x_1,x_2,\ldots,x_d) \] 
to the Euler couple $(h,j_h)
\in \cG(\cL_+) \subeq \cG(\cP_+)$,
for the graded Lie groups $\cL_+ = \SO_{1,d}(\R)$ and~$\cP_+$.
The spacetime region
\[ W_R=\{x\in\RR^{1,d}: |x_0|<x_1\} \] 
is called the {\it standard right wedge} in Minkowski space, and
\[ W_R^{\dS} := W_R \cap \dS^d \]
{is the corresponding wedge region in de Sitter space.} 
Note that $W_R$ and therefore $W_R^{\dS}$ are 
invariant under $\exp(\R k_{1})$. 
Poincar\'e transformed regions
$W = g.W_R, g \in \cP_+$, are called
  {\it wedge regions in Minkowski space};
  likewise the regions $W^{\dS}=g.W_R^{\dS}$, $g \in \cL_+$,
  are called {\it wedge regions in de Sitter space}.
  To $W = g.W_R$ we associate the boost group
    $\Lambda_W(t) := \exp(t \Ad(g)h)$. 
They are in equivariant one-to-one correspondence with
abstract Euler couples in $\cG_E(\cP_+)$ and
  $\cG_E(\cL_+)$, respectively.
  Here the couple $(h, j_h)$ corresponds to $W_R$ and $W_R^{\rm dS}$,  
  respectively (cf.~\cite[Lemma~4.13]{NO17}, \cite[Rem.~2.9(e)]{MN21}
  and \cite[Sect.~5.2]{BGL02}).
 \end{example}

 \subsubsection{Wedge domains in causal homogeneous spaces}
 \label{sect:chs}

In this subsection we recall how to specify suitable wedge regions~$W\subeq M$ in a causal homogeneous space $M = G/H$. 
Motivated by the Bisognano--Wichmann property (BW) in AQFT, the modular flow, namely the flow of the one-parameter group
generated by an Euler element on a causal homogeneous space $M$
should be timelike future-oriented. 
Indeed, the modular flow is correspond to the
inner  time evolution of Rindler wedges (see \cite{CR94} and also
\cite{BB99, BMS01, Bo09}, \cite[\S 3]{CLRR22}).
In our context this means that
the {\it modular vector field}
\begin{equation}
  \label{eq:xhdef}
 X_h^M(m) 
 :=  \frac{d}{dt}\Big|_{t = 0} \exp(th).m 
\end{equation}
should satisfy
\[ X^M_h(m) \in C_m^\circ \quad \mbox{ for all } \quad m \in W,\]
{where the causal structure on $M$ is specified by the $G$-invariant field $(C_m)_{m \in M}$
  of closed convex cones $C_m \subeq T_m(M)$.}
{If this condition is satisfied in one $m \in M$,
  we may always replace $h$ by a conjugate and thus assume that it holds
  in the base point $m = eH$. Then} 
the connected component
\begin{equation}
  \label{eq:W-def}
  W := W_M^+(h)_{eH}
\end{equation}
of the base point
$eH \in M$ in the {\it positivity region}
\begin{equation}\label{def:WM}
 W_M^+(h) := \{ m \in M \: X^M_h(m) \in C_m^\circ \} 
 \end{equation}
 is the natural candidate for a domain for which (BW) could be satisfied.
 {Note that this domain depends on $h$ and the causal structure
   on $M$ and that $W$ is invariant under the connected stabilizer
   $G^h_e$ of $h$, hence in particular under $\exp(\R h)$.} 
These ``wedge regions'' have been studied for compactly and
non-compactly causal symmetric spaces
in \cite{NO23} and \cite{NO22, MNO23b}, respectively. 

\begin{remark}\label{rem:1-1} If $Z(G) = \{e\}$,
  then each Euler element $h \in \g$
  determines a pair $(h,\tau_h) \in \cG_E$ uniquely.
{So the stabilizers $G^{(h,\tau_h)}$ and $G^h$ coincide
    and we may identify $\cW_+(h,\tau_h) \subeq \cG_E$
    with the adjoint orbit $\cO_h = \Ad(G)h$.
    We thus obtain a natural map
  from $\cW_+(h,\tau_h) \cong \cO_h$ to regions in $M$
 by $g.(h,\tau_h) \mapsto g.W_M^+(h)$. If, in addition, 
  $G^h$ preserves the connected component $W \subeq W_M^+(h)$
  (which is in particular the case if $W_M^+(h)$ is connected, hence
  equal to $W$),} this leads to a 
  map from the abstract wedge space $\cW_+(h,\tau_h)$ to the
  geometric wedge space on~$M$. Proposition~\ref{prop:LSW} below
  implies that it is isotone if the order on $\cW_+(h,\tau_h)$ is specified
  by the invariant cone~$C_M$ from \eqref{eq:cm}.
\end{remark}

\subsubsection*{The compression semigroup of a wedge region}

Let $M = G/H$ be a causal homogeneous space
and $(C_m)_{m \in M}$ its causal structure. {Writing
$G \times TM \to TM, (g,v) \mapsto g.v$ for the action of $G$ on
the tangent bundle, this means that $g.C_m = C_{g.m}$ for
$g \in G, m \in M$. }
Identifying $T_{eH}(M)$ with $\g/\fh$, we consider the projection
$p \: \g \to \g/\fh$ and the cone $C := C_{eH} \subeq \g/\fh$.
For $y \in \g$, the corresponding vector field on $M$ is given by
\[  X_y^M(gH) 
  =  \frac{d}{dt}\Big|_{t = 0} \exp(ty).gH
  =  g.\frac{d}{dt}\Big|_{t = 0} \exp(t \Ad(g)^{-1}y).eH
  = g.p(\Ad(g)^{-1}y).\]
The set
\begin{equation}
  \label{eq:cm}
 C_M
  := \{ y \in \g \:  (\forall m \in M) X_y^M(m) \in C_m \} 
  = \bigcap_{g \in G} \Ad(g) p^{-1}(C)
\end{equation}
is a closed convex $\Ad(G)$-invariant cone in $\g$. If $G$ acts
effectively on $M$, then it is also pointed
because elements in $C_M \cap - C_M$ correspond to vanishing vector fields 
on $M$. This cone is a geometric analog of the positive cone $C_U$
corresponding to a unitary representation of~$G$
(see \eqref{eq:CU}). The following observation shows that it behaves in many
respects similarly (cf.\  \cite{Ne22}).

As any connected component $W \subeq W_M^+(h) \subeq M$ is invariant
under $\exp(\R h)$, the same holds for the closed convex cone 
\[ C_W := \{ y \in \g \:  (\forall m \in W)\ X_y^M(m) \in C_m \} \supeq  C_M.\]
Below we show that this cone determines the tangent wedge of the compression
semigroup of~$W$.

\begin{prop} \mlabel{prop:LSW}
  For a connected component $W \subeq W_M^+(h)$, its compression semigroup
  \[ S_W := \{ g \in M\: g.W \subeq W \} \]
  is a closed subsemigroup of $\g$ with
  $G_W := S_W \cap S_W^{-1} \supeq G^h_e$ and
  \begin{equation}
    \label{eq:lsw}
 \L(S_W) := \{  x \in \g \:  \exp(\R_+ x) \subeq S_W \}
 =  \g_0(h) + C_{W,+} + C_{W,-},
\end{equation}
where the two convex cones $C_{W,\pm}$ are the intersections
$\pm C_W \cap \g_{\pm 1}(h).$
In particular, the convex cone $\L(S_W)$ has interior points if $C_M$ does.   
\end{prop}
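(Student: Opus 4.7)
The plan is to verify the semigroup properties of $S_W$, compute its Lie wedge $\L(S_W)$ via the $h$-grading of $\g$, and derive the interior-point statement from a generation argument applied to $C_M$. The semigroup property of $S_W$ is immediate, and closedness would follow from the fact that $W$ is a connected component of the open set $W_M^+(h)$, via a continuity argument on the strict cone condition defining $W_M^+(h)$. For the inclusion $G^h_e \subseteq G_W$, I would argue that every element of the connected group $G^h_e$ commutes with $\exp(\R h)$, hence preserves $W_M^+(h)$ setwise, and being connected it fixes each component, in particular~$W$. This immediately yields $\g_0(h) = \L(G^h_e) \subseteq \L(S_W)$.

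For the inclusion $\g_0(h) + C_{W,+} + C_{W,-} \subseteq \L(S_W)$, the key tool is the transformation rule $X_h^M(g.m) = g.X_{\Ad(g^{-1})h}^M(m)$. For $y \in \g_\varepsilon(h)$ with $\varepsilon \in \{+1,-1\}$, the identities $(\ad y)h = -\varepsilon y$ and $(\ad y)^2 h = 0$ give $\Ad(\exp(-ty))h = h + \varepsilon t y$, so
\[
  X_h^M(\exp(ty).m) = \exp(ty).\bigl(X_h^M(m) + \varepsilon t X_y^M(m)\bigr).
\]
When $y \in C_{W,\varepsilon}$ and $m \in W$, one has $\varepsilon X_y^M(m) \in C_m$ and $X_h^M(m) \in C_m^\circ$, so the bracketed term lies in $C_m^\circ + C_m \subseteq C_m^\circ$ for every $t \geq 0$; pushing forward will show that $\exp(ty).m \in W_M^+(h)$. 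Connectedness of the orbit $\{\exp(ty).m : t \geq 0\}$ and the initial condition $m \in W$ then force the orbit to remain in $W$, giving $y \in \L(S_W)$. Since $S_W$ is a closed subsemigroup containing $e$, $\L(S_W)$ is automatically a closed convex cone (by the Hilgert--Hofmann--Lawson theorem), so the full sum is contained in $\L(S_W)$.

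For the converse inclusion, I would take $y \in \L(S_W)$ and decompose $y = y_{-1} + y_0 + y_1$ along the $h$-grading. Since $\exp(\R h) \subseteq G_W$, conjugation preserves $S_W$ and hence $\L(S_W)$:
\[
  \Ad(\exp(th))y = e^{-t} y_{-1} + y_0 + e^t y_1 \in \L(S_W) \quad \text{for all } t \in \R.
\]
Dividing by $e^t$ and letting $t \to +\infty$ will yield $y_1 \in \L(S_W) \cap \g_1(h)$ by closedness and the cone property; the analogous limit yields $y_{-1} \in \L(S_W) \cap \g_{-1}(h)$, while $y_0 \in \g_0(h)$ is tautological. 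To identify $y_\varepsilon$ with an element of $C_{W,\varepsilon}$, I would fix $m \in W$ and exploit $\exp(sy_\varepsilon).m \in W$ for $s \geq 0$: this gives $X_h^M(\exp(sy_\varepsilon).m) \in C_{\exp(sy_\varepsilon).m}$, which via the transformation rule translates into $X_h^M(m) + \varepsilon s X_{y_\varepsilon}^M(m) \in C_m$. Dividing by $s$, sending $s \to \infty$, and using closedness of $C_m$ will force $\varepsilon X_{y_\varepsilon}^M(m) \in C_m$, i.e.\ $y_\varepsilon \in \varepsilon C_W \cap \g_\varepsilon(h) = C_{W,\varepsilon}$.

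For the interior-point statement, I would apply the $\Ad(\exp(\R h))$-invariance argument from the previous paragraph to the closed convex cone $C_M$ itself: the $\g_{\pm 1}(h)$-components of any $y \in C_M$ again lie in $C_M$, so the projection of $C_M$ onto $\g_{\pm 1}(h)$ coincides with $C_M \cap \g_{\pm 1}(h)$. If $C_M$ has interior (equivalently, spans $\g$), then $C_M \cap \g_{\pm 1}(h)$ spans $\g_{\pm 1}(h)$; combined with $C_M \cap \g_1(h) \subseteq C_{W,+}$ and $-(C_M \cap \g_{-1}(h)) \subseteq C_{W,-}$, this shows that $C_{W,\pm}$ spans $\g_{\pm 1}(h)$ and hence has nonempty interior there. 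The sum $\g_0(h) + C_{W,+} + C_{W,-}$ then spans $\g$ and has nonempty interior. The main technical obstacle I anticipate is the closedness of $S_W$, which requires a careful analysis of orbits approaching $\partial W$; the remaining steps are clean applications of the $h$-grading and the transformation rule for vector fields.
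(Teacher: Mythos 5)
Your argument follows essentially the same route as the paper's proof: the identity $\Ad(\exp(-ty))h = h + \varepsilon t y$ for $y \in \g_\varepsilon(h)$ (the paper writes this as $e^{t\ad x}h = h + t[x,h]$, using $\g_{\pm 2}(h) = \{0\}$), the limit $y_{\pm 1} = \lim_{t\to\infty}e^{\mp t}e^{\pm t\ad h}y$ for the converse inclusion, and the observation that the $\g_{\pm 1}(h)$-components of elements of $C_M$ again lie in $C_M$ for the interior-point claim. The only differences are cosmetic: you work with the vector fields $X_y^M$ directly where the paper works with the projection $p\colon\g\to\g/\fh$ at a representative of $m$; your divide-by-$s$-and-pass-to-the-limit argument for $\L(S_W)\cap\g_\varepsilon(h)\subseteq C_{W,\varepsilon}$ replaces the paper's citation of \cite[Prop.~V.1.6]{Ne99}; and your direct proof that $p_\pm(C_M)=C_M\cap\g_{\pm1}(h)$ replaces the citation of \cite[Lemma~3.2]{NOO21}. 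All of these substitutions are valid.

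The one point you leave open --- closedness of $S_W$, which you flag as ``the main technical obstacle'' requiring ``careful analysis of orbits approaching $\partial W$'' --- is in fact elementary and has nothing to do with the cone condition. Since $W$ is open, $W^c = M\setminus W$ is closed, and $g.W\subseteq W$ is equivalent to $g^{-1}.W^c\subseteq W^c$; the latter is a closed condition on $g$ because, for each fixed $x\in W^c$, the map $g\mapsto g^{-1}.x$ is continuous and $W^c$ is closed. Note that a direct continuity argument on the defining condition $X_h^M(m)\in C_m^\circ$ cannot work as stated, since that condition is open, and a limit of points of $W$ may land on $\partial W$; passing to the complement is the necessary (and sufficient) trick. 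With that one-liner inserted, your proof is complete.
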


\begin{prf} As $W \subeq M$ is an open subset, its complement $W^c := M \setminus W$
  is closed, and thus 
  \[ S_W = \{ g \in G \: g^{-1}.W^c \subeq W^c \} \]
  is a closed subsemigroup of $G$, so that its tangent wedge $\L(S_W)$ is a closed
  convex cone in $\g$ (\cite{HN93}).

  Let $m = gH \in W$, so that $p(\Ad(g)^{-1}h) \in C^\circ$.
  For $x \in \g_1(h)$ we then derive from $\g_2(h) = \{0\}$ that 
  \[ e^{t \ad x} h = h + t [x,h] \quad \mbox{ for } \quad t \in \R.\]
  This leads to 
  \begin{align*}
 p(\Ad(\exp(tx)g)^{-1}h)
&    = p(\Ad(g)^{-1} e^{-t \ad x}h)
    = p(\Ad(g)^{-1} ( h - t[x,h])\\
    &    = p(\Ad(g)^{-1} ( h + tx))
      = p(\Ad(g)^{-1}h) + t p(\Ad(g)^{-1} x).
  \end{align*}
  For $x \in C_{W,+}$, we have $p(\Ad(g)^{-1}x) \in C$, so that
  $p(\Ad(\exp(tx)g)^{-1}h) \in C^\circ$ for $t \geq 0$,
 which in turn implies that
    $\exp(tx).m \in W$ for $m \in W$ and $t\geq 0$.
    So $\exp(\R_+ x) \subeq S_W$, and thus $x \in \L(S_W)$.
 It likewise follows that
 $C_{W,-}  \subeq \L(S_W)$.
 The invariance of $W$ under the identify component
  $G^h_e$ of the centralizer of $h$ further entails
  $\g_0(h) \subeq \L(S_W)$, so that
  \begin{equation}
    \label{eq:incl1}
    C_{W,+} + \g_0(h) + C_{W,-} \subeq \L(S_W).
  \end{equation}
  We now prove the converse inclusion. 
  If $X^M_x(m) \not \in C_m$, i.e., $p(\Ad(g)^{-1}x) \not\in C$,
  then there exists a $t_0 > 0$ with 
\[ p(\Ad(g)^{-1}h) + t_0 \cdot p(\Ad(g)^{-1} x)\not\in C \]  (\cite[Prop.~V.1.6]{Ne99}),
  so that $\exp(t_0 x).m \not \in W$. We conclude that
\[     \L(S_W) \cap \g_1(h) = C_{W,+}.\] 
Further, the  invariance of the closed convex cone
  $\L(S_W)$ under $e^{\R \ad h}$ implies that, for
  $x = x_{-1} + x_0 + x_1 \in \L(S_W)$ and $x_j \in \g_j(h)$, we have
  \[ x_{\pm 1} = \lim_{t \to \infty} e^{\mp t} e^{\pm t \ad h} x \in \L(S_W) \cap
    \g_{\pm 1}(h) = C_{W,\pm}, \]
  which implies the other inclusion 
$\L(S_W) \subeq C_{W,+} + \g_0(h)+ C_{W,-},$ 
  hence equality by \eqref{eq:incl1}.
  
  Let $p_\pm \:  \g \to \g_{\pm 1}(h)$ denote the projection along the other
  eigenspaces of $\ad h$. Then
  \[ C_{W,\pm} \supeq C_{M,\pm} := \pm C_M \cap \g_{\pm 1}(h) = \pm p_{\pm}(C_M) \]
  also follows from  \cite[Lemma~3.2]{NOO21}. Therefore $C_M^\circ \not=\eset$
  implies $C_{W,\pm}^\circ \not=\eset$, and this is equivalent to
  $\L(S_W)^\circ \not=\eset$.   
\end{prf}

\begin{rem} In many situations, such as the action
    of $\PSL_2(\R)$ on the circle $\bS^1 \cong \bP_1(\R)$,
  the cones $C_{W,\pm} \supeq C_{M,\pm}$ 
    coincide, {and we believe that this is probably always the case.}
It  is easy to see that, if $x \in C_{W,+}$, then the positivity region 
    \[ \Omega_x := \{ m \in M \: X^M_x(m) \in C_m \} \]
    contains $W$ (by definition), and it is also invariant under
    $\exp(\R h)$ and $\exp(\R x)$, to that
    \begin{equation}
      \label{eq:omegax}
      \Omega_x \supeq \bigcup_{t > 0} \exp(-tx).W.
    \end{equation}
    Clearly, $\Omega_x = M$ follows if the right hand side of
      \eqref{eq:omegax}  
    is dense in $M$, but we now show that Minkowski space
    provides an example where $\Omega_x = M$
    without the right hand side of \eqref{eq:omegax} being dense in $M$.

    If $G$ is the connected Poincar\'e group acting on Minkowski
    space $M = \R^{1,d}$ and
    \[ W = W_R = \{ (x_0, \bx) \: x_1 > |x_0| \},  \]
    then
    \[ S_{W} = \oline{W} \rtimes
      \big(\SO_{d-1}(\R) \times \SO_{1,1}(\R)^\uparrow\big)\]
    (\cite[Lemma~4.12]{NO17})     implies that
    \[ C_{W,\pm} = \L(S_{W})\cap \g_1(h)
      = \R_+ (\pm\be_0 + \be_1) \]
    consists of constant vector fields, so that
    $C_{W,\pm} = C_{M,\pm}$ in this case. Here we see that, for
    $x = \be_0 + \be_1 \in C_{W,+}$, the domain 
    $\Omega_x = W - \R_+ x$ is an open half space,
    hence in particular not dense in~$M$.
   Therefore we cannot expect the domain  $\Omega_x$ in
    \eqref{eq:omegax} to be dense in $M$.
\end{rem}

\subsubsection{Non-compactly causal spaces}
\mlabel{subsec:ncc}

Let $G$ be a connected simple Lie group 
  and $h \in \g$ be an Euler element. The associated non-compactly
  causal symmetric spaces are obtained as follows
  (see \cite[Thm.~4.21]{MNO23a} for details).
  We choose a Cartan involution $\theta$ on $\g$
  with $\theta(h) = -h$, write $K := G^\theta$ for the corresponding group of
  fixed points, and consider the involution
  $\tau_{{\nc}} := \tau_h \theta \in \Aut(\g)$. 
  Assuming that the involution $\tau_{{\nc}}$ integrates to an involution
  $\tau^G_{{\nc}}$ on $G$, we consider a subgroup $H \subeq \Fix(\tau^G_{{\nc}}) = G^{\tau^G_{{\nc}}}$
  that is open (hence has the same Lie algebra $\fh = \g^{\tau_{{\nc}}}$) 
  and {for which} $H \cap K$ fixes $h$). Then
  $M := G/H$ is the corresponding {\it non-compactly causal symmetric space},
  where the invariant causal structure is determined by the 
  maximal pointed closed convex cone 
  $C \subeq \g^{-\tau_{{\nc}}} \cong T_{eH}(M)$ containing~$h$.
  This construction ensures in particular that $eH \in W^+_M(h)$. 
Assume, in addition, that $G = \Inn(\g)$ is centerfree. 
Then \cite[Cor.~7.2]{MNO23b} identifies $W$ from \eqref{def:WM}
with the
``observer domain'' $W(\gamma)$ associated to the geodesic
$\gamma(t) = \Exp_{eH}(t h)$ in~$M$.
Further, \cite[Prop.~7.3]{MNO23b} thus implies that the 
stabilizer $G_W$ of $W$ coincides with the centralizer $G^h$ of~$h$: 
\[ G_W = G^h,\]
so that, for centerfree groups, we may identify the wedge space
\[ \cW(M,h) := G.W \cong G/G_W = G/G_h \cong  \cO_h \] 
with the adjoint orbit $\cO_h$ of~$h$.

If, more generally, $G$ is only assumed connected
and $M = G/H$ is a corresponding non-compactly causal symmetric space,
then the connected component $W := W_M^+(h)_{eH} \subeq M$ containing
$eH$ is the natural wedge region
and $G_{W_M} \subeq G^h$ may be a proper subgroup.
{Typical examples {arise naturally} for $\g = \fsl_2(\R)$
(see \cite[Rem.~5.13]{FNO23}).

For non-compactly causal symmetric spaces, we typically
have $G_{\tau_\nc}\not\cong G_{\tau_h}$ because the
{product $\tau_{{\nc}}\tau_h$ need not be inner
  (cf.\ Remark~\ref{rem:gsigma-same}). 
 If, for instance, $\g = \fh_\C$ and $\tau_{{\nc}}$ is complex conjugation
  with respect to $\fh$ (non-compactly causal of complex type),
  then $\tau_h$ is complex linear and $\tau_{{\nc}}$ is antilinear, hence
  their product is antilinear and therefore not inner.

  From $\tau_{\nc} = \theta \tau_h$ we derive $\tau_{\nc} \tau_h = \theta$,
  which leads to the question when $\theta$ is inner.
  For a characterization of these case, we refer to \cite{MNO23c}.}

\subsubsection{Compactly causal spaces}
\mlabel{subsec:cc}

Let $G$ be a connected Lie group
and $M = G/H$ be a compactly causal symmetric space,
where $H \subeq G^{\tau_\cc}$ is an open subgroup and
$\tau_\cc$ is an involutive automorphism of $G$.
We assume that there exists an Euler element
$h \in \fh = \g^\tau_\cc$, so that we obtain a so-called modular
compactly causal symmetric Lie algebra $(\g, \tau_\cc, C, h)$
(cf.\ \cite{NO22}).
Here $C \subeq \fq := \g^{-\tau_\cc}$ is a pointed generating closed
convex cone, invariant under $\Ad(H)$, whose interior $C^\circ$
consists of elliptic elements. 
We further assume that {the involution $\tau_h$ on $\g$
  integrates to an involutive automorphism $\tau_h^G$ of $G$ such that}   
$\tau_h^G(H) = H$ {and the existence of a pointed generating
$\Ad(G)$-invariant cone $C_\g \subeq \g$ such that}
\[  -\tau(C_\g) = C_\g \quad \mbox{ and }\quad  C = C_\g \cap \fq.\]

Then $eH \in M$ is a fixed point of the modular flow
and there exists a unique connected component
\[ W = W_M^+(h)_{eH} \]
of the positivity domain $W_M^+(h)$ that contains $eH$ in its boundary.
Theorem~9.1 in \cite{NO22} then asserts that
\[ S_W := \{ g \in G \:  g.W \subeq W \}
  = G_W \exp(C_\g^c),\]
where $G_W = \{ g \in G \:  g.W = W\}$ and
\begin{equation}
  \label{eq:cgc}
 C_\g^c := C_{\g,+} + C_{\g,-} {\subeq \g^{-\tau_h}} \quad \mbox{ for } \quad 
C_{\g,\pm} := \pm C_\g \cap \g_{\pm 1}(h). 
\end{equation}
The cone $C_\g^c$ is $-\tau_\cc$-invariant with 
\begin{equation}
  \label{eq:doubledcone2}
  (C_\g^c)^{-\tau_\cc} = C_\g^c \cap \fq
  = C_+ + C_-
  \quad \mbox{ for } \quad
  C_\pm :=
  {\pm C_\g \cap \fq_{\pm 1}(h) = \pm C \cap \fq_{\pm 1}(h).}
\end{equation}
Here
\[ G_W = G^h_e H^h \subeq G^h \]
is an open subgroup {with the Lie algebra $\g_0(h)$}
and the wedge space
\[ \cW(M,h) := G.W \cong G/G_W \]
carries the structure of a symmetric space (\cite[Prop.~9.2]{NO22}).
Covering issues related  to $\cW(M,h)$
are discussed in \cite[Prop.~9.4]{NO22}.

\begin{rem} In general $\tau_\cc\not=\tau_h$
    and also $\tau_\cc \not= \tau_h \theta$ for Cartan involutions
    $\theta$ with $\theta(h) = -h$.
The latter products $\tau_h \theta$ are precisely
    the involutions $\tau_{\rm nc}$, corresponding to non-compactly
  causal symmetric spaces.  In general we also have 
  $G_{\tau_\cc} \not\cong G_{\tau_h}$ because the product 
  $\tau_\cc\tau_h$ need not be inner (cf.~Remark~\ref{rem:gsigma-same}),
  as the following example shows. 
If $(\g,\tau_\cc)$ is compactly causal of group type, then 
$\g \cong \fh \oplus \fh$ with $\tau_{\cc}(x,y) = (y,x)$, whereas 
$\tau_h$ preserves both ideals. Therefore $\tau_\cc\tau_h$
flips the ideals, hence cannot be inner.
If $(\g,\tau_\cc)$ is of Cayley type, then (by definition) 
$\tau_\cc = \tau_h$ for an Euler element~$h$.

If $\g$ is simple, then it is of hermitian type, so that
all Euler elements in $\g$ are conjugate. The relation
\[ \tau_{\cc} \Ad(g)\tau_h \Ad(g)^{-1} = \tau_{\cc}\tau_h  \Ad(\tau_h^G(g)g^{-1}) \]
then shows that $\tau_\cc \tau_h$ is inner for one Euler element if and only
this is the case for all Euler elements. As we have seen above,
this is true for Cayley type spaces.
\end{rem}

\subsection{ The geometry of nets of real subspaces}
\label{sect:geoss}

In this section we recall some fundamental properties of the
{geometry of} standard subspaces on generalized one-particle nets.
{We refer to \cite{Lo08, MN21, NO17} for more details}.
Sections \ref{sect:interss} and \ref{sect:maxnet} contains
{some new observations that will become relevant below.}

\subsubsection{Standard subspaces}
We call a closed real subspace $\sH$ of the complex Hilbert space 
$\cH$ \textit{cyclic} if $\sH+i\sH$ is dense in $\cH$, \textit{separating} if $\sH\cap i\sH=\{0\}$, and \textit{standard} 
if it is cyclic and separating. We write $\Stand(\cH)$ for
the set of standard subspaces of $\cH$. The symplectic
orthogonal of a real subspace $\sH$ is defined by the symplectic form 
$\Im \langle\cdot,\cdot\rangle$ on $\cH$ via 
\[ \sH'=\{\xi\in\cH:(\forall \eta \in \sH)\ 
 \Im\langle\xi,\eta \rangle=0 \}.\] 
Then $\sH$ is separating if and only if $\sH'$ is cyclic, hence $\sH$ is standard if and only if $\sH'$ is standard.
For a standard subspace $\sH$, we 
define the {\it Tomita operator} as the closed antilinear involution
\[ \sH+i\sH \to \sH + i \sH,\quad 
\xi+i\eta\mapsto \xi-i\eta. \] 
The polar decomposition $J_\sH\Delta_\sH^{\frac12}$ {of this operator}
defines an antiunitary involution $J_\sH$ (a conjugation) 
and the modular operator~$\Delta_\sH$. 
For the modular group  $(\Delta_\sH^{it})_{t \in \R}$, 
we then have 
\[  J_\sH\sH=\sH', \quad 
 \Delta^{it}_\sH\sH=\sH \qquad \mbox{ for every  } \quad 
t\in \R\] and
the modular relations
\[J_{\sH}\Delta^{it}_{\sH}J_{\sH}=\Delta^{it}_{\sH}\qquad \mbox{ for every  } \quad t\in \R. \]
One also has $\sH=\Fix(J_\sH \Delta_\sH^{1/2})$
(\cite[Thm.~3.4]{Lo08}). This construction 
leads to a one-to-one correspondence between
{couples $(\Delta, J)$ satisfying the modular relation
  and} standard subspaces: 

\begin{proposition}\label{prop:11}{\rm (\cite[Prop.~3.2]{Lo08})} 
{  The map $\sH\mapsto (\Delta_\sH, J_\sH)$ 
  is a bijection between the set of standard subspaces of $\cH$ 
  and the set of pairs $(\Delta, J)$, where $J$ is a conjugation,
  $\Delta > 0$ selfadjoint} with $J\Delta J =\Delta^{-1}$. 
\end{proposition}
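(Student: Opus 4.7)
The plan is to prove the bijection in two directions, with most of the content in constructing the inverse map. The forward direction (well-definedness and injectivity) is essentially already laid out in the recalled material: for every standard $\sH$, the Tomita operator $T_\sH \colon \xi + i\eta \mapsto \xi - i\eta$ for $\xi,\eta \in \sH$ is a densely defined, closed, antilinear involution, and its polar decomposition $T_\sH = J_\sH \Delta_\sH^{1/2}$ produces a conjugation $J_\sH$ and a positive selfadjoint $\Delta_\sH$ satisfying the modular relation $J_\sH \Delta_\sH J_\sH = \Delta_\sH^{-1}$. Injectivity is immediate from $\sH = \Fix(J_\sH \Delta_\sH^{1/2})$.

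For surjectivity, the plan is to reverse the construction. Given a pair $(\Delta,J)$ with $\Delta > 0$ selfadjoint, $J$ a conjugation and $J\Delta J = \Delta^{-1}$, I would define the antilinear operator
\[ S := J \Delta^{1/2}, \qquad \dom(S) := \dom(\Delta^{1/2}), \]
and set $\sH := \ker(S - I)$ (a closed real subspace of $\cH$, since $S$ is closed). The first key point to verify is that $S$ is a (closed, densely defined) antilinear involution. Using the Borel functional calculus, the identity $J\Delta J = \Delta^{-1}$ extends to $J f(\Delta) J = f(\Delta^{-1})$ for every real Borel function $f$, and applied to $f(t) = t^{1/2}$ on the spectrum of $\Delta$ it gives $J \Delta^{1/2} J = \Delta^{-1/2}$ together with the corresponding equality of domains $J\,\dom(\Delta^{1/2}) = \dom(\Delta^{-1/2})$. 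This yields $S^2 = J\Delta^{1/2} J \Delta^{1/2} = \Delta^{-1/2}\Delta^{1/2} = I$ on $\dom(\Delta^{1/2}) \cap \Delta^{-1/2}\dom(\Delta^{-1/2}) = \dom(S)$.

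Next I would show that $\sH$ is standard. Separation follows from antilinearity: if $\xi = i\eta$ with $\xi,\eta \in \sH$, then $\xi = S\xi = S(i\eta) = -iS\eta = -i\eta = -\xi$, so $\xi = 0$. Cyclicity follows from the real/imaginary decomposition on $\dom(S)$,
\[ \xi = \tfrac{1}{2}(\xi + S\xi) + i\,\tfrac{1}{2i}(\xi - S\xi), \]
where both summands lie in $\sH$ by the involution property and antilinearity of $S$. Since $\dom(S) = \dom(\Delta^{1/2})$ is dense, $\sH + i \sH$ is dense in $\cH$. In particular, this computation also shows $\sH + i\sH = \dom(S)$.

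Finally, I would identify the Tomita operator of $\sH$. For $\xi,\eta \in \sH$ one has $S(\xi + i\eta) = S\xi - iS\eta = \xi - i\eta$, so $S$ restricted to $\sH + i\sH$ coincides with the canonical antilinear involution defining $T_\sH$; as both are closed and share the dense domain $\sH + i\sH = \dom(S)$, they agree. The uniqueness of the polar decomposition of the closed antilinear involution $T_\sH = S$ then gives $J_\sH = J$ and $\Delta_\sH = \Delta$, proving surjectivity. The main subtlety I expect is exactly the domain bookkeeping in verifying $S^2 = I$, since $\Delta^{1/2}$ is unbounded; a clean way to handle it is to pass through the joint spectral representation of $\Delta$ and $J$ or, more elementarily, to use the domain equalities provided by $J \dom(\Delta^{1/2}) = \dom(\Delta^{-1/2})$ as above.
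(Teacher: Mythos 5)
Your argument is correct and is essentially the standard proof of this correspondence; the paper itself does not prove the statement but only cites it to Longo's notes, where the same route is taken (polar decomposition of the Tomita operator in one direction, and $\sH := \Fix(J\Delta^{1/2})$ with the $\xi = \tfrac12(\xi+S\xi) + i\,\tfrac{1}{2i}(\xi - S\xi)$ decomposition for surjectivity in the other). The domain bookkeeping you flag is handled correctly by the antilinear functional-calculus identity $J\Delta^{1/2}J = \Delta^{-1/2}$ together with $\operatorname{ran}(\Delta^{1/2}) \subeq \dom(\Delta^{-1/2})$, and the identification $\sH + i\sH = \dom(S)$ closes the argument.
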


From Proposition~\ref{prop:11} we easily deduce: 
\begin{lemma}\label{lem:sym}{\rm(\cite[Lemma 2.2]{Mo18})}
Let $\sH\subset\cH$ be a standard subspace  and $U\in\AU(\cH)$ 
be a {unitary or anti-unitary} operator. 
Then $U\sH$ is also standard and 
$U\Delta_\sH U^*=\Delta_{U\sH}^{\eps(U)}$ and $UJ_\sH U^*=J_{U\sH}$, 
where $\eps(U) = 1$ if $U$ is unitary and 
$\eps(U) = -1$ if it is antiunitary. 
\end{lemma}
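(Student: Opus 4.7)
My plan is a three-step derivation, each step feeding into the next: first verify that $U\sH$ is a standard subspace, then establish the Tomita-operator identity $S_{U\sH} = U S_\sH U^{*}$, and finally read off the formulas by uniqueness of the polar decomposition (Proposition~\ref{prop:11}). For standardness of $U\sH$: since $U$ is a bijective isometry, $U\sH$ is closed, and in both the unitary and antiunitary cases $U(i\sH) = iU\sH$ (for antiunitary $U$ this uses $-iU\sH = iU\sH$, since the real subspace $U\sH$ is closed under negation). Hence $U\sH + iU\sH = U(\sH + i\sH)$ is dense and $U\sH \cap iU\sH = U(\sH \cap i\sH) = \{0\}$.

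For the Tomita identity, I check on a generic element $U\xi + iU\eta$ with $\xi,\eta \in \sH$. If $U$ is unitary, $U^{*}(U\xi + iU\eta) = \xi + i\eta$, so $U S_\sH U^{*}(U\xi + iU\eta) = U(\xi - i\eta) = U\xi - iU\eta$. If $U$ is antiunitary, antilinearity of $U^{*} = U^{-1}$ instead gives $U^{*}(U\xi + iU\eta) = \xi - i\eta$; then $S_\sH(\xi - i\eta) = \xi + i\eta$, and applying antilinear $U$ yields $U(\xi + i\eta) = U\xi - iU\eta$. Both cases match $S_{U\sH}(U\xi + iU\eta) = U\xi - iU\eta$, so the identity $S_{U\sH} = U S_\sH U^{*}$ holds on the core $\sH + i\sH$.

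Finally, substituting $S_\sH = J_\sH \Delta_\sH^{1/2}$ gives the factorization $U S_\sH U^{*} = (U J_\sH U^{*})(U\Delta_\sH^{1/2}U^{*})$. The first factor is an antiunitary involution, since $(U J_\sH U^{*})^2 = U J_\sH^{2} U^{*} = I$ and the parity of antilinear factors makes the composition antilinear in both cases. The second factor is positive self-adjoint: for antiunitary $U$ this uses $\langle U T U^{*}\xi,\xi\rangle = \overline{\langle T U^{*}\xi, U^{*}\xi\rangle}\ge 0$ whenever $T\ge 0$, together with $U\Delta_\sH^{1/2}U^{*} = (U\Delta_\sH U^{*})^{1/2}$, which follows from applying the spectral functional calculus to the real-valued map $\lambda \mapsto \lambda^{1/2}$. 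Uniqueness of the polar decomposition (Proposition~\ref{prop:11}) then identifies these two factors with $J_{U\sH}$ and $\Delta_{U\sH}^{1/2}$, yielding the stated formulas. The only technical care is confined to the antiunitary case, where one must scrupulously track how antilinearity of $U$ interacts with the scalar $i$ and confirm that antiunitary conjugation preserves positivity of self-adjoint operators and commutes with real-valued spectral calculus; these facts are classical but easy to mishandle.
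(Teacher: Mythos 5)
Your overall strategy --- conjugating the Tomita operator to obtain $S_{U\sH}=US_\sH U^*$ and then reading off the modular data from uniqueness of the polar decomposition --- is exactly the intended argument (the paper gives no proof of its own, only the citation to \cite{Mo18} and the remark that the lemma follows from Proposition~\ref{prop:11}), and your first two steps are carried out correctly, including the careful bookkeeping of antilinearity.

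The problem is the final sentence of your third step. What your argument actually produces is $J_{U\sH}=UJ_\sH U^*$ together with $\Delta_{U\sH}^{1/2}=U\Delta_\sH^{1/2}U^*$, i.e.\ $\Delta_{U\sH}=U\Delta_\sH U^*$ \emph{with exponent $+1$ in both the unitary and the antiunitary case} --- precisely because, as you yourself note, $\lambda\mapsto\lambda^{1/2}$ is real-valued, so antiunitary conjugation commutes with this piece of the functional calculus. This is not the displayed formula $U\Delta_\sH U^*=\Delta_{U\sH}^{\eps(U)}$ when $\eps(U)=-1$, and the two cannot be reconciled as identities of positive operators: take $U=J_\sH$, so that $U\sH=\sH'$ and $\Delta_{\sH'}=\Delta_\sH^{-1}$; your identity gives $J_\sH\Delta_\sH J_\sH=\Delta_\sH^{-1}$ (the modular relation, correct), whereas the literal reading of the statement would give $J_\sH\Delta_\sH J_\sH=\Delta_{\sH'}^{-1}=\Delta_\sH$, forcing $\Delta_\sH^2=\1$. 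So you have proved the correct operator identity, but you have not proved --- and cannot prove --- the stated formula read literally, and you should not close with ``yielding the stated formulas.'' The exponent $\eps(U)$ enters only through the \emph{complex} functional calculus: for antiunitary $U$ one has $Uf(\Delta_\sH)U^*=\bar f(U\Delta_\sH U^*)=\bar f(\Delta_{U\sH})$, hence $U\Delta_\sH^{it}U^*=\Delta_{U\sH}^{-it}=\big(\Delta_{U\sH}^{\eps(U)}\big)^{it}$, which is the form in which the lemma is actually used in the paper (e.g.\ in (BW) and in the covariance of the BGL net). A complete write-up should either state the conclusion as $\Delta_{U\sH}=U\Delta_\sH U^*$ and then derive the $\eps(U)$-version for the modular group as above, or explicitly flag that the displayed formula is to be understood at the level of $\Delta^{it}$.
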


\begin{prop} \mlabel{prop:standchar} {\rm(\cite{Lo08},\cite[Prop.~2.1]{NOO21})}
  Let $\sV \subeq \cH$ be a standard subspace 
with modular objects $(\Delta, J)$. For 
$\xi \in \cH$, we consider the orbit map $\alpha^\xi \: \R \to \cH, t \mapsto 
\Delta^{-it/2\pi}\xi$. Then the following are equivalent:
\begin{itemize}
\item[\rm(i)] $\xi \in \sV$. 
\item[\rm(ii)] $\xi \in \cD(\Delta^{1/2})$ with $\Delta^{1/2}\xi = J\xi$. 
\item[\rm(iii)] The orbit map $\alpha^\xi \: \R \to \cH$ 
  extends to a continuous map  $\{z \in \C \: 0 \leq \Im z \leq \pi\}
\to \cH$ which is 
holomorphic on the interior and satisfies $\alpha^\xi(\pi i) = J\xi$. 
\item[\rm(iv)] There exists $\eta \in \cH^J$ 
whose orbit map $\alpha^\eta$ 
extends to a map  $\{ z \in \C \: |\Im z| \leq \pi/2\}
\to \cH$ which is continuous, 
holomorphic on the interior, and satisfies $\alpha^\eta(-\pi i/2) = \xi$. 
\end{itemize}
\end{prop}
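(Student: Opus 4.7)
The plan is to prove the four equivalences in a cycle (i) $\Leftrightarrow$ (ii) $\Leftrightarrow$ (iii) $\Leftrightarrow$ (iv), with (ii) as the algebraic/analytic hub. All three steps ultimately reduce to the spectral calculus for $\Delta$ together with the commutation relation $J\Delta J = \Delta^{-1}$, which implies $J\Delta^{\alpha}\subeq \Delta^{-\alpha}J$ for any real~$\alpha$.

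\medskip
\textbf{Step 1: (i) $\Leftrightarrow$ (ii).} This is essentially the definition of $\sV$ through its Tomita operator. The Tomita operator $S = J\Delta^{1/2}$ is a closed antilinear involution with $\cD(S) = \cD(\Delta^{1/2})$, and $\sV = \Fix(S)$. If $\xi\in\sV$, then $\xi\in\cD(\Delta^{1/2})$ and $J\Delta^{1/2}\xi = \xi$; applying $J$ gives $\Delta^{1/2}\xi = J\xi$. Conversely, if $\xi\in\cD(\Delta^{1/2})$ with $\Delta^{1/2}\xi = J\xi$, then $S\xi = J\Delta^{1/2}\xi = J^2\xi = \xi$, so $\xi\in\sV$.

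\medskip
\textbf{Step 2: (ii) $\Leftrightarrow$ (iii).} This rests on the spectral theorem for $\Delta$. Write $\Delta=\int_0^\infty\lambda\,dE_\lambda$. For $z=t+is$ with $0\le s\le\pi$, the candidate extension is $\alpha^\xi(z) = \int \lambda^{-iz/2\pi}\,dE_\lambda\xi$, whose norm squared is $\int \lambda^{s/\pi}\,d\|E_\lambda\xi\|^2$. This is finite for all $s\in[0,\pi]$ if and only if $\int(1+\lambda)\,d\|E_\lambda\xi\|^2 < \infty$, equivalently $\xi\in\cD(\Delta^{1/2})$. By dominated convergence the resulting function is continuous on the closed strip and, since $\lambda^{-iz/2\pi}$ is entire in $z$, holomorphic on its interior. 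At $z=i\pi$ it equals $\Delta^{1/2}\xi$. Thus the holomorphic extension exists and has boundary value $J\xi$ at $i\pi$ precisely when $\xi\in\cD(\Delta^{1/2})$ and $\Delta^{1/2}\xi=J\xi$.

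\medskip
\textbf{Step 3: (iii) $\Leftrightarrow$ (iv).} The idea is to symmetrize the strip around the real axis by halving the shift. For (iii) $\Rightarrow$ (iv), set $\eta:=\alpha^\xi(i\pi/2)=\Delta^{1/4}\xi$. The commutation $J\Delta^{1/4}=\Delta^{-1/4}J$ together with (ii) yields
\[
J\eta = J\Delta^{1/4}\xi = \Delta^{-1/4}J\xi = \Delta^{-1/4}\Delta^{1/2}\xi = \Delta^{1/4}\xi = \eta,
\]
so $\eta\in\cH^J$. Then $\alpha^\eta(z)=\alpha^\xi(z+i\pi/2)$ defines the required extension on $|\Im z|\le\pi/2$, with $\alpha^\eta(-i\pi/2)=\alpha^\xi(0)=\xi$. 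Conversely, given (iv), set $\beta(z):=\alpha^\eta(z-i\pi/2)$ on $0\le\Im z\le\pi$; this is continuous on the closed strip, holomorphic on its interior, and agrees with $\alpha^\xi$ on the real line because $\beta(t)=\Delta^{-it/2\pi}\Delta^{-1/4}\eta=\Delta^{-it/2\pi}\xi$. At $z=i\pi$, using $J\eta=\eta$ and $J\Delta^{-1/4}=\Delta^{1/4}J$,
\[
\beta(i\pi) = \Delta^{1/4}\eta = \Delta^{1/4}J\eta = J\Delta^{-1/4}\eta = J\xi,
\]
which is (iii).

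\medskip
No single step is genuinely hard; the only real care needed is in Step~2, where one must verify that the $L^2$-condition $\int \lambda^{s/\pi}\,d\|E_\lambda\xi\|^2<\infty$ for $s\in[0,\pi]$ is equivalent to $\xi\in\cD(\Delta^{1/2})$, and in Step~3, where the two shifts must be compatible on overlapping domains and the $J$-intertwining of fractional powers of $\Delta$ is used repeatedly. These are bookkeeping issues rather than conceptual obstacles.
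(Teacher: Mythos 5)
The paper does not prove this proposition; it is quoted from \cite{Lo08} and \cite[Prop.~2.1]{NOO21}, so there is no internal proof to compare against. Your cycle (i)$\Leftrightarrow$(ii)$\Leftrightarrow$(iii)$\Leftrightarrow$(iv) with (ii) as the hub is the standard route taken in those references, and Steps 1 and the forward implications (ii)$\Rightarrow$(iii), (iii)$\Rightarrow$(iv) are correct as written.

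There is, however, one genuine gap, and it sits exactly where the proposition has its only real content: in the implications (iii)$\Rightarrow$(ii) and (iv)$\Rightarrow$(iii) you silently identify the \emph{postulated} continuous extension of the orbit map (continuous on the closed strip, holomorphic in the interior) with the spectral-calculus function $z \mapsto \Delta^{-iz/2\pi}\xi = \int \lambda^{-iz/2\pi}\, dE_\lambda\xi$. Your Step 2 only shows that \emph{if} $\xi \in \cD(\Delta^{1/2})$ then the spectral formula provides such an extension with value $\Delta^{1/2}\xi$ at $i\pi$; it does not show that the mere existence of \emph{some} extension forces $\xi \in \cD(\Delta^{1/2})$, which is what (iii)$\Rightarrow$(ii) requires. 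The missing argument is a duality/reflection step: for $\psi$ in the dense subspace $\bigcup_n E([1/n,n])\cH$, the function $g(z) = \la \Delta^{i\bar z/2\pi}\psi, \xi\ra$ is holomorphic on the open strip and continuous on its closure, and agrees on $\R$ with $\la \psi, \alpha^\xi(z)\ra$; by Schwarz reflection their difference vanishes identically, so $\la \Delta^{1/2}\psi, \xi\ra = \la \psi, \alpha^\xi(i\pi)\ra = \la \psi, J\xi\ra$ for all $\psi$ in a core of $\Delta^{1/2}$, whence $\xi \in \cD((\Delta^{1/2})^*) = \cD(\Delta^{1/2})$ and $\Delta^{1/2}\xi = J\xi$. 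The same issue recurs in (iv)$\Rightarrow$(iii), where you compute $\beta(t) = \Delta^{-it/2\pi}\Delta^{-1/4}\eta$ and $\beta(i\pi) = \Delta^{1/4}\eta$ as if $\alpha^\eta$ were known to be the spectral extension on $|\Im z| \le \pi/2$; the same reflection argument (applied to the two-sided strip) is needed first to conclude $\eta \in \cD(\Delta^{\pm 1/4})$. Once this uniqueness step is inserted, your proof is complete; without it, the two backward implications are asserted rather than proved, and your closing remark that only ``bookkeeping'' remains understates where the actual analytic work lies.
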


\subsubsection{The Brunetti--Guido--Longo (BGL) net}
Here we recall a construction we introduced in \cite{MN21} that generalize the algebraic construction of free fields for AQFT models presented in \cite{BGL02}.

If $(U,G)$ is an (anti-)unitary representation of $G_\sigma$,   
then we obtain a standard subspace $\sH_U(W)$ determined for {$W
  = (h, \tau) \in \cG$}
by the couple of operators (cf.~Proposition \ref{prop:11}):
\begin{equation}
  \label{eq:bgl}
J_{\sH_U(W)} = 
U(\tau) \quad \mbox{ and } \quad \Delta_{\sH_U(W)} = e^{2\pi i \partial U(h)}, 
\end{equation}
and thus a $G$-equivariant map $\sH_U \:  \cG \to \Stand(\cH)$. 
This is the so-called BGL net
  \[ \sH_U^{\rm BGL} \: \cG(G_\sigma) \to \Stand(\cH).\]

In the following theorem, we need the {\it positive cone}
  \begin{equation}
    \label{eq:CU}
 C_U := \{ x \in \g \: -i \cdot \partial U(x) \geq 0\}, \qquad 
 \partial U(x) = \frac{d}{dt}\Big|_{t = 0} U(\exp tx)
  \end{equation}
  of a unitary representation~$U$. It
  is a closed, convex, $\Ad(G)$-invariant cone in $\g$.

  \begin{theorem} \mlabel{thm:BGL}
    Let $C_\g \subeq \g$ be a pointed generating
    closed convex cone contained in the positive cone $C_U$
    of the (anti-)unitary representation $(U,\cH)$ of $G_\sigma$.
    Then the BGL net 
\[ \sH_U^{\rm BGL} \: \cG(G_\sigma) \to \Stand(\cH) \] 
    is $G_\sigma$-covariant and isotone with respect to the $C_\g$-order on     $\cG(G_\sigma)$.
\end{theorem}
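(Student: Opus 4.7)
My plan is to verify covariance and isotony separately, with Lemma~\ref{lem:sym} handling the first and a Borchers-type theorem for standard subspaces handling the second.

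For covariance, writing $W = (h,\tau)$ and fixing $g \in G_\sigma$, I would apply Lemma~\ref{lem:sym} directly: it gives $U(g) J_{\sH_U(W)} U(g)^{-1} = J_{U(g)\sH_U(W)}$ and $U(g) \Delta_{\sH_U(W)} U(g)^{-1} = \Delta_{U(g)\sH_U(W)}^{\eps(g)}$. The identity $U(g)U(\tau)U(g)^{-1} = U(g\tau g^{-1})$ is immediate, and conjugation of $e^{2\pi i \partial U(h)}$ yields $e^{2\pi i \partial U(\Ad^\eps(g)h)}$: when $g$ is antiunitary, the sign flip coming from conjugating $i$ by an antilinear operator is exactly the $\eps(g)$-factor that also appears in $\Ad^\eps$. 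This matches the modular data of $\sH_U(g.W)$, so Proposition~\ref{prop:11} concludes $U(g)\sH_U(W) = \sH_U(g.W)$.

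For isotony, the task is to show that $g \in S_W$ implies $U(g)\sH_U(W) \subseteq \sH_U(W)$. I would exploit the structural decomposition
\[ S_W = G_W \exp(C_+ + C_-), \]
recorded in Definition~\ref{def:euler}(d), together with the abelianness $[C_\pm, C_\pm] = 0$, which lets me split elements of $\exp(C_+ + C_-)$ into a product $\exp(x_+)\exp(x_-)$ with $x_\pm \in C_\pm$. Stabilizer elements $g \in G_W$ satisfy $U(g)\sH_U(W) = \sH_U(W)$ by the covariance already established, so it remains to treat the one-parameter families $\exp(sx)$ for $x \in C_\pm$, $s \geq 0$.

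The decisive ingredient is the Borchers theorem for standard subspaces. For $x \in C_+ \subseteq C_\g \cap \g_1(h)$, the eigenvalue relation $[h,x]=x$ gives the modular commutation
\[ \Delta_{\sH_U(W)}^{it}\, U(\exp(sx))\, \Delta_{\sH_U(W)}^{-it} = U(\exp(e^{-2\pi t} s x)), \]
and the hypothesis $C_\g \subseteq C_U$ provides $-i\,\partial U(x) \geq 0$. The standard Borchers theorem then yields $U(\exp(sx))\sH_U(W) \subseteq \sH_U(W)$ for $s \geq 0$. The case $x \in C_-$ is symmetric: here $-x \in C_\g$ supplies positivity and $\ad h$ acts as $-1$ on $\g_{-1}(h)$, producing the reversed modular scaling $U(\exp(e^{2\pi t}sx))$, so the same Borchers argument applied with swapped roles delivers the inclusion for $s \geq 0$. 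The only non-routine step is the invocation of Borchers' theorem, but this is well established in the standard subspace setting and essentially the content of the semigroup results in \cite{Ne22} underlying the construction of $S_W$ in \cite{MN21}; combined with the above splitting this closes the proof.
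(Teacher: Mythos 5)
The paper does not prove Theorem~\ref{thm:BGL} in the text; it is recalled from \cite{MN21}, and your outline follows exactly the route taken there: covariance from Lemma~\ref{lem:sym} together with Proposition~\ref{prop:11} (the $\eps(g)$-sign from antilinearity matching the twist in $\Ad^\eps$), and isotony by reducing $g \in S_W$ to the stabilizer plus the one-parameter semigroups $\exp(\R_+ x)$, $x \in C_\pm$, where the one-particle Borchers theorem (the semigroup results of \cite{Ne22} underlying Definition~\ref{def:euler}(d)) gives the inclusion. One small imprecision: $[C_+,C_+]=[C_-,C_-]=\{0\}$ does not let you write $\exp(x_+ + x_-)$ as $\exp(x_+)\exp(x_-)$, since $[\g_1(h),\g_{-1}(h)]\subeq\g_0(h)$ need not vanish; but this is harmless because the decomposition $S_W=\exp(C_+)\,G_W\exp(C_-)$ recorded in Definition~\ref{def:euler}(d) already supplies the product form you need.
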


  The BGL net also satisfies twisted locality conditions and
  PT symmetry. We refer to \cite{MN21} for a detailed discussion.
{In this picture we have not required $\sigma$ to be an 
Euler involution so $\cG_E(G_\sigma)$ may  in particular be  trivial 
(see Example~\ref{ex:no-euler}). This general presentation 
is motivated by the results in Section~\ref{sect:modEul} 
that will exhibit the existence of an Euler element in $\fg$ 
and an involution $\tau_h^G$, defining a graded 
group $G_{\tau_h}$, as a consequence of 
a certain regularity condition for associated standard subspaces 
in unitary representations of $G$.} 

\begin{ex}
  \mlabel{ex:no-euler}
  It is easy to construct graded groups $G_\sigma$ for which
  $\cG_E(G_\sigma) = \eset$, i.e., no Euler couples exist.
  For example, we may consider
  $G = \SL_2(\R)$ and the involutive automorphism
  $\theta(g) = (g^\top)^{-1}$ (Cartan involution).
  We claim that $G_\theta = G \rtimes \{\1,\theta\}$ contains
  no Euler couples. In fact, if $(h,\tau)$ is an Euler couple,
  then $\Ad(\tau) = \tau_h$.
  Identifying the Lie algebra $\fsl_2(\R)$, endowed with its
  Cartan--Killing form, with $3$-dimensional Minkowski space
  $\R^{1,2}$, we have $\Ad(G) = \Ad(G_\theta)  = \SO_{1,2}(\R)_e$,
  a connected group. But the automorphisms $\tau_h$
  are contained in $\SO_{1,2}(\R)^\down$ because they reverse the causal
  orientation. Hence no involution $\tau = (g,\theta) \in G_\theta$
  satisfies $\Ad(\tau) = \tau_h$. Clearly, the picture changes
  if we replace $\theta$ by an involution $\tau_h^G$, where $h \in \fsl_2(\R)
  \cong \so_{1,2}(\R)$ is an Euler element. 
\end{ex}

\subsubsection{Nets on homogeneous spaces}
\label{sect:onenet}

For a unitary representation $(U,\cH)$ of a connected a Lie group~$G$ 
and a homogeneous space $M = G/H$, we are interested in
families $(\sH(\cO))_{\cO \subeq M}$ of closed real subspaces of $\cH$, 
indexed by open subsets $\cO \subeq M$;
so-called {\it nets of real subspaces on $M$}. 
Below we work in a more general context, where
the connection between the abstract and the geometric wedges
is less strict. 
For such nets, we
consider the following properties:
\begin{itemize}
\item[(Iso)] {\bf Isotony:} $\cO_1 \subeq \cO_2$ 
implies $\sH(\cO_1) \subeq \sH(\cO_2)$ 
\item[(Cov)] {\bf Covariance:} $U(g) \sH(\cO) = \sH(g\cO)$ for $g \in G$. 
\item[(RS)] {\bf Reeh--Schlieder property:} 
$\sH(\cO)$ is cyclic  if $\cO \not=\eset$. 
\item[(BW)] {\bf Bisognano--Wichmann property:}
{There exists an open subset $W \subeq M$ (called a {\it wedge region}),  
such that $\sH(W)$ is standard
with modular operator $\Delta_{\sH(W)} = e^{2\pi i \partial U(h)}$
  for some $h \in \g$.}
\end{itemize}
Nets satisfying (Iso), (Cov), (RS), (BW) 
on non-compactly causal symmetric spaces have been constructed on
  non-compactly causal symmetric spaces in \cite{FNO23}, 
  and on compactly causal spaces in \cite{NO23}. 

In some cases there is a one-to-one
correspondence between the abstract wedge space
$\cW_+ \subeq \cG_E(G_\sigma)$
  and the set $\cW_M := \{g.W \: g \in G\}$
of wedge regions in $M$, see Remark \ref{rem:1-1}.
In these cases, the BGL net on $\cW_+$ can be considered
as a net on concrete wedge regions in $M$,
satisfying the previous assumptions, on the
set $\cW_M$ of wedge regions in $M$.
A general correspondence theorem still has to be established.
If $\sV$ is a standard subspace with
  $\Delta_\sV = e^{2\pi i \partial U(h)}$, then
  $\sH(g.W) := U(g) \sV$ yields a well-defined net on $\cW_M$
  if $g.W = W$ implies $U(g)\sV = \sV$.
 If $\ker U$ is discrete, the latter condition means that
  $\Ad(g)h = h$ and $U(g) J_\sV U(g)^{-1} = J_\sV$.}

\subsubsection{Minimal and maximal nets of real subspaces}\label{sect:maxnet}

To add a geometric context to the nets of standard subspaces
  that we have already encountered in terms of the BGL construction
  (cf.\ Theorem~\ref{thm:BGL}),
  we now fix an Euler element $h\in \g$ and
a homogeneous space $M = G/H$ of $G$, in which we 
consider an open subset $W$ invariant under the one-parameter group
$\exp(\R h)$. We call $W$ and its translates $gW$, $g \in G$,
  ``wedge regions''.
At the outset,  we do not assume any specific properties
of~$W$, but Lemma~\ref{lem:direct-net} will indicate which
properties good choices of $W$ should have. 
Let $(U,\cH)$ be an (anti-)unitary representation of $G_{\tau_h}$ and 
$\sV = \sV(h,U)$ the corresponding standard subspace. 
For an open subset $\cO \subeq M$, we put
\begin{equation}
  \label{eq:def-ho}
  \sH^{\m}(\cO) := \bigcap_{g\in G, \cO \subeq gW} U(g)\sV
  \quad \mbox{ and } \quad 
{  \sH^{\rm min}(\cO) := \oline{\sum_{g\in G, gW \subeq \cO} U(g)\sV}.}
\end{equation}
We call {$\sH^{\rm max}$ the {\it maximal net},
  in accordance} with \cite{SW87}.

  This leads to $\sH^{\m}(\cO) = \cH$ (the empty intersection) if there exists
no $g \in G$ with $\cO \subeq gW$, i.e., $\cO$ is not contained in
any wedge region. We likewise get
$\sH^{\rm min}(\cO) := \{0\}$ (the empty sum) if there exists
no $g \in G$ with $gW \subeq \cO$, i.e., $\cO$ contains no
wedge region.

We also note that, if we write
  \[ \cO^\wedge := \Big(\bigcap_{gW \supeq \cO} gW\Big)^\circ \supeq \cO
    \quad \mbox{ and } \quad 
    \cO^\vee := \bigcup_{gW \subeq \cO} gW \subeq \cO,\]
  then $\cO^\wedge$ and $\cO^\vee$ are open subsets satisfying
  $(\cO^\wedge)^\wedge = \cO^\wedge$, $(\cO^\vee)^\vee = \cO^\vee$, and 
  \begin{equation}
    \label{eq:enla}
 \sH^{\rm max}(\cO^\wedge) = \sH^\m(\cO) \quad \mbox{ and }  \quad   
 \sH^{\rm min}(\cO^\vee) = \sH^{\rm min}(\cO).
  \end{equation}
  So, effectively, the maximal net ``lives'' on all open subsets $\cO$
  satisfying $\cO = \cO^\wedge$ (interiors of intersections of
  wedge regions)
  and the minimal net on those open subsets
satisfying $\cO = \cO^\vee$ (unions of wedge regions).

\begin{lem} \mlabel{lem:direct-net}
    The following assertions hold:
  \begin{itemize}
  \item[\rm(a)] The nets $\sH^{\m}$ { and $\sH^{\rm min}$} 
    on $M$ satisfy {\rm(Iso)} and {\rm(Cov)}. 
  \item[\rm(b)] The set of all open subsets $\cO \subeq M$
  for which $\sH^{\m}(\cO)$ is cyclic is $G$-invariant.
\item[\rm(c)] The following are equivalent:
  \begin{itemize}
  \item[\rm(i)] ${S_W := \{ g \in G \:  gW \subeq W \}} \subeq S_\sV$.
  \item[\rm(ii)] $\sH^{\m}(W) = \sV$. 
  \item[\rm(iii)] $\sH^{\m}(W)$ is standard. 
  \item[\rm(iv)] $\sH^{\m}(W)$ is cyclic. 
{  \item[\rm(v)] $\sH^{\rm min}(W) = \sV$. 
  \item[\rm(vi)] $\sH^{\rm min}(W)$ is standard. 
  \item[\rm(vii)] $\sH^{\rm min}(W)$ is separating. }
  \end{itemize}
\item[\rm(d)] The cyclicity of a subspace $\sH^{\m}(\cO)$
  is inherited by subrepresentations, direct sums, direct integrals
  and finite tensor products.
\end{itemize}
\end{lem}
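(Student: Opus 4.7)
For part (a), isotony follows from direct manipulation of the indexing sets: $\cO_1 \subeq \cO_2$ implies $\{g : \cO_2 \subeq gW\} \subeq \{g : \cO_1 \subeq gW\}$, so the $\sH^\m$ intersection for $\cO_1$ is taken over more terms and hence is smaller; a parallel argument with $\{g : gW \subeq \cO\}$ handles $\sH^{\rm min}$. Covariance follows by substituting $k = gh$ in the index variable:
\[ U(g)\bigcap_{h:\, \cO \subeq hW} U(h)\sV \ = \ \bigcap_{k:\, g\cO \subeq kW} U(k)\sV \ = \ \sH^\m(g\cO), \]
and analogously for the sum defining $\sH^{\rm min}$. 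Part (b) is then an immediate consequence of (Cov): a unitary image of a cyclic subspace is cyclic.

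For part (c), I would first establish two preliminary observations that drive both the maximal and minimal equivalences. Taking $g = e$ gives $\sH^\m(W) \subeq \sV \subeq \sH^{\rm min}(W)$ unconditionally; combining (Iso) with (Cov) yields $U(S_W)$-invariance of both subspaces, since for $g \in S_W$ one has $U(g)\sH^\m(W) = \sH^\m(gW) \subeq \sH^\m(W)$, and dually for $\sH^{\rm min}$. As $\exp(\R h) \subeq G_W \subeq S_W$, both subspaces are invariant under the modular group $\Delta_\sV^{it}$ of $\sV$. For the maximal net, (i)$\Rightarrow$(ii) is immediate: if $S_W \subeq S_\sV$, each index $g \in S_W^{-1}$ of the intersection satisfies $g^{-1} \in S_\sV$, hence $\sV \subeq U(g)\sV$; combined with $\sH^\m(W) \subeq \sV$ this forces equality. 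The chain (ii)$\Rightarrow$(iii)$\Rightarrow$(iv) is trivial, and (iv)$\Rightarrow$(iii) is automatic since $\sH^\m(W) \subeq \sV$ inherits separability.

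The crux of (c) is (iv)$\Rightarrow$(i). I would reduce it to the identity $\sH^\m(W) = \sV$, after which (Cov) and (Iso) give $U(g)\sV = U(g)\sH^\m(W) = \sH^\m(gW) \subeq \sH^\m(W) = \sV$ for $g \in S_W$, i.e.\ $g \in S_\sV$. For the identity itself I would invoke the standard-subspace analogue of the Reeh--Schlieder/Takesaki theorem: \emph{any cyclic closed real subspace of $\sV$ that is invariant under $\Delta_\sV^{it}$ for all $t \in \R$ must equal $\sV$}. This is the main technical hurdle; its proof uses analytic continuation of the modular orbit maps into the strip $0 \leq \Im z \leq \pi$ via Proposition~\ref{prop:standchar} together with spectral arguments for $\Delta_\sV$, and is a classical fact that I would either cite or prove as a preliminary lemma. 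The minimal-net equivalences are handled dually: (i)$\Rightarrow$(v) is straightforward since under (i) each summand lies in $\sV$ and $g = e$ gives the reverse inclusion, (v)$\Rightarrow$(vi)$\Rightarrow$(vii) is trivial, and for (vii)$\Rightarrow$(i) one applies the same lemma to $\sH^{\rm min}(W)' \subeq \sV'$, which is cyclic and $\Delta_{\sV'}^{it}$-invariant under the separability hypothesis.

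For part (d), I would exploit naturality of the BGL construction with respect to the relevant operations. For direct sums and direct integrals, $\Delta = e^{2\pi i \partial U(h)}$ and $J = U(\tau_h^G)$ decompose block-diagonally (resp.\ disintegrate), so $\sV$ and each translate $U(g)\sV$ inherit the decomposition; passing to the intersection defining $\sH^\m$ requires, in the direct-integral case, reducing the uncountable index set to a countable dense subset using strong continuity of $U$ and separability, after which $\sH^\m_U(\cO)$ decomposes accordingly and cyclicity transfers term-by-term. Subrepresentations are covered by the direct-sum case since unitary invariant subspaces admit orthogonal complements. For finite tensor products, the inclusion $\sV(h,U_1) \otimes_\R \sV(h,U_2) \subeq \sV(h, U_1 \otimes U_2)$ --- verified directly by applying $J\Delta^{1/2}$ to simple tensors --- together with its $U(g)$-equivariance gives $\sH^\m_{U_1}(\cO) \otimes_\R \sH^\m_{U_2}(\cO) \subeq \sH^\m_{U_1 \otimes U_2}(\cO)$, so cyclicity of both factors propagates to cyclicity of the tensor product.
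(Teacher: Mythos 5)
Your proposal is correct and takes essentially the same route as the paper's proof: the same index manipulations for (a) and (b), the same key fact for (c) --- namely that a cyclic closed real subspace of $\sV$ invariant under the modular group $\Delta_\sV^{it}$ equals $\sV$ (the paper cites \cite[Prop.~3.10]{Lo08} for exactly this) --- and the same direct-sum/direct-integral/tensor-product decomposition arguments for (d). Your passage to symplectic complements for (vii)~$\Rightarrow$~(v) merely makes explicit how that proposition is applied to the minimal net, which the paper does implicitly.
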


\begin{prf} (a) Isotony is clear and covariance of the maximal net
  follows from
  \[ \sH^{\m}(g_0\cO)
    = \bigcap_{g_0\cO \subeq gW} U(g)\sV
    = U(g_0)\bigcap_{g_0\cO \subeq gW} U(g_0^{-1} g)\sV
    = U(g_0)\sH^{\m}(\cO).\]
{The argument for the minimal net is similar.} 

  \nin (b) follows from covariance.

  \nin (c) (i) $\Leftrightarrow$ (ii): Clearly,
    $\sH^{\rm max}(W) \subeq \sV$, and equality holds if and only if
    $W \subeq gW$ implies $U(g)\sV \supeq \sV$,
    which is equivalent to $S_W^{-1} \subeq S_\sV^{-1}$, and this is equivalent
    to~(i).

 (ii) $\Rarrow$ (iii) $\Rarrow$ (iv) are trivial.

(iv) $\Rarrow$ (ii): By covariance
 and $\exp(\R h).W = W$, the subspace $\sH^{\m}(W) \subeq \sV$
  is invariant under the modular group $U(\exp \R h)$ of $\sV$.
  If $\sH^{\m}(W)$ is cyclic, then it is also standard, as a subspace of $\sV$,
  so that \cite[Prop.~3.10]{Lo08} implies $\sH^{\m}(W) = \sV$.

(i) $\Leftrightarrow$ (v) follows with a similar argument
    as the equivalence of (i) and (ii).

 (v) $\Rarrow$ (vi) $\Rarrow$ (vii) are trivial.

 (vii) $\Rarrow$ (v): By covariance
 and $\exp(\R h).W = W$, the subspace
  $\sH^{\rm min}(W) \supeq \sV$
  is invariant under the modular group $U(\exp \R h)$ of $\sV$.
  If $\sH^{\rm min}(W)$ is separating,
  then it is also standard, because it contains~$\sV$.
  Now \cite[Prop.~3.10]{Lo08} implies $\sH^{\rm min}(W) = \sV$.

\nin (d) We use that
  \begin{equation}
    \label{eq:sh=va}
 \sH^{\m}(\cO) = \sV_A \quad \mbox{ for } \quad
A := \{ g \in G \: g^{-1} \cO \subeq W \}.
  \end{equation}
  Now \eqref{eq:v-dirsum} implies that, for $U = U_1 \oplus U_2$, we have
  \[ \sH^{\m}(\cO) = \sH_{1}^{\m}(\cO) \oplus \sH_{2}^{\m}(\cO).\]
  This proves that cyclicity of $\sH^{\m}(\cO)$ is inherited
  by subrepresentations and direct sums. 
  For finite tensor products, the assertion 
  follows from Lemma~\ref{lem:tensprostand}.
  If $U = \int_X^\oplus U_m\, d\mu(m)$ is a direct integral,
  then  \eqref{eq:sh=va} and   Lemma~\ref{lem:g-inter}(a) 
imply that \begin{equation}
  \label{eq:net-dirint}
  \sH^{\m}(\cO) = \int_X^\oplus \sH_{m}^{\m}(\cO)\, d\mu(m)
\end{equation}
for direct integrals. 
So Lemma~\ref{lem:di1} implies that $\sH^{\m}(\cO)$ is cyclic if 
every $\sH_{m}^{\m}(\cO)$ is cyclic in~$\cH_m$.
 \end{prf}

  Lemma~\ref{lem:direct-net}(d)
  implies in particular that a direct integral representation
  $(U,\cH)$ is $(h,W)$-localizable 
  in a family of subsets of $M$
  in the sense of Definition~\ref{def:local}
  if $\mu$-almost all representations $(U_m, \cH_m)$ have this property.
For the case where $G$ is the Poincar\'e group and $M = \R^{1,d}$,  
a similar argument can be found in \cite[Lemma~4.3]{BGL02}.

\begin{rem} {(The case where $S_W$ is a group)} 
  If the semigroup $S_W$ {is a group, i.e.,}
  $S_W = G_W = \{g \in G \: g.W = W\}$
  is a group and $\ker(U)$ is discrete, then the inclusion 
   $S_W \subeq S_\sV$ is equivalent to
   {   \begin{equation}
     \label{eq:gwx}
     G_W \subeq G_\sV = G^{h,J} = \{ g \in G^h \: J U(g) J = U(g)\}
   \end{equation}
(cf.\ Lemma~\ref{lem:sym}).}   
 In the context of causal homogeneous spaces, the 
     definition of $W$ as a connected component of
     $W_M^+(h)$ {(see \S~\ref{sect:chs})
     implies that $\exp(\R h) \subeq G^h_e \subeq G_W$,} and we 
have in many concrete examples that $G_W \subeq G^h$
 and $\L(G_W) =  \g^h$ (see \cite{NO22, NO23, MNO23b}
 and \S\S~\ref{subsec:ncc} and \ref{subsec:cc}).
 However, $U(G_W)$ need not commute with~$J$,
 so that \eqref{eq:gwx} may fail.
   Examples arise already for $\g = \fsl_2(\R)$; see
   \cite[Rem.~5.13]{FNO23}.  
 \end{rem}

\begin{lemma} \mlabel{lem:maxnet-larger}
  Let $(U,\cH)$ be an (anti-)unitary representation of $G_{\tau_h}$
  and $\sH$ a net of real subspaces on open subsets of $M$ satisfying
  {\rm (Iso), (Cov)} {and $\sH(W) = \sV$ with respect to 
    $h \in \g$ and $W \subeq M$.} 
Then   
  \[ \sH^{\rm min}(\cO) \subeq \sH(\cO) \subeq \sH^{\rm max}(\cO) \]
  for each  open subset $\cO \subeq M$
  and equality holds for all domains of the form
  $\cO = g.W$, $g \in G$ (wedge regions in $M$).
 \end{lemma}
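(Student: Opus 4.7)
The proof should be essentially a direct unfolding of the definitions of $\sH^{\rm max}$ and $\sH^{\rm min}$ in \eqref{eq:def-ho}, combined with the hypotheses (Iso) and (Cov) and the anchoring identity $\sH(W) = \sV$. There is no real obstacle here; the main point is to be careful with the order of quantifiers in the intersection and the sum, and to use covariance of $\sH^{\rm max}, \sH^{\rm min}$ as already established in Lemma~\ref{lem:direct-net}(a).

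The plan for the upper bound is as follows. Fix an open subset $\cO \subeq M$ and any $g \in G$ with $\cO \subeq g.W$. By (Iso) applied to the inclusion $\cO \subeq g.W$, we get $\sH(\cO) \subeq \sH(g.W)$, and by (Cov) together with $\sH(W) = \sV$ we have $\sH(g.W) = U(g)\sH(W) = U(g)\sV$. Therefore $\sH(\cO) \subeq U(g)\sV$ for every such $g$, and intersecting over all admissible $g$ yields $\sH(\cO) \subeq \sH^{\rm max}(\cO)$. For the lower bound, fix $g \in G$ with $g.W \subeq \cO$. Then, symmetrically, (Cov) gives $U(g)\sV = \sH(g.W)$, and (Iso) gives $\sH(g.W) \subeq \sH(\cO)$. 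Since $\sH(\cO)$ is closed, taking the closed sum over all such $g$ produces $\sH^{\rm min}(\cO) \subeq \sH(\cO)$.

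For the equality statement on wedge regions $\cO = g.W$, by covariance of all three nets it suffices to treat $\cO = W$. The two chains of inclusions just proved give $\sH^{\rm min}(W) \subeq \sH(W) = \sV \subeq \sH^{\rm max}(W)$. On the other hand, taking $g = e$ in the definitions shows that $\sV$ appears as one of the terms in the intersection defining $\sH^{\rm max}(W)$ (since $W \subeq W$) and as one of the terms in the sum defining $\sH^{\rm min}(W)$. Hence $\sH^{\rm max}(W) \subeq \sV$ and $\sH^{\rm min}(W) \supeq \sV$, so all three subspaces coincide. Applying $U(g)$ and (Cov) extends this to every wedge region $g.W$.

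If anything deserves an extra line of care, it is only the verification that $\sH^{\rm min}(W) \subeq \sV$: this uses the separating inclusion $\sH^{\rm min}(W) \subeq \sH(W)$ just proved, which is why the two inclusions are best established first and the wedge equality only afterwards.
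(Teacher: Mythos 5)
Your proposal is correct and follows essentially the same route as the paper: both arguments are a direct unfolding of the definitions in \eqref{eq:def-ho} using (Iso), (Cov) and the anchor $\sH(W)=\sV$. The only (harmless) difference is organizational: the paper first verifies $S_W\subeq S_\sV$ and invokes Lemma~\ref{lem:direct-net}(c) to get $\sH^{\rm max}(W)=\sH^{\rm min}(W)=\sV$ before deriving the inclusions, whereas you prove the two inclusions first and then obtain the wedge equality from the $g=e$ terms in the intersection and the sum, which is an equally valid shortcut.
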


If $\eset \not= W \not= M$, then we have in particular
 \[  \sH^{\rm min}(\eset) = \{0\} \subeq
   \sH^{\rm max}(\eset) = \bigcap_{g \in G} U(g)\sV \quad \mbox{ and } \quad 
   \sH^{\rm min}(M) = \oline{\sum_{g \in G} U(g)\sV} \subeq
   \sH^{\rm max}(M) = \cH.\]

 \begin{proof} First we show that the three properties
   (Iso), (Cov) and {$\sH(W) = \sV$} of the net $\sH$
   imply that $S_W \subeq S_\sV$.
   In fact, $g.W \subeq W$ implies
    \[ U(g)\sV
      = \ U(g) \sH(W)
     \ {\buildrel{\rm(Cov)}\over= }\ \sH(g.W)
     \ {\buildrel{\rm(Iso)}\over \subeq }\ \sH(W)
     =  \sV.\]
 From Lemma~\ref{lem:direct-net}(c) we thus obtain
$\sH^{\rm max}(W)= \sH^{\rm min}(W) = \sV$. 
     Hence $\sH(gW) = U(g) \sV = \sH^{\rm max}(gW) = \sH^{\rm min}(gW)$
     by covariance for any $g \in G$ (Lemma~\ref{lem:direct-net}(a)).
     Further, isotony shows that $\cO \subeq g W$ implies
     $\sH(\cO) \subeq \sH(gW) = U(g)\sV$, so that
     $\sH(\cO) \subeq \sH^{\rm max}(\cO)$.
     Likewise, $gW \subeq \cO$ implies
     $U(g)\sV  = \sH(gW) \subeq \sH(\cO)$, and thus
     $\sH^{\rm min}(\cO) \subeq \sH(\cO)$. 
\end{proof}

\begin{defn} \label{def:minkcaus}
(a)     {\rm(Causal complement)}
Let $M=\RR^{1,d}$ be Minkowski space. Its causal structure allows us to
define the \textit{causal complement (or the  spacelike complement)} of
an open subset $\cO\subset M$ by 
\begin{equation}\label{eq:ccomp}
  \cO'=\{x\in M: (\forall y \in \cO)\, (y-x)^2<0\}^\circ. \end{equation}
This is the interior of the set of all the points that cannot
be reached from $E$ with a timelike or lightlike curve. 

\nin (b)  {\rm(Spacelike cones)}
  In Minkowski space $\R^{1,d}$, we call an open subset
  $\cO$ {\it spacelike} if $x_0^2 < \bx^2$ holds for all $(x_0,\bx) \in \cO$.
  A spacelike open subset is called a {\it spacelike
    (convex) cone} if, in addition, it is a (convex) cone.
       
\nin (c)  {\rm(Double cone)} A {\it double cone}  is, up to Poincar\'e covariance,
the causal closure
\[ \bB_r'' = (r \be_0 - V_+) \cap (-r \be_0 + V_+) \] 
  of an open ball of the time zero hyper-plane
    $\bB_r=\{(0,\bx) \in \RR^{1,d}: \bx^2< r^2\}$.
\end{defn}

\begin{remark} \mlabel{rem:x} 
We continue to use the notation from Example \ref{ex:desit} and
  Definition~\ref{def:minkcaus}. 
Let $d\geq2$ and $M \supset \cD\mapsto \sH(\cD)\subset\cH$  be a net of standard subspaces on double cones (cf.~Definition~\ref{def:minkcaus}(c)), let 
$U$ be a representation of the Poincar\'e group $\cP_+^\uparrow$
satisfying (Iso), (Cov), (RS) and the following properties
\begin{enumerate}
\item\textit{Positivity of the energy:} 
The support of the spectral measure of the space-time
    translation group is contained in  
  \[ \oline{V_+}=\{(x_0,\bx) \in\RR^{1,d}:x_0^2 - \bx^2 \geq 0,x_0 \geq 0\}.\]
\item Locality: $\cD_1\subset \cD'_2 \Rarrow
  \sH(\cD_1)\subset\sH(\cD_2)'. $

\item Bisognano--Wichmann property: Let $W\subset M$ be a wedge
  region, as introduced in \ref{ex:desit}. Then 
  \begin{equation}\label{eq:Wsub}\sH(W)=\overline{\sum_{\cD\subset W}\sH(\cD)},\end{equation} 
  is standard with $\Delta^{-it/2\pi}_{\sH(W)}=U(\Lambda_W(t))$,
  where $\Lambda_W(t)$ is the corresponding one-parameter group
of boosts    (cf.\ Example~\ref{ex:desit}).
\end{enumerate}
The Bisognano--Wichmann property implies
\textit{wedge duality (or essential duality)}:
\[ \sH(W') = \sH(W)'. \]
Here $W'$ is the causal complement of the wedge $W$,
as in \eqref{eq:ccomp} (see \cite[Prop.~2.7]{Mo18}). 

For a  double cone $\cD$ we define 
\begin{equation}
  \label{eq:dual-dob}
  \sH(\cD'):=\overline{\sum_{\cD_1\subset\cD'}\sH(\cD_1)}
\end{equation}
  and obtain the following net  on double cones
\[M\supset\cD\longmapsto\sH^d(\cD):=\sH(\cD')'
= \bigcap_{\cD_1\subset\cD'}\sH(\cD_1)'. \]
By locality one has in general that $\sH(\cD)\subset\sH^d(\cD)$. 
The net $\sH^d(\cD)$ is called the \textit{dual net} of~$\sH$. 
If $\sH(\cD)=\sH^d(\cD)$, then the net $\sH$ is said to satisfy
\textit{ Haag duality}. 
Given two relatively spacelike double cones $\cD_1$ and $\cD_2$,
there always exists a wedge region $W$ such that  $\cD_1\subset W$
and $\cD_2\subset W'$ (\cite[Prop.~3.1]{TW97}). For
every double cone $\cD$, we further have $\cD=\bigcap_{W\supset \cD}W$.
As a consequence $\sH(\cD')=\overline{\sum_{W\supset \cD'}\sH(W)}$ (with the definition of $\sH(W)$ given in \eqref{eq:Wsub}). With respect to
  $\sV = \sH(W_R)$, this leads to 
  \[ \sH^{\rm min}(\cD') = \sH(\cD') \quad \mbox{ and } \quad
    \sH^d(\cD) = \sH^{\rm min}(\cD')'.\]
 We further obtain
$$\sH^d(\cD)=\bigcap_{W\supset \cD}\sH(W)
=\bigcap_{g\in \cP_+^\uparrow, gW_R\supset\cD}\sH(gW_R)=\sH^\m(\cD).$$

For the case $d=1$ one still has
\[ \sH^d(\cD)=\bigcap_{W\supset \cD}\sH(W)
  =\bigcap_{g\in G, gW_R\supset\cD}\sH(gW_R)=\sH^\m(\cD),\]
but, to this end, one has to consider the maximal net with respect
to a unitary representation $(U,\cH)$ of the group 
$G=\cP^\up=\langle\cP_+^\uparrow, r\rangle$,
where $r(x_0,x_1)=(x_0,-x_{1})$ and $\sH$ is also covariant for~$U(r)$.
Indeed, every double cone is the intersection of $W_R+a$ and $W_R'+b$ for some $a,b\in\RR^{1,d}$, but $W_R$ and $W_R'$ belong to disjoint orbits of wedges with respect to $\cP_+^\uparrow$. However,
they belong to the same orbit 
of $\cP^\up$ because $W_R'=rW_R$.  

Alternatively, starting with a unitary representation
  $(U,\cH)$ of $\cP^\up_+$ for which $\sH$ is covariant, we can use Theorem \ref{thm:loc-net}
  to extend $U$ to an (anti-)unitary representation of
  $\cP_+$ by $U(\tau_h) := J_{\sH(W_R)}$. Then
      $\cP_+$ acts covariantly on the net on wedge regions.\footnote{One
    can also argue with Borchers' Theorem,
    positivity of the energy and the Bisognano--Wichmann property.}
    Hence $\tau_hW_R=W_R'$ implies  the equality
$$\bigcap_{W\supset \cD}\sH(W)=\bigcap_{g\in \cP_+, gW_R\supset\cD}U(g)\sH(W_R)  =\bigcap_{g\in \cP_+, gW_R\supset\cD}\sH(gW_R)=:\tilde\sH^\m(\cD),$$
where $\tilde\sH^\m(\cD)$ now is defined with respect to the
(anti-)unitary representation  of~$\cP_+$.
If both constructions apply, then $\sH^{\rm max}(\cD) = \tilde\sH^{\rm max}(\cD)$. 

We can conclude a correspondence between the maximal net construction and the dual net construction but, since  we will not deal with locality in this paper,
a more detailed analysis is postponed to future works.
\end{remark}

\subsubsection{Intersections of standard subspaces} \label{sect:interss}

\nin\textbf{Standing assumption in the remainder of this section:}
Let $G$ be a connected Lie group with
    Lie algebra $\g$ and $h \in \g$ an Euler element. 
    Assume that the involution $\tau_h$ integrates to an involution
    $\tau_h^G$ on $G$. For an   (anti-)unitary representation $(U,\cH)$ of
$G_{\tau_h} := G \rtimes \{\id_G, \tau_h^G\}$, we call
  \begin{equation}
    \label{eq:def-V(h,U)}
    \sV := \sV(h,U) := \sH_U^{\rm BGL}(h,\tau_h^G) 
  \end{equation}
  the {\it canonical standard subspace associated to $(h,U)$}.
{Its modular objects are $J = U(\tau_h^G)$ and $\Delta = e^{2\pi i \partial U(h)}$.}

For a subset $A \subeq G$, we consider the closed real subspace 
  \begin{equation}
    \label{eq:def-VA(h,U)}
    \sV_A := \sV_A(h,U) := \bigcap_{g \in A} U(g)\sV.
  \end{equation}
We shall be interested in criteria for these real subspaces to be cyclic.  
  An important property of these subspaces is that they
 are well adapted to direct sums and direct integrals
 (Lemma~\ref{lem:g-inter}). For a direct sum representation
 $U = U_1 \oplus U_2$ we have in particular
 $\sV = \sV_1 \oplus \sV_2$, which leads to
 \begin{equation}
   \label{eq:v-dirsum}
\sV_A = \sV_{1,A} \oplus \sV_{2,A}    
\end{equation}
because $U(g)^{-1}(v_1, v_2) \in \sV$ is equivalent to
$U_j(g)^{-1}v_j \in \sV_j$ for $j= 1,2$. 

These concepts require (anti-)unitary representations of
$G_{\tau_h}$, but often unitary representations of $G$
are easier to deal with. The following lemma 
translates between unitary and (anti-)unitary representations and
their properties. It is our version of a closely related technique
developed in \cite[Props.~4.1, 4.2]{BGL02},  which is based on
density properties of intersections of dense complex
subspaces of~$\cH$.

\begin{lem} {\rm(The (anti-)unitary extension)}
  \mlabel{lem:3.4}
  Let $(U,\cH)$ be a unitary representation of $G$
  and write~$\oline\cH$ for the Hilbert space $\cH$, endowed with the
  opposite complex structure. Then the following assertions hold:
  \begin{itemize}
  \item[\rm(a)] On $\tilde\cH := \cH \oplus \oline\cH$ we obtain by
    $\tilde U(g) := U(g) \oplus U(\tau_h^G(g))$ a unitary representation
    which extends by $\tilde U(\tau_h)(v,w) := \tilde J(v,w) := (w,v)$ to an
    (anti-)unitary representation of $G_{\tau_h}$.
    The corresponding standard subspace $\tilde \sV := \sV(h, \tilde U)$
    coincides with the graph
    \begin{equation}
      \label{eq:tildesv}
      \tilde\sV = \Gamma(\Delta^{1/2}), 
    \end{equation}
    and its modular operator is $\tilde\Delta := \Delta \oplus \Delta^{-1}$.
  \item[\rm(b)] If $U$ extends to an (anti-)unitary representation
    of $G_{\tau_h}$ by $J = U(\tau_h)$, then the following assertions hold:
    \begin{itemize}
    \item[\rm(1)] $\Phi \:  \cH^{\oplus 2} \to \tilde \cH, \Phi(v,w) = (v,Jw)$ 
    is a unitary intertwiner of $\tilde U$ and the (anti-)unitary representation
    $U^\sharp$ of $G_{\tau_h}$ on $\cH^{\oplus 2}$, given by
    \[ U^\sharp\res_G = U^{\oplus 2} \quad \mbox{ and }  \quad
      U^\sharp(\tau_h)(v,w) := J^\sharp(v,w) :=  (Jw,Jv).\]
  \item[\rm(2)] The standard subspace $\sV^\sharp := \sV(h,U^\sharp)$
    coincides with the
    graph $\Gamma(T_\sV)$ of the Tomita operator
    $T_\sV = J\Delta^{1/2}$ of~$\sV$.
    \item[\rm(3)] 
    The (anti-)unitary representation $\tilde U$ is equivalent to the (anti-)unitary
    representation $U^{\oplus 2}$ of $G_{\tau_h}$ on~$\cH^{\oplus 2}$.
    \item[\rm(4)]  If $A \subeq G$ is a subset, then 
    $\tilde\sV_A$ is cyclic in $\tilde\cH$ if and only if
    $\sV_A$ is cyclic in $\cH$.     
    \end{itemize}
  \end{itemize}
\end{lem}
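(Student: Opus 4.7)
The plan is to verify (a) first and then each part of (b) in turn.

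For (a), one first checks that $\tilde U$ is a well-defined (anti-)unitary representation of $G_{\tau_h}$ on $\tilde \cH$. The map $\tilde J(v,w) = (w,v)$ is antilinear because the complex structure on the $\oline\cH$-summand is reversed; it is an involutive isometry, and the relation $\tilde J \tilde U(g)\tilde J = \tilde U(\tau_h^G(g))$ follows by a component-wise computation using $(\tau_h^G)^2 = \id_G$. The modular operator on the $\oline\cH$-summand is $\Delta^{-1}$ because the Stone generator of $U(\exp th)$ flips sign under the reversed complex structure, yielding $\tilde\Delta = \Delta \oplus \Delta^{-1}$. The Tomita operator is then computed as $\tilde J \tilde\Delta^{1/2}(v,w) = (\Delta^{-1/2}w,\, \Delta^{1/2}v)$, and its fixed-point set is exactly $\{(v,\Delta^{1/2}v) : v \in \cD(\Delta^{1/2})\} = \Gamma(\Delta^{1/2})$, since the two apparent equations $v = \Delta^{-1/2}w$ and $w = \Delta^{1/2}v$ are equivalent.

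For (b)(1), $\Phi(v,w) = (v, Jw)$ is complex linear because $J\colon \cH \to \cH$ is antilinear but its image sits in the $\oline\cH$-summand, whose reversed complex structure exactly compensates; it is isometric because $J$ is, and the two intertwining identities reduce to the defining relation $U(\tau_h^G(g))J = JU(g)$ and to $J^2 = \id$. For (b)(2), with $\Delta^\sharp = \Delta \oplus \Delta$ a direct computation gives $J^\sharp \Delta^{\sharp,1/2}(v,w) = (T_\sV w,\, T_\sV v)$, whose fixed points are $\{(v, T_\sV v) : v \in \cD(T_\sV)\} = \Gamma(T_\sV)$, the two equations collapsing because $T_\sV^2 = \id$. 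For (b)(3), I propose the explicit intertwiner
\[ T(v,w) = \tfrac{1}{\sqrt{2}}\bigl(v+w,\; i(v-w)\bigr) \colon \cH^{\oplus 2} \to \cH^{\oplus 2}, \]
which is unitary and commutes with $U \oplus U$ (hence intertwines the $G$-actions, the $i$-factor passing through the complex linearity of $U(g)$). The identity $T \cdot U^\sharp(\tau_h) = U^{\oplus 2}(\tau_h) \cdot T$ then follows from the antilinearity of $J$: applying $J$ to the second component produces $J(i(v-w)) = -iJ(v-w) = i(Jw - Jv)$, matching the swap built into $U^\sharp(\tau_h)(v,w) = (Jw, Jv)$. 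Composing with $\Phi$ from (1) yields $\tilde U \cong U^{\oplus 2}$.

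For (b)(4), the unitary equivalence $T \circ \Phi^{-1}\colon \tilde\cH \to \cH^{\oplus 2}$ transports the standard subspace $\tilde\sV = \sV(h,\tilde U)$ to $\sV(h, U^{\oplus 2}) = \sV \oplus \sV$, since any intertwiner of (anti-)unitary representations of $G_{\tau_h}$ carries the pair $(\Delta, J)$ to the corresponding pair. By $G$-equivariance it then carries $\tilde\sV_A$ to $\sV_A \oplus \sV_A$, and cyclicity of the latter in $\cH^{\oplus 2}$ is clearly equivalent to cyclicity of $\sV_A$ in $\cH$. The main technical subtlety is (b)(3): the naive Hadamard rotation $(v,w) \mapsto \tfrac{1}{\sqrt 2}(v+w, v-w)$ almost works, but produces a sign discrepancy on the second component, which is repaired by the $i$-twist — this uses the antilinearity of $J$ in an essential way. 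The modular-operator computation on $\oline\cH$ in (a) likewise requires some care because of the reversed complex structure.
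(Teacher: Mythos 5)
Your proposal is correct and follows essentially the same route as the paper's proof: direct verification of the modular data in (a) and (b)(2), the same intertwiner $\Phi$ in (b)(1), an explicit unitary on $\cH^{\oplus 2}$ commuting with $U^{\oplus 2}(G)$ and conjugating $J^{\oplus 2}$ to $J^\sharp$ in (b)(3), and transport of $\tilde\sV_A$ to $\sV_A \oplus \sV_A$ in (b)(4). The only cosmetic difference is your choice of intertwiner $T(v,w)=\tfrac{1}{\sqrt2}(v+w,\,i(v-w))$ in place of the paper's matrix $\tfrac12\binom{(1+i)\1\ \ (1-i)\1}{(1-i)\1\ \ (1+i)\1}$; both are unitaries differing only by phases and serve the identical purpose.
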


\begin{prf} (a) The first assertion
  is a direct verification (cf.\ \cite[Lemma~2.10]{NO17}).
  Since
  \[ \tilde\Delta = e^{2\pi i \partial \tilde U(h)}
    = \Delta \oplus \Delta^{-1}, \]
  the description of the standard subspace $\tilde \sV = \Fix(\tilde J
  \tilde\Delta^{1/2})$ follows immediately.  

  \nin (b)  (1) Clearly, $\Phi$ is a complex linear isometry that intertwines 
  the (anti-)unitary representation $\tilde U$ with
  the (anti-)unitary representation $U^\sharp$.

  (2)  As $\Delta^\sharp = \Phi^{-1}\tilde\Delta \Phi = \Delta \oplus \Delta$,
  the relation
  \[ (v,w) = J^\sharp(\Delta^\sharp)^{1/2}(v,w)
    = (J \Delta^{1/2} w, J \Delta^{1/2} v) = (T_\sV w, T_\sV v) \]
  is equivalent to $w = T_\sV v$. Hence $\sV^\sharp = \Gamma(T_\sV)$.

(3) As the restrictions of $U^{\oplus 2}$ and $U^\sharp$ to $G$ coincide,
  \cite[Thm.~2.11]{NO17} implies their equivalence as (anti-)unitary
  representations. However, in the present concrete case it is easy to
  see an intertwining operator. The matrix
  \[  A := \frac{1}{2} \pmat{
      (1+i)\1 & (1-i)\1 \\ 
      (1-i)\1 & (1+i)\1}
  \quad \mbox{ with } \quad A^2 = \pmat{ \0 & \1 \\ \1 & \0} \]
  defines a unitary operator on $\cH^{\oplus 2}$ commuting with $U^\sharp(G)$.
  It satisfies $J^{\oplus 2} A J^{\oplus 2} = A^* = A^{-1}$, so that
  \[ A J^{\oplus 2} A^{-1} = A^2 J^{\oplus 2} = J^\sharp.\] 

(4) If $U\res_G$ extends to an (anti-)unitary representation~$U$ of
  $G_{\tau_h}$ on $\cH$, then (3)  implies that $\tilde U \cong U^{\oplus 2}$,
  and any equivalence $\Psi \: (\tilde U,\tilde \cH)  \to (U^{\oplus 2}, \cH^{\oplus 2})$ maps $\tilde \sV_A$ to
  $(\sV \oplus \sV)_A = \sV_A \oplus \sV_A$
  (see \eqref{eq:v-dirsum}).
  Therefore $\tilde \sV_A$ is cyclic if and only if
  $\sV_A$ is cyclic in~$\cH$.
\end{prf}

 The following definition extends the classical type of irreducible
  complex representations to the case where the involution on $G$ is
  non-trivial. For a unitary representation
    $(U,\cH)$, we write $(\oline U, \oline\cH)$ for 
    the canonical unitary representation on the complex conjugate space
  $\oline\cH$ by $\oline U(g) = U(g)$. 
  We observe that, for an (anti-)unitary representation
  $(U,\cH)$ of $G_{\tau_h}$, its {\it commutant} 
  \[ U(G_{\tau_h})'
    = \{ A \in B(\cH) \: (\forall g \in G_{\tau_h})
    \, A U(g) = U(g) A \} 
{= \{ A \in U(G)' \:  U(\tau_h^G) A = AU(\tau_h^G) \}}\]
is only a real subalgebra of $B(\cH)$ because some $U(g)$ are antilinear.

  \begin{defn} \mlabel{def:types} (\cite[Def.~2.12]{NO17})
    Let $(U, \cH)$ be an irreducible unitary representation
    of~$G$. 
We say that $U$ is (with respect to $\tau_h$), of 
\begin{itemize}
\item {\it real type} if there exists an antiunitary 
  involution $J$ on $\cH$ such that {$U^\sharp(\tau_h) := J$
  extends $U$ to an (anti-)unitary representation $U^\sharp$
  of $G_{\tau_h}$ on~$\cH$,
  i.e., $J U(g) J = U(\tau_h^G(g))$ for $g \in G$. 
  Then the commutant of $U^\sharp(G_{\tau_h})$ is~$\R$.}
\item {\it quaternionic type} if there exists an antiunitary 
  complex structure $I$ on $\cH$ satisfying $I U(g) I^{-1}
  = U(\tau_h^G(g))$ for $g \in G$. Then $\oline U \circ \tau_h^G \cong U$, 
  $U$ has no extension  on the same space, and
  the (anti-)unitary representation
  $(\tilde U, \tilde \cH)$ of $G_{\tau_h}$ with $\tilde U\res_G \cong
  U \oplus (\oline U\circ \tau_h^G)$ is irreducible with commutant $\bH$.
\item {\it complex type} if
  {$\oline U \circ \tau_h^G \not\cong U$.
    This is equivalent to the non-existence of $V\in\AU(\cH)$
    such that $U(\tau_h^G(g)) = V U(g) V^{-1}$ for all $g \in G$.}
  Then $(\tilde U,\tilde \cH)$ is an irreducible (anti-)unitary 
  representation of $G_{\tau_h}$ with commutant $\C$. 
\end{itemize}
  \end{defn}

  \begin{ex}  (a) {On the Poincar\'e group 
    $\cP = \R^{1,d} \rtimes \cL^\up_+$ we consider the involution 
      $\tau_h^G(g) = j_h g j_h$,
    corresponding to conjugation with
    \[ j_h(x_0,x_1,\ldots, x_d) = (-x_0, -x_1, x_2, \ldots, x_d),\]
    so that $\cP_{\tau_h} \cong \cP_+$.
    Then all irreducible positive energy representations of $\cP$   }
  are of real type except the massless finite helicity representations 
that are of complex type {(see \cite[App.~A]{Mu01} for $m > 0$, and 
 \cite[Thm.~9.10]{Va85} for the general case)}.

{\nin (b) (cf.\ \cite[Ex.~2.16(c)]{NO17})
Consider the irreducible unitary representation of 
$G = \SU_2(\C)\cong \Spin_3(\R)$
on $\C^2 \cong \bH$ (by left multiplication) 
where the complex structure on $\bH$ 
is defined by the right multiplication with~$\C$.
This representation
is of quaternionic type with respect to {$\sigma = \id$}, but of
real type with respect to the involution $\sigma(g) = \oline g$. 
}  \end{ex}

    \begin{rem} \mlabel{rem:tenspro-anti} (Antiunitary tensor products) 
    Let $G = G_1 \times G_2$ be a product of type I groups  
  and $\tau$ an involutive automorphism of $G$ preserving both factors, i.e., 
$\tau = \tau_1 \times \tau_2.$ 
  We want to describe irreducible (anti-)unitary representations
  $(U,\cH)$  of the group $G_\tau = G \rtimes \{\id_G, \tau\}$
  using \cite[Thm.~2.11(d)]{NO17}.
  
  \nin (a) The first possibility is that $U\res_G$ is irreducible, so
  that $U(G)' \cong \R$. Then
\[ (U\res_G,\cH) \cong (U_1,\cH_1) \otimes (U_2, \cH_2) \]
with irreducible unitary representations $(U_j, \cH_j)$ of $G_j$
both extending to (anti-)unitary representations $U_j^\sharp$ of $G_j$.
{Hence both $U_1$ and $U_2$ are of real type. }

\nin (b) The second possibility is that $U\res_G$ is reducible
with $U(G)' \cong \C$ or $\bH$, so that
{\[ U\res_G \cong V \oplus (\oline V \circ \tau), \]}
where $(V,\cK)$ is an irreducible unitary representation of $G$
of complex or quaternionic type.
Now $V = U_1 \otimes U_2$, and thus
{
  \[ \cH \cong (\cH_1 \otimes \cH_2) \oplus
  (\oline\cH_1 \otimes \oline\cH_2), \quad 
  U\res_G \cong (U_1 \otimes U_2)  \oplus
  (\oline{U_1} \circ \tau_1 \otimes 
  \oline{U_2} \circ \tau_2).\]}

If $U_j$ is of complex type, then {$\oline{U_j} \circ \tau_j \not\cong U_j$}
implies that $V$ is of complex type. If both $U_1$ and $U_2$ are
of quaternionic type, then {$\oline{U_j} \circ \tau_j \cong U_j$}
for $j = 1,2$ implies {$\oline V \circ \tau \cong V$,} so that
$V$ is of quaternionic type.
\end{rem}

\begin{prop} \mlabel{prop:3.2}
  Assume that $G$ has at most countably many connected
  components and that $A \subeq G$ is a subset. Then the following
  are equivalent:
  \begin{itemize}
  \item[\rm(a)] For all (anti-)unitary representations 
    $(U,\cH)$ of $G_{\tau_h}$, the subspace $\sV_A$ is cyclic. 
  \item[\rm(b)] For all irreducible (anti-)unitary representations
    $(U,\cH)$ of $G_{\tau_h}$, the subspace $\sV_A$ is cyclic. 
  \item[\rm(c)] For all irreducible unitary representations 
    $(U,\cH)$ of $G$, the subspace $\tilde\sV_A$ is cyclic
    in $\tilde \cH$. 
  \item[\rm(d)] For all unitary representations 
    $(U,\cH)$ of $G$, the subspace $\tilde\sV_A$ is cyclic in $\tilde\cH$
  \end{itemize}
\end{prop}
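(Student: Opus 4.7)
The plan is to prove the four equivalences via the scheme (a)$\Leftrightarrow$(d), (b)$\Leftrightarrow$(c), and then close the loop using the trivial (a)$\Rightarrow$(b) together with a direct-integral argument for (b)$\Rightarrow$(a); the countable-components hypothesis enters only in this last step, ensuring sufficient separability to decompose representations. The main tools are Lemma~\ref{lem:3.4} (the (anti-)unitary extension) and the type classification in Definition~\ref{def:types}.

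First I would establish (a)$\Leftrightarrow$(d) as a direct consequence of Lemma~\ref{lem:3.4}. For (a)$\Rightarrow$(d), given a unitary representation $(U,\cH)$ of $G$, part (a) of that lemma produces an (anti-)unitary representation $(\tilde U, \tilde \cH)$ of $G_{\tau_h}$ whose canonical standard subspace is~$\tilde\sV$; applying hypothesis (a) to $\tilde U$ yields cyclicity of $\tilde\sV_A = \sV(h,\tilde U)_A$. Conversely, for (d)$\Rightarrow$(a), given an (anti-)unitary representation $(V,\cH)$ of $G_{\tau_h}$, its restriction $V\res_G$ satisfies the hypothesis of Lemma~\ref{lem:3.4}(b), and part (b)(4) transfers cyclicity of $\tilde\sV_A$ (supplied by (d)) to cyclicity of $\sV_A = \sV(h,V)_A$.

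Next I would establish (b)$\Leftrightarrow$(c) via the trichotomy in Definition~\ref{def:types}. Every irreducible (anti-)unitary representation $(V,\cK)$ of $G_{\tau_h}$ arises in exactly one of two ways from an irreducible unitary $U$ of~$G$: either $V\res_G = U$ is itself irreducible with $V = U^\sharp$ (real type), or $V\res_G \cong U \oplus (\oline U \circ \tau_h^G)$ with $U$ of complex or quaternionic type and $V \cong \tilde U$. In the real case, Lemma~\ref{lem:3.4}(b)(3) gives $\tilde U \cong (U^\sharp)^{\oplus 2}$, hence $\tilde\sV_A \cong \sV(h,V)_A \oplus \sV(h,V)_A$ by the direct-sum formula \eqref{eq:v-dirsum}, so cyclicity of $\tilde\sV_A$ is equivalent to cyclicity of $\sV(h,V)_A$. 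In the complex or quaternionic case, $\tilde\sV_A$ \emph{is} $\sV(h,V)_A$. Thus (b) (for all irreducible~$V$) and (c) (for all irreducible~$U$) translate into each other.

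Finally, for the main obstacle (b)$\Rightarrow$(a), I would decompose an arbitrary (anti-)unitary representation $(V,\cH)$ of $G_{\tau_h}$ as a direct integral of irreducible (anti-)unitary components, using that the assumption of countably many connected components makes $G_{\tau_h}$ second countable. The construction $V \mapsto \sV_A(h,V)$ commutes with direct integrals, by the same reasoning that yielded \eqref{eq:net-dirint} in Lemma~\ref{lem:direct-net}(d), combined with the intersection-versus-direct-integral compatibility of Lemma~\ref{lem:g-inter}. Fiberwise cyclicity from hypothesis (b) then lifts to cyclicity of $\sV_A$ via Lemma~\ref{lem:di1}. The delicate point here is justifying the measurable decomposition into irreducibles when $G_{\tau_h}$ is not of Type~I; in that case one first passes to a decomposition into factor representations and then disintegrates further, which is precisely where the countable-components hypothesis is indispensable.
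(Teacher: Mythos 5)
Your organization differs from the paper's: the paper closes the cycle (a)$\Rightarrow$(b)$\Rightarrow$(c)$\Rightarrow$(d)$\Rightarrow$(a), whereas you prove (a)$\Leftrightarrow$(d) and (b)$\Leftrightarrow$(c) separately and then try to cross from the ``irreducible'' side back to the ``general'' side via (b)$\Rightarrow$(a). Your two stand-alone equivalences are correct and use exactly the paper's ingredients (Lemma~\ref{lem:3.4}, the type trichotomy of Definition~\ref{def:types}, and the direct-sum formula \eqref{eq:v-dirsum}); in fact you prove slightly more than the paper does at this stage, since the paper only needs the implications (b)$\Rightarrow$(c) and (d)$\Rightarrow$(a).

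The problem is the closing step. Your (b)$\Rightarrow$(a) requires disintegrating an arbitrary (anti-)unitary representation of $G_{\tau_h}$ into \emph{irreducible (anti-)unitary} representations. This is not covered by the standard Dixmier theory the paper invokes: that theory applies to unitary representations of a separable locally compact group (equivalently, representations of a separable $C^*$-algebra), while here the odd part of $G_{\tau_h}$ acts antilinearly and the commutant $U(G_{\tau_h})'$ is only a \emph{real} algebra, so one cannot simply quote \cite[Thm.~8.5.2]{Di64}. You flag this yourself and sketch a two-stage disintegration through factor representations, but that machinery is neither standard nor developed in the paper, so as written this is a genuine gap. The paper sidesteps it entirely by instead proving (c)$\Rightarrow$(d): there one disintegrates the honest \emph{unitary} representation $U$ of $G$ into irreducibles (standard, using separability from the countable-components hypothesis), observes $\tilde U = \int_X^\oplus \tilde U_x\, d\mu(x)$, and transports cyclicity fiberwise via Lemma~\ref{lem:g-inter} and Lemma~\ref{lem:di1}. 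The repair for you is immediate and costs nothing you have not already built: replace your (b)$\Rightarrow$(a) by this (c)$\Rightarrow$(d); combined with your (b)$\Leftrightarrow$(c), your (a)$\Leftrightarrow$(d), and the trivial (a)$\Rightarrow$(b), the four statements are then equivalent.
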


\begin{prf} (a) $\Rarrow$ (b) is trivial. 

  \nin (b) $\Rarrow$ (c): Let $(U,\cH)$ be an irreducible
  unitary representation and $(\tilde U,\tilde \cH)$ its natural
  (anti-)unitary extension. Then either
  $\tilde U$ is an irreducible (anti-)unitary representations
  (if $U$ is of complex or quaternionic type) or
  a direct sum of two irreducible representations
  (if $U$ is of real type) (cf.\ Definition~\ref{def:types}). 
  In view of \eqref{eq:v-dirsum},
  the cyclicity of $\sV_A$ is inherited by direct sums, so that 
  (c) follows from (b).

  \nin (c) $\Rarrow$ (d): Let $(U,\cH)$ be a
  unitary representation of $G$. Decomposing
  $U$ into a direct sum of cyclic representations,
  we may assume that $U$ is cyclic, hence that $\cH$
  is separable. Using \cite[Thm.~8.5.2, \S 18.7]{Di64}, we can write
  $U$ as a direct   integral
  \[ U = \int^\oplus_X U_x\, d\mu(x)\]
  of irreducible representations $(U_x)_{x \in X}$.
  Then
  \[ \tilde U = \int^\oplus_X \tilde U_x\, d\mu(x)\]
  implies that $\tilde \sV_A = \int^\oplus_X \tilde \sV_{x,A}\, d\mu(x)$ 
  by \eqref{eq:v-dirint} and
  Lemma~\ref{lem:g-inter}(a). Further, Lemma~\ref{lem:g-inter}(b)
  implies that $\tilde \sV_A$ is cyclic because all subspaces
  $\tilde \sV_{x,A}$ are cyclic by~(c).

  \nin (d) $\Rarrow$ (a): If $(U,\cH)$ is an (anti-)unitary
  representation of $G_{\tau_h}$, then its restriction to $G$ has
  an (anti-)unitary extension   $(\tilde U, \tilde \cH)$
  which by Lemma~\ref{lem:3.4}(b)(1) is equivalent to $U^{\oplus 2}$.
  Hence the cyclicity of $\tilde \sV_A \cong \sV_A \oplus \sV_A$ implies that
  $\sV_A$ is cyclic.   
\end{prf}

\section{Modular groups are generated by  Euler elements}
\label{sect:modEul}

In this section we show that, if the modular group of a
standard subspace $\sV$ is obtained from a unitary representation
of a finite-dimensional Lie group $G$ and a certain regularity
condition is satisfied, then its infinitesimal generator is an
Euler element $h \in \g$ and the modular conjugation~$J_\sV$ induces on $G$
the involution corresponding to $\tau_h = e^{\pi i \ad h}$ on~$\g$
(Theorem~\ref{thm:2.1} in Section~\ref{subsec:3.1}).
In Subsection~\ref{subsec:strich-opalg} we describe the
implications of this result in the context of operator algebras
with cyclic separating vectors 
(Theorem~\ref{thm:2.1-alg}). In this context, we also obtain an
explicit description of the identity component
of the subsemigroup $S_\cM$ of $G$ leaving a von Neumann algebra
$\cM$ invariant.

\subsection{{The Euler Element Theorem}}
\mlabel{subsec:3.1}

The following theorem is a key result of this paper on which
all other discussion builds. An important consequence is
relation \eqref{eq:J-rel} which provides an extension of $U$
to an (anti-)unitary representation of $G_{\tau_h}$ on the same space space. {Note that, besides connectedness,
  no assumptions are made on the structure of $G$,
  in particular $G$ does not have to be semisimple.}

\begin{thm} \mlabel{thm:2.1} {\rm(Euler Element Theorem)} 
  Let $G$   be a connected finite-dimensional Lie group with 
  Lie algebra $\g$ and $h \in \g$. 
  Let $(U,\cH)$ be a unitary 
  representation of $G$ with discrete kernel. 
  Suppose that $\sV$ is a standard subspace
    and $N \subeq G$ an identity neighborhood such that 
  \begin{itemize}
  \item[\rm(a)] $U(\exp(t h)) = \Delta_\sV^{-it/2\pi}$ for $t \in \R$,
    i.e., $\Delta_\sV = e^{2\pi i \, \partial U(h)}$, and 
  \item[\rm(b)] $\sV_N := \bigcap_{g \in N} U(g)\sV$ is cyclic.
  \end{itemize}
  Then $h$ is an Euler element and the conjugation $J_\sV$ satisfies
  \begin{equation}
    \label{eq:J-rel}
 J_\sV U(\exp x) J_\sV = U(\exp \tau_h(x)) \quad \mbox{ for } \quad
 \tau_h = e^{\pi i \ad h}, x \in \g.
  \end{equation}
\end{thm}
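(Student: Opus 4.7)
The theorem has two conclusions: (E) that $\ad h$ is diagonalizable with spectrum in $\{-1,0,1\}$, and (J) the intertwining $J_\sV U(\exp x) J_\sV = U(\exp \tau_h(x))$ holds for $\tau_h = e^{i\pi \ad h}$. The plan is to exploit analytic continuation via Proposition~\ref{prop:standchar}: for each $w \in \sV$ the orbit map $\alpha_t w := U(\exp(th))w = \Delta_\sV^{-it/2\pi}w$ extends continuously to $\overline{\cS_\pi}$, holomorphically on the interior, with $\alpha_{i\pi}w = J_\sV w$. The cyclicity of $\sV_N$ supplies a dense family of vectors $v$ for which this extension is simultaneously available for the whole family $\{U(g)^{-1}v\}_{g\in N}$.

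The central computation is the group-theoretic commutation
\[
  \alpha_t\, U(\exp(-sx)) = U\!\bigl(\exp(-s\,e^{t\,\ad h}x)\bigr)\,\alpha_t,
\]
valid in $G$ for real $t$, small $|s|$ and $x$ in a neighborhood of $0 \in \g$. Restricting to $v \in \sV_N$ analytic for $U$ (dense by Nelson), the left-hand side applied to $v$ extends holomorphically in $t \in \cS_\pi$ by the previous paragraph, while the right-hand side extends analytically in $t$ on a small initial strip via the convergent power series $U(\exp y)w = \sum_{n \geq 0} \tfrac{1}{n!}(i\,\dd U(y))^n w$ for $y \in \g_\C$ near $0$ and $w$ analytic. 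Bootstrapping along translates of the strip using the cocycle $\alpha_{t+t'} = \alpha_t\alpha_{t'}$ extends the RHS across the whole $\cS_\pi$, and uniqueness of analytic continuation yields at $t = i\pi$ the boundary identity
\[
  J_\sV \, U(\exp(-sx))\,v = U\!\bigl(\exp(-s\,\tau_h(x))\bigr)\,J_\sV v,\qquad \tau_h(x) := e^{i\pi\,\ad h}(x) \in \g_\C.
\]

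Both conclusions are extracted from this identity. Because $v$ ranges over a cyclic set and the LHS is bounded (unitary up to antilinearity), the formal exponential on the RHS must define a genuine unitary; combined with the discreteness of $\ker U$ (making $x \mapsto U(\exp x)$ a local homeomorphism near $0$), this forces $\tau_h(x) \in \g$ for every small $x$, so $\tau_h$ preserves $\g$ and is an involutive automorphism of it. Applying the identity twice together with $J_\sV^2 = I$ gives $e^{2\pi i\,\ad h}|_\g = \id$, whence $\ad h$ is diagonalizable (Jordan blocks are ruled out by the same identity) with integer eigenvalues. To pin the spectrum down to $\{-1,0,1\}$ we invoke the standard-subspace form of Borchers' theorem: any $y \in \g$ with $U(\exp(\R_+ y))\sV \subseteq \sV$ satisfies $[h,y] = y$, and the analogous statement on the opposite side yields $[h,y] = -y$; the cyclicity hypothesis on $\sV_N$ produces a Lie wedge $\L(S_\sV)$ large enough (in the spirit of the tangent-wedge analysis of Proposition~\ref{prop:LSW}) that its positive part, its negative, and the centralizer $\g_0(h)$ together span $\g$, pinning all eigenvalues into $\{-1,0,1\}$. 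Claim (J) is then the boundary identity rewritten using $J_\sV^2 = I$.

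The main obstacle is the rigorous analytic continuation of the RHS across the full strip $\cS_\pi$: for non-real $t$ the element $e^{t\,\ad h}(x)$ lies in $\g_\C \setminus \g$, and $U$ is not defined on any complexified group. The remedy is to localize on analytic vectors, where the exponential power series interprets $U(\exp y)w$ for $y \in \g_\C$ near~$0$, giving local extensions that chain via the modular cocycle to cover the strip. A secondary subtlety is the final spectral reduction to $\{-1,0,1\}$: one must verify that the regularity hypothesis indeed furnishes enough one-parameter subgroups in $S_\sV$ and its inverse for Borchers' theorem to restrict $\ad h$ in every Lie-algebraic direction, rather than merely in the abstract sense given by the integer-eigenvalue conclusion.
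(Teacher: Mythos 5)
Your outline of the ``integrality'' step follows the paper's Part~1 in spirit, but it rests on a false density claim: you restrict to $v \in \sV_N$ that are \emph{analytic} for $U$ and invoke Nelson for density. Nelson gives density of $\cH^\omega$ in $\cH$, not in $\sV_N$; in fact the situation is as bad as possible, since a real-analytic orbit map $g \mapsto U(g)^{-1}v$ that lands in the closed real subspace $\sV$ for all $g$ in an open set must land in $\sV$ for \emph{all} $g\in G$ (test against real-linear functionals vanishing on $\sV$), so $\sV_N \cap \cH^\omega \subeq \sV_G$, which is $\{0\}$ in the non-degenerate case. The paper avoids this by never requiring the vector $\xi \in \sV_N$ to be analytic: it pairs $U(\exp(sh)\exp(tx))\xi$ against an arbitrary analytic vector $\psi \in \cH^\omega$ and moves the problematic continuation onto the $\psi$-side, where $U(\exp(-t\,e^{s\ad h}x))\psi = \eta_\psi(-t\,e^{s\ad h}x)$ extends holomorphically in $s$ to a disc of radius $2\pi$ via the convergent series \eqref{eq:etapsi}; the $\xi$-side only needs $U(\exp tx)\xi \in \sV \subeq \cD(\Delta_\sV^{1/2})$. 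Without this device your boundary identity at $t = i\pi$, and hence both $\tau_h(\g)\subeq\g$ and \eqref{eq:J-rel}, are not established.

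The reduction of the spectrum from $\Z$ to $\{-1,0,1\}$ is also not a consequence of Borchers' theorem in the way you describe. Cyclicity of $\sV_N$ gives no lower bound on the endomorphism semigroup $S_\sV$: when $C_U = \{0\}$ one has $S_\sV = G_\sV$, there are no nontrivial one-parameter semigroups $\exp(\R_+ y) \subeq S_\sV$ to which a Borchers-type commutation theorem could be applied, and the description $\L(S_\sV) = C_+ + \g_0(h) + C_-$ from \cite{Ne22} anyway presupposes that $h$ is an Euler element, so the argument would be circular. The paper's Part~2 is a different mechanism: for an eigenvector $x$ with $[h,x] = kx$, $|k|\ge 2$, the continuation is pushed only to $s = \pi i/|k|$ (height $\le \pi/2$, so still inside $\cS_\pi$), giving $e^{\frac{\pi i}{|k|}M}e^{tQ}e^{rM}\eta = e^{-tQ}e^{\frac{\pi i}{|k|}M}e^{rM}\eta$ on $\sV_N$; compressing by spectral projections $P_{iM}(E)$ of $iM = i\partial U(h)$ turns this into an identity among bounded operators, from which one deduces that $e^{\R M}$ commutes with $e^{\R Q}$, contradicting $k \neq 0$. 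Some argument of this kind (or another genuinely non-circular one) is needed to finish the proof.
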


{In Theorem~\ref{thm:hsym} we characterize those Euler elements
  for which a standard subspace satisfying (a) exists in every
unitary representation of $G$.
}

\begin{prf} {\bf Part 1: $\ad h$ is diagonalizable with integral eigenvalues:}
  For $x \in \g$, we write
  \[ x(s) := e^{s \ad h} x \in \g. \]
 Pick $\xi \in \sV_N$. Then we have for $\psi \in \cH$
  \begin{align}
    \label{eq:e2x}
 \la \psi, U(\exp(sh)\exp(tx)) \xi \ra
& =  \la \psi, U(\exp(t x(s)) \exp(sh)) \xi \ra \notag \\
& =  \la U(\exp(-t x(s))) \psi, U(\exp(sh)) \xi \ra.
  \end{align}
  By assumption, there exists a $\delta > 0$ such that 
  $U(\exp tx)\xi \in \sV$ for $|t| < \delta$,  so that
  $U(\exp tx)\xi$ is contained in the domain
  of $\Delta_\sV^{1/2} = e^{\pi i \cdot \partial U(h)}$.
  Therefore the left hand side of \eqref{eq:e2x} can be continued analytically
  in $s$ to a continuous function on the closure of the strip 
  $\cS_\pi$   which is holomorphic in the interior
  (Proposition~\ref{prop:standchar}).

  To obtain an analytic extension of the right hand side, we assume that
  $\psi \in \cH^\omega$ is an analytic vector for~$U$. Then there exists an
open convex $0$-neighborhood $B \subeq \g_\C = \g + i \g$ (depending on $\xi$)
  and a holomorphic map
  \[  \eta_\psi \: B\to \cH \quad \mbox{ with } \quad
    \eta_\psi(x) = U(\exp x)\psi \quad \mbox{ for }\quad  x\in B \cap \g\]
  and
  \begin{equation}
    \label{eq:etapsi}
 \eta_\psi(z) = \sum_{n = 0}^\infty\frac{1}{n!}    (\dd U(z))^n \psi
 \quad \mbox{ for } \quad z \in B.
  \end{equation}
Writing $\cH(B)$ for the set of all these vectors~$\psi$,
we know that $\bigcup_{n \in \N} \cH(\frac{1}{n}B)$ is dense in $\cH$
(\cite{Nel59}). 
  Shrinking~$\delta$, we may assume that
  \[ e^{z \ad h} t x \subeq B \quad \mbox{ for } \quad |t| \leq \delta,
      |z| \leq 2 \pi.\]
      Then, for a fixed $t$ with $|t| \leq \delta$, the function 
  $s \mapsto U(\exp(-t x(s))) \psi$ can be continued analytically
  to the open disc $\cD := \{z \in \C \: |z| < 2\pi\}$. 
Further, $s \mapsto  U(\exp sh)\xi$ has an analytic
  continuation to the strip $\cS_\pi$. 
  We conclude that both sides of \eqref{eq:e2x} extend analytically to
  $\cD \cap \cS_\pi$ with continuous boundary values.
  We thus obtain for any fixed $t$ with $|t| \leq \delta$ and $s = \pi i$
  the equality
  \begin{equation}
    \label{eq:e3b}
    \la \psi, e^{\pi i \cdot \partial U(h)} U(\exp tx) \xi \ra
    = \la \eta_\psi(-t e^{-\pi i \ad h}x), e^{\pi i \cdot \partial U(h)}\xi \ra.
  \end{equation}
  As $U(\exp tx)\xi \in \sV$ and $\Delta_\sV^{1/2} = e^{\pi i \cdot \partial U(h)}$,
  this is equivalent to
  \begin{equation}
    \label{eq:e4x}
    \la \psi, J_\sV U(\exp tx) \xi \ra
    = \la \eta_\psi(-t e^{-\pi i \ad h}x), J_\sV \xi \ra.
  \end{equation}
  The real subspace $\sV_N$ spans a dense subspace of $\cH$, so that,
  for each analytic vector $\psi \in \cH^\omega$, there exists a
  $\delta_\psi > 0$, such that 
\begin{equation}
    \label{eq:e5x}
     U(\exp -tx) J_\sV \psi  
     =  J_\sV \eta_\psi(-t e^{-\pi i \ad h}x)
    \quad \mbox{ for } \quad |t| \leq \delta_\psi. 
  \end{equation}
  Multiplication with $J_\sV$ on the left yields
\begin{equation}
  \label{eq:e5y}
 J_\sV U(\exp -tx) J_\sV \psi  
 =  \eta_\psi(-t e^{-\pi i \ad h}x) 
  \end{equation}
  For a fixed $t_0 = \delta_\psi$, \eqref{eq:e5x} shows 
    in particular that the $G$-orbit map of 
    $J_\sV \psi$ is real analytic in an $e$-neighborhood
    because 
    \[ z \mapsto \eta_\psi(-t e^{-\pi i \ad h}z) \]
    defines a holomorphic function on a $0$-neighborhood of $\g_\C$. 
  We therefore
  have $J_\sV\cH^\omega \subeq \cH^\omega$.
As  both sides are differentiable in $t = 0$, we now obtain 
\begin{equation}
    \label{eq:e6x}
    J_\sV \dd U(x) J_\sV  \psi    =  \dd U(e^{-\pi i \ad h}x) \psi
    \quad \mbox{ for } \quad \psi \in \cH^\omega.
  \end{equation}
  The left hand side is a skew-symmetric operator on $\cH^\omega$, so that
  $\dd U(e^{-\pi i \ad h}x)$ is skew-symmetric on $\cH^\omega$.
 As $\ker(\dd U) = \L(\ker U) = \{0\}$, it follows that 
\begin{equation}
    \label{eq:e8x}
    \tau_h(x)  :=  e^{-\pi i \ad h}x \in \g \quad \mbox { for } \quad x \in \g 
  \end{equation}
 because $\dd U(z)$ is skew hermitian on $\cH^\omega$
  if and only if $z \in \g$.
  
This means that the automorphism $\tau_h \in \Aut(\g_\C)$
preserves the real subspace $\g \subeq \g_\C$ and that we have
\begin{equation}
    \label{eq:e6xb2}
    J_\sV \dd U(x) J_\sV   =  \dd U(e^{-\pi i \ad h}x) 
    \quad \mbox{ on } \quad \cH^\omega \quad \mbox{ for every } \quad x \in \g. 
  \end{equation}
Applying this relation twice, we arrive at
\begin{equation}
    \label{eq:e7y}
\dd U(x) =   J_\sV^2 \dd U(x) J_\sV^2   =  \dd U(\tau_h^2x) 
    \quad \mbox{ on } \quad \cH^\omega\quad \mbox{ for every } \quad x \in \g. 
  \end{equation}
As $\dd U$ is injective, this shows that
$e^{-2\pi i \ad h} = \tau_h^2 = \id_\g$.
This in turn implies that
  $\ad h$ is diagonalizable with integral eigenvalues
(\cite[Exer.~3.2.12]{HN12}). We also note that \eqref{eq:e6xb2} entails
\[ J_\sV U(\exp x) J_\sV = U(\exp \tau_h(x)) \quad \mbox{ for } \quad x \in \g\]
because any dense subspace consisting of analytic vectors is a core
  by Nelson's Theorem.

  \nin  {\bf Part 2: $h$ is an Euler element:}
  Let $k \in \Z$ be an eigenvalue of $\ad h$. We have to show that $|k| \leq 1$.
  So let us assume that  $|k| \geq 2$ and show that this leads to a contradiction.
  Let $x \in \g$ be a corresponding eigenvector, so that
  $[h,x] = k x$. In view of (b), there exists a $\delta > 0$ such that
  \[ U(\exp tx) U(\exp sh)  \sV_N \subeq \sV \quad \mbox{ for }  \quad |t| + |s| < \delta.\]
  Let  
  \[ M := \partial U(h) \quad \mbox{ and } \quad
 Q := \partial U(x) \]
  denote the infinitesimal generators of the
  $1$-parameter groups $U(\exp th)$ and $U(\exp tx)$, respectively.
  Suppose that
  $\xi = U(\exp rh)\eta = e^{rM}\eta$ for $\eta \in \sV_N$ and $|r| < \delta$,
  so that $\xi \in \sV$. As in {Part~1}, for
  $|t| + |r| < \delta$ and any entire vector $\psi \in \cH$ of $Q$,
  both sides of
  \begin{equation}
    \label{eq:e2c}
 \la \psi, U(\exp(sh)\exp(tx)) \xi \ra
 =  \la \psi, U(\exp(t e^{sk}x) \exp(sh)) \xi \ra
  \end{equation}
  extend analytically in $s$ into $\cS_\pi$.
  For $s := \frac{\pi i}{|k|}$ we have $\Im s < \pi$, so that
  we obtain for any $\eta \in \sV_N$ 
  \begin{equation}
    \label{eq:e4c}
    \la \psi, e^{\frac{\pi i}{|k|}M} e^{tQ} e^{rM} \eta \ra
    = \la \psi, e^{-tQ} e^{\frac{\pi i}{|k|}M} e^{rM} \eta \ra 
    \quad \mbox{ for } \quad |t| + |r| < \delta.
  \end{equation}
As this holds for a
  dense set of vectors $\psi$, we derive that
  \begin{equation}
    \label{eq:e5c}
    e^{\frac{\pi i}{|k|}M} e^{tQ} e^{rM} \eta  
    =  e^{-tQ} e^{\frac{\pi i}{|k|}M} e^{rM} \eta 
    \quad \mbox{ for } \quad |t| + |r| < \delta.
  \end{equation}

  Now let $E \subeq \R$ be a bounded Borel subset and
  $P_{iM}(E)$ the corresponding spectral projection of the selfadjoint
  operator $iM$ on $\cH$. We multiply the relation \eqref{eq:e5c}
  on the left with $P_{iM}(E)$ to obtain 
  \begin{equation}
    \label{eq:e6}
    e^{\frac{\pi i}{|k|}M} P_{iM}(E) e^{tQ} e^{rM} \eta  
    =  P_{iM}(E) e^{-tQ} e^{\frac{\pi i}{|k|}M} e^{rM} \eta. 
  \end{equation}
  Next we observe that $e^{\frac{\pi i}{|k|} M} P_{iM}(E)$
  is a bounded operator and, as $\pi\geq \frac{2\pi}{|k|}$,
  the vector $\eta$ is contained in the domain of
  $e^{\frac{2\pi i}{|k|} M}$, so that its orbit map 
    $t \mapsto e^{tM}\eta$ extends analytically to the strip
    $\cS_{\frac{2\pi}{k}}$.
  So both sides of \eqref{eq:e6} have 
  analytic continuations in $r$ to the strip $\cS_{\frac{\pi}{|k|}}$.
  Hence by uniqueness of analytic continuation, \eqref{eq:e6}
  also holds for all real $r$ and $|t| < \delta$.
  Let
  \[ \cH_\eta  := \oline{\Spann\{ e^{rM}\eta \: r \in \R} \} \]
  denote the cyclic subspace generated by $\eta$ under
  $e^{\R M} = U(\exp \R h)$. We then obtain from \eqref{eq:e6} that
    \begin{equation}
    \label{eq:e6xb}
    e^{\frac{\pi i}{|k|}M} P_{iM}(E) e^{tQ} \zeta 
    =  P_{iM}(E) e^{-tQ} e^{\frac{\pi i}{|k|}M} \zeta \quad \mbox{ for } \quad
    \zeta \in \cH_\eta. 
  \end{equation}
    As $\cH_\eta$ is invariant under
  the von Neumann algebra generated by $e^{\R M}$, it is invariant under all spectral projections, i.e. $P_{iM}(E)\cH_\eta\subset\cH_\eta$. 
  This shows that
\begin{equation}
    \label{eq:e6b}
    e^{\frac{\pi i}{|k|}M} P_{iM}(E) e^{tQ} P_{iM}(E)  \eta  
    =  P_{iM}(E) e^{-tQ} e^{\frac{\pi i}{|k|}M} P_{iM}(E) \eta. 
  \end{equation}
  As all operators in this identity are bounded and
  $\sV_N$ spans a dense subspace of $\cH$, we arrive at the relation
\begin{equation}
    \label{eq:e7yb}
    e^{\frac{\pi i}{|k|}M} P_{iM}(E) e^{tQ} P_{iM}(E)  
    =  P_{iM}(E) e^{-tQ} P_{iM}(E) e^{\frac{\pi i}{|k|}M} 
\quad \mbox{ for } \quad |t| < \delta.  \end{equation}
Hence
\[ P_{iM}(E)e^{\frac{2\pi i}{|k|}M} P_{iM}(E)
= \big(P_{iM}(E)e^{\frac{\pi i}{|k|}M} P_{iM}(E)\big)^2 \] 
commutes with $ P_{iM}(E) e^{tQ} P_{iM}(E)$ for $|t| < \delta$.
As the von Neumann algebra on $P_{iM}(E)\cH$
generated by
$P_{iM}(E)e^{\frac{2\pi i}{|k|}M} P_{iM}(E)$
contains the unitary one-parameter group $P_{iM}(E)e^{\R M}P_{iM}(E)$,
it follows that
\begin{align*}
  P_{iM}(E)e^{sM} e^{tQ} P_{iM}(E)
&=  P_{iM}(E)e^{sM} P_{iM}(E) e^{tQ} P_{iM}(E)\\
&=P_{iM}(E) e^{tQ}  P_{iM}(E)e^{sM} P_{iM}(E)\\
&=  P_{iM}(E) e^{tQ} e^{sM} P_{iM}(E)
  \quad \mbox{ for } \quad s \in \R, |t| < \delta.
\end{align*}
As $E$ was arbitrary, this implies that $e^{\R M}$ commutes with $e^{\R Q}$,
 contradicting   the assumption $|k| \geq 2$. 
 We therefore have $|k|\leq 1$ and thus $h$ is an Euler element.
\end{prf}

\begin{remark} If $N$ is an
      $e$-neighborhood in $G$, then so is 
      $N^{-1}$. Therefore condition (b) in Theorem \ref{thm:2.1} is equivalent to the following:\\
    \nin (b') There exists a cyclic subspace
    $\sK\subset \sH$ such that $U(g)\sK\subset \sV$ for every $g\in N$.

    Indeed, if (b) holds, then $\sK := \sV_N$ satisfies (b') for the
    $e$-neighborhood $N^{-1}$. If, conversely, (b') holds,
    then $\sV_{N^{-1}} \supeq \sK$ is cyclic.
    When nets of standard subspaces are considered in the next sections, then Property (b) and (b')  will be related to
    regularity and localizability in a specific region, respectively (cf.~Definition \ref{def:reg} and Lemma \ref{lem:loc-imp-reg})
\end{remark}

Starting points for the development of
  the proof of Theorem \ref{thm:2.1}  were \cite{BB99}
  for  Part 1  and \cite{Str08}
  for Part 2. Accordingly, we recover
  one of R.~Strich's results as the following corollary.

\begin{cor} \mlabel{cor:1.1} {\rm(Strich's Theorem for standard subspaces)}
Let $\lambda \in \R^\times$ and consider a
  {two-dimensional} connected Lie group $G$ whose Lie algebra
  is $\g = \R x + \R h$ with $[h,x] = \lambda x$.
  Let $(U,\cH)$ be a unitary
  representation of $G$ with $\partial U(x) \not=0$. Suppose that $\sH \subeq \sV$ are standard subspaces such that
  \begin{itemize}
  \item[\rm(a)] $U(\exp(-\beta t h)) = \Delta_\sV^{it}$ for $t \in \R$.
  \item[\rm(b)] $U(\exp t x)U(\exp sh) \sH \subeq \sV$
    for $|s| + |t| < \delta$ and some $\delta > 0$. 
  \end{itemize}
  Then $\beta = \frac{2\pi}{|\lambda|}$. 
\end{cor}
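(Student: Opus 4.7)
The plan is to reduce Corollary~\ref{cor:1.1} directly to the Euler Element Theorem~\ref{thm:2.1} by a rescaling. I would set $h' := \frac{\beta}{2\pi} h$, so that condition~(a) takes the form $\Delta_\sV = e^{2\pi i\,\partial U(h')}$, matching hypothesis~(a) of Theorem~\ref{thm:2.1}. The commutation relation transforms to $[h',x] = k\, x$ with $k := \frac{\beta\lambda}{2\pi}$, and once $h'$ is recognized as an Euler element the allowed values of $k$ will pin down $|\beta|$.

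Next I would verify the two hypotheses of Theorem~\ref{thm:2.1}. For discreteness of $\ker U$, I observe that $\ker(\dd U) \subseteq \g$ is an ideal of the Lie algebra. In $\g = \R h + \R x$ with $[h,x] = \lambda x$ and $\lambda \neq 0$, the only ideals are $\{0\}$, $\R x$, and $\g$: the line $\R h$ fails to be an ideal since $[x,h] = -\lambda x \notin \R h$. The hypothesis $\partial U(x) \neq 0$ excludes $\R x$ and $\g$, so $\ker(\dd U) = \{0\}$ and $\ker U$ is discrete. For the cyclicity hypothesis, I take the open $e$-neighborhood $N := \{\exp(tx)\exp(sh) : |s|+|t|<\delta\}$. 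Condition~(b) rewrites as $\sH \subseteq U(g^{-1})\sV$ for every $g \in N$, hence $\sH \subseteq \sV_{N^{-1}}$; since $\sH$ is cyclic (being standard), the larger subspace $\sV_{N^{-1}}$ is cyclic as well. Theorem~\ref{thm:2.1} now applies to $h'$ and yields that $h'$ is an Euler element.

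Finally I would read off $|\beta|$. Since $h'$ is Euler, $\ad h'$ is nonzero and diagonalizable with spectrum contained in $\{-1,0,1\}$. Automatically $h \in \ker(\ad h')$, so the non-vanishing of $\ad h'$ forces the eigenvalue $k$ on $x$ to be nonzero, i.e.\ $k \in \{-1,+1\}$. Substituting $k = \frac{\beta\lambda}{2\pi}$ gives $|\beta| = \frac{2\pi}{|\lambda|}$, which is the claim (up to the sign convention on $\beta$). The main content of the corollary is already encoded in Theorem~\ref{thm:2.1}; the only genuinely delicate point is pinning down the sign of $\beta$, since the computation above only determines $|\beta|$. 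I expect this to be the only real obstacle, and I would settle it either by a standing positivity convention on the choice of $h$ relative to $\log\Delta_\sV$, or by invoking a half-sided modular inclusion argument exploiting the strict inclusion $\sH \subsetneq \sV$ (which must hold, as $\sH=\sV$ together with~(b) would force $x$ to commute with the modular flow) together with the resulting Borchers-type commutation between $\Delta_\sV^{it}$ and $U(\exp tx)$.
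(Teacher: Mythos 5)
Your proposal is correct and coincides with the paper's own proof, which is exactly the one-line reduction you describe: Theorem~\ref{thm:2.1} applied to $h' = \frac{\beta}{2\pi}h$ shows that $h'$ is an Euler element, whence $\frac{\beta|\lambda|}{2\pi}=1$; the verifications of the discrete-kernel and cyclicity hypotheses that you spell out are left implicit there. Your worry about the sign is legitimate but is not resolved in the paper either --- the Euler-element condition only yields $|\beta|=2\pi/|\lambda|$, and the paper tacitly takes $\beta>0$ (as is made explicit in the parallel Borchers--Buchholz corollaries), so no half-sided-inclusion argument is supplied or needed there.
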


\begin{prf} Theorem~\ref{thm:2.1} implies that
  $\frac{\beta}{2\pi} h$ is an Euler element in $\g$, so that
  $\frac{\beta |\lambda|}{2\pi} = 1$.   
\end{prf}

  \begin{thm} \mlabel{thm:loc-net}
    Let $(U,\cH)$ be a unitary representation of the
    connected Lie group $G$ with $\ker(U)$ discrete.
    If $(\sH(\cO))_{\cO\subeq M}$ is a net of real subspaces on
   {(the open subsets of)} a $G$-manifold $M$ that satisfies
        {\rm(Iso), (Cov), (RS)} and {\rm(BW)}, 
    then the Lie algebra element $h$ satisfying
\[ \Delta_{\sH(W)} = e^{2\pi i\, \partial U(h)} \]  is an Euler element, and
    the conjugation $J := J_{\sH(W)}$ satisfies 
  \[ J U(\exp x) J = U(\exp \tau_h(x))\quad \mbox{ for } \quad
  \tau_h = e^{\pi i \ad h}, x \in \g.\]
\end{thm}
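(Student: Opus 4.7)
The plan is to recognize that Theorem~\ref{thm:loc-net} is essentially an immediate corollary of the Euler Element Theorem (Theorem~\ref{thm:2.1}) once we verify its two hypotheses in the present setting. The Bisognano--Wichmann hypothesis (BW) directly supplies condition (a): taking $\sV := \sH(W)$, the relation $\Delta_\sV = e^{2\pi i\,\partial U(h)}$ is exactly what is postulated. So the entire work lies in verifying condition (b), namely producing an identity neighborhood $N \subeq G$ for which $\sV_N = \bigcap_{g \in N} U(g)\sV$ is cyclic.

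To achieve this, I would exploit the continuity (smoothness) of the $G$-action on $M$ together with isotony, covariance, and the Reeh--Schlieder property. First, since $\sV = \sH(W)$ is standard, $W$ must be nonempty, so pick any $m_0 \in W$. The action map $G \times M \to M$ is continuous, hence there exist a symmetric identity neighborhood $N \subeq G$ and an open neighborhood $\cO \subeq W$ of $m_0$ such that $N.\cO \subeq W$. Symmetry of $N$ gives $g^{-1}.\cO \subeq W$, i.e.\ $\cO \subeq g.W$, for every $g \in N$. Applying (Iso) and (Cov) yields
\[ \sH(\cO) \subeq \sH(g.W) = U(g)\sH(W) = U(g)\sV \qquad \text{for all } g \in N,\]
and consequently $\sH(\cO) \subeq \sV_N$. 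By (RS), $\sH(\cO)$ is cyclic, so $\sV_N$ is cyclic as well. This verifies hypothesis~(b) of Theorem~\ref{thm:2.1}.

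With hypotheses (a) and (b) of Theorem~\ref{thm:2.1} in hand, its conclusion delivers at once both desired statements: $h$ is an Euler element, and the modular conjugation $J = J_\sV$ satisfies the twisting identity
\[ J\, U(\exp x)\, J = U(\exp \tau_h(x)) \qquad \text{for all } x \in \g.\]
The only nontrivial ingredient in the argument, namely the discreteness of $\ker(U)$ required by Theorem~\ref{thm:2.1}, is imposed as a hypothesis of Theorem~\ref{thm:loc-net} itself. I do not foresee any real obstacle in the deduction, since it is a routine packaging of the axioms; the genuine difficulty in this entire circle of ideas is buried in the proof of Theorem~\ref{thm:2.1}, while here the main conceptual content is simply that the axioms (Iso), (Cov), (RS) together with the continuity of the group action automatically produce the regularity that Theorem~\ref{thm:2.1} requires.
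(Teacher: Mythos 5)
Your proposal is correct and follows essentially the same route as the paper: both reduce the statement to Theorem~\ref{thm:2.1} by using (Iso), (Cov) and (RS) to produce a cyclic subspace $\sH(\cO)\subeq \sV_N$ for a suitable $e$-neighborhood $N$. The only cosmetic difference is that the paper obtains $N$ as $\{g\in G\: g.\oline\cO\subeq W\}$ for a relatively compact open $\cO\subeq W$ (openness of $N$ via compactness of $\oline\cO$), whereas you invoke joint continuity of the action at $(e,m_0)$ and symmetrize $N$; these are interchangeable.
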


\begin{prf} Let $\cO \subeq W$ be a non-empty open, relatively compact subset.
  Then $\oline\cO$ is a compact subset of the open set $W$, so that
  \[ N := \{ g \in G \: g.\oline\cO \subeq  W \} \]
  is an open $e$-neighborhood in $G$. For every $g \in N$ we
  have by (Cov) and (Iso) 
  \[  g.\sH(\cO) = \sH(g.\cO) \subeq \sH(W)\ {\buildrel {\rm(BW)}\over =}\ \sV.\]
  Further (RS) implies that $\sH := \sH(\cO)$ is cyclic, hence standard because
  it is contained in $\sV$. Now the assertion follows from
  Theorem~\ref{thm:2.1}.    
\end{prf}

Theorem 6.2 in \cite{BB99}  can be rephrased for standard subspaces. Then it becomes a consequence of {our}
Theorem \ref{thm:loc-net}.  With the notations introduced in Example \ref{ex:desit}, we state the following corollary:

\begin{corollary} {\rm(Borchers-Buchholz Theorem for standard subspaces)} Let $U$ be a unitary representation of the Lorentz group $G = \SO_{1,d}(\R)^\up$
  on a Hilbert space $\cH$, acting covariantly on an isotone net
{ $(\sH(\cO))_{\cO \subeq \dS^d}$} 
of standard subspace on open regions of de Sitter spacetime.
{If $\beta>0$ is such that 
\begin{equation}\label{eq:bb99} U(\exp(th))
  =\Delta^{-\frac{it}{\beta}}_{\sH(W_R^{\dS})}\quad \mbox{ for } \quad t\in\RR,
\end{equation}
then $\beta=2\pi$.}
\end{corollary}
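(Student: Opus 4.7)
The statement will follow from Theorem~\ref{thm:loc-net} by a simple rescaling argument. The first step is to rewrite the hypothesis \eqref{eq:bb99} so that it matches the (BW) normalization used throughout the paper. Substituting $t \mapsto \frac{\beta t}{2\pi}$ in $U(\exp(th)) = \Delta^{-it/\beta}_{\sH(W_R^{\dS})}$ gives $U\bigl(\exp\bigl(\tfrac{\beta}{2\pi} t\, h\bigr)\bigr) = \Delta^{-it/2\pi}_{\sH(W_R^{\dS})}$ for all $t \in \R$, and by Stone's theorem this is equivalent to
\[ \Delta_{\sH(W_R^{\dS})} = e^{2\pi i\, \partial U(h')} \qquad \text{with} \qquad h' := \tfrac{\beta}{2\pi}\, h \in \so_{1,d}(\R). \]

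Next, I would verify that Theorem~\ref{thm:loc-net} applies. The net $(\sH(\cO))_{\cO \subeq \dS^d}$ satisfies (Iso) by hypothesis, (Cov) by covariance of the action, and (RS) implicitly through the standard-subspace assumption on wedge regions; (BW) holds with the element $h'$ and the wedge region $W_R^{\dS}$ by the display above. It remains to ensure that $\ker(U)$ is discrete: since $\g = \so_{1,d}(\R)$ is simple, $\ker(U)_e$ is either trivial or all of $G$; in the latter case $\partial U(h) = 0$ forces $\Delta_{\sH(W_R^{\dS})} = \1$, which is incompatible with $\sH(W_R^{\dS})$ being standard together with a non-trivial $h$, so we may assume $\ker(U)$ is discrete.

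Theorem~\ref{thm:loc-net} then asserts that $h' = \frac{\beta}{2\pi} h$ is an Euler element of $\g$. But, as recalled in Example~\ref{ex:desit}, the boost generator $h$ is itself an Euler element, so $\ad h$ is diagonalizable with spectrum exactly $\{-1,0,1\}$. Consequently
\[ \Spec(\ad h') = \tfrac{\beta}{2\pi} \Spec(\ad h) = \bigl\{-\tfrac{\beta}{2\pi},\, 0,\, \tfrac{\beta}{2\pi}\bigr\}, \]
and the requirement $\Spec(\ad h') \subeq \{-1,0,1\}$ with $h' \neq 0$ forces $\frac{\beta}{2\pi} = 1$, i.e., $\beta = 2\pi$.

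The only subtle point is the discreteness of $\ker(U)$ needed to invoke Theorem~\ref{thm:loc-net}; as indicated above, simplicity of $\so_{1,d}(\R)$ reduces this to excluding the trivial representation, which is forced upon us by the standardness requirement in (BW). Everything else is a direct verification of hypotheses and the elementary observation that a positive scalar multiple of an Euler element is again an Euler element only when the scalar equals one.
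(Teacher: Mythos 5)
Your proof is correct and takes essentially the same route as the paper: both reduce the statement to Theorem~\ref{thm:loc-net} applied to the rescaled element $\tilde h = \frac{\beta}{2\pi}h$, and then conclude from the fact that a positive scalar multiple of an Euler element is again an Euler element only when the scalar equals $1$. The only inaccuracy is in your side remark on $\ker(U)$: the equality $\Delta_{\sH(W_R^{\dS})}=\1$ is \emph{not} incompatible with $\sH(W_R^{\dS})$ being standard (standard subspaces with trivial modular operator exist, e.g.\ $\cH^J$ for any conjugation $J$), so excluding the trivial representation is really a tacit non-degeneracy assumption --- one the paper's own proof also leaves implicit --- rather than a consequence of standardness.
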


\begin{proof}
The net of standard subspaces $(\sH(O))_{\cO\subset\dS^d}$ with the Lorentz group representation $(U,\cH)$ fit the hypotheses of Theorem \ref{thm:loc-net} with respect to the Lie algebra element $\tilde h=\frac{\beta }{2\pi}h$, as
\[ \Delta_{H(W^{\dS}_R)}={e^{2\pi i\partial U(\tilde h)}}. \] 
We conclude that $\tilde h$ is an Euler element. Since $h$
is also an Euler element in $\so(1,d)$ and $\beta > 0$, we must have $\beta=2\pi$.
\end{proof}

\begin{rem}   (a) An important consequence of Theorem~\ref{thm:2.1} 
    is that $\tau_h$ integrates to an
    involutive automorphism $\tau_h^G$ on the group $U(G) \cong G/\ker(U)$
    that is uniquely determined by
    \[ \tau_h^G(\exp x) = \exp(\tau_h(x)) \quad \mbox{ for }\quad x \in\g.\]
    {To see this, let $q_G \: \tilde G \to G$ denote the universal
      covering of $G$ and $\tau_h^{\tilde G}$ the automorphism of
      $\tilde G$ integrating $\tau_h \in \Aut(\g)$.
      Replacing $G$ by $\tilde G$ and $U$ by $U \circ q_G$, we may
      assume that $G = \tilde G$. Then \eqref{eq:J-rel} implies that
      \begin{equation}
        \label{eq:JUg}
        J U(g) J = U(\tau_h^G(g))\quad \mbox{ for } \quad g \in G.
      \end{equation}
      It follows that $\tau_h^G(\ker U) = \ker U$, and hence that
      $\tau_h^G$ factors through an automorphism of the quotient
      group $G/\ker U \cong U(G)$.

      Whenever $\tau_h^G$ exists (which by the preceding is the case if $G$ is
      simply connected or if $U$ is injective),}
 $U$ extends to an (anti-)unitary representation of the Lie group
 \begin{equation}
   \label{eq:gtauh1}
 G_{\tau_h} = G \rtimes \{\id_G, \tau_h^G\}
 \quad \mbox{  by  } \quad U(\tau_h^G) := J.
 \end{equation}

 In the setting of Theorem~\ref{thm:2.1}, $(U, \cH)$ cannot be a 
multiple of an irreducible representation of complex type.   Indeed, 
in this case there exists no anti-unitary operator $J$
on $\cH$ such that
\begin{equation}
  \label{eq:jrelx}
  U(\tau_h(g))=JU(g)J^{-1} \quad \mbox{ for } \quad g \in G.
\end{equation}
{So the conclusion of Theorem \ref{thm:2.1} fails, and therefore 
  one of the two assumptions (a) and  (b) must be violated.} Given $h\in\fg$, it is easy to construct a standard subspaces satisfying (a) by taking
{$\Delta_\sV:=e^{2\pi i \partial U(h)}$} as Tomita operator 
and any {conjugation $J$ commuting with $\partial U(h)$. The existence
  of such a conjugation only requires the unitary equivalence of the
  selfadjoint operators  $i\partial U(h)$ and $-i\partial U(h)$
  (\cite[Prop.~3.1]{NO15}). This
  is much weaker than \eqref{eq:jrelx} and satisfied in all
  unitary representations if $\g$ is semisimple and 
  $h$ an Euler element (Theorem~\ref{thm:hsym}).
So Hypothesis (b) has to fail and thus regularity is lost. 
However, the doubling process from Lemma \ref{lem:3.4}(a)
leads to a context where \eqref{eq:jrelx} can be implemented.

This accords with the comment after Theorem~4.13 in \cite{DM20},
  where is has been argued, with a similar argument,
  that factorial representations with finite non-zero  helicity 
  of the Poincar\'e group $\cP_+^\up$ of $\RR^{1,3}$ cannot
  act on a net of standard subspaces on spacelike cones (cf.~notation in Def.~\ref{def:minkcaus}).
  We briefly recall the ideas here. Let $(U,\cH)$ be a
  factorial   representation of finite non-zero helicity,  
  acting covariantly on a net of standard subspaces on spacelike cones $\cC\mapsto\sH(\cC)$. 
  By \cite[Cor.~4.4]{DM20}, $\sH$ has the (BW) property with respect to 
  the pair $(h,W_R)$ (see Example~\ref{ex:desit}).\footnote{It actually suffices to require {the net to assign standard subspaces} to wedge regions.}
Following \cite[Prop.~2.4]{GL95} (or in our general setting \cite[Thm.~4.28]{MN21}), a representation of finite non-zero  helicity
acting on a net of standard subspaces on spacelike cones extends to a covariant (anti-)unitary representation of the proper
Poincar\'e group $\cP_+$ as in \eqref{eq:jrelx}. 
As representations of finite non-zero  helicity 
are of complex type (\cite[Thm.~9.10]{Va85}), we arrive at a
contradiction.

Clearly, this example is compatible with the
(BW) property in the form of condition (a) in Theorem \ref{thm:2.1}.
By continuity of the Poincar\'e action on $\RR^{1,3}$, there always exists a spacelike cone $\cC \subeq \bigcap_{g\in N} gW$ if
$N\subset\cP_+^\uparrow$ is a sufficiently small
neighborhood of the identity and $W$ is a wedge region.
For $\sV = \sH(W)$, we then obtain 
$\sH(\cC)\subset\sV_N=\bigcap_{g\in N} g\sH(W)$,
and thus $\sV_N$ is cyclic whenever $\sH(\cC)$ is
(which follows from (RS)).}
In particular, spacelike cone localization of standard subspaces ensures the regularity condition (b) in the setting of Theorem \ref{thm:2.1} and this
regularity condition for $\sH(\cC)$ ensures the geometric property
used in \cite[Prop.~2.4]{GL95} to obtain an
extension to an (anti-)unitary representation
of~$\cP^\up$. As stressed for this specific case in \cite{DM20}, one needs to
couple finite non-zero helicity representations with opposite
helicities to {provide an environment for non-trivial nets}
of standard subspaces.

\smallskip
\nin   (b)  If $\sV_N= \sV$, then $\sV$ is $U(G)$-invariant
  because the connected Lie group $G$ is generated by the identity
  neighborhood~$N$. In this case $h \in \g$ is central, which follows from
  the discreteness of $\ker(U)$ because $U(G)$ commutes with $\Delta_\sV$.
  Then we obtain on $\cH^J$ a real representation of~$G$.
  
\smallskip  
  
  \nin (c) If $\g$ is a compact Lie algebra, then every Euler element
    $h \in \g$ is central, so that ${\tau_h = \id_\g}$.
    Therefore the cyclicity of $\sV_N$ as in Theorem~\ref{thm:2.1}
    implies that $J_\sV$ and $\Delta_\sV$ commute with $U(G)$, and thus
    $U(g)\sV = \sV$ for $g \in G$.
 Therefore, a standard subspace $\sV$ associated
    to a pair $(h,\tau) \in \cG(G_\sigma)$  by the BGL construction  
    can only satisfy the regularity condition in Theorem~\ref{thm:2.1}(b)
    if $\sV$ and $\cH^{J_\sV}$ are $U(G)$-invariant.
  Therefore the representation $(U,\cH)$ is the complexification
  of the real representation of $U$ on $\cH^J = \sV$.
  Conversely, for every real representation $(U,\cE)$ of~$G$,
  the real subspace $\cE \subeq \cE_\C$ is standard with $\Delta_\cE = \1$
  and $U_\C(G)$ leaves $\cE$ invariant, so that the regularity condition is
  satisfied for trivial reasons.
\end{rem}

\subsection{An application to operator algebras}
\mlabel{subsec:strich-opalg}

The following theorem is a version of the Euler Element Theorem~\ref{thm:2.1}
for operator algebras.
We consider the following setup:

\begin{itemize}
\item[\rm(Uni)] Let $(U,\cH)$ be a unitary representations
  of the {\bf connected} Lie group
  $G$ with discrete kernel, so that the derived representation
  $\dd U$ is injective. 
\item[\rm(M)] Let $\Omega$ be a unit vector  and
  $\cM \subeq B(\cH)$ be a von Neumann algebra
  for which $\Omega$ is cyclic and generating.
  We write $(\Delta_{\cM,\Omega}, J_{\cM,\Omega})$ for the corresponding
  modular objects.
\item[\rm(Fix)] $\Omega \in \cH^G$, i.e., $\Omega$ is fixed by $U(G)$.   
\item[\rm(Mod)] \textbf{Modularity:} There exists an  element $h \in \g$ for which
  $e^{2\pi i \partial U(h)} = \Delta_{\cM,\Omega}$. As $\ker(U)$ is discrete, $h$ is
  uniquely determined. 
\item[\rm(Reg)] \textbf{Regularity:} For some $e$-neighborhood $N \subeq G$, the vector
  $\Omega$ is still cyclic (and obviously separating) for the
  von Neumann algebra 
  \[ \cM_N := \bigcap_{g \in N} \cM_g, \quad \mbox{ where } \quad
    \cM_g= U(g)\cM U(g)^{-1}.\]
  This implies that $(\cM_N)'$ is a von Neumann algebra
  containing $\cM_g' = U(g) \cM' U(g)^{-1}$ for $g \in N$ and that
  $\Omega$ is cyclic and separating for $(\cM_N)'$.   
\end{itemize}

\begin{thm} \mlabel{thm:2.1-alg}
  Assume {\rm(Uni)}, {\rm(M), (Fix)}, {\rm(Reg)} and {\rm (Mod)}.
  Then $h$ is an Euler element and the modular conjugation $J = J_{\cM,\Omega}$
of the pair $(\cM,\Omega)$ satisfies
  \[ J U(\exp x) J = U(\exp \tau_h(x)) \quad \mbox{ for } \quad
  \tau_h = e^{\pi i \ad h}.\]
\end{thm}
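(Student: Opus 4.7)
The strategy is to reduce the statement to the Euler Element Theorem~\ref{thm:2.1} by passing from the von Neumann algebra $\cM$ to its canonical standard subspace. Concretely, I would define
\[ \sV := \overline{\cM_{\mathrm{sa}}\,\Omega}. \]
By Tomita--Takesaki theory, $\sV$ is a standard subspace whose modular objects coincide with those of the pair $(\cM,\Omega)$, so $\Delta_\sV = \Delta_{\cM,\Omega} = e^{2\pi i \partial U(h)}$ and $J_\sV = J_{\cM,\Omega}$. In particular, hypothesis (a) of Theorem~\ref{thm:2.1} is satisfied for this $\sV$ and the given $h \in \g$.

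Next I would verify the regularity hypothesis (b) of Theorem~\ref{thm:2.1}. Here the crucial input is (Fix): since $U(g)\Omega = \Omega$ for every $g \in G$, we get
\[ U(g)\sV = \overline{U(g)\cM_{\mathrm{sa}}U(g)^{-1}\,U(g)\Omega} = \overline{(\cM_g)_{\mathrm{sa}}\,\Omega}, \]
so that $U(g)\sV$ is the canonical standard subspace attached to $(\cM_g,\Omega)$. Since $\cM_N \subseteq \cM_g$ for every $g \in N$, we obtain $(\cM_N)_{\mathrm{sa}}\Omega \subseteq U(g)\sV$ for all $g \in N$, and therefore
\[ \overline{(\cM_N)_{\mathrm{sa}}\,\Omega} \subseteq \sV_N := \bigcap_{g\in N} U(g)\sV. \]
By hypothesis (Reg), $\Omega$ is cyclic for $\cM_N$, i.e.\ $\overline{\cM_N\,\Omega}=\cH$. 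Splitting every $A \in \cM_N$ into its selfadjoint and skewadjoint parts via $A = \tfrac{A+A^*}{2} + i\,\tfrac{A-A^*}{2i}$ yields
\[ \overline{(\cM_N)_{\mathrm{sa}}\,\Omega} + i\,\overline{(\cM_N)_{\mathrm{sa}}\,\Omega} = \overline{\cM_N\,\Omega} = \cH, \]
which shows that $\overline{(\cM_N)_{\mathrm{sa}}\,\Omega}$ is a cyclic real subspace. A fortiori, $\sV_N$ is cyclic.

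With both hypotheses of Theorem~\ref{thm:2.1} verified, that theorem immediately yields that $h$ is an Euler element and that $J_\sV U(\exp x) J_\sV = U(\exp \tau_h(x))$ for every $x \in \g$. Since $J_\sV = J_{\cM,\Omega}$, this is the desired conclusion. I do not expect a genuine obstacle here: the argument is essentially a translation from operator algebras into standard subspaces. The only point requiring care is the identification $U(g)\sV = \overline{(\cM_g)_{\mathrm{sa}}\,\Omega}$, which hinges decisively on the $G$-invariance of $\Omega$; without (Fix) this bridge between the two pictures would not be available, and a separate argument would be needed to transport the regularity of $\Omega$ for $\cM_N$ to cyclicity of $\sV_N$.
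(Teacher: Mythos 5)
Your proposal is correct and follows essentially the same route as the paper: both define $\sV = \overline{\cM_{\sa}\Omega}$, use (Fix) to identify $U(g)\sV$ with the standard subspace of $(\cM_g,\Omega)$ (equivalently, to show $U(g)^{-1}\overline{(\cM_N)_{\sa}\Omega} \subeq \sV$ for $g \in N$), deduce from (Reg) that $\sV_N \supeq \overline{(\cM_N)_{\sa}\Omega}$ is cyclic, and then invoke Theorem~\ref{thm:2.1}. The only difference from the paper's proof is notational.
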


\begin{prf} Clearly, $\Omega$ is also separating for $\cM_N$.
  Let $\cM_{\sa} := \{ M \in \cM \: M^* = M\}$
 be the subspace of hermitian
  elements in $\cM$. Then we obtain
the two standard subspaces 
\begin{equation}
  \label{eq:vh}
\sV := \oline{\cM_{\sa}\Omega}  \supeq
\sH := \oline{(\cM_N)_{\sa}\Omega}.
\end{equation}
Further $U(g)^{-1}\cM_N U(g) \subeq \cM$ for $g \in N$ implies
$U(g)^{-1}\sH \subeq \sV$. Hence $\sH \subeq \sV_N$, and
the assertion follows from Theorem~\ref{thm:2.1}
\end{prf}

\begin{ex} \mlabel{ex:3.7}
  (The minimal group) For $G = \R$, $\g = \R h$, and
    the unitary one-parameter group
    $U(t) := \Delta_{\cM,\Omega}^{-it/2\pi}$,
    the conditions (Uni), (M), (Fix), (Mod) and (Reg) are satisfied
    because the Tomita--Takesaki Theorem ensures that
    $\cM_g = \cM$ for every $g \in G$. The conclusion
    of Theorem~\ref{thm:2.1-alg} then reduces to the fact that
    $J_{\cM.\Omega}$ commutes with the modular group.   
\end{ex}

\subsubsection*{Endomorphism semigroups}

We consider the context from Theorem~\ref{thm:2.1-alg},
    where $G$ is a connected finite-dimensional Lie group with
    Lie algebra $\g$, $h \in \g$ is an Euler element,
    $(U,\cH)$ is an (anti-)unitary 
    representation of $G_{\tau_h}$ with discrete kernel,
    $J = U(\tau_h^G)$, 
  and $\sV = \sV(h,U) \subeq \cH$ is the associated standard subspace.
  We also have a von Neumann algebra $\cM$ with cyclic separating vector
  $\Omega$ for which
  \[ \sV = \sV_\cM := \oline{\cM_{\sa}\Omega}. \]
  Here the equality of $\sV$ and $\sV_\cM$ follows from the equality
  of their modular objects and Proposition~\ref{prop:11}.

We consider {\it the endomorphism semigroup of $\cM$ in $G$} by 
  \[ S_\cM := \{ g \in G \: U(g) \cM U(g)^{-1} \subeq \cM \}. \]
Typically it is hard to get fine information on the semigroup
  $S_\cM$, but combining results from \cite{Ne22} with 
  Theorem~\ref{thm:2.1-alg}, we actually get a full description
  of its identity component by comparing it with the
  endomorphism semigroup 
 \[ S_{\sV} := \{ g \in G \: U(g) \sV\subeq U(g)\}.\]

\begin{thm} \mlabel{thm:3.2} {\rm(The endomorphism semigroup)}
  Suppose that {\rm(Uni)}, {\rm(M), (Fix)}, {\rm(Reg)} and {\rm (Mod)}
  are satisfied. With the pointed cones $C_\pm := \pm C_U \cap \g_{\pm 1}(h)$, we have
  the following description of the identity component of the semigroup~$S_\cM$: 
  \[ (S_\cM)_e = (G_\cM)_e \exp(C_+ + C_-) = \exp(C_+) (G_\cM)_e \exp(C_-)
    \quad \mbox{ and } \quad \L(G_\cM) = \g_0(h).\]
In particular $(G_\cM)_e = \la \exp \g_0(h) \ra$.
\end{thm}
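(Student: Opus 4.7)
The plan is to reduce the analysis to the well-understood semigroup $S_\sV$ associated to the standard subspace $\sV := \sV_\cM = \oline{\cM_{\sa}\Omega}$, whose modular objects coincide with $(\Delta_\cM, J_{\cM,\Omega})$. By Theorem~\ref{thm:2.1-alg}, the hypotheses force $h$ to be an Euler element and $J := J_{\cM,\Omega}$ to satisfy $J U(g) J = U(\tau_h^G(g))$, so $U$ extends to an (anti-)unitary representation of $G_{\tau_h}$. The fixing hypothesis~(Fix) then gives $S_\cM \subseteq S_\sV$ at once: $U(g)\cM U(g)^{-1} \subseteq \cM$ together with $U(g)\Omega = \Omega$ yields $U(g)\cM_{\sa}\Omega \subseteq \cM_{\sa}\Omega$, hence $U(g)\sV \subseteq \sV$.

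For the target $S_\sV$ I would invoke the Olshanski-type description of \cite{Ne22} (the abstract counterpart of Proposition~\ref{prop:LSW}). Since $\ker U$ is discrete, $C_U$ is pointed, and using $\g_0(h)^{\tau_h} = \g_0(h)$ one obtains
\[ \L(S_\sV) = \g_0(h) + C_+ + C_-, \qquad \L(G_\sV) = \g_0(h), \]
with factorizations $(S_\sV)_e = (G_\sV)_e \exp(C_+ + C_-) = \exp(C_+)(G_\sV)_e\exp(C_-)$. From $S_\cM \subseteq S_\sV$ we inherit $\L(S_\cM) \subseteq \g_0(h) + C_+ + C_-$ and $\L(G_\cM) \subseteq \g_0(h)$. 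The reverse containment $C_+ + C_- \subseteq \L(S_\cM)$ is supplied by the Borchers--Wiesbrock theorem: for $x \in \g_{\pm 1}(h)\cap C_U$ the one-parameter group $U(\exp tx)$ has non-negative generator and satisfies $\Delta_\cM^{it}U(\exp x)\Delta_\cM^{-it} = U(\exp(e^{\mp 2\pi t}x))$, so it implements a semigroup of endomorphisms of $\cM$ for $t \geq 0$. Tomita--Takesaki moreover gives $\R h \subseteq \L(G_\cM)$.

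The main obstacle is upgrading these inclusions to the full equality $\L(G_\cM) = \g_0(h)$: mere commutation of $U(\exp tx)$ with both $\Delta_\cM$ and $J$ for $x \in \g_0(h)$ only guarantees preservation of $\sV$, which in general does not imply normalization of $\cM$. The strategy is to work with $\L(S_\cM)$ as a Lie wedge. Because $\pm h \in \L(G_\cM)$, the wedge $\L(S_\cM)$ is invariant under $e^{\R\ad h}$ and hence respects the $\ad h$-grading; combined with the sandwich $C_+ + C_- \subseteq \L(S_\cM)\subseteq \g_0(h) + C_+ + C_-$ this forces $\L(S_\cM)\cap \g_{\pm 1}(h) = C_\pm$, so the entire question reduces to the $\g_0$-component. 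One then exploits that for any $x \in \g_0(h)$ one has $\Ad(\exp tx)C_\pm = C_\pm$ (since $\ad x$ preserves each grade and $C_U$ is $\Ad(G)$-invariant), so conjugation by $U(\exp tx)$ permutes the family of Borchers endomorphisms $\{\Ad U(\exp y) : y \in C_\pm\}$ of $\cM$; this rigidity, together with the Olshanski-semigroup arguments of \cite{Ne22}, should yield $U(\exp tx)\cM U(\exp tx)^{-1} = \cM$ and hence $\g_0(h) \subseteq \L(G_\cM)$. Once $\L(G_\cM) = \g_0(h)$ is established, the factorization assertions for $(S_\cM)_e$ and the identity $(G_\cM)_e = \la\exp\g_0(h)\ra$ follow verbatim from the same Olshanski decomposition already invoked for $S_\sV$.
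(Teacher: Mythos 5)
Your reduction to the standard subspace $\sV_\cM = \oline{\cM_{\sa}\Omega}$, the inclusion $S_\cM \subseteq S_{\sV_\cM}$ via (Fix), and the appeal to \cite{Ne22} for the polar decomposition $S_{\sV_\cM} = \exp(C_+)\,G_{\sV_\cM}\exp(C_-)$ with $\L(G_{\sV_\cM}) = \g_0(h)$ all match the paper. The gap is in the converse direction. You claim that for $x \in C_\pm$ positivity of the generator together with the commutation relation $\Delta_\cM^{it}U(\exp x)\Delta_\cM^{-it} = U(\exp(e^{\mp 2\pi t}x))$ ``implements a semigroup of endomorphisms of $\cM$''. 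That is the converse of Borchers' theorem at the von Neumann algebra level, and it does not hold in general: these hypotheses only yield $U(\exp tx)\sV_\cM \subseteq \sV_\cM$, i.e.\ membership in $S_{\sV_\cM}$, not in $S_\cM$. (This is exactly why Davidson's results \cite{Da96}, quoted right after Theorem~\ref{thm:3.2}, require an additional cyclicity hypothesis to pass from invariance of $\sV_\cM$ to invariance of $\cM$.) The same obstruction blocks the $\g_0(h)$-component, which you yourself flag as incomplete; the proposed ``rigidity'' argument via $\Ad$-invariance of the cones is not a proof.

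The missing ingredient is the regularity hypothesis (Reg), which your outline never uses. The paper's argument is: for $g \in S_{\sV_\cM}\cap N$ with $N$ as in (Reg), both $\cM'$ and $\cM_g'$ are contained in $\cM_N'$, for which $\Omega$ is cyclic and separating; since $U(g)\sV_\cM \subseteq \sV_\cM$ gives $\sV_{\cM_g'} \supseteq \sV_{\cM'}$, \cite[Prop.~3.24]{Lo08} upgrades this inclusion of standard subspaces to the inclusion of algebras $\cM_g' \supseteq \cM'$, i.e.\ $g \in S_\cM$. Hence $S_\cM \cap N = S_{\sV_\cM}\cap N$, and since $\exp(C_\pm)$ and $(G_{\sV_\cM})_e$ are generated by their intersections with any identity neighborhood, one obtains $(S_{\sV_\cM})_e \subseteq S_\cM$ and then the stated polar decomposition. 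If you insert this local identification, your outline becomes a correct proof; without it, the two central inclusions $\exp(C_\pm) \subseteq S_\cM$ and $\g_0(h) \subseteq \L(G_\cM)$ remain unproved.
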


\begin{prf}
  As $U$ has discrete kernel, $h$ is an Euler element and $\sV = \sV_\cM$, 
  \cite[Thms.~2.16, 3.4]{Ne22} imply that
\begin{equation}
  \label{eq:SV}
  S_{\sV_\cM} = G_{\sV_\cM} \exp(C_+ + C_-) = \exp(C_+) G_{\sV_\cM} \exp(C_-).
\end{equation}
  Further, $g \in S_\cM$ yields 
  $U(g) \sV_{\cM} = \sV_{\cM_g} \subeq \sV_\cM$
  because $U(g)$ fixes $\Omega$, and therefore 
  \begin{equation}
    \label{eq:semincl}
    S_\cM \subeq S_{\sV_\cM}.
  \end{equation}

  Let $N$ be an $e$-neighborhood as in (Reg)
    and  $g \in S_{\sV_\cM} \cap N$.
    Then $\cM_N'$ contains both algebras $\cM'$
    and $\cM'_g = U(g)\cM' U(g)^{-1}$,
    and $U(g) \sV_\cM \subeq \sV_\cM$ implies 
$U(g) \sV_\cM' \supeq \sV_\cM'$. 
Further, $\Omega$ is cyclic and separating for $\cM_N'$
  and
  \[\sV_{\cM_g'} =  U(g) \sV_{\cM'} = U(g) \sV_{\cM}' \supeq \sV_{\cM}' = \sV_{\cM'}.\]
  As $\Omega$ is cyclic and separating for
 $\cM_g'$ and $\cM'$, 
  \cite[Prop.~3.24]{Lo08} implies that
  $\cM_g' \supeq \cM'$,
  which leads to $\cM_g \subeq \cM$, i.e.,
  $g \in S_\cM$. This proves that
    \[ S_\cM \cap N = S_{\sV_\cM} \cap N.\] 
  Since the semigroups $\exp(C_\pm)$ and $(G_{V_\cM})_e$ are generated by
  their intersections with $N$, it follows that
  ${(S_{\sV_\cM})_e} = \exp(C_+)(G_{V_\cM})_e \exp(C_-)\subeq S_\cM$.
  Now the assertion follows  from the fact that the
    connected components of $S_{\sV_\cM}$ are products
    of connected components of the group $G_{\sV_\cM}$
    and $\exp(C_+ + C_-)$ (polar decomposition of $S_{\sV_\cM}$).
\end{prf}

\begin{rem} Davidson's paper \cite{Da96} contains
    interesting results on the relation
between the stabilizer groups $G_\cM$ and $G_{\sV_\cM}$, 
also on the level of endomorphism semigroups.

\nin (a) \cite[Thm.~4]{Da96}
considers a unitary
one-parameter group $U_t = e^{itH}$ that fixes $\Omega$ and 
leaves the standard subspace $\sV_{\cM}$ invariant. 
It asserts that, if the set
\[ \cD(\delta) := \{ X \in \cM \: [H,X] \in \cM \} \]
is such that $\cD(\delta)\Omega$ is a core for $H$ in $\cH$, then 
$\Ad(U_t)\cM = \cM$ for all $t\in \R$.

\nin (b) \cite[Thm.~5]{Da96} considers a unitary
one-parameter group $U_t = e^{itH}$ fixing $\Omega$ such that 
$U_t \sV_{\cM}\subeq \sV_{\cM}$ for $t \geq 0$.
He shows that, if 
\[ \sV_\eps := \bigcap_{0 \leq t \leq \eps} U_t \sV_{\cM} \]
is cyclic for some $\eps > 0$, then
$\Ad(U_t)\cM \subeq \cM$ for $t \geq 0$.
This condition is rather close to the assumption in our
Theorem~\ref{thm:2.1} and the regularity conditions discussed 
in the following section. 
\end{rem}

\section{Regularity and Localizability}
\mlabel{sec:loc-reg}

{If $(U,\cH)$ is a unitary representation
  of the Lie group $G$ and $\sV\subeq \cH$ a standard subspace with
  $\Delta_\sV = e^{2\pi i \partial U(h)}$ for some $h \in \g$, then the
  Euler Element Theorem (Theorem~\ref{thm:2.1}) 
  describes a sufficient condition for $h$ to be an Euler element,
  and in this case it even implies the extension of $U$ to an
  (anti-)unitary extension of $G_{\tau_h}$ by~$J_\sV$.
  In this section we study the converse problem: Assuming
  that $h$ is an Euler element and $(U,\cH)$ an (anti-)unitary
  representation of $G_{\tau_h}$, when is $\sV_N$ cyclic for some
  $e$-neighborhood $N \subeq G$. We then call $U$ regular with
  respect to~$h$.
  In Subsection~\ref{subsec:4.1} we discuss various permanence properties
  of regularity and also sufficient conditions, such as
  Theorems~\ref{thm:reg-posen} and \ref{thm:posreg},
  deriving regularity from positive spectrum conditions.

  In Subsection~\ref{subsec:4.2}, we turn to localizability
  aspects of nets of real subspaces.
  Starting with an (anti-)unitary representation of
  $G_{\tau_h}$ and the corresponding standard subspace
  $\sV = \sV(h,U)$, we consider an maximal net
  $\sH^{\rm max}$ associated to some wedge region $W \subeq M = G/H$.
  We then say that $(U,\cH)$ is $(h,W)$ localizable in those subsets
  $\cO \subeq M$ for which the real subspace $\sH^{\rm max}$ is cyclic.
  Here the starting point is to assume this for $W$, which 
  by Lemma~\ref{lem:direct-net} implies that $\sH^{\rm max}(W) = \sV$,
  so that the net $\sH^{\rm max}$ satisfies (Iso), (Cov) and (BW),
  but not necessarily the Reeh--Schlieder condition.
  In this context our main results are
  Theorem~\ref{thm:local-reduct}, asserting localizability
  for linear reductive groups in all representations in all
  non-empty open subsets of the associated non-compactly causal
  symmetric space for a suitably chosen wedge region. 
  For the Lorentz group $\SO_{1,d}(\R)_e$ and its simply
  connected covering $\Spin_{1,d}(\R)$, this leads to localization
  in open subsets of de Sitter space $\dS^d$.
  Relating open subsets of $\dS^d$ with open spacelike cones in
  Minkowski space $\R^{1,d}$, this allows us to derive that,
  for the Poincar\'e group, localizability in spacelike cones
  is equivalent to the positive energy condition
  (Theorem~\ref{thm:poinloc}).

\subsection{Regularity}
\mlabel{subsec:4.1}

\begin{defn}\label{def:reg}
  We call an (anti-)unitary representation 
  $(U,\cH)$ {of $G_{\tau_h}$} {\it regular with respect to $h$}, or
{\it $h$-regular}, if there exists an 
$e$-neighborhood $N \subeq G$ such that
$\sV_N = \bigcap_{g \in N} U(g)\sV$ is cyclic.
{Replacing $N$ by its interior, we may always assume that $N$ is
  open.}
\end{defn}

\begin{rem}
In these terms, Theorem~\ref{thm:2.1} asserts that, 
if $U$ is a unitary representation with discrete kernel,
$\sV$ is a standard subspace and $h \in \g$ with
$\Delta_\sV = e^{2\pi i \, \partial U(h)}$, then $h$-regularity 
implies that $h$ is an Euler element and
that the prescription $U(\tau_h) := J$ extends $U$ to an 
(anti-)unitary representation of $G_{\tau_h}$. 
\end{rem}

This leads us to the problem to determine which
 (anti-)unitary  representations $(U,\cH)$
of~{$G_{\tau_h}$} are
$h$-regular. We start with a few general observations

\begin{examples}\label{ex:reg}
(a) If $G$ is abelian, then $\tau_h = \id_\g$
  and $J$ commutes with $U(G)$.
  Therefore $U(g)\sV = \sV$ for all $g \in G$ and
  thus all representations are regular.

  \nin (b) From \cite{FNO23} it follows that all irreducible  (anti-)unitary 
  representations are regular for any Euler element 
  if $G$ is a simple linear Lie group
  or $\g \cong \fsl_2(\R)$. 
  In Corollary~\ref{cor:real-red} below, this is extended
  to all connected linear real reductive Lie groups. 
    
  \nin (c) Let $L =  \SO_{1,d}(\R)_e$ be the connected Lorentz group
  and $h \in \so_{1,d}(\R)$ a boost generator.
Then  all  (anti-)unitary  representations
of the proper Lorentz group $L_+ \cong L_{\tau_h}$ are $h$-regular.
This follows from  $d = 1$ from (a) and, for $d \geq 2$, from (b).
\end{examples}

\begin{lem} \mlabel{lem:3.4b}
  For an  (anti-)unitary  representation $(U,\cH)$ of $G_{\tau_h}$,
  the  following assertions hold:
  \begin{itemize}
  \item[\rm(a)] If $U = U_1 \oplus U_2$ is a direct sum, then
    $U$ is $h$-regular if and only if $U_1$ and $U_2$ are $h$-regular.
  \item[\rm(b)] If $U$ is $h$-regular, then every subrepresentation is
    $h$-regular. 
  \end{itemize}
\end{lem}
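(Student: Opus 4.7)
The plan is to deduce both statements from the decomposition formula $\sV_A = \sV_{1,A} \oplus \sV_{2,A}$ in \eqref{eq:v-dirsum}, combined with the elementary fact that an orthogonal sum of closed real subspaces $\sK_1 \oplus \sK_2 \subeq \cH_1 \oplus \cH_2$ is cyclic if and only if each $\sK_j$ is cyclic in $\cH_j$.

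For (a), suppose first that $U = U_1 \oplus U_2$ is $h$-regular and pick an open $e$-neighborhood $N \subeq G$ with $\sV_N$ cyclic in $\cH = \cH_1 \oplus \cH_2$. Then
\[ \sV_N + i\sV_N = (\sV_{1,N} + i\sV_{1,N}) \oplus (\sV_{2,N} + i\sV_{2,N}) \]
is dense, and projecting onto each factor shows that $\sV_{j,N} + i\sV_{j,N}$ is dense in $\cH_j$ for $j = 1,2$; hence $U_1$ and $U_2$ are $h$-regular with the same $N$. Conversely, if $U_j$ is $h$-regular with $e$-neighborhood $N_j$, put $N := N_1 \cap N_2$. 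Since $N \subeq N_j$, we have $\sV_{j,N} \supeq \sV_{j,N_j}$, so each $\sV_{j,N}$ is cyclic, and therefore $\sV_N = \sV_{1,N} \oplus \sV_{2,N}$ is cyclic in~$\cH$.

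For (b), let $\cH_1 \subeq \cH$ be a closed $G_{\tau_h}$-invariant subspace carrying a subrepresentation $U_1$ of $U$. Its orthogonal complement $\cH_2 := \cH_1^\perp$ is invariant under every $U(g)$, $g \in G$, by unitarity, and is also invariant under $J = U(\tau_h^G)$ because for $\xi \in \cH_2$ and $\eta \in \cH_1$ we have $\langle J\xi, \eta\rangle = \overline{\langle \xi, J\eta\rangle} = 0$, using that $J\eta \in \cH_1$. Hence $\cH_2$ is $G_{\tau_h}$-invariant and $U = U_1 \oplus U_2$ as (anti-)unitary representations of $G_{\tau_h}$. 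Now (a) immediately yields that $U_1$ is $h$-regular, and since every subrepresentation arises in this way, the claim follows.

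The only mild subtlety is the invariance of the orthogonal complement under the antiunitary $J$, but this is immediate from the conjugate-linear version of the adjoint identity. Neither part requires any Lie-theoretic input; both are formal consequences of the decomposition \eqref{eq:v-dirsum} and standard Hilbert space arguments, so no serious obstacle is anticipated.
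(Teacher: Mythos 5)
Your proof is correct and follows essentially the same route as the paper: both rest on the decomposition $\sV_N = \sV_{1,N} \oplus \sV_{2,N}$ from \eqref{eq:v-dirsum} together with the fact that an orthogonal sum of real subspaces is cyclic iff each summand is. You merely spell out two details the paper leaves implicit — passing to $N_1 \cap N_2$ in the converse direction of (a), and the $J$-invariance of the orthogonal complement in (b) — both of which are handled correctly.
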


\begin{prf} (a) If $U \cong  U_1 \oplus U_2$,
  then \eqref{eq:v-dirsum} implies that
  $\sV_N = \sV_{1,N} \oplus \sV_{2,N}$ for every
  $e$-neighborhood $N \subeq G$.
  In particular, $\sV_N$ is cyclic if and only if $\sV_{1,N}$ and
  $\sV_{2,N}$ are. 

  \nin (b) follows immediately from (a). 
\end{prf}

Applying Lemma~\ref{lem:g-inter}(b) to $A := N$, we obtain 
the following generalization to direct integrals:

\begin{lem} \mlabel{lem:g-intera}
  Assume that $G$ has at most countably many components.
  Then a direct integral $U = \int_X^\oplus U_m \, d\mu(m)$
  is regular if and only if there exists an
    $e$-neighborhood $N \subeq G$ such that, for $\mu$-almost every $m \in X$,
    the subspace $\sV_{m,N}$ is cyclic.
  \end{lem}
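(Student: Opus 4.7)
The plan is to apply Lemma~\ref{lem:g-inter}(b) directly, with $A := N$ taken to be an open $e$-neighborhood of $G$. First I would recall the setup: in the direct integral decomposition $U = \int_X^\oplus U_m\, d\mu(m)$ of the (anti-)unitary representation of $G_{\tau_h}$, the canonical standard subspace $\sV = \sV(h,U)$ decomposes fiberwise as $\sV = \int_X^\oplus \sV_m\, d\mu(m)$ with $\sV_m = \sV(h,U_m)$, since the modular data $\Delta_\sV = e^{2\pi i \partial U(h)}$ and $J_\sV = U(\tau_h^G)$ are implemented componentwise in the direct integral. Then, for any subset $A \subeq G$, Lemma~\ref{lem:g-inter}(a) yields the decomposition
\[ \sV_A \;=\; \bigcap_{g \in A} U(g)\sV \;=\; \int_X^\oplus \sV_{m,A}\, d\mu(m), \]
which is exactly the identity \eqref{eq:v-dirint} already invoked in the proof of Proposition~\ref{prop:3.2}.

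Next I would specialize to $A = N$ and apply Lemma~\ref{lem:g-inter}(b), which characterizes cyclicity of a direct integral of real subspaces in terms of almost-everywhere cyclicity of the fibers. For the forward direction, if $U$ is $h$-regular then by definition there exists an open $e$-neighborhood $N \subeq G$ with $\sV_N$ cyclic; Lemma~\ref{lem:g-inter}(b) then gives $\sV_{m,N}$ cyclic for $\mu$-a.e.\ $m \in X$. Conversely, if some $e$-neighborhood $N$ satisfies that $\sV_{m,N}$ is cyclic for $\mu$-a.e.\ $m$, the same lemma yields cyclicity of $\sV_N = \int_X^\oplus \sV_{m,N}\, d\mu(m)$, which is the definition of $h$-regularity of $U$.

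The assumption that $G$ has at most countably many connected components is used so that the direct integral formalism applies (guaranteeing measurability of $m \mapsto \sV_{m,N}$ and separability conditions required to invoke Lemma~\ref{lem:g-inter}), exactly as in Proposition~\ref{prop:3.2}. I do not anticipate any substantive obstacle: the nontrivial content, namely that a single $N$ works for $\mu$-a.e.\ $m$ (rather than an $m$-dependent $N_m$), is precisely what the quantifier order in Lemma~\ref{lem:g-inter}(b) delivers, and is preserved in both directions of the equivalence.
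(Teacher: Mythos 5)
Your proof is correct and follows exactly the paper's route: the paper derives this lemma in one line by applying Lemma~\ref{lem:g-inter}(b) with $A := N$, which is precisely the argument you spell out (including the fiberwise decomposition of $\sV$ via \eqref{eq:v-dirint} and the role of the countability hypothesis in reducing to a countable dense subset of $N$). No gaps.
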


To deal with tensor products, we need the following observations 
from \cite{LMR16}: 

\begin{lem} Let $\sV_j \subeq \cH_j$, $j = 1,\ldots, n$, be standard 
  subspaces with the modular data $(\Delta_j, J_j)$.
  Then the closed real span 
  \[\sV :=  \sV_1 \otimes \cdots \otimes \sV_n \]
  of the elements $v_1 \otimes \cdots \otimes v_n$,
  $v_j \in \sV_j$, is a standard subspace of
  \[ \cH := \cH_1 \otimes \cdots \otimes \cH_n\]
  with modular data
  \[ \Delta = \Delta_1 \otimes \cdots \otimes \Delta_n \quad \mbox{ and } \quad
    J = J_1 \otimes \cdots \otimes J_n.\]
  Moreover,
  \[ \sV' = \sV_1' \otimes \cdots \otimes \sV_n'.\] 
\end{lem}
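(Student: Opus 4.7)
The plan is to reduce to $n=2$ by induction on $n$ (the general case follows by grouping factors, using associativity of the Hilbert tensor product and of the conjugation/modular operator tensor products), and then to invoke Proposition~\ref{prop:11} to identify $\sV$ with the unique standard subspace specified by the proposed modular data. Set $J := J_1 \otimes \cdots \otimes J_n$ and $\Delta := \Delta_1 \otimes \cdots \otimes \Delta_n$, the latter defined by the spectral calculus on $\cH$. Then $J$ is a conjugation, $\Delta$ is positive selfadjoint, and $J\Delta J = \Delta^{-1}$ since $J_j \Delta_j J_j = \Delta_j^{-1}$ factorwise. By Proposition~\ref{prop:11} there is a unique standard subspace $\tilde\sV \subseteq \cH$ with modular data $(\Delta, J)$, characterized as $\tilde\sV = \Fix(J\Delta^{1/2})$.

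First I would show $\sV \subseteq \tilde\sV$. For $v_j \in \sV_j$ we have $v_j \in \cD(\Delta_j^{1/2})$ with $J_j \Delta_j^{1/2} v_j = v_j$. Consequently the elementary tensor $v := v_1 \otimes \cdots \otimes v_n$ lies in $\cD(\Delta^{1/2})$ with $\Delta^{1/2} v = \Delta_1^{1/2} v_1 \otimes \cdots \otimes \Delta_n^{1/2} v_n$, hence $J\Delta^{1/2}v = v$. As $\tilde\sV$ is closed and real linear, $\sV \subseteq \tilde\sV$; in particular $\sV$ is separating. Next, I would verify cyclicity of $\sV$: the complex linear span of $\sV$ contains, by multilinearity over~$\C$, every elementary tensor $\xi_1 \otimes \cdots \otimes \xi_n$ with $\xi_j \in \sV_j + i\sV_j$, and since each $\sV_j + i\sV_j$ is dense in $\cH_j$, this span is dense in~$\cH$. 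Thus $\sV$ is cyclic, hence standard.

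To conclude $\sV = \tilde\sV$, I would show that $\sV$ is invariant under the modular group of $\tilde\sV$: on an elementary tensor $v_1 \otimes \cdots \otimes v_n \in \sV$,
\[ \Delta^{it}(v_1 \otimes \cdots \otimes v_n) = \Delta_1^{it}v_1 \otimes \cdots \otimes \Delta_n^{it} v_n \in \sV,\]
since each $\Delta_j^{it}\sV_j = \sV_j$; by closure $\Delta^{it}\sV = \sV$ for every $t \in \R$. Thus $\sV$ is a cyclic closed real subspace of the standard subspace $\tilde\sV$, invariant under the modular group of $\tilde\sV$, so \cite[Prop.~3.10]{Lo08} (already used in Lemma~\ref{lem:direct-net}) gives $\sV = \tilde\sV$. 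This yields simultaneously that $\sV$ is standard and that $J_\sV = J$, $\Delta_\sV = \Delta$. The final assertion follows from $\sV' = J_\sV\sV$: applying $J = J_1 \otimes \cdots \otimes J_n$ to the elementary tensors spanning $\sV$ gives elementary tensors $J_1 v_1 \otimes \cdots \otimes J_n v_n$ with $J_j v_j \in J_j\sV_j = \sV_j'$, whose closed real span is $\sV_1' \otimes \cdots \otimes \sV_n'$.

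The only technical point requiring care is the claim in the first step that elementary tensors of vectors $v_j \in \cD(\Delta_j^{1/2})$ lie in $\cD(\Delta^{1/2})$ with $\Delta^{1/2}$ acting multiplicatively on them; this is a standard consequence of the joint spectral theorem applied to the commuting family of selfadjoint operators $\Delta_j \otimes \1$ obtained by ampliation, together with the identity $\Delta = \prod_j (\Delta_j \otimes \1)$ in the functional calculus. No other step is delicate; everything else reduces to Proposition~\ref{prop:11} and the multilinearity of the tensor product.
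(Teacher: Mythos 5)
Your proof is correct. Note, however, that the paper itself gives essentially no argument here: it simply cites \cite[Props.~2.5, 2.6]{LMR16} for the case $n=2$ and reduces the general case to it by induction. What you have written is therefore a self-contained proof of the cited result rather than a variant of the paper's argument. Your route --- define the candidate modular data $(\Delta, J)$, check $J\Delta J = \Delta^{-1}$, obtain the associated standard subspace $\tilde\sV = \Fix(J\Delta^{1/2})$ via Proposition~\ref{prop:11}, verify $\sV \subeq \tilde\sV$ on elementary tensors, prove cyclicity of $\sV$ by complex multilinearity, and then upgrade the inclusion to equality via invariance of $\sV$ under $\Delta^{i\R}$ and \cite[Prop.~3.10]{Lo08} --- is sound, and it has the advantage of handling all $n$ at once without the induction. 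The two points that genuinely require care are both handled correctly: the closedness of $\Fix(J\Delta^{1/2})$ (which follows from closedness of the Tomita operator) and the multiplicative action of $\Delta^{1/2}$ on elementary tensors of vectors $v_j \in \cD(\Delta_j^{1/2})$, which you rightly reduce to the joint functional calculus of the strongly commuting ampliations $\Delta_j \otimes \1$. The derivation of $\sV' = J\sV = \sV_1' \otimes \cdots \otimes \sV_n'$ from the identity $J_\sH \sH = \sH'$ and the fact that the antiunitary $J$ carries closed real spans to closed real spans is also correct and replaces the paper's citation of \cite[Prop.~2.5]{LMR16}.
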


\begin{prf} The first assertion follows easily by induction from the case
  $n = 2$ (\cite[Prop.~2.6]{LMR16}).
The second assertion follows by induction from \cite[Prop.~2.5]{LMR16}. 
\end{prf}

\begin{ex} \mlabel{ex:sl2-inc} 
Consider the group
  $G = \tilde\SL_2(\R)$, an Euler element
  $h \in \g = \fsl_2(\R)$ (they are all conjugate)
  and an irreducible  (anti-)unitary  representation
  $(U_1, \cH_1)$ of $G_{\tau_h}$
  for which $U_1(Z(G)) \not\subeq \{\pm \1\}$.
  We then consider the antiunitary representation 
 \[ U := U_1 \otimes \oline{U_1} \quad \mbox{ of $G_{\tau_h}$ on }  \quad
   \cH_1 \otimes \oline{\cH_1}\]
and observe that $U_1(Z(G)) \subeq \T \1$ implies that
  $U$ factors through the group $G/Z(G) \cong \PSL_2(\R)$.
    For $\sV_1 := \sV(h, U_1)$, $\sV_1' = \sV(h, \oline U_1)$,
    and $\sV := \sV(h,U)$, we then have
    \[ \sV_{Z(G)} = \sV = \sV_1 \otimes \sV_1'
      \subeq \cH = \cH_1 \otimes \oline{\cH_1}.\]
  However, $U_1(Z(G))  \subeq \T\1$ is a subgroup containing
  non-real numbers, so that
  \[ \sV_{1,Z(G)} =\bigcap_{z \in Z(G)} U_1(z) \sV_1 = \{0\}.\]
  We therefore have
  \[ \sV_{Z(G)} = \sV  \not=    \sV_{1,Z(G)} \otimes \sV_{1,Z(G)}' = \{0\}.\]
\end{ex}

\begin{ex} \mlabel{ex:sl2-incb}
 Another example from AQFT, where
  strict inclusions of the type \eqref{eq:vtens} arise, 
  is contained in
  \cite[Sect. 4.2.2]{MT19}.
  We present the example in a slightly different way from \cite{MT19} in order to fit it with the language introduced in this paper.
  It is obtained by
  second quantization of the 
  tensor product of $\U(1)$-current chiral one-particle nets.
  Consider the $1+1$-dimensional Minkowski spacetime $\RR^{1,1}$  
with the quadratic form $x^2=x_0^2-x_1^2$, where {spacetime events
  are denoted $x=(x_0,x_1)$}. One can now pass to chiral coordinates: 
\begin{equation}\label{eq:+-} (x_+, x_-) = \Big(\frac{x_0+x_1}{\sqrt2},\frac{x_0-x_1}{\sqrt2}\Big)\end{equation}
In these coordinates, the right and left wedge in $\R^{1,1}$ are given by
\[ W_R = \R_+ \times \R_-  \quad \mbox{ and } \quad
  W_L = \R_- \times \R_+.\]
Consider the BGL net $(\sH(I))_{I \subeq \R_\infty}$  indexed by intervals on the 
compactified real line $\R_\infty = \R \cup \{\infty\}$,
associated with the (anti-)unitary lowest weight 1 representation
$(U,\cH)$ of the M\"obius group $\Mob_{\tau_h}$ with respect to
the Euler element 
$h\in\fsl_2(\RR)$, the generator of the dilations, acting by $\exp(th)x=e^tx$.
We form the tensor product net
\[ \RR^{1,1}\supset I_1 \times I_2 \mapsto \tilde \sH(I_1 \times I_2) 
  :=\sH(I_1)\otimes \sH(I_2)\subset\cH\otimes\cH, \] 
where $I_1$ and $I_2$ are intervals in  $\RR_\infty$.
A pair of intervals specifies a region
\[ \cD_{I_1,I_2} := \{(x_+,x_-)\in\RR^{1,1}: x_+\in I_1 , x_-\in I_2\}.\] 
Here we only consider intervals $I_1, I_2 \subeq \R$, so that the
product set $I_1  \times I_2 \subeq \R_\infty^2$ can be identified with 
$\cD_{I_1, I_2}$, and this set is connected.

The net $\tilde \sH$ on ``rectangles'' in $\R_\infty^2$ is covariant for the representation $U\otimes U$ of the group
$\Mob^2_{\tau_h} := (\Mob\times\Mob)_{(\tau_h,\tau_h)}$.
Note that
the identity component of the Poincar\'e group $\cP_+^\uparrow$ and the
dilation group $(D(t))_{t \in \R^+}$ are contained in the group 
$\Mob^2$.  Let $r$ be the space reflection  $r(x_0,x_1) =
(x_0, -x_1)$, resp., by $r(x_+,x_-) = (x_-, x_+)$.
We consider the group
$\Mob^2_{r,\tau_h}$, generated by $\Mob^2_{\tau_h}$ and $r$.
We implement the reflection $r$ unitarily
on $\cH\otimes\cH$ as the flip, acting on simple tensors
by $U(r)(\xi\otimes\eta) =\eta\otimes\xi$. 
This extends $U\otimes U$ to an (anti-)unitary representation 
  $U^{(2)}$ of $\Mob^2_{r,\tau_h}$ 
  for which the net $\tilde\sH$
  is covariant. 
Now let
  \[ G \cong \R^{1,1} \rtimes (\R^+ \times \OO_{1,1}(\R))^\up
    \cong \cP^\up \rtimes \R^+ \]
be the subgroup of $\Mob^2_r$ 
generated by $\cP^\up=\R^{1,1}\rtimes \OO_{1,1}(\RR)^\up$
and positive dilations.  
Clearly, 
  \[ \tilde\sH(W_R)= \sH(\RR^+)\otimes \sH(\RR^-) \quad \mbox{ and } \quad
    \tilde\sH(W_L)= \sH(\RR^-)\otimes \sH(\RR^+).\]
Let $I_1=(a,b)$ and $I_2=(c,d)$ be bounded real intervals.
Then
\[ I_1\times I_2= W^R_{a,c} \cap W^L_{b,d},\]
where
\[ W^R_{a,c}=(\RR^++a)\times(\RR^-+c) \quad \mbox{ and } \quad
  W^L_{b,d}=(\RR^-+b)\times(\RR^++d). \]

Let $A= \{g_1, g_2\}\subeq  \Mob\times\Mob$,
where $g_1 W_R=W^R_{a,c}$ and $g_2W_R=W^L_{b,d}$.
{For
\[ \sV := \tilde \sH(W_R),\]
we now derive from isotony }
  \begin{equation}
    \label{eq:prop-incl}
 \sV_A= \tilde\sH(W^R_{a,c})\cap \tilde\sH(W^L_{b,d})
 \supset \tilde\sH(I_1 \times I_2)
 = \sH(I_1) \otimes \sH(I_2)
  = \tilde \sH(W^R_{a,c} \cap W^L_{b,d}). 
  \end{equation}
  We now consider $\tilde{\sH}^\m$, the maximal net with respect to $G$.
  In \cite[Sect.~4.4.2]{MT19} it is proved that
  $\tilde {\sH}^\m(I_1 \times I_2) = \sV_A$  properly
  contains $\tilde \sH(I_1 \times I_2) = \sH(I_1) \otimes \sH(I_2)$. The idea of the proof is that
  the net $\tilde \sH$  is  $\Mob\times\Mob$-covariant by
  construction, but the net on Minkowski space 
  \[\RR^{1+1}\supset I_1\times I_2\longmapsto \tilde \sH^\m (I_1\times I_2)\subset\cH,
    \qquad I_1,I_2\subset\RR\]
  is only $G$-covariant and. Consequently, they have to be different.
  It is easy to see (again by construction) that the net
  $\tilde\sH^\m$ is $G$-covariant with respect to
  $U^{(2)}\res_G$.
  In order to prove that it is not $\Mob\times \Mob$-covariant, one
  can argue as follows:   The representation
  \[ (U\otimes U)|_{\cP^\up}=\int_{\RR_+}^\oplus U_m d\nu(m)\]
  disintegrates to a direct integral of all positive mass
  representations $(U_m, \cH_m), m > 0$, of $\cP^\up$.  
   On wedge regions, the net is the BGL net,
   hence disintegrates into the BGL nets $\sH_m$
   over $\R_+ = (0,\infty)$
\[ \tilde\sH(W)=\int_{\RR_+}^\oplus {\sH_m(W)}\, d\nu(m)\subset\int_{\RR_+}^\oplus\cH_m d\nu(m). \] 
  By (DI2) from Appendix \ref{app:stsub}, we also have 
\[ \tilde\sH^\m(\cD)=\int_{\RR_+}^\oplus {\sH_m^{\rm max}(\cD)}\, d\nu(m)\subset\int_{\RR_+}^\oplus\cH_m d\nu(m)\]
for all open doublecones $\cD=I_1\times I_2$. 
We associate the following subspace to the forward light cone: 
\[ \sK(V_+)
  :=\overline{\sum_{\cD\subset V_+}\tilde\sH^\m(\cD)},\]
{where the union is extended over all double cones $\cD$ contained
  in $V_+$.}

Following {\cite[Prop.~4.3]{MT19}},
we have $\overline{\sum_{\cD\subset V_+}\tilde\sH_m(\cD)}=\cH_m$, so that
$\sK^\m(V_+)$ is  not separating because
\[ \sK(V_+)=\overline{\sum_{\cD\subset V_+}\tilde\sH^\m(\cD)})
  =\oline{\int_{\RR_+}^\oplus\sum_{\cD\subset V_+} \sH_m(\cD)d\nu(m)}
  =\int_{\RR_+}^\oplus\cH_m d\nu(m)=\cH.\]
Let $g\in\Mob\times\Mob$ 
such that $g\cD=V_+$ for some bounded interval~$\cD$.
We conclude that there is no unitary operator 
$Q\in\U(\cH)$, 
implementing $g$ in the sense that 
$Q\tilde\sH^\m(\cD) \supeq {\tilde\sH^\m(\tilde\cD)}$
holds for all double cones {$\tilde\cD \subeq V_+$.}
In fact, the former is a standard subspace and sum of the spaces
on the right is not separating.
\end{ex}

\begin{lem} \mlabel{lem:dense-int} Let $(U,\cH)$ be an  (anti-)unitary  representation
  of $G_{\tau_h}$ for which the cones 
  \[ C_\pm := \pm C_U \cap \g_{\pm 1}(h) \]
  have interior points in $\g_{\pm 1}(h)$
  with respect to the subspace topology.
  Then, for $\sV = \sV(h,U)$,
  the semigroup $S_\sV = \{ g \in G \: U(g)\sV \subeq \sV\}$
  has dense interior,  i.e., $S_\sV = \oline{S_{\sV}^\circ}$.
  \end{lem}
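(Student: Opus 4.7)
The plan is to combine the polar decomposition
\[
S_\sV \;=\; \exp(C_+)\, G_\sV\, \exp(C_-)
\]
from \cite[Thm.~2.16]{Ne22} (cf.\ \eqref{eq:SV}) with a local openness argument for a multiplication map. Every $g \in S_\sV$ admits a factorisation $g = \exp(x_+)\, g_0\, \exp(x_-)$ with $x_\pm \in C_\pm$ and $g_0 \in G_\sV$, so the goal is to produce a family $g_t \to g$ with $g_t \in S_\sV^\circ$.

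Fix interior points $y_\pm \in C_\pm^\circ$, which exist by hypothesis. Since $C_\pm$ is a convex cone, adding any cone element to an interior point yields an interior point, so $x_\pm + t y_\pm \in C_\pm^\circ$ for every $t > 0$. Hence the elements
\[
g_t := \exp(x_+ + t y_+)\, g_0\, \exp(x_- + t y_-)
\]
lie in $S_\sV$ and converge to $g$ as $t \downarrow 0$. It therefore suffices to show that $\exp(z_+)\, k\, \exp(z_-) \in S_\sV^\circ$ whenever $z_\pm \in C_\pm^\circ$ and $k \in G_\sV$.

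To this end I consider the smooth multiplication map
\[
\Phi \:\; N_+ \times G_\sV \times N_- \to G,\qquad (u_+, \ell, u_-) \mapsto u_+\, \ell\, u_-,
\]
with $N_\pm := \exp(\g_{\pm 1}(h))$. Its differential at $(e,e,e)$ is the sum map $\g_1(h) \oplus \L(G_\sV) \oplus \g_{-1}(h) \to \g$. Here $\g_0(h) \subeq \L(G_\sV)$: for $v_0 \in \g_0(h)$ the relations $[v_0, h] = 0$ and $\tau_h(v_0) = v_0$ imply that $U(\exp v_0)$ commutes with both $\Delta_\sV = e^{2\pi i \partial U(h)}$ and $J_\sV = U(\tau_h^G)$, so $\exp v_0 \in G_\sV$. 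Consequently the differential is surjective onto $\g_1(h) + \g_0(h) + \g_{-1}(h) = \g$, so $\Phi$ is a submersion at $(e,e,e)$. Since $\Ad(G_\sV)$ preserves the $3$-grading (so that $G_\sV$ normalises $N_\pm$), the surjectivity of $d\Phi$ propagates to every $(u_+, k, u_-)$ by left and right translation within $G$. Restricted to the open subset $\exp(C_+^\circ) \times G_\sV \times \exp(C_-^\circ)$, the image of $\Phi$ lies in $\exp(C_+)\, G_\sV\, \exp(C_-) = S_\sV$ and contains an open $G$-neighbourhood of every $\exp(z_+)\, k\, \exp(z_-)$; thus each $g_t$ belongs to $S_\sV^\circ$.

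The main technical point is the surjectivity of $d\Phi$ at an arbitrary triple $(u_+, k, u_-)$, which reduces via translation to the submersion property at $(e,e,e)$ but requires the $\Ad(G_\sV)$-invariance of the grading $\g = \g_{-1}(h) \oplus \g_0(h) \oplus \g_1(h)$. This is automatic when $\dd U$ is faithful, and in general it is encoded in the polar decomposition of $S_\sV$ referenced above; once it is in place, the perturbation argument is routine.
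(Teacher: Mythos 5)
Your overall strategy---feeding the polar decomposition $S_\sV = \exp(C_+)\,G_\sV\,\exp(C_-)$ into a local openness argument---is sound, and in the case where $\ker U$ is discrete your proof works. The genuine gap is that the lemma makes no such assumption, and both of the inputs you rely on are only available under it. The decomposition \eqref{eq:SV} (i.e.\ \cite[Thm.~3.4]{Ne22}) is established for representations with discrete kernel; without that hypothesis an element $g \in G_\sV$ only satisfies $\Ad(g)h - h \in \ker \dd U$ rather than $\Ad(g)h = h$, so $\Ad(G_\sV)$ need not preserve the grading, $G_\sV$ need not normalise $\exp(\g_{\pm 1}(h))$, and your translation argument for the surjectivity of $d\Phi$ at a general triple $(u_+,k,u_-)$ breaks down. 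Your closing remark that this ``is encoded in the polar decomposition'' is not a substitute for an argument: the decomposition says nothing about $\Ad(G_\sV)$ preserving $\g_{\pm 1}(h)$. The paper's proof spends its entire first half on exactly this point: it passes to $G^r = G/\ker(U)$, observes that the projected cones $C_\pm/\fn_{\pm 1}(h)$ are still generating, applies \cite[Thm.~3.4]{Ne22} there, and then pulls the dense-interior property back along the quotient map $G \to G^r$ using its continuous local sections. If you insert this reduction at the outset, your argument becomes correct.

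Beyond the gap, the paper also produces interior points more cheaply than you do. Since $S_\sV$ is a subsemigroup, $g\,S_\sV^\circ \subeq S_\sV^\circ$ for every $g \in S_\sV$, so it suffices to approximate the identity $e$ by interior points; these are supplied by the local diffeomorphism $(x_0,x_1,x_{-1}) \mapsto \exp(x_0)\exp(x_1 + x_{-1})$ at the origin together with the openness of $C_\pm^\circ$ in $\g_{\pm 1}(h)$. This avoids verifying the submersion property of $\Phi$ at arbitrary base points altogether. Your perturbation $g_t = \exp(x_+ + t y_+)\,g_0\,\exp(x_- + t y_-)$ and the cone fact $x_\pm + t y_\pm \in C_\pm^\circ$ are correct, but they do more work than the semigroup trick requires.
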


  Note that, if $C_U$ has interior points, then so do
    the cones $C_\pm$, because they are the projections of $\pm C_U$
    onto $\g_{\pm 1}(h)$.
  
\begin{prf} Let $G^r := G/\ker(U)$ and $\fn := \L(\ker U) = \ker(\dd U)$.
  We write $U^r \: G^r \to \U(\cH)$ for the unitary representation
  of $G^r$ defined by~$U$.   Then
  \[ C_U = C_U + \fn \quad \mbox{ and }  \quad
    C_U /\fn = C_{U^r}.\]
  Moreover, for $\fn_\lambda(h) = \fn \cap \g_{\pm \lambda}(h)$ we have 
  \[ \g^r_\lambda(h) \cong \g_\lambda(h)/\fn_\lambda(h)
    \quad \mbox{ for }\quad \lambda =1,0,-1.\]
  Therefore the cones 
  \[ C_\pm^r := \pm C_{U^r} \cap \g^r_{\pm 1}(h)
    = C_\pm/ \fn_{\pm 1}(h) \]
  are generating and
  \[ S^r_\sV := \{ g \in G^r \: U^r(g)\sV \subeq \sV\}
    = G^r_{\sV} \exp(C_+^r + C_-^r) \]
  by \cite[Thm.~3.4]{Ne22}.
  To see that this semigroup has dense interior, it suffices to show
    that $e$ can be approximated by interior points.
    Since both cones $C_\pm^r$ have dense interior
    and the map
    \[ \g_0(h) \times \g_1(h) \times \g_{-1}(h) \to G, \quad
      (x_0, x_1, x_{-1}) \mapsto \exp(x_0) \exp(x_1 + x_{-1}) \]
is a local diffeomorphism around $(0,0,0)$, 
the semigroup $S^r_\sV$ has dense interior.
As $S_\sV \subeq G$ is the full
  inverse image of $S^r_{\sV}$ under the quotient map $G \to G^r$, which has continuous local  sections, 
 it has dense interior as well. 
\end{prf}

\begin{theorem} \mlabel{thm:reg-posen} {\rm(Regularity via positive energy)}
  If $(U,\cH)$ is an (anti-)unitary representation of $G_{\tau_h}$
  for which the cones
  \[ C_\pm := \pm C_U \cap \g_{\pm 1}(h) \]
  are generating in $\g_{\pm 1}(h)$, then $(U,\cH)$ is regular.   
\end{theorem}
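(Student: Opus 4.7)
The plan is to combine Lemma~\ref{lem:dense-int} with a short continuity argument. First I would observe that if $C_\pm$ are generating closed convex cones in the finite-dimensional spaces $\g_{\pm 1}(h)$, then they automatically have non-empty interior in the subspace topology (a standard fact: a closed convex cone $C$ in a finite-dimensional vector space $V$ satisfies $C - C = V$ if and only if $C^\circ \neq \eset$). Hence the hypotheses of Lemma~\ref{lem:dense-int} are satisfied, so the endomorphism semigroup $S_\sV = \{ g \in G \: U(g)\sV \subeq \sV\}$ has non-empty interior; indeed $S_\sV = \oline{S_\sV^\circ}$.

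Next I would pick any element $g_0 \in S_\sV^\circ$ and use the continuity of the map
\[ \phi \: G \to G, \quad \phi(g) := g^{-1}g_0. \]
Since $\phi(e) = g_0 \in S_\sV^\circ$ and $S_\sV^\circ \subeq G$ is open, the preimage $N := \phi^{-1}(S_\sV^\circ)$ is an open $e$-neighborhood in~$G$. By construction, for every $g \in N$ one has $g^{-1} g_0 \in S_\sV$, which translates into $U(g^{-1}) U(g_0) \sV \subeq \sV$, i.e.
\[ U(g_0)\sV \subeq U(g)\sV \quad \mbox{ for every } \quad g \in N. \]
Intersecting over $g \in N$ gives the crucial inclusion
\[ U(g_0)\sV \subeq \bigcap_{g \in N} U(g) \sV = \sV_N. \]

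Finally, the right-hand side inherits cyclicity: by Lemma~\ref{lem:sym} the image $U(g_0)\sV$ of a standard subspace under the unitary operator $U(g_0)$ is again a standard subspace, hence cyclic. Therefore $\sV_N$ is cyclic as well, which is precisely the definition of $h$-regularity in Definition~\ref{def:reg}. There is no serious obstacle here beyond invoking Lemma~\ref{lem:dense-int}; the only small point to check is the elementary fact that ``generating'' implies ``has interior points'' for closed convex cones in finite-dimensional spaces, which is what allows us to feed the hypothesis into Lemma~\ref{lem:dense-int} in the first place.
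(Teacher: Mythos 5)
Your proof is correct and follows essentially the same route as the paper: both arguments reduce the claim to the observation that $U(g_0)\sV \subeq \sV_N$ whenever $N^{-1}g_0 \subeq S_\sV$, obtain an interior point $g_0$ of $S_\sV$ from Lemma~\ref{lem:dense-int}, and conclude cyclicity of $\sV_N$ from that of the standard subspace $U(g_0)\sV$. Your explicit remark that a generating closed convex cone in a finite-dimensional space has non-empty interior is exactly the (implicit) bridge the paper uses to feed the hypothesis of the theorem into Lemma~\ref{lem:dense-int}.
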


\begin{prf} For a subset
  $N \subeq G$ and $g_0 \in G$, we note that
  \begin{equation}
    \label{eq:Nsv}
  U(g_0)\sV \subeq \sV_N \qquad \Leftrightarrow \qquad
  N^{-1} g_0 \subeq S_\sV.
  \end{equation}
  From Lemma~\ref{lem:dense-int} we infer that   $S_\sV$ has an interior point~$g_0$,
  so that the above condition is satisfied for some $e$-neighborhood~$N$.
  As $U(g)\sV$ is cyclic, it follows in particular that $\sV_N$ is cyclic.
\end{prf}

\begin{rem} \mlabel{rem:quant}
  (a) The condition on the cone $C_\pm$ to be generating holds for
{positive energy representations of}
  the M\"obius group. Up to sign, the
  only pointed, generating (in the sense of having interior points) 
  closed convex $\Ad$-invariant cone is
 \[  C := \{ X \in \g \: {V_X} \geq 0\} 
= \Big\{  X= \pmat{a & b \\ c & -a} \: 
b \geq 0, c \leq 0, a^2 \leq -bc\Big\}.\]
For the Euler element $h = \frac{1}{2}\pmat{1 & 0 \\ 0 & -1}$ we have
\[ C_\pm = \pm C \cap \g_{\pm 1}(h), \quad
  C_+ = \R_+ \pmat{0& 1\\ 0 & 0}, \quad 
  C_- = \R_+ \pmat{0& 0\\1 & 0},\]
and the half lines $C_\pm$ in $\g_{\pm 1}(h)$ also have interior points.  
In general the generating property of the cones {$C_\pm$ in $\g_{\pm 1}(h)$}  is rather strong.
For instance it is not satisfied by positive energy representations of the  Poincar\'e group 
on $\R^{1,3}$. 
Theorem \ref{thm:posreg} will show how to derive regularity 
if the cones $C_\pm$ are not generating; see~Remark \ref{rem:poi}.

  \nin (b) From the proof of Theorem~\ref{thm:reg-posen}
  one can derive some more specific
  quantitative information. If
  $N$ is an $e$-neighborhood contained
  in $g_0^{-1} S_\sV$ for some $g_0 \in S_\sV$, then the argument implies that
  $\sV_N$ is cyclic.

  \nin (c) If $[\g_1(h), \g_{-1}(h)] = \{0\}$, then
  $B := \exp(\g_1(h) + \g_{-1}(h))$ is an abelian subgroup of $G$
  and $S_\sV \supeq G^h_e \exp(C_+ + C_-)$.
  If $C \subeq B$ is any compact $e$-neighborhood,
  then there exists a $b_0 \in S_\sV$ with $C^{-1}b_0 \subeq S_\sV$,
  so that $ G^h_e C^{-1} b_0 \subeq S_\sV$ and
  thus $\sV_{C G^h_e} = \sV_C \supeq U(b_0)\sV$
  is cyclic. It follows that $N$ can be chosen
  arbitrarily large, whenever the cones $C_\pm$ are generating.
  {A typical example is given by the $3$-dimensional Poincar\'e algebra
    in dimension~$1+1$.} 
\end{rem}

Note that the subgroups $G_{\pm 1}(h) := \exp(\g_{\pm 1}(h)) \subeq G$
are abelian. 

\begin{theorem}\label{thm:posreg} Suppose that $G = R \rtimes L$ is a semidirect product.
  Let $(U,\cH)$ be an anti\-unitary representation such that
  \begin{itemize}
  \item $(U\res_L,\cH)$ is regular, and 
  \item the cones $C_\pm := \pm C_U \cap \fr_{\pm 1}(h)$ generate
    $\fr_{\pm 1}(h)$. 
  \end{itemize}
  Then $(U,\cH)$ is regular.
\end{theorem}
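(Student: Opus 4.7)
The plan is to combine the regularity of $U\res_L$ with a positive-cone argument on the normal factor $R$, exhibiting a cyclic subspace of the form $U(r_0)\sV_{N_L}$ inside $\sV_N$ for a suitably chosen open $e$-neighborhood $N\subeq G$. Throughout I assume that $R$ is stable under $\tau_h^G$ (automatic when $R$ is characteristic, as for the unipotent radical in the applications below), so $U\res_R$ is an (anti-)unitary representation and its positive cone intersected with $\fr_{\pm 1}(h)$ equals $C_\pm$.

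First I show that $S_\sV\cap R$ has non-empty interior in $R$, even though $h$ itself need not lie in $\fr$. Since $[\fr_{\pm 1}(h),\fr_{\pm 1}(h)]\subeq\fr_{\pm 2}(h)=0$ and $C_\pm$ generate $\fr_{\pm 1}(h)$, the inclusion $\exp(C_+ + C_-)\subeq S_\sV$ provides a subset with non-empty interior in $\exp(\fr_1(h)+\fr_{-1}(h))$. Moreover, for $x\in\g_0(h)$ one has $\tau_h(x)=x$, so $U(\exp x)$ commutes with both $\Delta_\sV$ and $J_\sV$, giving $\exp(\g_0(h))\subeq G_\sV$ and in particular $\exp(\fr_0(h))\subeq G_\sV\cap R$. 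Because $\fr=\fr_{-1}(h)\oplus\fr_0(h)\oplus\fr_1(h)$, the product $\exp(\fr_0(h))\exp(C_+ + C_-)$ is a neighborhood of $e$ in $R$ contained in $S_\sV\cap R$. Pick $r_0$ in the interior. By continuity of $\ell\mapsto c_{\ell^{-1}}(r_0)$ and the tube lemma applied to a compact $e$-neighborhood $N_L^\circ\subeq L$, there is an open $e$-neighborhood $M\subeq R$ with
\begin{equation*}
 M\cdot c_{\ell^{-1}}(r_0)\;\subeq\; S_\sV\cap R\qquad\text{for every }\ell\in N_L^\circ.
\end{equation*}

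Next, $L$-regularity furnishes an open $e$-neighborhood $N_L^{\mathrm{reg}}\subeq L$ with $\sV_{N_L^{\mathrm{reg}}}$ cyclic. Set $N_L:=N_L^\circ\cap N_L^{\mathrm{reg}}$; shrinking the indexing set only enlarges the intersection, so $\sV_{N_L}$ remains cyclic. Choose an open $e$-neighborhood $N_R\subeq M^{-1}$ in $R$ and form $N:=N_L\cdot N_R$, an open $e$-neighborhood in $G=R\rtimes L$. The key computation: for $g=\ell r\in N$ with $\ell\in N_L$, $r\in N_R$, the identity $\ell^{-1}r_0 = c_{\ell^{-1}}(r_0)\,\ell^{-1}$ gives
\begin{equation*}
 U(g^{-1}r_0)\;=\;U\!\bigl(r^{-1}c_{\ell^{-1}}(r_0)\bigr)\,U(\ell^{-1}).
\end{equation*}
Since $\sV_{N_L}\subeq U(\ell)\sV$ yields $U(\ell^{-1})\sV_{N_L}\subeq\sV$, and since $r^{-1}\in M$ forces $r^{-1}c_{\ell^{-1}}(r_0)\in M\cdot c_{\ell^{-1}}(r_0)\subeq S_\sV$, one concludes $U(g^{-1}r_0)\sV_{N_L}\subeq\sV$, i.e.\ $U(r_0)\sV_{N_L}\subeq U(g)\sV$. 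As $g\in N$ was arbitrary, $U(r_0)\sV_{N_L}\subeq \sV_N$. Cyclicity of $\sV_{N_L}$ together with unitarity of $U(r_0)$ then force $\sV_N$ to be cyclic, proving $h$-regularity of $(U,\cH)$.

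The main technical obstacle is the uniformity produced by the tube lemma in the first step: one needs a single $e$-neighborhood $M$ in $R$ that witnesses $r^{-1}c_{\ell^{-1}}(r_0)\in S_\sV$ for every $\ell$ in a whole compact $N_L^\circ$ simultaneously, and the $L$-regularity neighborhood must be shrinkable into $N_L^\circ$ without losing cyclicity. Both points are handled by elementary continuity and the fact that shrinking the indexing set of $\sV_{(\cdot)}$ only enlarges it.
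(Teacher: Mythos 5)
Your argument is correct and rests on the same two pillars as the paper's proof: the generating cones $C_\pm$ produce an interior point $r_0$ of $S_\sV\cap R$ in $R$ (via $\exp(\fr_0(h))\subeq G_\sV$ and $\exp(C_++C_-)\subeq S_\sV$), and this is combined with the cyclic subspace $\sV_{N_L}$ furnished by $L$-regularity. The difference is purely in the assembly. The paper keeps $r_0$ on the right: from $r_0N_R^{-1}\subeq S_\sV$ it gets $U(\ell)U(r)U(r_0)^{-1}\sV\supeq U(\ell)\sV\supeq\sV_{N_L}$, i.e.\ $\sV_{N_L}\subeq \sV_A$ for the open set $A=N_LN_Rr_0^{-1}$, which is a neighborhood of $r_0^{-1}$ rather than of $e$; regularity then follows because $\sV_{a_0^{-1}A}=U(a_0)^{-1}\sV_A$ converts cyclicity over any nonempty open set into cyclicity over an $e$-neighborhood. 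You insist on landing directly on an $e$-neighborhood, which forces you to commute $r_0$ past $\ell^{-1}$ and hence to control the conjugates $c_{\ell^{-1}}(r_0)$ uniformly; that costs you the tube-lemma step. That step does work, but only because $\ell\mapsto c_{\ell^{-1}}(r_0)$ is continuous with value $r_0\in(S_\sV\cap R)^\circ$ at $\ell=e$, so that \emph{after shrinking} $N_L^\circ$ one has $c_{\ell^{-1}}(r_0)\in(S_\sV\cap R)^\circ$ for all $\ell\in N_L^\circ$; since $S_\sV$ is not invariant under conjugation by $L$, this prerequisite of the tube lemma should be stated explicitly. Two small corrections: the set $\exp(\fr_0(h))\exp(C_++C_-)$ is in general \emph{not} a neighborhood of $e$ in $R$ (the identity lies on its boundary whenever $C_\pm$ are proper cones); what you actually need and use is only that it has nonempty interior, which holds because $(x_0,x_1,x_{-1})\mapsto\exp(x_0)\exp(x_1+x_{-1})$ is a local diffeomorphism and $C_++C_-$ has interior points in $\fr_1(h)\oplus\fr_{-1}(h)$. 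Also the opening caveat about $\tau_h^G$-stability of $R$ is unnecessary: $\fr$ is an ideal, hence $\ad h$-invariant and therefore $\tau_h$-invariant.
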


\begin{prf} First, let $N_L \subeq L$ be an $e$-neighborhood for which
  $\sH := \sV_{N_L}$ is cyclic.
  Our assumption implies that $S_\sV \cap R$ has interior points
in $R$ (Lemma~\ref{lem:dense-int}).  Hence there exists $r_0 \in (S_\sV \cap R)^\circ$
  and an $e$-neighborhood $N_R \subeq R$ with $r_0 N_R^{-1} \subeq S_\sV$.
  Then
  \[  U(\ell) U(r) U(r_0)^{-1} \sV \supeq U(\ell) \sV \supeq \sH
    \quad \mbox{ for } \quad \ell \in N_L, \ r \in N_R,\]
  and so regularity follows.
\end{prf}

\begin{rem}\label{rem:poi} {The condition on the cones
    $C_\pm$ in Theorem~\ref{thm:reg-posen} is stronger than
    the positive energy condition $C_U \not=\{0\}$.} The latter assumes
  the existence of a positive cone $C$ in the Lie algebra that $-i\partial U(x)\geq0$ for every $x\in C$ but does not require the generating property. Theorem \ref{thm:posreg} shows that, in order to recover the regularity of the net on Minkowski spacetime, one has to look at the  representation of the Poincar\'e group $\cP_+^\uparrow=\RR^{1,3}\rtimes \cL_+^\uparrow$  and to check the
non-triviality 
    of the one-dimensional cones $C_\pm$ in the eigenspaces
    $\fr_{\pm 1}(h) = \R (\be_0 \pm \be_1)$ (light rays)
    in the subalgebra $\fr \cong \R^{1,3}$ corresponding to translations,
  and the regularity property for the restriction of the representation
  to the {identity component $\cL_+^\up$ of the Lorentz group.}
  The first property is {equivalent to} the usual positive
  energy condition on Poincar\'e representations,
  namely the {joint spectrum of the translations is contained in}
  $\{x\in \RR^{1,3}:x^2\geq0,x_0\geq0\}$. The second one holds for every representation of the Lorentz group, see Example \ref{ex:reg} and
  Theorem~\ref{thm:local-reduct} below.
\end{rem}

\begin{rem} (a) If $G$ is simply connected,
  then $G \cong R \rtimes S$, where $S$ is semisimple and $R$
  is the solvable radical. In view of
  Theorem~\ref{thm:local-reduct}, which guarantees localizability
  for representations of $S$, Theorem~\ref{thm:posreg}
  applies whenever the cones
  $C_U \cap \fr_{\pm 1}(h)$ are generating, i.e.,
  the restriction of the representation to the abelian subgroups
  $R_\pm := \exp(\fr_{\pm 1}(h))$ have a generating positive cone.

  \nin (b) A similar remark applies to (coverings of) 
  identity components of   real algebraic groups.
  They are semidirect products $G= N\rtimes L$,
  where $N$ is unipotent and $L$ is reductive (\cite[Thm. VIII.4.3]{Ho81}).
  For these groups Theorem~\ref{thm:posreg} 
  applies whenever the cones
  $C_U \cap \fn_{\pm 1}(h)$ are generating.

  \nin (c) Presently we do not know if all (anti-)unitary representations
  of Lie groups of the form $G_{\tau_h}$, $h \in \g$ an Euler element,
  are regular. The preceding discussion shows that, to
  answer this question, a more detailed analysis of the case of solvable
  groups has to be undertaken.  
\end{rem}

\begin{prop} \mlabel{prop:2.18}
  Let $h \in \g$ be an Euler element and $G_{\tau_h}$ as above.
  An  (anti-)unitary  representation  $(U,\cH)$ of
  $G_{\tau_h}$ is regular if and only if its
  restriction to the connected normal subgroup $N_h^\natural$ with Lie algebra
  \[ \fn_h^\natural := \g_1(h) + (\R h + [\g_1(h), \g_{-1}(h)]) + \g_{-1}(h) \]
  is regular.   
\end{prop}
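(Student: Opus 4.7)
The plan is to reduce the question of regularity on $G$ to regularity on $N_h^\natural$ by exhibiting a complementary subspace $\fc$ of $\fn_h^\natural$ in $\g$ whose one-parameter subgroups stabilize $\sV$. The forward direction then becomes trivial, and the reverse direction amounts to a local coordinate argument.

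First I would verify the structural preliminaries. The subspace $\fn_h^\natural$ is a sum of $\ad h$-eigenspaces, hence $\tau_h$-invariant; using the Jacobi identity together with $[\g_1(h),\g_1(h)] \subseteq \g_2(h) = 0$ one checks that $\fn_h^\natural \trile \g$ is an ideal. Consequently $N_h^\natural \trile G$ is a connected $\tau_h^G$-invariant normal subgroup, the restriction $U|_{N_h^\natural}$ extends to an (anti-)unitary representation of $(N_h^\natural)_{\tau_h}$, and its associated canonical standard subspace coincides with $\sV = \sV(h,U)$ since the latter depends only on $\partial U(h)$ and on $J = U(\tau_h^G)$.

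Next I would identify the stabilizing complement. Because $\fn_h^\natural$ contains all of $\g_1(h) \oplus \g_{-1}(h)$, any linear complement $\fc$ of $\fn_h^\natural \cap \g_0(h) = \R h + [\g_1(h),\g_{-1}(h)]$ inside $\g_0(h)$ is a complement of $\fn_h^\natural$ in~$\g$. Crucially, $\fc \subseteq \g_0(h) = \ker(\ad h)$, so $U(\exp c)$ commutes with $\Delta_\sV = e^{2\pi i \partial U(h)}$ for every $c \in \fc$; moreover $\tau_h$ acts trivially on $\g_0(h)$, hence $\tau_h^G(\exp c) = \exp(\tau_h c) = \exp(c)$, so $U(\exp c)$ commutes with $J_\sV = U(\tau_h^G)$ as well, and therefore with the Tomita operator $J_\sV \Delta_\sV^{1/2}$. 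This yields $\exp(\fc) \subseteq G_\sV := \{g \in G :  U(g)\sV = \sV\}$.

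With these tools, the forward implication is immediate: if $\sV_N$ is cyclic for some $e$-neighborhood $N \subeq G$, then $N_1 := N \cap N_h^\natural$ is an $e$-neighborhood in $N_h^\natural$ and $\bigcap_{n \in N_1} U(n)\sV \supeq \sV_N$ is cyclic. For the converse, suppose $\sV_{N_1, N_h^\natural} := \bigcap_{n \in N_1} U(n)\sV$ is cyclic for some open $e$-neighborhood $N_1 \subeq N_h^\natural$. Since $\g = \fc \oplus \fn_h^\natural$, the map $(c,n) \mapsto \exp(c) n$ is a local diffeomorphism at $(0,e)$, so I can pick open $e$-neighborhoods $C_0 \subeq \fc$ and $N_1' \subeq N_1$ with $N := \exp(C_0) \cdot N_1'$ open in $G$; by further shrinking, using continuity of conjugation and normality of $N_h^\natural$, I may arrange $\exp(c)\,N_1'\,\exp(-c) \subeq N_1$ for all $c \in C_0$. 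Since $U(\exp c)\sV = \sV$, for $g = \exp(c) n \in N$ one has $U(g)\sV = U(\exp(c) n \exp(-c))\sV$, so
\[ \sV_N = \bigcap_{c \in C_0,\, n \in N_1'} U(\exp(c) n \exp(-c))\sV \supeq \bigcap_{n \in N_1} U(n)\sV,\]
which is cyclic. The main obstacle is Step~2: one must pick the complement $\fc$ inside the centralizer $\g_0(h)$, which is possible precisely because $\fn_h^\natural$ was engineered to contain all of $\g_{\pm 1}(h)$; without this, $\exp(\fc)$ would fail to commute with both $\Delta_\sV$ and $J_\sV$, and the reduction to $N_h^\natural$ would break down.
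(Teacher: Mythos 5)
Your proof is correct and follows essentially the same route as the paper: both rest on the decomposition $\g = \fn_h^\natural + \g_0(h)$ together with the observation that elements exponentiated from $\g_0(h)$ commute with $\Delta_\sV$ and $J_\sV$, hence lie in $G_\sV$, so the intersection over a $G$-neighborhood reduces to one over a neighborhood in $N_h^\natural$. The paper factorizes $G = N_h^\natural G^h_e$ with the stabilizer on the right (so $\sV_{N G^h_e} = \sV_N$ directly), whereas you place the stabilizing factor $\exp(C_0)$ on the left and compensate with a conjugation step; this is only a cosmetic difference.
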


Note that the equality of
 $\g = \fn_h^\natural$ is equivalent to the Euler
element $h$ being anti-elliptic in~$\g$
(cf.\ Definition~\ref{def:antiell} below). 

\begin{prf} {
Since $\g = \fn_h^\sharp + \g_0(h)$ on the Lie algebra level,
    we obtain $G = N_h^\natural G^h_e$ for the corresponding integral
    subgroups, where $N_h^\sharp$ is a normal subgroup with Lie algebra
    $\fn_h^\sharp$ and $\L(G^h_e) = \g_0(h)$.}
  Then $G^h_e \subeq G^{h,\tau_h}$ implies that
  $G^h_e \subeq G_\sV$. For any $e$-neighborhood $N \subeq N_h^\natural$,
  we therefore
  have
  \[\bigcap_{g \in N G^h_e} U(g) \sV   = \bigcap_{g \in N} U(g) \sV.\]
  Therefore $U$ is regular if and only if $U\res_{N_h^\natural}$ is regular.   
\end{prf}

\begin{prop} We consider a group
  $G = E \rtimes \R$, where $E$ is a finite-dimensional vector space
  with Lie algebra of the form 
  \[ \g = E \rtimes \R h, \]
  where $h$ is an Euler element.
Then all  (anti-)unitary  representations of $G$ are regular.   
\end{prop}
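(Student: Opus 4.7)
The plan is to prove the stronger statement that, for every (anti-)unitary representation $(U,\cH)$ of $G_{\tau_h}$, the standard subspace $\sV := \sV(h,U)$ is invariant under the entire group $G$. Once this is shown, regularity is immediate, because $\sV_N = \sV$ is then standard and in particular cyclic for every $e$-neighborhood $N \subeq G$.

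Invariance under $\exp(\R h) \subeq G_\sV$ is automatic, since $U(\exp th) = \Delta_\sV^{-it/2\pi}$ is the modular group of $\sV$. Using the decomposition $G = E\cdot \exp(\R h)$, it remains to show $U(\exp v)\sV = \sV$ for every $v \in E$. The key tool is the Borchers-type modular-translation identity
\[
\Delta_\sV^{1/2}\, U(\exp v) \;=\; U(\exp \tau_h(v))\, \Delta_\sV^{1/2} \qquad \text{on } \dom(\Delta_\sV^{1/2}), \quad v \in E,
\]
which arises as the analytic continuation to $s = \pi i$ of the real-analytic family $U(\exp sh)U(\exp v)U(\exp -sh) = U(\exp e^{s\,\ad h}v)$. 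Because $h$ is an Euler element, $\ad h$ acts on $E$ with eigenvalues in $\{-1,0,1\}$, and hence $e^{\pi i\,\ad h}v = \tau_h(v) \in E$; the right-hand side is thus a bounded unitary in $U(G)$. Abelianness of $E$ allows one to decompose $v = v_0 + v_1 + v_{-1}$ into $\ad h$-eigenvectors with pairwise commuting factors $U(\exp v_\lambda)$, reducing matters to the one-dimensional case in each eigenspace, where the continuation is classical.

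Combining this identity with the $(U,J)$-intertwining relation $J\,U(\exp v)\,J^{-1} = U(\exp \tau_h(v))$ coming from the (anti-)unitary structure yields
\[
S_\sV\, U(\exp v) \;=\; J\Delta_\sV^{1/2}U(\exp v) \;=\; JU(\exp \tau_h(v))\Delta_\sV^{1/2} \;=\; U(\exp v)\, J\Delta_\sV^{1/2} \;=\; U(\exp v)\,S_\sV,
\]
so $U(\exp v)$ commutes with the Tomita operator $S_\sV = J\Delta_\sV^{1/2}$. Since $\sV = \Fix(S_\sV)\cap\dom(S_\sV)$, this forces $U(\exp v)\sV \subeq \sV$, and replacing $v$ by $-v$ gives the reverse inclusion. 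Hence $U(\exp v)\sV = \sV$ for all $v \in E$, so $U(G)\sV = \sV$ and regularity follows trivially.

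The main technical point I expect to require care is rigorously justifying the modular-translation identity on the full domain $\dom(\Delta_\sV^{1/2})$, rather than merely on analytic vectors. I would handle this in the standard way: first establish the identity on a dense core of analytic vectors via the Lie-algebraic conjugation formula, then use that $U(\exp \tau_h(v))\Delta_\sV^{1/2}F \in \cH$ whenever $F\in\dom(\Delta_\sV^{1/2})$ to conclude that $U(\exp v)$ preserves this domain, and extend by closedness.
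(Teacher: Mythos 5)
Your proposal rests on a claim that is actually false: for $G = E \rtimes \R$ the canonical standard subspace $\sV = \sV(h,U)$ is in general \emph{not} invariant under $U(E)$. The paper's own Example~\ref{ex:4.24} is a counterexample of exactly the form covered by this proposition: $G = \Aff(\R)_e = \R \rtimes \R$, $\sV = \sH^{(1)}(\R_+)$, and $U(\exp tv)$ acts by translation, so $U(\exp tv)\sV = \sH^{(1)}(\R_+ + t)$ is a \emph{proper} subspace of $\sV$ for $t>0$ and properly contains it for $t<0$. This is the generic situation whenever $E_{\pm1}(h) \neq \{0\}$; full $G$-invariance of $\sV$ would force the endomorphism semigroup $S_\sV = G_\sV\exp(C_++C_-)$ to be a group, trivializing the whole positive-cone structure. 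The precise point where your argument breaks is the ``modular-translation identity'' $\Delta_\sV^{1/2}U(\exp v) = U(\exp\tau_h(v))\Delta_\sV^{1/2}$ on $\dom(\Delta_\sV^{1/2})$: for $v \in E_1(h)$ the function $s \mapsto U(\exp(e^{s}v))\xi$ admits an analytic continuation to the strip $0 \le \Im s \le \pi$ only under a spectral condition ($\mp i\,\partial U(v) \ge 0$), and even then $U(\exp tv)$ does not preserve $\dom(\Delta_\sV^{1/2})$ for $t$ of the wrong sign, so the identity cannot hold as an unrestricted operator relation. What Borchers-type continuation genuinely yields is the one-sided inclusion $U(\exp tv)\sV \subeq \sV$ for $t$ in a half-line, never two-sided invariance; your commutation $S_\sV U(\exp v) = U(\exp v)S_\sV$, which would give $U(\exp v)\Fix(S_\sV) = \Fix(S_\sV)$, is therefore unobtainable.

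For contrast, the paper's proof does not attempt invariance at all. It first reduces, via Proposition~\ref{prop:2.18} and a system-of-imprimitivity argument on the orbits of $e^{\R\ad h}$ in $E^*$, to quotients with $\dim E_{\pm1}\le 1$ (abelian, $\aff(\R)$, or $\fp(2)$). There the orbit structure forces the cones $C_\pm = \pm C_U \cap E_{\pm1}$ to be nonzero, hence generating, so Theorem~\ref{thm:reg-posen} applies: the semigroup $S_\sV$ has interior points, giving an $e$-neighborhood $N$ and $g_0$ with $U(g_0)\sV \subeq \sV_N$, whence $\sV_N$ is cyclic. Finally Remark~\ref{rem:quant} supplies a representation-independent $N$ so that Lemma~\ref{lem:g-intera} handles direct integrals. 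The essential content, which your approach bypasses, is establishing the spectral positivity on the eigenspaces $E_{\pm1}$; regularity is then a statement about the \emph{interior} of the compression semigroup, not about the stabilizer group of $\sV$.
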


\begin{prf} Let $E_j := \{ v \in E \: [h,v] = jv \}$ be the
  $h$-eigenspaces in~$E$. 
  By Proposition~\ref{prop:2.18}, it suffices to verify regularity
  on the subgroup $N_h^\sharp = (E_1 \oplus E_{-1}) \rtimes \R$.
  Using systems of imprimitivity, it follows that all irreducible
  unitary representations of such groups
  factor through representations of groups for which
  $\dim E_{\pm 1} \leq 1$.
  {In fact, all
    all orbits of $e^{\R \ad h}$ in $E^* = E^*_{-1} \oplus E^*_1$
    are contained in an at most $2$-dimensional subspace 
    because, for $\alpha = \alpha_{-1}  + \alpha_1$, we have
    \[ e^{\ad h}.\alpha
      = e^{-t} \alpha_{-1} + e^t \alpha_1 \in
      \R \alpha_{-1} +  \R \alpha_1.\]
    As irreducible unitary representations of $G$ are build from
    $\exp(\R h)$-ergodic covariant projection-valued measures on $E^*$,
    we can mod out $\ker \alpha_{\pm j}$ to reduce to the situation
    where $\dim E_{\pm 1} \leq 1$.}

This reduces the problem to
  the cases where $\g$ is abelian, $\aff(\R)$ or
  $\fp(2) = \R^{1,1} \rtimes \so_{1,1}(\R)$. The simple orbit structure for $\R$ on the dual space $E^*$
  implies that in this case the cones
  \[ C_\pm := \pm C_U \cap E_{\pm 1} \]
  are always non-trivial, hence generating.
  Now regularity of all irreducible  (anti-)unitary  representations
  follows from Theorem~\ref{thm:reg-posen}.

  Moreover, Remark~\ref{rem:quant} implies that, for all
  compact $e$-neighborhoods $N \subeq G$
  (which project to compact identity neighborhoods in the three types
  of quotient groups), the subspaces $\sV_N$ are cyclic. 
  As $N$ is independent of the representation, we can
  use Lemma~\ref{lem:g-intera} to obtain the result in general.
\end{prf}

\begin{rem} Let $(U,\cH)$ be an irreducible  (anti-)unitary  representation
  of the connected Lie group~$G$ and
  $0 \not=v \in \cD(\Delta^{1/2})$ be an analytic vector.
  If $\xi \in \sV_A$, then $U(g)^{-1} \xi \in \sV$ holds for all
  $g$ in~$A$ and, if $A^\circ \not=\eset$, then the analyticity of the map 
  $G \to \sV, g \mapsto U(g)^{-1} v$ and the closedness of $\sV$
  imply that $U(G)v \subeq \sV$, so that
  \[ \sV_A \cap \cH^\omega \subeq \sV_G.\]
  If $\sV_A \cap \cH^\omega$ is dense in $\sV_A$ and $\sV_A$ is cyclic,
  it follows that $\sV_G$ is cyclic.
  Its invariance under the modular group of $\sV$
  then implies that $\sV = \sV_G$
  (\cite[Prop.~3.10]{Lo08}).
  Therefore $\sV$ is $G$-invariant and thus $h$ is central in $\g$
  if $\ker(U)$ is discrete. 
In  view of \cite[Thm.~7.12]{BN23},
  one should not expect that $\sV$ contains non-zero analytic vectors
  if $\sV_G = \{0\}$.  For more details on the subspace $\sV_G$, we refer to
  Section~\ref{subsec:svg} below. 
\end{rem}

\subsection{Localizability}
\mlabel{subsec:4.2}

In this section we study localizability properties of unitary
representations of a connected Lie group $G$.

\begin{defn} \mlabel{def:local}
    We say that the (anti-)unitary representation 
  $(U,\cH)$ of $G_{\tau_h}$ is 
  {\it $(h,W)$-localizable in those open subsets $\cO \subeq M$  
    for which  $\sH^{\rm max}(\cO)$ is cyclic.}
\end{defn}

{The following remark show that already the localizability condition
  in the wedge region $W$ has consequences for the representation.}

\begin{rem} By Lemma~\ref{lem:direct-net}(c)
  the property of $(h,W)$-localizability implies
  $S_W \subeq S_\sV$.
From  \cite[Thm.~3.4]{Ne22} we recall that 
\begin{equation}
  \label{eq:svform}
 S_\sV := \{ g \in G \: U(g)\sV \subeq \sV\}
  = G_{\sV} \exp(C_+ + C_-) \quad \mbox{ with } \quad
  C_\pm = \pm C_U \cap \g_{\pm 1}(h)
\end{equation}
if $\ker(U)$ is discrete.
If the Lie wedge
\[ \L(S_W) = \{ x \in \g \: \exp(\R_+ x) \subeq S_W \} \]
is  {not contained in $\g_0(h)$ (see Proposition~\ref{prop:LSW}
  for a description of this cone for positivity domains), this implies that}
one of the two cones
  \[ \L(S_\sV) \cap \g_{\pm 1}(h) = C_\pm
=  \pm C_U \cap \g_{\pm 1}(h) \] 
  is non-zero and thus $C_U  \not= \{0\}$.
  If $S_W = G_W$ is a group, this conclusion is not possible,
  so that localizability {does not require any spectral condition,
    in particular $C_U = \{0\}$ is possible.}
\end{rem}

\
\begin{rem}
For the canonical nets obtained from pairs $(h,W)$ on a homogeneous space 
$M = G/H$ through two (anti-)unitary representations $U_1$, $U_2$ of $G_{\tau_h}$,
as in \eqref{eq:def-ho}, Lemma~\ref{lem:tensprostand}  shows that, 
for a tensor product representation $U = U_1 \otimes U_2$, we have 
\[ \sH^{\m}(\cO) \supeq \sH_{1}^{\m}(\cO) \otimes \sH_{2}^{\m}(\cO), \]
and in general equality does not hold (Example~\ref{ex:sl2-inc}).

\end{rem}

\begin{lem} \mlabel{lem:loc-imp-reg}
    {\rm(Localizability implies regularity)}
  Let $\eset\not=\cO \subeq W \subeq M$ be open subsets such that
  $N := \{ g \in G \: g^{-1}\cO \subeq W \}$ is an $e$-neighborhood.
  If $(U,\cH)$ is an (anti-)unitary representation for which
  $\sH^{\m}(W) = \sV$ and $\sH^{\m}(\cO)$ is cyclic, then it is regular.
  \end{lem}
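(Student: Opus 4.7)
The plan is to observe that the hypothesis on $N$ is precisely what is needed to sandwich $\sH^{\m}(\cO)$ inside $\sV_N$, so that cyclicity transfers. The whole argument reduces to a set-theoretic observation plus monotonicity of intersection.

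First, I would rewrite the defining condition of $N$ in the form that matches the index set of $\sH^{\m}(\cO)$. By hypothesis
\[
N = \{g \in G \: g^{-1}\cO \subeq W\} = \{g \in G \: \cO \subeq gW\},
\]
since $g^{-1}\cO \subeq W$ is equivalent to $\cO \subeq gW$. In particular, $N$ is a subset of the index set $\{g \in G \: \cO \subeq gW\}$ appearing in \eqref{eq:def-ho}.

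Next, I would invoke the monotonicity of intersections: an intersection taken over a smaller index set yields a larger subspace. Thus
\[
\sV_N \;=\; \bigcap_{g \in N} U(g)\sV \;\supseteq\; \bigcap_{g \in G,\ \cO \subeq gW} U(g)\sV \;=\; \sH^{\m}(\cO).
\]
(The hypothesis $\sH^{\m}(W) = \sV$ is implicit here in guaranteeing we are in the correct framework relating the maximal net with the canonical BGL subspace $\sV$; by Lemma~\ref{lem:direct-net}(c) it is automatic once $S_W \subeq S_\sV$.)

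Finally, since $\sH^{\m}(\cO)$ is cyclic by assumption and $\sV_N \supeq \sH^{\m}(\cO)$, the closed real subspace $\sV_N$ is also cyclic. As $N$ is an $e$-neighborhood in $G$ by hypothesis, this is exactly the defining condition for $(U,\cH)$ to be $h$-regular in the sense of Definition~\ref{def:reg}. There is no substantial obstacle in this argument; the lemma is essentially a tautological consequence of the definition of the maximal net combined with the monotonicity of intersections.
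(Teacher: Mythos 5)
Your proposal is correct and follows essentially the same route as the paper: both arguments reduce to showing $\sH^{\m}(\cO) \subeq \sV_N$ and then transferring cyclicity, the paper doing so via isotony and covariance of the maximal net while you unwind the definition \eqref{eq:def-ho} directly and observe that $N$ coincides with the index set, so that in fact $\sH^{\m}(\cO) = \sV_N$. Your version has the minor merit of making explicit that the hypothesis $\sH^{\m}(W)=\sV$ is not actually needed for this inclusion, whereas the paper's chain $\sH^{\m}(gW)=U(g)\sH^{\m}(W)=U(g)\sV$ invokes it.
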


  \begin{prf} By assumption $\sH^{\m}(\cO)$ is cyclic, and
  \[ \sH^{\m}(\cO) \subeq  \bigcap_{g \in N} \sH^{\m}(gW)
    = \bigcap_{g \in N} U(g) \sH^{\m}(W) 
=  \bigcap_{g \in N} U(g) \sV = \sV_N. \]
  It follows that $\sV_N$ is cyclic. 
\end{prf}

{Nets satisfying (Iso) and (Cov) can easily be constructed as follows. 
  Given an (anti-)unitary representation $(U, \cH)$  of $G_\tau$,
  the subspace  
$\cH^\infty \subeq \cH$ of vectors $v \in \cH$ for which the orbit map 
$U^v \: G \to \cH, g \mapsto U(g)v$, is smooth 
({\it smooth vectors}) is dense
and carries a natural Fr\'echet topology for which the action of 
$G$ on this space is smooth (\cite{Go69, Ne10},
\cite[App.~A]{NO21}, and Appendix~\ref{app:b}). 
The space $\cH^{-\infty}$ of continuous antilinear functionals $\eta \: \cH^\infty \to \C$ 
({\it distribution vectors}) 
contains in particular Dirac's kets 
$\la \cdot, v \ra$, $v \in \cH$, so that 
we obtain complex linear embeddings 
\[ \cH^\infty \into \cH \into \cH^{-\infty},\] 
where $G$ acts on all three spaces 
by representations denoted $U^\infty, U$ and $U^{-\infty}$, respectively.

All of the three above  representations can be integrated to the
convolution algebra $C^\infty_c(G,\C)$ of
test functions, for instance $U^{-\infty}(\phi) := \int_G \phi(g)U^{-\infty}(g)\, dg$,
where $dg$ stands for a left Haar measure on $G$.
The operators
$U(\phi)$ are continuous maps $\cH \to \cH^\infty$, so that
their adjoints $U^{-\infty}(\phi)$ define maps $\cH^{-\infty} \to \cH$. 
For any real subspace $\sE \subeq \cH^{-\infty}$, we can therefore
associate to every open subset 
$\cO \subeq G$, the closed real subspace 
\begin{equation}
  \label{eq:HE}
  \sH_\sE^G(\cO) := \oline{\spann_\R U^{-\infty}(C^\infty_c(\cO,\R))\sE}.
\end{equation}
On a homogeneous space $M = G/H$ with the projection map
$q \: G \to M$, we now obtain a ``push-forward net''
\begin{equation}
  \label{eq:pushforward}
  \sH^M_\sE(\cO) := \sH_\sE^G(q^{-1}(\cO)).
\end{equation}
This assignment satisfies (Iso) and (Cov), so that a key problem
is to specify subspaces $\sE$ of distribution vectors for which
(RS) and (BW) hold as well.}

Suppose that $\g$ is simple and $h \in \g$ an Euler element,
  and that $M = G/H$ is the corresponding
  non-compactly causal symmetric space (cf.\ Subsection~\ref{subsec:ncc}).
    In \cite{FNO23} a net of standard subspaces
    $\sH_E^M$ has been constructed on open regions of $M$,
    satisfying (Iso), (Cov), (RS), (BW), where
    $W  = W_M^+(h)_{eH}$. The following lemma applies in
  particular to these nets:

\begin{lem} \mlabel{lem:eimpliesloc}
  Let $(U,\cH)$ be an (anti-)unitary representation
  and $\sE \subeq \cH^{-\infty}$ be a real subspace with 
  $\sV = \sH^M_\sE(W)$. If the net $\sH^M_\sE$ has the
  Reeh--Schlieder property {\rm(RS)}, then
  $\sH^{\m}(\cO)$ is cyclic for any non-empty open subset
  $\cO \subeq M$. 
\end{lem}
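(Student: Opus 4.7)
The plan is to apply Lemma~\ref{lem:maxnet-larger}, which compares an arbitrary isotone, covariant net with the maximal net $\sH^{\m}$ associated to the wedge $W$. The net $\sH_\sE^M$ is built as a push-forward in \eqref{eq:pushforward} from a canonical construction \eqref{eq:HE} based on integrating the distribution-vector representation against real test functions on open subsets of $G$. This construction makes isotony (Iso) automatic (larger open sets give larger sets of test functions) and covariance (Cov) automatic from the equivariance of $q \: G \to M$ together with the $G$-equivariance of the assignment $\sE \mapsto U^{-\infty}(\phi)\sE$.

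Thus $\sH_\sE^M$ satisfies (Iso), (Cov), and by assumption $\sH_\sE^M(W) = \sV$. First, I would note that this forces $S_W \subeq S_\sV$ (as in the proof of Lemma~\ref{lem:maxnet-larger}): if $g.W \subeq W$ then isotony and covariance give $U(g)\sV = \sH_\sE^M(gW) \subeq \sH_\sE^M(W) = \sV$. By Lemma~\ref{lem:direct-net}(c), this in turn yields $\sH^{\m}(W) = \sV = \sH_\sE^M(W)$, so the two nets agree on $W$.

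Next, for any non-empty open $\cO \subeq M$ and any $g \in G$ with $\cO \subeq g.W$, isotony and covariance of $\sH_\sE^M$ give
\[
  \sH_\sE^M(\cO) \subeq \sH_\sE^M(g.W) = U(g)\,\sH_\sE^M(W) = U(g)\sV.
\]
Intersecting over all such $g$ yields the inclusion
\[
  \sH_\sE^M(\cO) \subeq \bigcap_{\cO \subeq g.W} U(g)\sV = \sH^{\m}(\cO).
\]

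Finally, since $\cO$ is non-empty, the Reeh--Schlieder assumption (RS) for the net $\sH_\sE^M$ says that $\sH_\sE^M(\cO)$ is cyclic in $\cH$. The inclusion above then immediately forces $\sH^{\m}(\cO)$ to be cyclic as well, which is the desired conclusion. There is no real obstacle here; the argument is essentially a one-line consequence of the comparison in Lemma~\ref{lem:maxnet-larger} together with the fact that the push-forward construction from \eqref{eq:HE}--\eqref{eq:pushforward} tautologically enjoys (Iso) and (Cov), so the only substantive input is the hypothesis $\sV = \sH_\sE^M(W)$ matching up the maximal net and the distribution-vector net on the distinguished wedge region.
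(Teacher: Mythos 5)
Your proposal is correct and follows essentially the same route as the paper: both reduce the claim to the inclusion $\sH_\sE^M(\cO) \subeq \sH^{\m}(\cO)$ via Lemma~\ref{lem:maxnet-larger} (you simply re-derive the relevant part of that lemma inline) and then invoke (RS) for $\sH_\sE^M$. No gaps.
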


\begin{prf} 
  Since $\sH^M_\sE(\cO)$ is cyclic for each non-empty open subset
  $\cO \subeq M$ by (RS), it suffices to verify that
  $\sH^M_\sE(\cO) \subeq \sH^{\rm max}(\cO)$. 
  As the net $\sH^M_\sE$ is covariant,  isotone and has the BW property
  with respect to $h$ and $W$, this follows from Lemma~\ref{lem:maxnet-larger}.
\end{prf}

\begin{ex} \mlabel{ex:4.24} 
We now describe an example of a net 
    $\sH^M_\sE$ constructed from a standard subspace
    $\sV = \sV(h,U)$ for which the corresponding
    maximal net $\sH^{\rm max}$ is strictly larger on some open subsets.
    Here $M = \R$, with its natural causal structure, on which
    we consider the group $G = \Aff(\R)_e$, acting by affine maps.

  On the space $C^\infty_c(\R,\R)$ of real-valued test functions on $\R$,
  we consider the positive definite hermitian form, given by
  \[ \la f,  g \ra_1
    := \int_{\R_+} p \oline{\hat f(p)}\hat g(p)\, dp = \int_{\R_+} p \hat f(-p)\hat g(p)\, dp\]
where  the Fourier transform is defined
$\hat f(p) = \int_\R e^{ipx}f(x)\, dx$. 
 We write $\cH^{(1)}$ for the real Hilbert space obtained
    by completion with respect to this scalar product
    and $\eta \: C^\infty_c(\R,\R) \to \cH^{(1)}$ for the canonical
  inclusion.
  The symplectic form corresponding to its imaginary part
  is
  \begin{equation}
    \label{eq:sigma}
 \sigma_1(f,g)
= \Im\int_{\R_+} p \hat f(-p)\hat g(p)\, dp
=\frac{1}{2i} \int_{\R} p \hat f(-p)\hat g(p)\, dp
= \pi \int_\R f(x)g'(x)\,dx.
  \end{equation}

Let $G := \Aff(\R)_e$ be the connected affine group.
Then the canonical action of $G$ on $C^\infty_c(\R,\R)$ by
$(g.f)(x) := f(g^{-1}x)$ preserves the hermitian form
and the Fourier transform intertwines it with the
unitary representation on $L^2(\R_+, pdp)$ by
\[ (\tilde U(b,a)F)(p) = e^{ibp} a F(ap), \quad b \in \R, a,p \in \R_+.\]
As $\tilde U$ extends to an irreducible unitary representation 
$\tilde U$ of $\PSL_2(\R)$ (cf.\ \cite[\S 5.4]{FNO23}), 
Corollary~\ref{cor:affine-irrep} implies that
$\tilde U$ is irreducible over $\R$.
It follows in particular that 
the Fourier transform $C^\infty_c(\R,\R) \to L^2(\R_+, p\, dp)$
has dense range.
We thus obtain a real linear
isometric bijection $\cH^{(1)} \to L^2(\R_+, p\, dp)$.
Bypassing the Fourier transform, we can also
write the scalar product, extended to complex-valued test
functions, as
\[ \la f,g\ra_1=\int_{\R_+}  \overline{\hat f(p)}\hat g(p)\, pdp
  =\int_\RR\int_\RR \oline{f(x)}g(y)\frac{(-1)}{(y-x+i0)^2}dx\,dy.\]
We consider the unitary representation $U^{(1)}$ of $G$ on $\cH^{(1)}$,
for which the Fourier transform is an intertwining operator onto
$L^1(\R_+, p\, dp)$.
Note that $\cH^{(1)}$ may also be considered as a Hilbert subspace
of $\cS'(\R)$ via the map $\iota(g)(f) = \la f, g \ra_1$
for $f,g\in \cS(\R)$, i.e., 
\[ \iota(g) = g * D \quad \mbox{ with }  \quad
  D(x) = \frac{(-1)}{(-x + i 0)^2}.\]
The antilinear involution
$(jf)(x) := -\oline{ f(-x)}$ on $C^\infty_c(\R)$
induces a conjugation on $\cH^{(1)}$ that extends $U^{(1)}$ to an 
{(anti-)unitary} representation $G_{\tau_h} \cong \R \rtimes \R^\times
= \Aff(\R)$ for the Euler element $h = (0,1)\in \g$.
On $L^2(\R_+,p\, dp)$, $j$ corresponds to the conjugation
defined by $JF = -\oline F$. 
Here $(h,-1) \in \cG_E(\Aff(\R))$ and
$\cW_+ = G.(h,-1)$ can be identified with the set 
 of open real half-lines, bounded from below. 

Clearly, 
\[\sH^{(1)}(\cO) := \overline{\eta(C^\infty_c(\cO,\R))} \] 
defines a net of real subspaces in $\cH^{(1)}$ that is isotone 
and $G$-covariant. 
Furthermore \eqref{eq:sigma} implies that this net is local 
in the sense that disjoint open intervals map to 
  symplectically orthogonal real subspaces.
It also satisfies the Reeh--Schlieder property and also the BW property in 
the sense that 
\[ \sV = \sV(h,U) = \sH^{(1)}(\R_+) \] 
(cf.~\cite{Lo08,NOO21}).
Here the main point is to verify
that the constant function $1$, a distribution vector for the representation
on $L^2(\R_+, p\, dp)$ satisfies the abstract KMS condition
\begin{equation}
  \label{eq:u1-KMS}
 J1 = - 1 = \Delta^{1/2} 1 \quad \mbox{ for }\quad
 \Delta = e^{2\pi i \partial U(0,1)}
\end{equation}
(cf. \cite{BN23}). As $\tilde U(0,e^t)1 = e^t$, the relation
\eqref{eq:u1-KMS} follows immediately.
For $k \geq 2$, we 
also have the following subnets, generated by the derivatives 
of test functions via 
\[ \sH^{(k)}(\cO)= \overline{\{ \eta(f^{(k-1)}) \: f \in C^\infty_c(\cO,\R)\}} 
\subeq \sH^{(1)}(\cO). \] 
These nets are also isotone and $G$-covariant. 
It is known from \cite[Prop.~4.2.3]{Lo08} and \cite{GLW98} 
that, for every bounded interval $I \subeq \R$
and $k < \ell$, the subspace $\sH^{(\ell)}(I) \subeq \sH^{(k)}(I)$
is proper with 
\[ \dim \big(\sH^{(k)}(I)/\sH^{(\ell)}(I)\big) =\ell-k.\]  On the other hand,
when $I = (a,\infty)$ is an unbounded interval,
then $\sH^{(k)}(I) = \sH^{(1)}(I)$ for every $k\in\N$. 
Furthermore, on intervals, 
$\sH^{(k)}$ is a restriction of the 
BGL net associated to the unitary positive energy representation 
$\tilde U^{(k)}$ of $\PSL_2(\RR)$ of lowest weight $k$
(\cite[Thm.~3.6.7]{Lo08}). 

Finally, we explain how to write these nets in the form 
$\sH^\R_{\sE_k}$ for suitable one-dimensional subspaces 
$\sE_k = \R \alpha_k \subeq \cH^{-\infty}$ of distribution 
vectors of the representation $(U^{(1)}, \cH^{(1)})$. 
To this end, we consider the Fourier transform
  $L^2(\R_+, p\, dp) \to \cO(\C_+),
  \cF_1(F)(z) = \int_{\R_+} e^{ipz} F(p)\, p dp$, which maps
  unitarily onto
  the reproducing kernel Hilbert space $\cH_1 \subeq \cO(\C_+)$ 
with reproducing kernel 
\[ Q(z,w) = \frac{-1}{(z - \oline w)^2} \quad \mbox{ for } \quad 
  z,w \in \C_+ = \R + i \R_+\]
(\cite{NOO21}). 
Here $J$ acts by $(JF)(z) := -\oline{F(-\oline z)}$
and the affine group by
\[ (U_1(b,a)F)(z) = a^{-1} F(a^{-1} (z+b)).\] 
The discussion in \cite[\S 5.4]{FNO23} shows that
\[ \alpha_1(x) := (x + i 0)^{-2}, \quad \mbox{ resp. } \quad
\alpha_1(z) = \frac{1}{z^2}, \] is a distribution vector
that is an eigenvector for the dilation group, satisfying
$U_1^{-\infty}(a)\alpha_1 = a \alpha_1$ and $J \alpha_1 = - \alpha_1$. 
For $\sE_1 := \R \alpha_1$, the corresponding standard subspace
$\sH^\R_{\sE_1}(\cO)$ is therefore generated by
the elements $U_1^{-\infty}(\phi)\alpha_1 = \phi^\vee * \alpha_1$,
$\phi \in C^\infty_c(\cO,\R)$, so that we obtain for each
open subset $\cO \subeq \R$: 
\[ \sH^\R_{\sE_1}(\cO) = \sH^{(1)}(-\cO). \]
We also note that
\[ U^{(1)}(\phi^{(k)})\alpha_1
  = \phi^{(k),\vee} * \alpha_1
  = (-1)^k \phi^\vee * \alpha_1^{(k)},\]
so that we obtain
\[ \sH^{(k)}(-\cO) = \sH^\R_{\sE_k}(\cO) \quad \mbox{ for } \quad
  \sE_k = \R \alpha_k, \quad \alpha_k := \alpha_1^{(k-1)}.\]
\end{ex}

An example on 1+1-dimensional Minkowski spacetime is described in
Remark~\ref{ex:sl2-incb}.

\subsubsection*{Localizability for reductive groups}

In this section we assume that $\g$ is reductive
and that $G$ is a corresponding connected Lie group.
We choose an involution $\theta$ on $\g$ in such a way that 
  it fixes the center pointwise and restricts to a Cartan involution
  on the semisimple Lie algebra $[\g,\g]$. Then
  the corresponding Cartan decomposition $\g = \fk \oplus \fp$ satisfies
$\fz(\g) \subeq \fk$. We write $K := G^\theta$ for the subgroup of
$\theta$-fixed points in $G$.

We write 
\[ \g = \g_0 \oplus \bigoplus_{\gamma \in \Gamma} \g_\gamma, \]
  where $\g_0 = \z(\g)$ is the center
  and each ideal $\g_\gamma$ is simple.
  Accordingly, we have 
  \[ h = h_0 + \sum_\gamma h_\gamma,\]  where 
$h_\gamma \in \g_\gamma$ either vanishes or is an Euler element in~$\g_\gamma$.
We assume that $\theta(h_\gamma) = - h_\gamma$ for each
$\gamma \in \Gamma$. We decompose $\Gamma$ as
\begin{equation}
  \label{eq:gammapm}
 \Gamma = \Gamma_0 \dot\cup \Gamma_1 \quad \mbox{ with } \quad
 \Gamma_0 := \{ \gamma \in \Gamma \: h_\gamma = 0\},
\end{equation}
so that
$h = h_0 + \sum_{\gamma \in \Gamma_1} h_\gamma$.
Then
we obtain an involutive automorphism $\tau$ on $\g$ by
\[ \tau(x) = 
  \begin{cases}
    x \qquad\quad \text{ for }\quad x \in \g_0 = \fz(\g), \\ 
    x \qquad\quad \text{ for } \quad x \in \g_\gamma, \gamma \in \Gamma_0,  \\ 
    \tau_h\theta(x) \quad\, \text{for } \quad x \in \g_\gamma,
    \gamma \in \Gamma_1, 
  \end{cases}
  \] 
  and we assume that $\tau$ integrates to an involutive automorphism
  $\tau^G$ of $G$.
  We write $\fh := \g^\tau$ and $\fq := \g^{-\tau}$
  for the $\tau$-eigenspaces in $\g$.
    Then there exists in $\fq$ 
    a unique maximal pointed generating $e^{\ad \fh}$-invariant
    cone $C$ containing $h' := \sum_{\gamma \in \Gamma_1} h_\gamma$ in its interior
    (\cite{MNO23a})
    We choose an open $\theta$-invariant
    subgroup $H \subeq G^\tau$ satisfying $\Ad(H)C = C$.
    By \cite[Cor.~4.6]{MNO23a}, this is
    equivalent to $H_K= H \cap K$ fixing~$h$.
    {Here we use that $H$ has a polar decomposition
      $H = H_K \exp(\fh_\fp)$, so that the above condition implies that
      $\Ad(H)h = e^{\ad \fh_\fp}h$.} Then
    \begin{equation}
      \label{eq:ncc}
      M = G/H
    \end{equation}
    is called the corresponding {\it non-compactly causal symmetric space}.
    The normal subgroups $G_0 = Z(G)_e$ and $G_j$ for $h_j = 0$,
    are contained in $H$, hence act trivially on $M$.
    The homogeneous space $M$ carries a $G$-invariant causal
    structure, represented by a field $(C_m)_{m \in M}$ of closed convex cones
    $C_m \subeq T_m(M)$, which is uniquely determined by
    $C_{eH} = C \subeq \fq \cong T_{eH}(M)$.

The modular vector field 
\begin{equation}
  \label{eq:xhdef2}
 X_h^M(m)  =  \frac{d}{dt}\Big|_{t = 0} \exp(th).m \end{equation} 
on $M$ determines a positivity region 
\begin{equation}\label{def:WM2}
 W_M^+(h) := \{ m \in M \: X^M_h(m) \in C_m^\circ  \} 
 \end{equation}
and the connected component $W := W_M^+(h)_{eH}$ of the base point
$eH \in M$ is called the {\it wedge region in $M$}.

{Note that the following theorem does not require any
  assumption concerning the irreducibility of the representation.
  Although its proof draws heavily from \cite{FNO23}, which deals
  with irreducible representations, Proposition~\ref{prop:3.2}
  is a convenient tool to reduce to this situation.} 
  
\begin{thm} \mlabel{thm:local-reduct}
  {\rm(Localization for real reductive groups)}
  If the universal complexification
  $\eta \: G \to G_\C$ of the connected reductive group $G$ is injective and
  $(U,\cH)$ is an (anti-)unitary representation
  of $G_{\tau_h}$, then the canonical net $\sH^{\m}$ on
  the non-compactly causal symmetric space $M = G/H$
  associated to $h$ as in \eqref{eq:ncc} satisfies 
  \begin{itemize}
  \item[\rm(a)] $\sV = \sH^{\m}(W)$, i.e., $S_W \subeq S_\sV$, and 
  \item[\rm(b)] $\sH^{\m}(\cO)$ is cyclic for every
        non-empty open subset $\cO \subeq M$. 
  \end{itemize}
\end{thm}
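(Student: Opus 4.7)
The plan is to combine the cyclicity-of-intersections machinery developed in Section~\ref{sect:interss} with the localization result of \cite{FNO23} for simple Lie groups, reducing to irreducible (anti-)unitary representations via Proposition~\ref{prop:3.2} and then using the product structure of a connected reductive~$G$ to build a net on~$M$ out of nets on the simple factors.

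First I would recast both (a) and (b) as cyclicity/invariance statements about the subspaces $\sV_A$ from \eqref{eq:def-VA(h,U)}. By \eqref{eq:sh=va} we have $\sH^{\m}(\cO) = \sV_A$ with $A = \{g\in G: g^{-1}\cO \subseteq W\}$, and by Lemma~\ref{lem:direct-net}(c) statement (a) amounts to $S_W \subseteq S_\sV$. For (a), direct sum and direct integral stability follow from \eqref{eq:v-dirsum} and Lemma~\ref{lem:g-inter}(a), so one may reduce to irreducible $(U,\cH)$. For (b), Proposition~\ref{prop:3.2} reduces cyclicity of $\sV_A$ for all (anti-)unitary representations of $G_{\tau_h}$ to the analogous statement for irreducible (anti-)unitary representations (equivalently, for all irreducible unitary $U$ on~$G$ via the doubling construction of Lemma~\ref{lem:3.4}).

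Next I would exploit the structure of $M$. Writing $\g = \fz(\g) \oplus \bigoplus_{\gamma \in \Gamma} \g_\gamma$ with simple ideals $\g_\gamma$, and $h = h_0 + \sum_{\gamma \in \Gamma_1} h_\gamma$ as in~\eqref{eq:gammapm}, the center and the simple factors indexed by $\Gamma_0$ are contained in $H$ and therefore act trivially on~$M$. Hence $M \cong \prod_{\gamma \in \Gamma_1} M_\gamma$ with $M_\gamma = G_\gamma/H_\gamma$ the non-compactly causal symmetric space of the simple pair $(\g_\gamma, h_\gamma)$, and the wedge $W = W_M^+(h)_{eH}$ factors as $W = \prod_{\gamma \in \Gamma_1} W_{M_\gamma}^+(h_\gamma)_{eH_\gamma}$. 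The corresponding cones and positive cones likewise factor, and the extension $G_{\tau_h}$ is compatible with this product via $\tau_h = \prod_\gamma \tau_{h_\gamma}$.

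For each simple factor and each irreducible (anti-)unitary representation $U_\gamma$ of $G_{\gamma, \tau_{h_\gamma}}$, the construction in \cite{FNO23} yields a subspace $\sE_\gamma \subeq \cH_\gamma^{-\infty}$ whose push-forward net $\sH^{M_\gamma}_{\sE_\gamma}$ satisfies (Iso), (Cov), (RS) and (BW) with $\sH^{M_\gamma}_{\sE_\gamma}(W_\gamma) = \sV(h_\gamma, U_\gamma)$. An irreducible (anti-)unitary representation of $G_{\tau_h}$ is, up to the doubling of Lemma~\ref{lem:3.4} and the real/complex/quaternionic trichotomy of Remark~\ref{rem:tenspro-anti}, a tensor product of such irreducible factors (the central part contributing only a unitary character that acts trivially on~$M$). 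Taking tensor products of the factor nets (see the tensor product lemma preceding Example~\ref{ex:sl2-inc}) then produces a net $\sH^M_\sE$ on $M$ satisfying (Iso), (Cov), (RS), (BW) with $\sH^M_\sE(W) = \sV(h,U)$. Lemma~\ref{lem:maxnet-larger} applied to this net gives simultaneously $S_W \subseteq S_\sV$ (hence (a)) and the chain $\sH^M_\sE(\cO) \subeq \sH^{\m}(\cO)$ for every open $\cO$, which together with the Reeh--Schlieder property of $\sH^M_\sE$ and Lemma~\ref{lem:eimpliesloc} gives (b) in the irreducible case. Lifting back via Proposition~\ref{prop:3.2} and the direct-integral stability established in Lemma~\ref{lem:direct-net}(d) and \eqref{eq:net-dirint} completes the proof.

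The main obstacle I anticipate is the bookkeeping in the third step: one must carefully match the (anti-)unitary extension datum on the product group to the tensor product of the factor extensions, in particular when some $U_\gamma$ are of complex or quaternionic type (so that the irreducible representation of $G_{\tau_h}$ is genuinely a doubled representation on the tensor product side, per Remark~\ref{rem:tenspro-anti}). One also needs to verify that the $\sE_\gamma$-construction of \cite{FNO23} behaves well under taking tensor products---specifically, that the tensor product of the distribution-vector nets still has the (RS) property, which requires that each factor produces a subspace $\sE_\gamma$ whose orbit is total, so that the tensor of the generated real subspaces remains cyclic. This should be a direct consequence of the irreducibility arguments in \cite{FNO23} but is the step most likely to need a separate lemma.
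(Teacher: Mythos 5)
Your overall architecture coincides with the paper's: reduce to irreducible (anti-)unitary representations via Proposition~\ref{prop:3.2}, exploit the decomposition of the reductive group into its center and simple factors, invoke \cite{FNO23} for each simple factor, and reassemble by tensoring. The one place where you genuinely diverge is the reassembly step. The paper never forms a tensor product of the distribution-vector nets $\sH^{M_\gamma}_{\sE_\gamma}$; instead it applies \cite[Thm.~4.10]{FNO23} only to conclude cyclicity of the \emph{maximal} nets $\sH_\gamma^{\rm max}(\cO_\gamma)$ on each factor, and then uses the purely abstract fact that cyclicity of $\sH^{\rm max}(\cO)$ is inherited by finite tensor products and subrepresentations (Lemma~\ref{lem:direct-net}(d), resting on the containment $\sV_A \supeq \sV_{1,A}\otimes\sV_{2,A}$ of Lemma~\ref{lem:tensprostand}). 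This sidesteps exactly the obstacle you flag at the end: there is no need to verify that the $\sE_\gamma$-construction, or the Reeh--Schlieder property, survives tensoring of the generating nets, nor to define the tensor net on non-product open sets. Your route can be completed --- any non-empty open $\cO\subeq M$ contains a product of non-empty opens, so isotony plus the density of the complex span of $\sH_1(\cO_1)\otimes\sH_2(\cO_2)$ recovers (RS), and Lemmas~\ref{lem:maxnet-larger} and \ref{lem:eimpliesloc} then give the conclusion --- but it requires the extra lemma you anticipate, together with the bookkeeping for complex and quaternionic factors via Remark~\ref{rem:tenspro-anti}, all of which the paper's abstract tensor-inheritance argument renders unnecessary. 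The trade-off is that your version produces an explicit covariant net below $\sH^{\rm max}$ on all of $M$, which is slightly more information than the cyclicity statement alone.
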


\begin{prf} In view of Lemma~\ref{lem:direct-net}(c), assertion (a)
  follows from~(b). So it suffices to verify~(b).
  By Proposition~\ref{prop:3.2} we may further assume that
  $(U,\cH)$ is irreducible. 
    Replacing $G$ by a suitable covering group, we may assume that
  the universal complexification $G_\C$ is simply connected, and then
  \[ G_\C \cong G_{0,\C}  \times \prod_{\gamma \in \Gamma} G_{\gamma,\C} \]
leads to the product structure 
\[ G \cong G_0  \times \prod_{\gamma \in \Gamma} G_\gamma. \]
Moreover,
\[ \fq   = \bigoplus_{\gamma \in \Gamma_1} \fq_\gamma \quad \mbox{ and } \quad
  C = \sum_{\gamma \in \Gamma_1} C_\gamma \quad \mbox{ with } \quad
  C_\gamma = C \cap \fq_\gamma\]
(cf.\ \eqref{eq:gammapm}).

We first consider irreducible representations of the factor groups
$G_{\gamma,\tau_h}$.
  If $h_\gamma \in \g_\gamma$ is trivial or central, then the standard subspace
  $\sV$ is $G_j$-invariant, so that $\sV = \sV_{G_j}$. 
  For all other simple factors $(h_\gamma,W_\gamma)$-localizability 
  in the family of all non-empty open subsets
  of the associated non-compactly causal symmetric space follows from
  \cite[Thm.~4.10]{FNO23}.
  This implies the assertion for all irreducible
  (anti-)unitary representations of the factor groups
  $G_{\gamma,\tau_h}$ and $G_{0,\tau_h}$.

Let $U_0 \otimes \bigotimes_{\gamma \in \Gamma} U_\gamma$ be an
 irreducible unitary representation of $G$ and extend it by some
 conjugation of the form
 $J = J_0 \otimes \bigotimes_{\gamma \in \Gamma} J_\gamma$ to an
 irreducible (anti-)unitary representation $(U, \cH)$ of $G_{\tau_h}$
 on a Hilbert space that is a subspace of
 the tensor product of the spaces
 \[\tilde\cH_\gamma = \cH_\gamma \oplus \oline{\cH_\gamma}.\] 
 By Remark~\ref{rem:tenspro-anti}, all irreducible
 (anti-)unitary representations of $G_{\tau_h}$ are subrepresentations of
 tensor products  of irreducible (anti-)unitary representations of the factor groups.
 We thus obtain all irreducible (anti-)unitary
 representations of $G_{\tau_h}$. 
 Therefore the assertion follows from the fact that
  (b) is inherited by subrepresentations, direct sums, and
  finite tensor products (Lemma~\ref{lem:direct-net}(d)).
\end{prf}

\begin{cor}   \mlabel{cor:real-red} {\rm(Regularity for linear reductive groups)} 
  Let $G$ be a connected linear reductive Lie group,
  i.e., its universal complexification is injective and
  $G_\C$ is a complex reductive algebraic group.
  Then there exists an $e$-neighborhood $N \subeq G$
  such that for every separable (anti-)unitary representation
  $(U,\cH)$ of $G_{\tau_h}$, the real subspace
  \[  \sV(h,U)_N = \bigcap_{g \in N} U(g) \sV(h,U) \]
  is cyclic. In particular, $(U,\cH)$ is $h$-regular.   
\end{cor}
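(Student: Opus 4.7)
The plan is to deduce the corollary by combining Theorem~\ref{thm:local-reduct} with the ``localizability implies regularity'' principle recorded in Lemma~\ref{lem:loc-imp-reg}. The central observation is that the $e$-neighborhood $N$ arising in Lemma~\ref{lem:loc-imp-reg} depends only on the geometry of a pair of open sets $\cO \subeq W \subeq M$, and not on the particular representation $(U,\cH)$; this is precisely what gives the uniform character demanded by the corollary.

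Concretely, I would fix the non-compactly causal symmetric space $M = G/H$ associated to $h$ as in \eqref{eq:ncc}, together with its wedge region $W = W_M^+(h)_{eH}$. Since $W$ is a non-empty open subset of $M$, I choose a non-empty relatively compact open subset $\cO \subeq W$ with $\oline{\cO} \subeq W$ (for instance a small open ball around~$eH$ in some chart). The continuity of the action $G \times M \to M$ together with the compactness of $\oline{\cO}$ and the openness of $W$ then ensures that
\[ N := \{ g \in G \: g^{-1}.\oline{\cO} \subeq W \} \]
is an open $e$-neighborhood in $G$. Crucially, $N$ is defined purely in terms of the homogeneous space $M$ and is independent of the representation.

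Given now any separable (anti-)unitary representation $(U,\cH)$ of $G_{\tau_h}$, Theorem~\ref{thm:local-reduct} applies because $G$ is a connected linear reductive Lie group with injective universal complexification. It provides two facts simultaneously: first, $\sH^{\rm max}(W) = \sV(h,U)$, and second, $\sH^{\rm max}(\cO)$ is cyclic in $\cH$ (since $\cO$ is a non-empty open subset of $M$). With these two ingredients in hand, Lemma~\ref{lem:loc-imp-reg} applied to the pair $\cO \subeq W$ and the representation $(U,\cH)$ yields exactly that
\[ \sV(h,U)_N \ =\  \bigcap_{g \in N} U(g)\sV(h,U) \]
is cyclic. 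In particular, $(U,\cH)$ is $h$-regular in the sense of Definition~\ref{def:reg}.

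The main ``obstacle'' is really not in this corollary itself but already absorbed in Theorem~\ref{thm:local-reduct}, whose proof leverages the localizability result of \cite{FNO23} for irreducible representations of simple factors together with Proposition~\ref{prop:3.2} (where separability of $\cH$ enters through the direct integral decomposition into irreducibles). Once Theorem~\ref{thm:local-reduct} is granted, the only remaining point to check is that the neighborhood $N$ can be fixed a priori; this is automatic from the geometric construction above.
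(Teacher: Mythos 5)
Your proposal is correct and follows essentially the same route as the paper: choose a relatively compact open $\cO$ with $\oline\cO \subeq W$, obtain the representation-independent $e$-neighborhood $N$ from compactness, and combine the cyclicity of $\sH^{\rm max}(\cO)$ from Theorem~\ref{thm:local-reduct} with Lemma~\ref{lem:loc-imp-reg}. Your explicit emphasis that $N$ is determined purely by the geometry of $M$, and hence uniform over all representations, is exactly the point the corollary requires.
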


\begin{prf} Let $\cO \subeq W \subeq M = G/H$ be an open subset
  whose closure $\oline\cO$ is relatively compact.
  In Theorem~\ref{thm:local-reduct} we have seen that
  $\sH^{\m}(\cO)$ is cyclic. Further
  \[ N := \{g \in G \: g\cO \subeq W\} \supeq \{g \in G \: g\oline\cO \subeq W\} \]
  is an $e$-neighborhood because $\oline\cO \subeq W$ is compact.
  Theretofore the $h$-regularity of $(U,\cH)$ follows from
  Lemma~\ref{lem:loc-imp-reg}.
\end{prf}

\subsubsection*{Localizability for the Poincar\'e group}

The following result is well-known
(\cite[Thm.~4.7]{BGL02}). Here we derive it naturally in the
context of our theory for general Lie groups. 
It connects regularity, resp., localizability  with
  the positive energy condition.

\begin{thm} {\rm(Localization for the Poincar\'e group)}
\mlabel{thm:poinloc} Let $(U,\cH)$ be an (anti-)unitary representation
  of the proper Poincar\'e group
  $\cP_+ = \R^{1,d} \rtimes \cL_+$ (identified with $\cP_{\tau_h}$)
  and consider the standard boost
  $h$ and the corresponding Rindler wedge $W_R \subeq \R^{1,d}$.
  Then $(U,\cH)$ is $(h,W_R)$-localizable in
{the set of all spacelike open cones} 
  if and only if it is a positive energy representation, i.e.,
  \begin{equation}
    \label{eq:v+}
 C_U \supeq
 \oline{V_+} := \{ (x_0,\bx) \: x_0 \geq 0, x_0^2 \geq \bx^2 \}.
  \end{equation}
  These representations are regular. 
\end{thm}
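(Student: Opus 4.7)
The Poincar\'e group factors as $\cP_+ = \R^{1,d} \rtimes \cL_+$ with abelian translations and linear reductive Lorentz factor; via Example~\ref{ex:desit} we identify $\cP_+ \cong \cP_{\tau_h}$, and the boost Euler element satisfies $\R^{1,d}_{\pm 1}(h) = \R(\be_0 \pm \be_1)$.

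\emph{Regularity.} The positive-energy hypothesis $\overline{V_+} \subseteq C_U$ is equivalent to the cones $C_\pm := \pm C_U \cap \R^{1,d}_{\pm 1}(h)$ being the half-lines $\R_+(\be_0 + \be_1)$ and $\R_+(-\be_0 + \be_1)$, which generate the one-dimensional eigenspaces. Combined with regularity of $U\res_{\cL_+}$ (Corollary~\ref{cor:real-red}), Theorem~\ref{thm:posreg} applied to the semidirect product yields regularity of $(U, \cH)$.

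\emph{Necessity ``$\Rightarrow$''.} Suppose $\sH^{\rm max}(\cC)$ is cyclic for some spacelike cone $\cC \subseteq W_R$. Isotony (Lemma~\ref{lem:direct-net}(a)) and Lemma~\ref{lem:direct-net}(c) force $\sH^{\rm max}(W_R) = \sV$, so $S_{W_R} \subseteq S_\sV$ and translations by $\overline{W_R}$ preserve $\sV$. The structure theorem $S_\sV = G_\sV \exp(C_+ + C_-)$ (Theorem~\ref{thm:3.2}) places the two extremal null rays of $\overline{W_R} \cap (\R\be_0 + \R\be_1)$ into $C_+$ and $C_-$, yielding $\be_0 \pm \be_1 \in C_U$. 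The same argument applied to each Lorentz-rotated wedge $gW_R$ gives $\Ad^\eps(g)(\be_0 \pm \be_1) \in C_U$ for every $g \in \cL_+$; the resulting orbit is the future null cone minus $\{0\}$, whose closed convex hull is exactly $\overline{V_+}$.

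\emph{Sufficiency ``$\Leftarrow$''.} By Poincar\'e covariance I reduce to $\cC \subseteq W_R$ with apex at~$0$, and set $B := \cC \cap \dS^d \subseteq W_R^{\dS}$. Theorem~\ref{thm:local-reduct} applied to $\cL_+$ on the non-compactly causal symmetric space $\dS^d$ yields cyclicity of
\[ \sK := \bigcap_{g \in \cL_+,\, \cC \subseteq gW_R} U(g)\sV, \]
where the cone property $W_R = \R_+ W_R$ identifies $B \subseteq gW_R^{\dS}$ with $\cC \subseteq gW_R$ for Lorentz~$g$. To conclude $\sK \subseteq \sH^{\rm max}(\cC)$ I would show that for each $(a, \Lambda) \in \cP_+$ with $\cC \subseteq \Lambda W_R + a$, $U(a)U(\Lambda)\sV \supseteq \sK$: scaling $\cC \ni c \to 0$ in the apex forces $-a \in \Lambda \overline{W_R}$, and positive energy provides $\exp(\overline{W_R}) \subseteq S_\sV$, so $U(\Lambda^{-1}(-a))\sV \subseteq \sV$ and hence $U(a)U(\Lambda)\sV \supseteq U(\Lambda)\sV$; a Lorentz perturbation $\Lambda_n \to \Lambda$ with $\cC \subseteq \Lambda_n W_R$ strictly then gives $U(\Lambda)\sV \supseteq \sK$ by continuity of~$U$. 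The main obstacle is this last approximation: since $\partial W_R$ contains spacelike points at non-zero transverse components, $\overline{\cC}$ may genuinely touch $\partial \Lambda W_R$, and handling it requires both the openness of $\cC$ and the transverse freedom in $\cL_+$ to push the boundary off $\overline{\cC}$. With this step in place, the result recovers \cite[Thm.~4.7]{BGL02} within the abstract framework of Sections~\ref{sect:pregeom}--\ref{sec:loc-reg}.
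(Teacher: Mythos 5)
Your overall strategy coincides with the paper's: necessity via $S_{W_R}\subeq S_\sV$ and Lorentz invariance of $C_U$, sufficiency by comparing $\sH^{\rm max}(\cC)$ with the Lorentz maximal net on $\dS^d$ and invoking Theorem~\ref{thm:local-reduct}, and regularity from the positive spectrum condition. The necessity argument and the regularity argument (via Theorem~\ref{thm:posreg} and Corollary~\ref{cor:real-red}, exactly as in Remark~\ref{rem:poi}; the paper itself instead derives regularity directly from spacelike-cone localizability) are correct, up to a misattributed citation: the polar decomposition $S_\sV=G_\sV\exp(C_++C_-)$ is \eqref{eq:svform} (from \cite[Thm.~3.4]{Ne22}), not Theorem~\ref{thm:3.2}, which concerns the operator-algebraic semigroup $S_\cM$.

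The genuine gap is the final step of sufficiency, which you yourself flag: you need $U(\Lambda)\sV\supeq\sK$ for the Lorentz part $\Lambda$ of any $(a,\Lambda)$ with $\cC\subeq a+\Lambda W_R$, and you propose to approximate $\Lambda$ by $\Lambda_n$ with $\cC\subeq\Lambda_n W_R$. This detour is both unnecessary and, as you note, not obviously executable. The correct observation is that $\cC\subeq\Lambda W_R$ holds \emph{exactly}, not only after perturbation: writing $v:=-\Lambda^{-1}a$ and $\ell:=\Lambda^{-1}$, the hypothesis reads $v+\ell.\cC\subeq W_R$; since $\cC$ is a cone with vertex $0$, for $c\in\cC$ and $\lambda>0$ one has $\lambda^{-1}v+\ell c\in W_R$ (using that $W_R$ is a cone), and letting $\lambda\to\infty$ gives $\ell.\cC\subeq\oline{W_R}$. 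As $\ell.\cC$ is open and $W_R=(\oline{W_R})^\circ$, this yields $\ell.\cC\subeq W_R$, i.e.\ $\cC\subeq\Lambda W_R$. Hence $\Lambda$ occurs in the intersection defining $\sK$ and $U(\Lambda)\sV\supeq\sK$ is trivial; no ``transverse freedom'' in $\cL_+$ is needed. With this replacement your sufficiency argument closes and agrees with the paper's proof. (Also note that the a priori reduction ``$\cC\subeq W_R$ with apex at $0$'' deserves a word: a general spacelike cone is first translated to vertex $0$ and then mapped into $W_R$ by a Lorentz transformation, using covariance of $\sH^{\rm max}$.)
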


Note that $\Ad(\cP_+^\uparrow)$ acts transitively on the set $\cE(\fp)$
of Euler elements, so that the choice of a specific
Euler element~$h$ is inessential. 

\begin{prf} First we show that the positive energy condition is necessary
  for localizability in spacelike cones.
  In fact, the localizability condition implies in particular
  that $\sH(W_R)$ is cyclic, so that Lemma~\ref{lem:direct-net} implies
  $S_{W_R} \subeq S_\sV$. As a consequence, 
  $\be_1 + \be_0 \in C_U$, and thus $\oline{V_+} \subeq C_U$ by
  Lorentz invariance of $C_U$. Therefore $(U,\cH)$ is a positive
  energy representation.
  
  Now we assume that $(U,\cH)$
  is a positive energy representation. 
  For the standard boost we have
  $h \in \fl \cong \so_{1,d}(\R)$, and the
  restriction $(U\res_{L_+}, \cH)$ is $(h,W)$-localizable
  in the family of all non-empty open subsets of $\dS^d$,
  where $W = W_R \cap \dS^d$ is the canonical wedge region 
  (Theorem~\ref{thm:local-reduct}).
  
  Next we recall from \cite[Lemma~4.12]{NO17} that
  \[ S_{W_R} = \{ g \in \cP_+^\uparrow \: gW_R \subeq W_R \}
    = \oline{W_R} \rtimes \SO_{1,d}(\R)^\up_{W_R},\]
  where
\[ \SO_{1,d}(\R)^\up_{W_R} 
  = \SO_{1,1}(\R)^\up \times \SO_{d-2}(\R)\]
is connected, hence coincides with $L^h_e$.
It follows that
\[ S_{W_R} = G^h_e \exp([0,\infty)(\be_0 + \be_1))
  \exp([0,\infty)(-\be_0 + \be_1)).\] 

Let us assume that $(U,\cH)$ is a positive energy representation, i.e.,
that { $C_U \supeq \oline{V_+}$ (cf.\ \eqref{eq:v+})}. Then
\[ C_\pm = [0,\infty)(\be_1 \pm \be_0) \subeq \oline{W_R},
\quad \mbox{ so that } \quad S_{W_R} \subeq S_\sV.\]
By Lemma~\ref{lem:direct-net}(c), the net $\sH^{\m}$
satisfies $\sH^{\m}(W_R) = \sV$.

Now suppose that $\cC \subeq W_R$ is a spacelike cone, so that
\[ \cC = \R_+ (\cC \cap \dS^d), \]
where $\cC \cap \dS^d$ is an open subset of the wedge region
$W = W_R \cap \dS^d$ 
in de Sitter space. For $g^{-1} = (v, \ell) \in \cP_+^\uparrow$, 
the condition $\cC \subeq g.W_R$ is equivalent to
\[ g^{-1}.\cC = v + \ell.\cC \subeq W_R, \]
which in turn means that $v \in \oline{W_R}$ and $\ell.\cC \subeq W_R$.
Then
\[ U(g) \sV  = U(\ell)^{-1} U(v)^{-1} \sV \supeq U(\ell)^{-1} \sV \]
follows from $\oline{W_R} \subeq S_\sV$, and therefore
{
  \begin{align*}
 \sH^{\m}(\cC)
&  = \bigcap_{\cC \subeq g.W_R} U(g)\sV
 \supseteq \bigcap_{\cC \subeq \ell^{-1}.W_R} U(\ell)^{-1} \sV\\
&  = \bigcap_{\cC \cap \dS^d \subeq \ell^{-1}.(W_R \cap \dS^d)} U(\ell)^{-1} \sV
    = \sH_{U\res_L}^{\m}(\cC \cap \dS^d).
  \end{align*}
}
We conclude that, on spacelike cones with vertex in $0$,
the net $\sH^{\m}$ coincides with the net $\sH_{U\res_L}^{\m}$ on de Sitter space.
As the latter net has the Reeh--Schlieder property
by Theorem~\ref{thm:local-reduct}, and all spacelike
cones can be translated to one with vertex~$0$,
localization in spacelike cones follows. 

Finally we show that $(U,\cH)$ is regular.
For $v \in W_R$ and a pointed
spacelike cone $C$ with $v + C \subeq W$, there exists an $e$-neighborhood
 $N \subeq G$ with $v + C \subeq g.W$ for all $g \in N$. This
 implies that $\sH^{\m}(v + C) \subeq \sV_N$, so that
 $(U,\cH)$ is regular.
\end{prf}

\begin{rem} 
Infinite helicity representations $(U,\cH)$ of $\cP_+$ in $\RR^{1,d}$ are 
{\bf not} localizable in double cones (Definition~\ref{def:minkcaus}).
Let  $\sV=\sH_U^{\mathrm BGL}(W)$ for $W = (h,j_h)$ be
  as in Example~\ref{ex:desit}.
  In  \cite[Thm. 6.1]{LMR16} it is proved that, if $\cO \subeq \R^{1,d}$ is a double cone, then
  \begin{equation}\label{eq:infspin}  \sH^{\m}(\cO)
    = \bigcap_{\cO \subeq g.W_R} U(g) \sV = \{0\}.
  \end{equation}
  The argument to conclude \eqref{eq:infspin} can be sketched as follows.
  Infinite spin representations are massless representations,
i.e., the support of the spectral measure of the space-time
    translation group is  
  \[ \partial V_+=\{(x_0,\bx) \in\RR^{1,d}:x_0^2 - \bx^2 =0,x_0 \geq 0\}.\]
Covariant nets of standard subspaces on double cones 
  in massless representations are also dilation covariant in the sense that   
  the representation of $\cP_+$ extends to the
Poincar\'e and dilation group 
$\R^{1,d} \rtimes (\R^+\times \cL)$, and that the net is also
covariant under this larger group, cf. \cite[Prop.~5.4]{LMR16}.
 When $d=3$, this follows from the fact that due to
  the Huygens {Principle}, one can associate by additivity 
  a standard subspace to the forward lightcone
  $\sH(V_+)=\overline{\sum_{\cO\subset V_+}\sH(\cO)}$
(sum over all double cones in $V_+$) and
  the modular group of $\sH(V_+)$ is geometrically implemented by
  the dilation group.
 As massless infinite helicity representations are not
  dilation covariant, it follows that they do not
  permit localization in double cones.
Properties of the free wave equation permit to  extend this  
 argument  to any space dimension $d\geq2$
  including  even space dimensions, and the 
  Huygens Principle fails (\cite[Sect.~8.2]{LMR16}).
    However, in
  Theorem~\ref{thm:poinloc}, we recover in our general setting the result contained in \cite[Thm.~4.7]{BGL02} that all   positive energy   representations of $\cP_+$
  are localizable in spacelike cones.    
\end{rem}

\section{Moore's Theorem and its consequences}
\label{sect:vna}

In this section we continue the discussion of 
  applications of our results to von Neumann algebras $\cM$
  with cyclic separating
vector $\Omega$, started in Subsection~\ref{subsec:strich-opalg}.
First we explain the consequences of 
Moore’s Eigenvector Theorem~\ref{thm:moore} (cf.\ \cite[Thm.~1.1]{Mo80}). 
Here the main point is that the properties (Mod) and (M)
(from Subsection~\ref{subsec:strich-opalg})  
imply that $\Omega$ is fixed by the one-parameter group $U(\exp(\R h))$
and Moore's Theorem allows us to find conditions for $G$
under which this always implies that $\Omega$ is fixed under~$G$.
Note that, for semisimple Lie groups Moore's Theorem also follows from
  the Howe--More Theorem   on the vanishing of matrix coefficients
  at infinity for all unitary representations non
  containing non-zero fixed vectors (cf.~ \cite[Thm.~2.2.20]{Zi84}).

  The first main result in this section are Theorem~\ref{thm:vg-ident},
  characterizing for (anti-)unitary representation
  $(U,\cH)$ of $G_{\tau_h}$ the subspace 
  $\sV_G = \bigcap_{g \in G} U(g) \sV$ as the set of fixed points
  of a certain normal subgroup specified in Moore's Theorem. 
  The second one is Theorem~\ref{thm:3.8} that combines Moore's Theorem
  with Theorem~\ref{thm:2.1-alg} to obtain a criterion for $\cM$ to be
  a factor of type III$_1$. The third one is Proposition~\ref{prop:disnet}
  which shows that all the structure we discuss survives the
  central disintegration of $\cM$, provided
  $\cM'$ and $\cM$ are conjugate under~$U(G)$.

\subsection{Moore's Theorem}
\mlabel{subsec:5.moore} 

\begin{thm} {\rm(Moore's Eigenvector Theorem)}  \mlabel{thm:moore}
Let $G$ be a connected finite-dimensional Lie group with Lie algebra 
$\g$ and $h \in \g$. Further, let $\fn_h \trile \g$ be the 
smallest ideal of $\g$ such that the image of $h$ in
the quotient Lie algebra $\g/\fn_h$ 
is elliptic. 

Suppose that $(U, \cH)$ is a continuous unitary representation of $G$ and 
$\Omega\in \cH$ an eigenvector for the one-parameter group  $U(\exp \R h)$. 
Then 
\begin{itemize}
\item[\rm(a)] $\Omega$ is fixed by the normal subgroup 
$N_h := \la \exp \fn_h \ra \trile G$, and 
\item[\rm(b)] the restriction of 
$i\cdot\partial U(h)$ to the orthogonal complement of the space 
$\cH^{N_h}$ of $N_h$-fixed vectors has absolutely continuous spectrum. 
\end{itemize}
\end{thm}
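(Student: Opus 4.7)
The plan is to identify the ideal $\fn_h$ concretely via the Jordan decomposition of $\ad h$, establish the easy portion of (a) through a Mautner-type phenomenon, and derive the delicate portion of (a) as a consequence of the spectral assertion (b).

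First I would decompose $\ad h = D_e + D_h + D_n$ on $\g$ (coming from the Jordan decomposition on $\g_\C$ and descending by reality) into three pairwise commuting derivations, where $D_e$ is semisimple with purely imaginary spectrum, $D_h$ is semisimple with real spectrum, and $D_n$ is nilpotent. The image of $h$ in a quotient $\g/\fn$ is elliptic precisely when $D_h$ and $D_n$ both vanish modulo $\fn$, so $\fn_h$ is the ideal generated by the subspace $D_h(\g) + D_n(\g)$.

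For the Mautner step, set $\alpha_t := \Ad(\exp th) = e^{t\, \ad h}$. If $y \in \g$ lies in a generalized $\ad h$-eigenspace whose eigenvalue has strictly negative real part, then $\alpha_t(y) \to 0$ as $t \to +\infty$, hence
\[ \exp(th)\exp(y)\exp(-th) = \exp(\alpha_t(y)) \longrightarrow e. \]
Since $U(\exp th)\Omega = e^{it\lambda}\Omega$ for some $\lambda \in \R$, the phases cancel and strong continuity of $U$ yields $U(\exp y)\Omega = \lim_{t\to +\infty} U(\exp \alpha_t(y))\Omega = \Omega$. Running $t \to -\infty$ handles the positive-real-part eigenspaces, and upon iterating and taking brackets these group elements generate the contribution to $\fn_h$ arising from $D_h(\g)$ and from all generalized eigenspaces of $\ad h$ with nonzero-real-part eigenvalue.

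The remaining contribution comes from $D_n$ restricted to the purely imaginary eigenspaces, where $\alpha_t(y)$ grows only polynomially and the direct Mautner trick fails. I would treat this simultaneously with (b): since $N_h$ is normal, $\cH_1 := (\cH^{N_h})^\perp$ is $G$-invariant, and it suffices to show that $i\, \partial U(h)|_{\cH_1}$ has absolutely continuous spectrum---this also rules out any eigenvector of $U(\exp \R h)$ inside $\cH_1$, forcing $\Omega \in \cH^{N_h}$ and completing (a). Absolute continuity reduces to quantitative matrix-coefficient decay on $\cH_1$: Howe--Moore vanishing-at-infinity on the semisimple factors of $N_h$, and direct Plancherel/Fourier analysis on the solvable factors. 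Together these place $t \mapsto \la U(\exp th)\xi, \eta \ra$ into $L^2(\R)$ for a dense set of $\xi, \eta \in \cH_1$, and then Plancherel's theorem shows that the spectral measure of $\partial U(h)|_{\cH_1}$ has $L^2$ density with respect to Lebesgue measure, hence is absolutely continuous. The principal obstacle is precisely the Jordan-block case with purely imaginary eigenvalues inside $\fn_h$, where one cannot geometrically contract $\alpha_t(y)$ to zero; a structural reduction to the semisimple versus solvable/nilpotent factors of $\fn_h$ combined with representation-theoretic input is required, essentially reproducing Moore's original argument in~\cite{Mo80}.
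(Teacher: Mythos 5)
First, a point of comparison: the paper does not prove this theorem at all — it is imported from Moore's paper \cite[Thm.~1.1]{Mo80} (with a side remark that, for semisimple $G$, part (a) also follows from Howe--Moore \cite[Thm.~2.2.20]{Zi84}). So your proposal must stand on its own. Much of its skeleton is sound: the refined Jordan decomposition $\ad h = D_e + D_h + D_n$ into commuting derivations, the identification of $\fn_h$ as the ideal generated by $D_h(\g)+D_n(\g)$, the contraction argument $\exp(th)\exp(y)\exp(-th)=\exp(e^{t\ad h}y)\to e$ for generalized eigenvectors with eigenvalue of nonzero real part, and the reduction of the remaining half of (a) to (b) via the $G$-invariant splitting $\cH = \cH^{N_h}\oplus(\cH^{N_h})^\perp$ are all correct.

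The genuine gap is in your proof of (b), on which the rest of (a) now depends. Two problems. First, Howe--Moore controls matrix coefficients $\la U(n)\xi,\eta\ra$ as $n\to\infty$ in a \emph{semisimple} group without invariant vectors; here $N_h$ need not be semisimple, and the relevant one-parameter group $\exp(\R h)$ need not even lie in $N_h$, so the theorem does not directly address $t\mapsto\la U(\exp th)\xi,\eta\ra$. Second, and more seriously, even where it applies Howe--Moore yields only vanishing at infinity with no rate; by Wiener's theorem this excludes atoms of the spectral measure (continuous spectrum), but the step ``hence the matrix coefficients lie in $L^2(\R)$ and Plancherel gives absolute continuity'' is unjustified and false in general — a representation of $\SL_2(\R)$ with almost-invariant but no invariant vectors (e.g.\ a direct integral of complementary series approaching the trivial representation) has matrix coefficients decaying arbitrarily slowly. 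The correct mechanism for absolute continuity is not decay but the commutation relation: for an eigenvector $x\in\g_\lambda(h)$ with $\lambda\in\R^\times$, the pair $(h,x)$ spans a copy of $\aff(\R)$, and Mackey theory for $\Aff(\R)_e$ (the same input the paper uses in the proof of Theorem~\ref{thm:hsym} via \cite[Prop.~2.38]{NO17}) shows that on $(\cH^{\exp\R x})^\perp$ the operator $i\partial U(h)$ is a multiple of $i\,\frac{d}{dp}$ on $L^2(\R)$, hence has Lebesgue spectrum; one then sums over generators of $\fn_h$ using that the absolutely continuous subspace is closed. Finally, the directions of $\fn_h$ coming from $D_n$ on the purely imaginary generalized eigenspaces admit neither a contraction nor an $\aff(\R)$-relation; this is precisely the hard case of Moore's theorem, and your proposal defers it entirely to ``reproducing Moore's original argument,'' so neither (b) nor the corresponding portion of (a) is actually established.
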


The ideal $\fn_h\trile \g$ has the property that the corresponding
closed normal subgroup $N_h \trile G_e$ generated by  $\exp(\fn_h)$
fixes $\Omega$, hence acts trivially on the projective orbit
$G.[\Omega] \subeq \bP(\cH)$.
As $\ad h$ induces an elliptic element on $\g/\fn_h$, 
the group $G/N_h$ has a basis 
of $e$-neighborhoods invariant under $\exp(\R h)$.

\begin{cor} \label{cor:moore-eigenvector} 
Let $G$ be a connected finite-dimensional Lie group. 
Suppose that $(U, \cH)$ is a 
unitary representation of $G$ with discrete kernel and 
that $h \in \g$ is such that $\partial U(h)$ has a
$G$-cyclic eigenvector in $\cH$. Then $\ad(h)$ is elliptic. 
\end{cor}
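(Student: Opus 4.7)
The plan is to derive this as an essentially immediate consequence of Moore's Eigenvector Theorem~\ref{thm:moore}. First I would observe that since $i\cdot\partial U(h)$ is self-adjoint (equivalently, $\partial U(h)$ is skew-adjoint), any eigenvector $\Omega$ of $\partial U(h)$ satisfies $\partial U(h)\Omega = i\lambda\,\Omega$ for some $\lambda \in \R$, so that $U(\exp th)\Omega = e^{it\lambda}\,\Omega$ for every $t \in \R$. In particular, $\Omega$ is an eigenvector for the one-parameter group $U(\exp \R h)$, placing us in the setting of Theorem~\ref{thm:moore}.

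Next I would invoke Moore's Theorem to conclude that $\Omega$ is fixed by the normal subgroup $N_h = \langle \exp \fn_h \rangle \trianglelefteq G$, where $\fn_h$ is the smallest ideal of $\g$ such that the image of $h$ in $\g/\fn_h$ is elliptic. The key step is to upgrade the fixing property from $\Omega$ to the entire dense subspace $U(G)\Omega$: for any $g \in G$ and $n \in N_h$, the normality of $N_h$ yields $g^{-1}ng \in N_h$, and therefore
\[
U(n) U(g) \Omega = U(g) U(g^{-1}ng) \Omega = U(g) \Omega.
\]
Hence $U(g)\Omega \in \cH^{N_h}$ for every $g \in G$. Since $\cH^{N_h}$ is closed and $U(G)\Omega$ is dense in $\cH$ by $G$-cyclicity of $\Omega$, we conclude $\cH^{N_h} = \cH$, i.e., $N_h \subseteq \ker U$.

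To finish, I would use the hypothesis that $\ker U$ is discrete. The subgroup $N_h$ is connected (it is the integral subgroup of $G$ with Lie algebra $\fn_h$), so containment in a discrete subgroup forces $N_h = \{e\}$, and in particular $\fn_h = \L(N_h) = \{0\}$. By the defining property of $\fn_h$, the image of $h$ in $\g/\fn_h = \g$ is elliptic, which is precisely the assertion that $\ad h$ is elliptic on $\g$.

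There is no real obstacle here beyond correctly applying Moore's Theorem; the substantive content of the corollary is already packaged into Theorem~\ref{thm:moore}, and the proof is a short bootstrap from one fixed vector to a dense set of fixed vectors via the normality of~$N_h$.
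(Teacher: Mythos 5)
Your proposal is correct and follows essentially the same route as the paper: apply Moore's Theorem to place $\Omega$ in $\cH^{N_h}$, use normality of $N_h$ to show $\cH^{N_h}$ is $U(G)$-invariant (hence all of $\cH$ by cyclicity), and conclude $\fn_h = \{0\}$ from discreteness of the kernel. The only cosmetic difference is that the paper concludes directly via $\fn_h \subeq \ker(\dd U) = \{0\}$ rather than passing through the group-level statement $N_h \subeq \ker U$.
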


\begin{prf} It suffices to show that $\fn_h = \{0\}$.
  As the subgroup $N_h \trile G$ is normal, the subspace
  $\cH^{N_h}$ of $N_h$-fixed vectors is $G$-invariant:
 For $\xi \in \cH^{N_h}$, $g \in G$ and $n \in N_h$, we have
  \[ U(n) U(g)\xi = U(g)U(g^{-1}ng)\xi = U(g)\xi.\]
The 
$G$-cyclic eigenvector $\Omega$ of $\partial U(h)$ is contained
in $\cH^{N_h}$ by Moore's Theorem, so that $\cH = \cH^{N_h}$.
     Therefore
  $\fn_h \subeq \ker(\dd U) =  \{0\}$.    
\end{prf}

In many situations, Moore's Theorem implies that
eigenvectors of one-parameter subgroups are actually fixed by~$G$.
These cases are easily detected with the following concept: 

\begin{defn} \mlabel{def:antiell}
  We call $h \in \g$ {\it anti-elliptic} if $\fn_h + \R h = \g$.
  \end{defn}

  \begin{rem} In \cite{Str08} a closely related property has been
    introduced for Lie algebra elements:
    An element $x \in \g$ for which $\ad x$ is diagonalizable is said to
    be {\it essential} if
    \[ \g = \R x + [x,\g] + \Spann [[x,\g], [x,\g]].\]
    As $\g = \sum_{\lambda \in \R} \g_\lambda(x)$ and
$[x,\g] = \sum_{\lambda\not=0} \g_\lambda(x)$, this is equivalent to  
\[ \g_0(x) = \R x + \sum_{\lambda \not=0} [\g_\lambda(x), \g_{-\lambda}(x)].\]
In this case the ideal $\fn_x$ contains all
eigenspaces $\g_\lambda(x)$ for $\lambda \not=0$, hence also the brackets
$[\g_\lambda(x), \g_{-\lambda}(x)]$. As
\[ \fri := \sum_{\lambda \not=0} \g_\lambda(x) + 
  \sum_{\lambda \not=0} [\g_\lambda(x), \g_{-\lambda}(x)] \]
is an ideal of $\g$ for which the image of $x$ in $\g/\fri$ is central,
it follows that $\fri = \fn_x$. Therefore an $\ad$-diagonalizable
element is essential if and only if it is anti-elliptic.
In this sense our concept of intrepidity extends
Strich's concept of essentiality to general Lie algebra elements.   
\end{rem}

  \begin{remark} 
  The assumption of $h$ to be anti-elliptic holds 
  if $h$ is an Euler element in a simple Lie algebra. 
 But $h=\frac12\diag(1,-1)$ is an Euler element in
  the reductive Lie algebra
  $\gl_2(\R)$ with $\fn_h = \fsl_2(\R)\ni h$. So it is
  not anti-elliptic. 
  \end{remark}

  Moore's Theorem immediately yields:  
  \begin{cor} \mlabel{cor:inheritfixed} If $h \in \g$ is anti-elliptic and
    $(U,\cH)$ is a unitary representation of a connected Lie group
    $G$ with Lie algebra $\g$, then $\ker(\partial U(h)) = \cH^G$.
  \end{cor}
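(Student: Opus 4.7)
The plan is to prove the two inclusions $\cH^G \subseteq \ker(\partial U(h))$ and $\ker(\partial U(h)) \subseteq \cH^G$ separately. The first inclusion is immediate: if $\Omega \in \cH^G$, then $U(\exp(th))\Omega = \Omega$ for all $t \in \R$, so Stone's theorem gives $\partial U(h)\Omega = 0$, hence $\Omega \in \ker(\partial U(h))$.

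For the reverse inclusion, I would start with $\Omega \in \ker(\partial U(h))$. Then $\Omega$ is an eigenvector of $\partial U(h)$ with eigenvalue $0$, so by Stone's theorem the one-parameter group $U(\exp(\R h))$ fixes $\Omega$. This puts us exactly in the hypothesis of Moore's Eigenvector Theorem~\ref{thm:moore}(a), which then yields that $\Omega$ is fixed by the normal subgroup $N_h = \langle \exp \fn_h \rangle$.

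Now I would invoke the anti-ellipticity hypothesis $\fn_h + \R h = \g$. Since $\Omega$ is fixed both by $N_h$ (so it is annihilated by $\dd U(\fn_h)$ on smooth vectors) and by $\exp(\R h)$ (so it is annihilated by $\dd U(\R h)$ in the same sense), and since these two subspaces together span all of $\g$, the vector $\Omega$ is fixed by the subgroup $G' \subeq G$ generated by $N_h$ and $\exp(\R h)$. The Lie algebra of $G'$ contains both $\fn_h$ and $\R h$, hence equals $\g$, so $G'$ is a connected Lie subgroup of the connected group $G$ with $\L(G')= \g$; therefore $G' = G$. This gives $\Omega \in \cH^G$, completing the proof.

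The argument is short and the only non-routine ingredient is Moore's Theorem, which is quoted. The main conceptual point is just to observe that anti-ellipticity is precisely the algebraic condition which, combined with the normal subgroup produced by Moore's Theorem and the one-parameter group already fixing $\Omega$, generates the whole connected group $G$.
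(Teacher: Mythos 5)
Your proof is correct and follows essentially the same route as the paper: the paper's argument likewise observes that elements of $\ker(\partial U(h))$ are eigenvectors of $U(\exp \R h)$, applies Moore's Theorem to get $N_h$-invariance, and uses anti-ellipticity (in the form $G = N_h\exp(\R h)$) to conclude $G$-invariance. The parenthetical detour through $\dd U$ on smooth vectors is unnecessary — being fixed by $N_h$ and by $\exp(\R h)$ already places $\Omega$ in the fixed-point set of the subgroup they generate — but this does not affect the validity of the argument.
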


  \begin{prf} As $\ker(\partial U(h))$ consists of
      eigenvectors for $U(\exp \R h)$, Moore's Theorem implies that
      they are fixed by $U(N_h)$. Anti-ellipticity of $h$ further
      implies that $G = N_h \exp(\R h)$, so that they are fixed by~$G$.    
  \end{prf}

  \begin{exs} \mlabel{exs:antiell} 
 (a) If $\g$ is simple and $h \in \g$ is not elliptic, then
  $\fn_h \not=\{0\}$ implies $\fn_h = \g$, so that
  $h$ is anti-elliptic. If, more generally, $\g$ is reductive such that
  $\g = \R h + [\g,\g]$ and no restriction of $\ad h$ to a simple
  ideal of $\g$ is elliptic, then $h$ is anti-elliptic. 

\nin (b) Consider a semidirect sum of Lie algebras
    $\g = \fr \rtimes \fl$ and an element
    $h \in \fl$ such that
    \begin{equation}
      \label{eq:imag-spec}
      \Spec(\ad h\res_{\fr}) \cap i\R = \eset
    \end{equation}
    and $h$ is anti-elliptic in $\fl$. Then
    $h$ is anti-elliptic in $\g$. In fact, our assumption implies that
    $\fr \subeq \fn_h$, so that $\g/\fn_h \cong \fl/(\fl \cap \fn_h)
    \cong \fl/\fl_h$ is linearly
    generated by the image of $h$. This implies that $\g = \fn_h + \R h$.

\nin (c) If $\g= \R x + \R h$ with $[h,x] = \lambda x$ and $\lambda \not=0$,
then $\fn_h = \R x$, so that $h$ is anti-elliptic
  (cf.\ \cite{Str08}). 
    
\nin (d) Consider the boost generator $h \in \so_{1,1}(\R) \subeq \fp(2)
    = \R^{1,1} \rtimes \so_{1,1}(\R)$, the $2d$-Poincar\'e--Lie algebra. 
    Then $\fn_h = \R^{1,1}$ and $\g = \fn_h +  \R h$, so that
    $h$ is anti-elliptic.

\nin (e) From (a) and (b) it follows immediately that, for $d \geq 3$,
    any boost generator $h \in \so_{1,d-1}(\R)
    \subeq \fp(d) = \R^{1,d-1} \rtimes \so_{1,d-1}(\R)$ 
    is anti-elliptic. Here we use that the representation
    of $\so_{1,d-1}(\R)$ on $\R^{1,d-1}$ is irreducible. 

    \nin (f)   Suppose that $\g$ is reductive and $h \in \g$ is an
    Euler element. {Since every ideal of a reductive Lie algebra
      possesses a complementary ideal (\cite[Def.~5.7.1]{HN12}),
      we can write
      $\g = \fn_h \oplus \fb$.
  We write accordingly $h = h_0 + h_1$ with $h_0 \in \fn_h$ and
  $h_1 \in \fb$.
  If $\fn_h$ is not central, then $h_0$ is an Euler element of $\fn_h$.
  Further, $h_1$ is elliptic in $\fb \cong \g/\fn_h$. From the direct
  sum decomposition we thus infer that $h_0$ is an Euler element of $\g$
  and that $h_1$ is elliptic.}
\end{exs}

\begin{lem} \mlabel{lem:eul-nh}
  If $h \in \g$ is an Euler element, then 
  \[  \fn_h = \g_1(h) + [\g_1(h), \g_{-1}(h)] + \g_{-1}(h). \]
  In particular, $h$ is anti-elliptic if and only if
  \[ \g_0(h)\subeq \R h + [\g_1(h), \g_{-1}(h)].\] 
\end{lem}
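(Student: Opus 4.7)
\medskip

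\noindent\textbf{Proof plan.}
Let $\fm := \g_1(h) + [\g_1(h), \g_{-1}(h)] + \g_{-1}(h)$. The plan is to show that $\fm$ is an ideal of $\g$ such that $h$ acts trivially on $\g/\fm$, and that any ideal $\fb \trile \g$ for which the image of $h$ in $\g/\fb$ is elliptic must contain $\fm$. By the defining minimality of $\fn_h$, this will force $\fn_h = \fm$, and the second assertion will then be an immediate consequence of the $3$-grading decomposition.

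First I would verify that $\fm$ is an ideal. Since $h$ is Euler, $\g_2(h) = \g_{-2}(h) = \{0\}$, so $[\g_1(h),\g_1(h)] = [\g_{-1}(h),\g_{-1}(h)] = \{0\}$. The bracket of $\g_0(h)$ with $\g_{\pm 1}(h)$ lies in $\g_{\pm 1}(h) \subseteq \fm$, and the stability of $[\g_1(h),\g_{-1}(h)]$ under $\ad \g_0(h)$ follows from a single Jacobi identity: for $x_0 \in \g_0(h)$, $x_1 \in \g_1(h)$, $x_{-1} \in \g_{-1}(h)$,
\[
[x_0,[x_1,x_{-1}]] = [[x_0,x_1],x_{-1}] + [x_1,[x_0,x_{-1}]] \in [\g_1(h),\g_{-1}(h)].
\]
Similarly, $[\g_{\pm 1}(h), [\g_1(h),\g_{-1}(h)]] \subseteq \g_{\pm 1}(h) \subseteq \fm$ by Jacobi, using $[\g_{\pm 1}(h),\g_{\pm 1}(h)] = \{0\}$.

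Next I would show that $\ad h$ vanishes on $\g/\fm$. Indeed $\fm \supseteq \g_1(h) + \g_{-1}(h) = [h,\g]$, so the induced action of $\ad h$ on the quotient is zero, hence elliptic (in fact central).

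The main step is the minimality. Let $\fb \trile \g$ be an ideal for which the image $\oline{\ad h}$ of $\ad h$ on $\g/\fb$ is elliptic, i.e., has purely imaginary spectrum. On the other hand, $\oline{\ad h}$ is a quotient of the real diagonalizable operator $\ad h$ with $\Spec(\ad h) \subseteq \{-1,0,1\}$, so $\Spec(\oline{\ad h}) \subseteq \{-1,0,1\} \cap i\R = \{0\}$, and being semisimple $\oline{\ad h} = 0$. Hence $[h,\g] \subseteq \fb$. For $x \in \g_1(h)$ we have $x = [h,x] \in \fb$, and similarly $\g_{-1}(h) \subseteq \fb$; therefore $[\g_1(h),\g_{-1}(h)] \subseteq \fb$ as well, whence $\fm \subseteq \fb$. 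Applying this to $\fb = \fn_h$ and combining with the first two steps yields $\fn_h = \fm$.

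For the last claim, note that $\fn_h \subseteq \g_1(h) + \g_0(h) + \g_{-1}(h)$ and $\fn_h \cap \g_0(h) = [\g_1(h),\g_{-1}(h)]$ by the grading of $\fm$. Hence $\fn_h + \R h = \g$ holds if and only if $\g_0(h) \subseteq \R h + [\g_1(h),\g_{-1}(h)]$. The only subtle point above is the spectral argument in the minimality step, but that is immediate from the semisimplicity of $\ad h$ over~$\R$; the rest is a direct Jacobi computation, so I anticipate no real obstacle.
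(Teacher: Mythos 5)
Your proposal is correct and follows essentially the same route as the paper's proof: show that $\g_{\pm 1}(h)$ must lie in any ideal whose quotient makes $h$ elliptic, observe that $\g_1(h) + [\g_1(h),\g_{-1}(h)] + \g_{-1}(h)$ is an ideal with $h$ central in the quotient, and conclude by minimality of $\fn_h$. You merely spell out the details (the Jacobi verification that $\fm$ is an ideal and the spectral argument forcing $\oline{\ad h}=0$) that the paper compresses into ``Clearly, $\g_{\pm1}(h)\subeq \fn_h$''.
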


\begin{prf} Clearly, $\g_{\pm 1}(h) \subeq \fn_h$  implies that
  $\fn_h$ contains the ideal
  \[  \fn := \g_1(h) + [\g_1(h), \g_{-1}(h)] + \g_{-1}(h). \]
  As the image of $h$ in $\g/\fn$ is central,
  we have $\fn_h = \fn$. 
  Hence $h$ is anti-elliptic if and only if
  $\g_0(h) \subeq \R h + [\g_1(h), \g_{-1}(h)]$.
\end{prf}

\begin{rem} If $h$ is an Euler element, then
  Lemma~\ref{lem:eul-nh} shows that
  \[ \g = \fn_h + \g_0(h),\]
  so that {the summation map is a surjective homomorphism
    $\fn_h \rtimes \g_0(h) \onto \g$. Hence 
  $\g$ is a quotient of $\fn_h \rtimes \g_0(h)$,
  where $h \in \g_0(h)$ is central.}
\end{rem}

\begin{rem} If $h$ is an Euler element, then
  \[ \fn_h^\natural :=
    \fn_h + \R h =  \g_1(h) + (\R h + [\g_1(h), \g_{-1}(h)]) + \g_{-1}(h) \]
  is an ideal of $\g$. {It is the minimal ideal} containing $h$,
  and therefore
  the corresponding integral subgroup of $G$ is generated by
  $\exp(\Ad(G)h)$.
  Therefore $h$ is anti-elliptic if and only if the modular groups
  $\exp(\Ad(g) \R h)$ generate~$G$.   
\end{rem}

\subsection{Non-degeneracy}
\mlabel{subsec:svg}

Let $(U,\cH)$ be an (anti-)unitary
  representation of $G_{\tau_h}$, where $h \in \g$ is an Euler element
  and $\sV = \sV(h,U)$ is the canonical standard subspace.

We consider the $G$-invariant closed real subspace 
\[ \sV_G = \bigcap_{g \in G} U(g) \sV. \]

We call the couple $(U,\sV)$ \textit{non-degenerate} if $\sV_G=\{0\}$.
  We shall see in this context how this property is related to the structure introduced in the previous section.

\begin{thm} \mlabel{thm:vg-ident}
  Suppose that $G$ is connected,
  $h \in \g$ is an Euler element, $(U,\cH)$ an \break 
  {(anti-)unitary} representation of $G_{\tau_h}$, and
  $\sV = \sV(h,U)$ the corresponding standard subspace.
  Then $\sV_G = \sV \cap \cH^{N_h}$, where
  $N_h$ is the normal subgroup from {\rm Moore's Theorem~\ref{thm:moore}}. 
\end{thm}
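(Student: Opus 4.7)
The plan is to prove the two inclusions $\sV\cap\cH^{N_h}\subseteq\sV_G$ and $\sV_G\subseteq\sV\cap\cH^{N_h}$ separately, using two structural facts about $\fn_h$ that follow from Lemma~\ref{lem:eul-nh}: since $\fn_h\supseteq\g_1(h)+\g_{-1}(h) = [h,\g]$, (a) the image of $h$ in $\g/\fn_h$ is central, so $\Delta_\sV^{it} = U(\exp(-2\pi t h))$ commutes with $U(g)$ on the $G$-invariant subspace $\cH^{N_h}$ for every $g\in G$; and (b) the involution induced by $\tau_h$ on $\g/\fn_h$ is trivial (since $\tau_h$ fixes $\g_0(h)$ pointwise while $\g_{\pm 1}(h)\subseteq\fn_h$), so $g^{-1}\tau_h^G(g)\in N_h$ for every $g\in G$, and $\tau_h^G(\fn_h)=\fn_h$ makes $J_\sV=U(\tau_h^G)$ preserve $\cH^{N_h}$.

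For the inclusion $\supseteq$, given $\xi\in\sV\cap\cH^{N_h}$ and $g\in G$, I would use (a) and the defining relation $\Delta_\sV^{1/2}\xi = J_\sV\xi$ to get
\[ \Delta_\sV^{1/2}U(g)\xi \;=\; U(g)\Delta_\sV^{1/2}\xi \;=\; U(g)J_\sV\xi, \]
while the intertwining relation $J_\sV U(g) J_\sV^{-1}=U(\tau_h^G(g))$ gives $J_\sV U(g)\xi = U(\tau_h^G(g))J_\sV\xi = U(g)\cdot U(g^{-1}\tau_h^G(g))J_\sV\xi$. By (b) and $J_\sV\xi\in\cH^{N_h}$, the extra factor acts trivially, so the two expressions coincide; hence $U(g)\xi\in\sV$ and $\xi\in\sV_G$.

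For the reverse inclusion I would first note that $\sV_G\cap\cH^\infty$ is dense in $\sV_G$: for real $\phi\in C_c^\infty(G,\R)$ the Bochner integral $U(\phi)\xi = \int\phi(g)U(g)\xi\,dg$ lies in $\sV_G$ (a real-linear combination of the family $g\mapsto U(g)\xi\in\sV_G$) and in $\cH^\infty$, and an approximate identity recovers $\xi$. Fix $\xi\in\sV_G\cap\cH^\infty$ and form the cyclic subspace $\cK_\xi := \overline{\spann_\C\,U(G)\xi}$. Since $U(\exp\R h)\subseteq U(G)$, $\cK_\xi$ is $\Delta_\sV^{it}$-invariant; and since $J_\sV\xi = \Delta_\sV^{1/2}\xi$ arises from $\xi$ by spectral calculus of $\partial U(h)$ (hence lies in the closed cyclic subspace of $\xi$ under $U(\exp\R h)$, and therefore in $\cK_\xi$), the intertwining relation gives $J_\sV\cK_\xi = \cK_{J_\sV\xi}\subseteq\cK_\xi$. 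Thus $\cK_\xi$ reduces $\sV$, and $\sV\cap\cK_\xi$ is a standard subspace of $\cK_\xi$ with the restricted modular data.

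The main step, which is also the trickiest to keep rigorous, compares $\sV_G\cap\cK_\xi$ with $\sV\cap\cK_\xi$ inside $\cK_\xi$: the former contains the orbit $U(G)\xi$ whose complex span is $\cK_\xi$, so it is cyclic and hence standard in $\cK_\xi$; it is $\Delta_\sV^{it}$-invariant and contained in $\sV\cap\cK_\xi$, so Longo's uniqueness result \cite[Prop.~3.10]{Lo08}, applied inside the reducing subspace $\cK_\xi$, forces $\sV_G\cap\cK_\xi = \sV\cap\cK_\xi$. The payoff is that the standard subspace $\sV\cap\cK_\xi$ is then $U(G)$-invariant, so by Lemma~\ref{lem:sym} every $U(g)\big|_{\cK_\xi}$ commutes with $\Delta_\sV\big|_{\cK_\xi}$. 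Combined with the ambient conjugation law $\Delta_\sV^{it}\partial U(x)\Delta_\sV^{-it} = e^{\mp 2\pi t}\partial U(x)$ for $x\in\g_{\pm 1}(h)$ --- valid on $\cK_\xi\cap\cH^\infty$, a dense core preserved by $U(G)$ --- this forces $\partial U(x)\big|_{\cK_\xi}=0$ for $x\in\g_{\pm 1}(h)$. These eigenspaces Lie-generate $\fn_h$ (Lemma~\ref{lem:eul-nh}), so $U(N_h)$ acts trivially on $\cK_\xi$, whence $\xi\in\cK_\xi\subseteq\cH^{N_h}$; by the density step this upgrades to $\sV_G\subseteq\cH^{N_h}$, completing the proof.
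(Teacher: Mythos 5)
Your proof is correct. The inclusion $\sV\cap\cH^{N_h}\subseteq \sV_G$ is essentially the paper's argument: the paper packages it as the statement that on $\cH_1=\cH^{N_h}$ one has $U_1(G)=U_1(\la \exp\g_0(h)\ra)$ (since $\g=\fn_h+\g_0(h)$), which commutes with both $U_1(\exp\R h)$ and $J_1$; your direct verification of $\Delta_\sV^{1/2}U(g)\xi=J_\sV U(g)\xi$ using $[h,\g]\subeq\fn_h$ and $g^{-1}\tau_h^G(g)\in N_h$ is the same mechanism. For the reverse inclusion the key engine is also identical --- a $U(G)$-invariant standard subspace with modular group $U(\exp\R h)$ forces $U(G)$ to commute with the modular group, hence $\dd U([h,\g])=0$, hence $\g_{\pm1}(h)$ and therefore all of $\fn_h$ acts trivially --- but your execution differs: you localize to cyclic subspaces $\cK_\xi$ generated by smooth vectors of $\sV_G$, invoke \cite[Prop.~3.10]{Lo08} to identify $\sV_G\cap\cK_\xi$ with $\sV\cap\cK_\xi$, and then run the commutation argument fiber by fiber. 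The paper instead applies the mechanism once globally on $\cH_0:=\oline{\sV_G+i\sV_G}$, where $\sV_G$ is already a $G$-invariant standard subspace whose modular operator is the restriction of $\Delta_\sV$ (by \cite[Cor.~2.1.8]{Lo08}); this makes the smoothing step, the reduction to $\cK_\xi$, and the appeal to Prop.~3.10 unnecessary. Your route is valid (all the auxiliary claims --- $J_\sV\xi\in\cK_\xi$ via bounded approximations of $\Delta^{1/2}$, $J$-invariance of $\cK_\xi$, standardness of $\sV\cap\cK_\xi$ with restricted modular data --- check out), but it buys nothing over the shorter global argument; if you streamline, replace $\cK_\xi$ by $\cH_0$ and drop the density and Prop.~3.10 steps.
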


\begin{prf} Let $\cH_1 := \cH^{N_h}$ and $\cH_2 := \cH_1^\bot$.
  As $N_h \trile G$ is a normal subgroup of $G_{\tau_h}$,
  the decomposition $\cH = \cH_1 \oplus \cH_2$ is $U(G_{\tau_h})$-invariant,
  so that $U = U_1 \oplus U_2$, accordingly. Since this group contains
  $J_\sV$ and the modular group, it follows that
    \[ \sV = \sV_1 \oplus \sV_2 \quad \mbox{ with } \quad
    \sV_1 = \sV \cap \cH^{N_h} \quad \mbox{ and }  \quad 
    \sV_2 = \sV \cap (\cH^{N_h})^\bot,\]
  where $\sV_1 = \sV(h, U_1)$.

  \nin  ``$\supeq$'': On $\cH_1$  the group $N_h$ acts trivially,
  so that $\g = \fn_h + \g_0(h)$ 
  (Lemma~\ref{lem:eul-nh}) implies that $U_1(G)  
  = U_1(\la \exp \g_0(h) \ra)$ commutes with
  the modular group $U_1(\exp \R h)$ of $\sV_1$.
  Further $\g_0(h) =\g^{\tau_h}$ shows that $U_1(G)$
  also commutes with $J_1 = U_1(\tau_h^G)$,
  and therefore $\sV_1$ is $U_1(G)$-invariant.
  This proves that $\sV_1 \subeq \sV_G$.

  \nin ``$\subeq$'': We consider the closed $U(G)$-invariant subspace
$\cH_0 := \oline{\sV_G + i \sV_G}$ 
  and note that $\sV_G$ is a standard subspace of~$\cH_0$. 
  As $\sV_G$ is invariant under $U(\exp \R h) = \Delta_\sV^{i\R}$,
the modular group of $\sV$,  it follows from \cite[Cor.~2.1.8]{Lo08} that
  \[ \Delta_{\sV_G} = e^{2 \pi i \, \partial U_0(h)}
    \quad \mbox{ for } \quad U_0(g) := U(g)\res_{\cH_0}.\]
The $U_0(G)$-invariance of the standard subspace $\sV_G$ 
implies that $U_0(G)$ commutes with its modular operator,
{hence with $\partial U_0(h)$, and thus
  $\partial U([h,x]) = 0$ for $x \in \g$.}
This implies
that $[h,\g] \subeq \ker \dd U_0,$
  so that the ideal $\ker(\dd U_0) \trile \g$ contains $\g_{\pm 1}(h)$,
  hence also
  \[ \fn_h = \g_1(h) + [\g_1(h),\g_{-1}(h)] + \g_{-1}(h)\]
  (cf.~Lemma~\ref{lem:eul-nh}). 
  This is turn shows that $\cH_0 \subeq \cH^{N_h}$,
hence $\sV_G \subeq \sV \cap  \cH^{N_h}$.   
\end{prf}

\begin{cor} \mlabel{cor:4.10}  If $G$ is connected and $h \in \fn_h$, then
  \[ \sV_G= \sV \cap \sV'.\] 
\end{cor}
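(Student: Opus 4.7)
The approach is to combine Theorem~\ref{thm:vg-ident}, which identifies $\sV_G = \sV \cap \cH^{N_h}$, with the purely modular observation
\[ \sV \cap \sV' = \{v \in \sV : \Delta_\sV v = v\}. \]
This observation follows directly from $\Delta_{\sV'} = \Delta_\sV^{-1}$ and $J_{\sV'} = J_\sV$: a vector in both subspaces satisfies $\Delta_\sV^{1/2} v = J_\sV v = \Delta_\sV^{-1/2} v$, which forces $\Delta_\sV v = v$; conversely, $\Delta_\sV v = v$ together with $v \in \sV$ gives $J_\sV v = \Delta_\sV^{1/2} v = v = \Delta_\sV^{-1/2} v$, so $v \in \sV'$.

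For the inclusion $\sV_G \subseteq \sV \cap \sV'$, I would use the hypothesis $h \in \fn_h$: it implies $\exp(\R h) \subseteq N_h$, so any $v \in \cH^{N_h}$ is fixed by the modular group $\Delta_\sV^{it} = U(\exp(-2\pi t h))$, giving $\Delta_\sV v = v$. Combined with $v \in \sV$, the modular characterization above places $v$ inside $\sV \cap \sV'$, and Theorem~\ref{thm:vg-ident} finishes this direction.

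For the converse $\sV \cap \sV' \subseteq \sV_G$, the main tool will be Moore's Theorem~\ref{thm:moore}. Any $v \in \sV \cap \sV'$ lies in $\ker(\Delta_\sV - \1) = \ker(e^{2\pi i\, \partial U(h)} - \1)$. By the spectral theorem applied to the selfadjoint operator $\partial U(h)$, this kernel decomposes as the orthogonal Hilbert sum
\[ \ker(\Delta_\sV - \1) = \bigoplus_{n \in \Z} \ker(\partial U(h) - n\1); \]
any non-zero vector in such a summand is an eigenvector of the one-parameter group $U(\exp \R h)$ (with eigencharacter $t \mapsto e^{itn}$), hence lies in $\cH^{N_h}$ by Moore's Theorem. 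Since $\cH^{N_h}$ is a closed subspace, $\ker(\Delta_\sV - \1) \subseteq \cH^{N_h}$, so intersecting with $\sV$ yields $\sV \cap \sV' \subseteq \sV \cap \cH^{N_h} = \sV_G$.

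I do not anticipate any substantive obstacle: the hypothesis $h \in \fn_h$ is used only to pass from $N_h$-invariance to $\Delta_\sV$-invariance in the first inclusion, while the second inclusion holds unconditionally thanks to Moore's Theorem applied eigenspace-by-eigenspace in the spectral decomposition of $\partial U(h)$.
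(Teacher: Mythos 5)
Your proof is correct and follows essentially the same route as the paper's: Theorem~\ref{thm:vg-ident} combined with the identity $\sV \cap \sV' = \Fix(\Delta_\sV) \cap \sV$, with the hypothesis $h \in \fn_h$ yielding one inclusion and Moore's Theorem~\ref{thm:moore} the other. One small slip in the converse direction: in the paper's conventions $\partial U(h)$ is \emph{skew}-adjoint (so $\Delta_\sV = e^{2\pi i\,\partial U(h)}$ is a positive operator, not a unitary), hence $\ker(\Delta_\sV - \1) = \ker(\partial U(h))$ outright and your decomposition into integer eigenspaces of $\partial U(h)$ is vacuous for $n \neq 0$; this does not damage the argument, since every vector of $\ker(\Delta_\sV - \1)$ is in any case an eigenvector of $U(\exp \R h)$, to which Moore's Theorem applies directly.
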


\begin{prf} Theorem~\ref{thm:vg-ident}
  shows that $\sV_G \subeq \cH^{N_h}$, and
 since $h \in \fn_h$ by assumption, $\sV_G$ is fixed by its modular
  group, hence contained in $\Fix(\Delta_\sV) \cap \sV = \sV \cap \sV'$.
    
  If, conversely, $v \in \sV \cap \sV'$, then
  $v$ is fixed by $U(\exp \R h) = \Delta_\sV^{i\R}$, hence by
  definition of $N_h$ also by $N_h$,
  so that $v \in \sV \cap \cH^{N_h}  = \sV_G$
  (Theorem~\ref{thm:vg-ident}).
\end{prf}

With the standard subspace
$\sV_G \subeq \cH^{N_h}$, the preceding corollary
yields an orthogonal decomposition 
\[  \sV = \sV_G \oplus \sV_{\rm symp}, \]
where $\sV_{\rm symp} \subeq (\cH^\R,\omega)$ is a symplectic subspace
for $\omega = \Im \la \cdot, \cdot \ra$ and
$\sV_{\rm symp} = \sV(h,U_s)$ for the
(anti-)unitary representation $U_s$ of $G_{\tau_h}$ on $(\cH^{N_h})^\bot$.

\begin{cor} \mlabel{cor:4.11} If $G$ is connected and $\fn_h = \g$, then the following
  are equivalent:
  \begin{itemize}
  \item[\rm(a)] $\sV_G = \{0\}$, i.e.~$(U,\sV)$ is  non-degenerate.
  \item[\rm(b)] $\cH^G = \{0\}$. 
  \item[\rm(c)] $\sV \cap \sV' = \{0\}$. 
  \item[\rm(d)] The closed real subspace $\tilde\sV$ generated by
    $U(G)\sV$ coincides with $\cH$. 
  \end{itemize}
\end{cor}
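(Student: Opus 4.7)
The plan is to reduce the four conditions to intersection statements using Theorem~\ref{thm:vg-ident} and Corollary~\ref{cor:4.10}, and then to exploit the $J_\sV$-invariance of $\cH^G$ together with symplectic duality for closed real subspaces. Since $\fn_h = \g$, the normal subgroup $N_h$ of Moore's Theorem coincides with the connected group~$G$, so Theorem~\ref{thm:vg-ident} yields $\sV_G = \sV \cap \cH^G$; moreover $h \in \g = \fn_h$, so Corollary~\ref{cor:4.10} applies directly and gives $\sV_G = \sV \cap \sV'$. Comparing these two identifications makes (a)$\Leftrightarrow$(c) immediate.

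For (a)$\Leftrightarrow$(b), I would argue that every $\xi \in \cH^G$ is fixed by $U(\exp \R h)$, hence lies in $\ker \partial U(h)$ and satisfies $\Delta_\sV\xi = \xi$. Because $\tau_h^G$ is an automorphism of $G$, the subspace $\cH^G$ is stabilized by $J_\sV = U(\tau_h^G)$, and the restriction $J_\sV\res_{\cH^G}$ is a conjugation on the complex Hilbert space $\cH^G$. The characterization $\sV = \Fix(J_\sV\Delta_\sV^{1/2})$ then gives
\[ \sV_G = \sV \cap \cH^G = (\cH^G)^{J_\sV}. \]
Since any conjugation on a non-zero complex Hilbert space has a non-zero real form, the vanishing of $(\cH^G)^{J_\sV}$ forces $\cH^G = \{0\}$, proving (a)$\Rightarrow$(b); the converse inclusion $\sV_G \subeq \cH^G = \{0\}$ is immediate from the identification above.

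For (c)$\Leftrightarrow$(d), I would first recall that for any closed real subspace $\sH \subeq \cH$ one has $\sH = \sH''$, so $\sH = \cH$ is equivalent to $\sH' = \{0\}$. The symplectic orthogonal of the closed real span $\tilde\sV$ of $U(G)\sV$ computes to
\[ \tilde\sV' = \bigcap_{g \in G}(U(g)\sV)' = \bigcap_{g \in G} U(g)\sV' = (\sV')_G, \]
so (d) is equivalent to $(\sV')_G = \{0\}$. Now $\sV'$ is the canonical standard subspace for $-h$: its modular data are $\Delta_\sV^{-1} = e^{2\pi i \partial U(-h)}$ and the same conjugation $J_\sV$, and $\tau_{-h} = \tau_h$ because $\ad h$ has integer eigenvalues so that $e^{\pi i \ad h}$ has order two. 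Since $\fn_{-h} = \fn_h = \g$ contains $-h$, Corollary~\ref{cor:4.10} applied to $\sV'$ produces $(\sV')_G = \sV' \cap \sV$, which is condition~(c).

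No step appears to be a genuine obstacle; the only bookkeeping point is verifying that the hypotheses of Theorem~\ref{thm:vg-ident} and Corollary~\ref{cor:4.10} transfer cleanly to the dual standard subspace $\sV'$, which reduces to the identification $\sV' = \sV(-h,U)$ with unchanged Euler involution $\tau_{-h} = \tau_h$ and the symmetry $\fn_{-h} = \fn_h$.
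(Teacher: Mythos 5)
Your proof is correct, and for (a)$\Leftrightarrow$(b)$\Leftrightarrow$(c) it is the paper's argument: Theorem~\ref{thm:vg-ident} gives $\sV_G = \sV \cap \cH^G$ and Corollary~\ref{cor:4.10} gives $\sV_G = \sV \cap \sV'$; your explicit identification $\sV \cap \cH^G = (\cH^G)^{J_\sV}$ merely spells out the paper's remark that $\sV_G$ is a standard subspace of $\cH^G$ (a standard subspace with trivial modular operator is exactly a real form, which is non-zero iff the ambient space is). The one genuine divergence is the link to (d): the paper proves (a)$\Leftrightarrow$(d) directly via the one-line computation $J_\sV \sV_G = \bigcap_{g\in G} U(\tau_h^G(g)) J_\sV \sV = (U(G)\sV)'$, using the covariance relation $J_\sV U(g) J_\sV = U(\tau_h^G(g))$, whereas you prove (c)$\Leftrightarrow$(d) by computing $\tilde\sV' = (\sV')_G$ and applying Corollary~\ref{cor:4.10} a second time to the dual standard subspace $\sV' = \sV(-h,U)$. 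Your route requires the extra (correct) bookkeeping that $\tau_{-h} = \tau_h$ and $\fn_{-h} = \fn_h \ni -h$, but in exchange it does not invoke the $J_\sV$-conjugation of $U(G)$ at that step; both arguments ultimately rest on $\sH'' = \sH$ for closed real subspaces, so the difference is cosmetic rather than structural.
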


\begin{prf}  Theorem~\ref{thm:vg-ident} implies that
  $\sV_G = \sV \cap \cH^G$, which is a standard subspace of the
  $G_{\tau_h}$-invariant subspace $\cH^G$. This implies the equivalence
  of (a) and (b).
  The equivalence of (a) and (c) follows from Corollary~\ref{cor:4.10}.
  To connect with (d),  we note that
\[ J_\sV \sV_G
  = \bigcap_{g \in G} J_\sV U(g) \sV
= \bigcap_{g \in G} U(\tau(g)) J_\sV \sV
= \bigcap_{g \in G} U(\tau(g)) \sV' 
= \bigcap_{g \in G} U(g) \sV' ={ (U(G)\sV)'}\]
shows that (d) is equivalent to (a).
\end{prf}

\begin{remark}
(a) Let $h\in\fg$ be an Euler element, if $h$ is symmetric then the condition $h\in\fn_h$ is satisfied. Indeed in this case there exists a subalgebra $\fh\subset\fg$  such that  $\fh\simeq\fsl_2(\R)$ and $h$ is an Euler element of $\fh$ \cite[Corollary 3.14]{MN21}. Then $h\in [\fh_1,\fh_{-1}]\subset\fn_h$.

\nin (b) If $h$ is not symmetric, then Corollary \ref{cor:4.10} does not hold. 
Indeed let {$(\sH(\cO))_{\cO}$} be the  one-particle net associated
to the   free field mass in dimension $1+1$ with mass $m>0$
and let $U$ be the mass $m$ representation of the identity
component $\cP_+^\uparrow=\RR^{1,1}\rtimes \cL_+^\up$ of the Poincar\'e group.
The wedge subspaces $\sV:=\sH(W_R)$ and $\sH (W_L)$ are
{mutually orthogonal symplectic factor subspaces satisfying}
\[ \sH(W_R)'=\sH (W_L) \quad \mbox{ and } \quad \sH(W_R)\cap\sH (W_L)=\{0\}.\] 
Here the wedge $W_R$ is associated to an Euler couple $(h,\tau_h)$
(cf. Example \ref{ex:desit}), and since $h$ is {neither symmetric 
  in $\cP_+^\uparrow$ nor} in $\cL_+^\up$ (note that $\so_{1,1}(\R) \cong \R$ is abelian),
there is no $g$ such that $gW_R=W_L$.
One can restrict the symmetry group to $H:=\cL_e$ as well as the representation $U|_{H}$, acting as automorphisms of $\sH(W_R)$.
We conclude that $\sV_H=\sV\neq \sV\cap \sV'=\{0\}$ since the subspace
{$\sV=\sH(W_R)$ is symplectic.} 

\nin (c) {The containment $h\in \fn_h$ does not imply that
  $h$ is symmetric: For instance no Euler element $h\in\fsl_3(\RR)$ is 
  symmetric, but $h\in \fg=\fn_h$ follows from the simplicity of
  $\fsl_3(\RR)$.}
\end{remark}

\subsection{Consequences of Moore's Theorem for operator  algebras}
\mlabel{subsec:cons-moore}

{For the discussion in this section, we recall the conditions
  (Uni), (M), (Fix), (Mod) and (Reg) from Section~\ref{subsec:strich-opalg}.}
  
\begin{theorem} \mlabel{thm:3.8}
  Let $G$ be a connected Lie group with
    Lie algebra $\g$ and $h \in \g$ anti-elliptic. 
  Let $(U,\cH)$ be a unitary 
  representation of $G$ with discrete kernel,
  $\cN\subset \cM \subeq B(\cH)$ an inclusion of von Neumann algebras, 
  and $\Omega\in\cH$ a unit vector which is
  cyclic and separating for~$\cN$ and $\cM$. 
  Assume that 
  \begin{itemize}
  \item[\rm(Mod)] $e^{2\pi i \partial U(h)} = \Delta_{\cM,\Omega}$,  and 
  \item[\rm(Reg')] $\{ g \in G \: \Ad(U(g)) \cN \subeq \cM\}$
    is an $e$-neighborhood in $G$.
  \end{itemize}
  Then the following assertions hold:
  \begin{itemize}
  \item[\rm(a)]   $h$ is an Euler element. 
  \item[\rm(b)]  The conjugation $J := J_{\cM,\Omega}$ satisfies
  \begin{equation}\label{eq:jcov} J U(\exp x) J
    = U(\exp \tau_h(x)) \quad \mbox{ for } \quad
    \tau_h = e^{\pi i \ad h}, x \in \g.\end{equation}
\item[\rm(c)] $\cH^G = \ker(\partial U(h))$.   
\item[\rm(d)] The restriction of $i\partial U(h)$ to the
  orthogonal complement of the subspace 
  $\cH^{N_h}$ of fixed vectors of the codimension-one normal subgroup~$N_h$,
  has absolutely continuous spectrum. 
  \end{itemize}
 If, in addition, $\cH^G = \C \Omega\not=\cH$, 
  then $\cM$ is factor of type {\rm III}$_1$.
\end{theorem}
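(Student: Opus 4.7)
The plan is to derive (a)--(d) by invoking Theorem~\ref{thm:2.1-alg} and Moore's Theorem~\ref{thm:moore}, and then to deduce the factor/type III$_1$ conclusion. To verify the hypotheses of Theorem~\ref{thm:2.1-alg}, first note that $\Omega$ being cyclic and separating for $\cM$ gives $\Delta_{\cM,\Omega}\Omega=\Omega$, which combined with (Mod) yields $\partial U(h)\Omega=0$, so $\Omega$ is an eigenvector of the one-parameter group $U(\exp\R h)$. Moore's Theorem~\ref{thm:moore}(a) then gives $\Omega\in\cH^{N_h}$, and the anti-ellipticity of $h$ (equivalent to $G=N_h\exp(\R h)$) promotes this to $\Omega\in\cH^G$, establishing condition (Fix). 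To derive (Reg) from (Reg'), note that if $\Ad(U(g))\cN\subseteq\cM$ for $g$ in an $e$-neighborhood $N$, then $\cN\subseteq U(g)^{-1}\cM U(g)=\cM_{g^{-1}}$, so $\cN\subseteq\cM_{N^{-1}}$; cyclicity of $\Omega$ for $\cN$ transfers to $\cM_{N^{-1}}$. Theorem~\ref{thm:2.1-alg} now delivers (a) and (b). Property (c) is Corollary~\ref{cor:inheritfixed} applied with anti-elliptic $h$, and (d) is Moore's Theorem~\ref{thm:moore}(b) applied to the eigenvector~$\Omega$.

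Under the additional assumption $\cH^G=\C\Omega\neq\cH$, I deduce the factor property as follows: the center $Z(\cM)=\cM\cap\cM'$ is pointwise fixed by the modular automorphism $\sigma^\Omega_t=\Ad(\Delta^{it})$, so for $Z\in Z(\cM)$ the vector $Z\Omega$ lies in $\ker\log\Delta=\ker\partial U(h)=\cH^G=\C\Omega$ by~(c); the separating property of $\Omega$ then yields $Z\in\C\1$, hence $Z(\cM)=\C\1$.

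For the type III$_1$ assertion, I invoke Connes' classification and aim to establish $\sigma(\log\Delta)=\R$, equivalently $\sigma(\Delta)=[0,\infty)$. The operator $\log\Delta=2\pi i\,\partial U(h)$ has $\C\Omega$ as kernel (so $1$ is a simple eigenvalue of $\Delta$), satisfies the reflection symmetry $J\log\Delta J^{-1}=-\log\Delta$ (coming from $\Delta J=J\Delta^{-1}$ and the antilinearity of $J$), and its restriction to $(\cH^{N_h})^\perp$ has absolutely continuous spectrum by~(d). The main obstacle is controlling the spectrum on $\cH^{N_h}\ominus\C\Omega$, which is non-trivial only when $h\notin\fn_h$ (the non-symmetric Euler case); on this subspace the quotient $G/N_h$ acts without fixed vectors. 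To overcome this, I will exploit the Euler grading $[\partial U(h),\partial U(x)]=\partial U(x)$ for $x\in\g_1(h)$, which under conjugation by $U(\exp th)$ scales $\partial U(x)$ by $e^t$ and so makes the spectrum of $\partial U(h)$ translation-invariant in a suitable sense, together with the endomorphism-semigroup description of Theorem~\ref{thm:3.2} and arguments in the spirit of \cite{Dr77,Lo82,BDF87,BB99}, to force $\sigma(\log\Delta)=\R$ and conclude that $\cM$ is a factor of type III$_1$.
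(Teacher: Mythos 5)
Your derivation of (a)--(d) and of the factor property is correct and follows the same route as the paper: (Reg$'$) gives (Reg) for $\cM_{N^{-1}}\supeq \cN$, (Mod) gives $\partial U(h)\Omega=0$, Moore's Theorem plus anti-ellipticity give (Fix) and (c) (this is exactly Corollary~\ref{cor:inheritfixed}), Theorem~\ref{thm:2.1-alg} gives (a),(b), Moore's Theorem gives (d), and the centre argument via $Z\Omega\in\ker(\Delta_{\cM,\Omega}-\1)=\C\Omega$ is the content of Proposition~\ref{prop:4.1}(a)--(c).

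The type III$_1$ assertion, however, is not proved in your proposal; it is only announced. You propose to show $\sigma(\log\Delta_{\cM,\Omega})=\R$ directly, but none of the ingredients you list actually yields this. Statement (d) gives \emph{absolute continuity} of the spectrum of $i\partial U(h)$ on $(\cH^{N_h})^\bot$, which is far from saying the spectrum is all of $\R$ (an absolutely continuous spectrum could be a bounded interval). The scaling relation $U(\exp th)\,\partial U(x)\,U(\exp(-th))=e^{t}\,\partial U(x)$ for $x\in\g_1(h)$ constrains the spectral data of $\partial U(x)$, not of $\partial U(h)$; the Borchers/Driessler-type commutation arguments that convert such relations into statements about $\sigma(\log\Delta)$ require a positivity (positive energy) hypothesis on $\partial U(x)$, which is not among the assumptions of the theorem. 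And the subspace $\cH^{N_h}\ominus\C\Omega$, which you flag as the ``main obstacle,'' is left entirely untreated. So the final step is a genuine gap.

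The paper closes this gap by a purely operator-algebraic argument that avoids any spectral computation: since $h$ is anti-elliptic, $\cH^{\Delta}=\ker(\partial U(h))=\cH^G=\C\Omega$, so by Proposition~\ref{prop:4.1}(a),(b) the modular flow is ergodic, $\cM^\alpha=\C\1$. Ergodicity then excludes all other types (Proposition~\ref{prop:4.1}(e)): semifinite factors have inner modular groups, whose spectral projections would lie in $\cM^\alpha$; for type III$_0$ the centre of $\cM^\alpha$ would be non-atomic; for type III$_\lambda$ the Connes spectrum $\Gamma(\cM)=\lambda^{\Z}$ forces the modular group to be periodic, making $\cM^\alpha$ a type II$_1$ factor. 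Only type III$_1$ survives. If you want to keep your spectral approach, note that since $\cM^\alpha=\C\1$ is a factor, $S(\cM)\cap\R^\times_+=\sigma(\Delta_{\cM,\Omega})\cap\R^\times_+$ is a closed subgroup of $\R^\times_+$, so it suffices to exclude $\{1\}$ and $\lambda^{\Z}$ rather than to exhibit the full spectrum --- but that is precisely the paper's argument.
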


\begin{proof} Our assumptions clearly imply (Uni), (M) and (Mod).
Let $N \subeq G$ be the $e$-neighborhood specified by (Reg'). 
Then $\cM_N \supeq \cN$, so that (Reg) is also satisfied.
As $h$ is anti-elliptic and $\Omega \in \ker(\partial U(h))$ by (Mod),
Corollary~\ref{cor:inheritfixed} implies that
\[ \Omega \in \cH^G = \ker(\partial U(h)),\]
which is (c). Now Theorem~\ref{thm:2.1-alg} implies~(a) and~(b). 
Further, (d) follows from Moore's Theorem.

If, in addition, $\cH^G = \C \Omega\not= \cH$, then
\[ \C \Omega = \ker(\partial U(h)) = \ker(\Delta_{\cM,\Omega}-\1),\]
so that $\cM$ is a factor of type III$_1$ by Proposition~\ref{prop:4.1}(e) 
because $\cH = \oline{\cM\Omega}$ implies $\cM \not= \C\1$
and $\C \Omega = \ker(\Delta_{\cM,\Omega}-\1)$ implies
$\Delta_{\cM,\Omega} \not=\1$. 
\end{proof}

In our context, Theorem 6.2 of \cite{BB99} becomes
  the following corollary. We use the notation from~\ref{ex:desit}.
  \begin{corollary} {\rm(Borchers--Buchholz Theorem)} Let $(U,\cH)$ be a 
    unitary representation of the Lorentz group $G = \SO_{1,d}(\R)^\up$
    acting covariantly on an isotone net
    $(\cA(\cO))_{\cO \subeq \dS^d}$ 
    of von Neumann algebras on open non-empty subsets of de Sitter spacetime,
 i.e.,~$\cO_1\subset \cO_2$ implies $\cA(\cO_1)\subset\cA(\cO_2)$ (isotony) and $\Ad(U(g))(\cA(\cO))=\cA(g\cO)$ with $g\in G$ (G-covariance).  
 Let $\Omega\in\cH$ be a fixed vector of $U(G)$ that is cyclic and separating for any $\cA(\cO)$. Assume that  the vacuum state $\omega(\cdot)=\langle\Omega,\cdot\,\Omega\rangle$  is a KMS state for $\cA(W_R)$
 with inverse temperature $\beta>0$ with respect to the
 one-parameter group $(U(\exp th))_{t \in \R}$, namely for every pair $X,Y\in \cA(W_R)$, there exists an analytic function $F_{X,Y}$ on
 the strip $\{z\in\mathbb C: 0<\Im z<\beta\}$
 with continuous boundary values satisfying
 \[ F(t)=\omega(X\Ad(U(\exp th))(Y)),
   \quad F(t+i\beta)=\omega(\Ad(U(\exp th))(Y)X),\quad t\in\RR. \] 
Then $\beta=2\pi$
\end{corollary}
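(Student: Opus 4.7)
The plan is to reinterpret the KMS hypothesis as the modularity condition (Mod) of Theorem~\ref{thm:3.8} applied to the rescaled Lie-algebra element $\tilde h := \tfrac{\beta}{2\pi}h$, and then to exploit the fact that a non-zero scalar multiple of an Euler element in $\so_{1,d}(\RR)$ is itself an Euler element only when the scalar equals $\pm 1$.

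First I would derive from the KMS hypothesis the operator identity
\[U(\exp th) = \Delta_{\cM,\Omega}^{-it/\beta} \qquad (t \in \RR),\]
where $\cM := \cA(W_R^{\dS})$, equivalently $\Delta_{\cM,\Omega} = e^{2\pi i\,\partial U(\tilde h)}$. After the substitution $t = \beta s$, the KMS hypothesis says that $s \mapsto \Ad U(\exp\beta sh)$ is an $\omega$-KMS automorphism group of $\cM$ at inverse temperature~$1$; by uniqueness of the modular automorphism group it coincides with $\sigma_s^{\cM,\Omega}$. Both $U(\exp \beta sh)$ and $\Delta_{\cM,\Omega}^{is}$ implement $\sigma_s^{\cM,\Omega}$ and fix $\Omega$, and since $\cM\Omega$ is dense, the two unitary one-parameter groups agree on all of $\cH$.

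Next I would verify the remaining hypotheses of Theorem~\ref{thm:3.8} for the triple $(U,\cM,\Omega)$ and the element $\tilde h$. For $d \ge 2$ the Lie algebra $\so_{1,d}(\RR)$ is simple, so $\ker U$ is discrete (unless $U$ is trivial, a degenerate case in which $\Delta_{\cM,\Omega} = \1$ and there is nothing to prove); $\tilde h$ is anti-elliptic as a non-zero multiple of an Euler element in a simple Lie algebra (Example~\ref{exs:antiell}(a)). Cyclicity and separation of $\Omega$ for $\cM$ are given, and (Mod) was established above. For (Reg$'$), choose a non-empty open subset $\cO_0 \subset W_R^{\dS}$ whose closure is compact in $W_R^{\dS}$ and set $\cN := \cA(\cO_0)$: by continuity of the $G$-action and compactness of $\oline{\cO_0}$, the set $N := \{g \in G : g.\oline{\cO_0} \subset W_R^{\dS}\}$ is an open $e$-neighborhood, and for $g \in N$, covariance and isotony give $\Ad(U(g))\cN = \cA(g\cO_0) \subseteq \cA(W_R^{\dS}) = \cM$; cyclicity and separation of $\Omega$ for $\cN$ are hypothesized.

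Theorem~\ref{thm:3.8} then forces $\tilde h$ to be an Euler element of $\so_{1,d}(\RR)$. Since $h$ is a boost generator and therefore already an Euler element, $\ad(\tilde h) = \tfrac{\beta}{2\pi}\ad(h)$ has spectrum $\tfrac{\beta}{2\pi}\{-1,0,1\}$, and the Euler condition $\Spec(\ad \tilde h) \subseteq \{-1,0,1\}$ together with $\tilde h \ne 0$ forces $\tfrac{\beta}{2\pi} = \pm 1$; positivity of $\beta$ yields $\beta = 2\pi$. The main delicate point is the first step: the KMS condition constrains only the automorphism group on $\cM$, not the implementing unitaries, so passing to the operator-level equality $U(\exp th) = \Delta_{\cM,\Omega}^{-it/\beta}$ relies on both candidate unitary groups fixing the cyclic vector $\Omega$ and implementing the same automorphism on the dense subspace $\cM\Omega$, much as in the standard-subspace version of the theorem proved above.
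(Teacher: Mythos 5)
Your proposal is correct and follows essentially the same route as the paper: derive the operator identity $U(\exp th)=\Delta_{\cM,\Omega}^{-it/\beta}$ from the KMS condition (the paper cites \cite[Thm.~III.4.7.2]{Bl06} plus the GNS realization where you argue via uniqueness of the modular automorphism group and agreement on the dense subspace $\cM\Omega$), verify (Reg$'$) with a relatively compact $\cO_0\Subset W_R^{\dS}$, invoke Theorem~\ref{thm:3.8} to conclude that $\tfrac{\beta}{2\pi}h$ is an Euler element, and compare with the fact that $h$ already is one. Your added care about discreteness of $\ker U$ and anti-ellipticity of the rescaled element is a sound filling-in of details the paper leaves implicit.
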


\begin{proof} For $\cO\Subset W_R$,
  there exists an open neighborhood of the identity
  $N\subset\SO_{1,d}(\R)^\up$ such that $\cO\subset g W_R^{\rm{dS}}$
  for all $g\in N$. Let $\cM:= \cA(W_R^{\rm{dS}})$.
  By covariance, $\cN:=\cA(\cO)$ satisfies
  (Reg') in Theorem \ref{thm:3.8}. 
  The KMS property implies that $\Ad(U(\exp th))
  =\Ad(\Delta^{-{it}/{\beta}}_{\cA,\Omega})$ (cf.~\cite[Thm.III.4.7.2 ]{Bl06}) and,
  since the representation of $\cA(W_R^{\dS})$ on $\cH$
  is the GNS representation for w.r.t.~$\omega$,
  we have that $U\left(\exp\left(\frac{\beta t}{2\pi}h\right)\right)
  =\Delta_{\cA(W_R^{\dS}),\Omega}^{-\frac{it}{2\pi}}$, and 
  Theorem \ref{thm:3.8} applies. We conclude that $\frac{\beta  }{2\pi}h$ is an Euler element, but since $h$ is also an Euler element in $\so_{1,d}(\R)$,
  it follows that $\beta=2\pi$.
\end{proof}

\begin{defn} \mlabel{def:cA-tildecA} 
  We write $\displaystyle{\cA := \big(\bigcup_{g \in G} \cM_g)'' \subeq B(\cH)}$
  for the von Neumann algebra generated by all algebras $\cM_g=U(g)\cM U(g)^{-1}$.
  Let $ (\cM')_G:=\bigcap_{g \in G} \cM_g'$ and note that
  \begin{equation}
    \label{eq:caprime}
    \cA' = \bigcap_{g \in G} \cM_g' =(\cM')_G.
  \end{equation}
  We also write $\tilde\cA$ for the von Neumann algebra generated by
  $\cA$ and $J\cA J$ with $J=J_{\cM,\Omega}$, i.e., by all algebras $\cM_g$ and $(\cM')_g$, $g \in G$.
  Then $\tilde\cA' \subeq \cM \cap \cM' = \cZ(\cM)$ and, more precisely,
  \begin{equation}
    \label{eq:tildea'}
\tilde\cA' = \cZ(\cM)_G  = \bigcap_{g \in G} \cZ(\cM)_g
  \end{equation}
  is the maximal $G$-invariant subalgebra of $\cZ(\cM)$.
\end{defn}

\begin{lem} \mlabel{lem:5.1}
  Let $\alpha_t := \Ad(\Delta^{it}) \in \Aut(\cM)$ be the modular automorphisms
  of the von Neumann algebra  $\cM$ corresponding to the cyclic separating vector~$\Omega$. 
  If {\rm(Uni)}, {\rm(M)}, {\rm(Fix)}, {\rm(Reg)}
  and {\rm(Mod)} are satisfied and
  $h$ is anti-elliptic, then
  \begin{itemize}
  \item[\rm(a)] $\cA'\subeq \cM'$ is invariant under $\Ad(U(G))$. 
  \item[\rm(b)] $(\cM')^G = (\cM')^\alpha = (\cA')^G$. 
  \item[\rm(c)] $\cZ(\cM) \subeq \cM^G = \cM^\alpha$. 
  \end{itemize}
\end{lem}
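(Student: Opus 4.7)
For part (a), the argument is purely structural. Since $\cM = \cM_e$ appears as one of the generating algebras of $\cA$, we have $\cM \subseteq \cA$, hence $\cA' \subseteq \cM'$. The relation $U(g_0)\cM_g U(g_0)^{-1} = \cM_{g_0 g}$ shows that the family $\{\cM_g\}_{g \in G}$ is permuted by $\Ad U(G)$, so both $\cA = \big(\bigcup_g \cM_g\big)''$ and its commutant $\cA'$ are $\Ad U(G)$-invariant.

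For part (b), I first dispose of the easy inclusions. The inclusion $(\cM')^G \subseteq (\cM')^\alpha$ is trivial, because by (Mod) the modular flow $\alpha_t = \Ad(U(\exp(-2\pi th)))$ is a one-parameter subgroup of $\Ad U(G)$. The equality $(\cM')^G = (\cA')^G$ follows at once from (a): on the one hand $\cA' \subseteq \cM'$ gives $(\cA')^G \subseteq (\cM')^G$; on the other, any $X \in (\cM')^G$ satisfies $X = U(g) X U(g)^{-1} \in U(g)\cM'U(g)^{-1} = \cM_g'$ for every $g \in G$, whence $X \in \bigcap_g \cM_g' = \cA'$. The heart of (b) is therefore the inclusion $(\cM')^\alpha \subseteq (\cM')^G$. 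Given $X \in (\cM')^\alpha$, I would use (Fix) and (Mod) to get $\Delta^{it}\Omega = U(\exp(-2\pi th))\Omega = \Omega$, and then compute
\[
  \Delta^{it}X\Omega \;=\; \Delta^{it}X\Delta^{-it}\,\Delta^{it}\Omega \;=\; \alpha_t(X)\Omega \;=\; X\Omega,
\]
so that $X\Omega \in \ker\partial U(h)$. By Theorem~\ref{thm:3.8}(c), which packages the anti-ellipticity of $h$ together with Corollary~\ref{cor:inheritfixed} (Moore's Theorem), $\ker\partial U(h) = \cH^G$, and so $X\Omega \in \cH^G$. The same argument applied to $X^*$ (which is clearly also in $(\cM')^\alpha$) yields $X^*\Omega \in \cH^G$. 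Consequently, for any $g \in G$, using $U(g)\Omega = \Omega$,
\[
  \bigl(U(g)XU(g)^{-1}\bigr)\Omega \;=\; U(g)X\Omega \;=\; X\Omega,
\]
i.e.\ $U(g)XU(g)^{-1}$ and $X$ agree on the vector $\Omega$.

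To conclude $U(g)XU(g)^{-1} = X$ as operators, I need both to lie in a common von Neumann algebra for which $\Omega$ is separating. The natural candidate is $\cA'$, which by (a) is $\Ad U(G)$-invariant and contained in $\cM'$ (so $\Omega$ is separating for it). Thus the task reduces to showing $(\cM')^\alpha \subseteq \cA'$, i.e.\ that $X$ commutes with $\cM_g$ for every $g \in G$. The hard part will be this reduction. My plan for it is to exploit the mirror symmetry provided by the modular conjugation: since $J\Delta J = \Delta^{-1}$ and $J U(g)J = U(\tau_h^G(g))$ by Theorem~\ref{thm:3.8}(b), the map $X \mapsto Y := JX^*J$ sends $(\cM')^\alpha$ bijectively onto $\cM^\alpha$ and interchanges $\cA'$ with $\cM_G := \bigcap_g \cM_g$. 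Combining the $G$-fixedness of $Y\Omega$ (obtained by the same Moore argument applied to $Y \in \cM^\alpha$) with the regularity hypothesis (Reg), which supplies an $e$-neighborhood $N \subeq G$ such that $\Omega$ is cyclic for $\cM_N = \bigcap_{g \in N} \cM_g$, one should be able to propagate the commutation with $\cM$ first to each $\cM_g$ with $g \in N$, and then by the group property of $\{g : X \in \cM_g'\}$ to all of $G$, yielding $X \in \cA'$. Once this is in place, the separating property of $\Omega$ for $\cA'$ finishes the argument, giving $U(g)XU(g)^{-1} = X$ and $X \in (\cM')^G$.

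For part (c), the inclusion $\cZ(\cM) \subseteq \cM^\alpha$ is the standard Tomita--Takesaki fact that the modular automorphism group fixes the center pointwise (the restriction of the KMS state to the abelian algebra $\cZ(\cM)$ is tracial). The equality $\cM^\alpha = \cM^G$ then follows by an argument formally identical to the one sketched for (b), with $\cM$ playing the role of $\cM'$ and $\cM_G = \bigcap_g \cM_g$ playing the role of $\cA'$; the hard inclusion is $\cM^\alpha \subseteq \cM_G$, handled by the same mirror-plus-regularity device. The main obstacle in the whole lemma is thus the single structural step: upgrading the vector-level invariance $X\Omega \in \cH^G$ (which Moore's Theorem hands us for free once $h$ is anti-elliptic) to the operator-level inclusion $X \in \cA'$, resp.\ $X \in \cM_G$. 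Everything else is either trivial or a direct application of the separating property of~$\Omega$.
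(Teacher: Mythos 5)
Your parts (a) and (c) and the easy inclusions in (b) match the paper, and your vector-level computation --- $X \in (\cM')^\alpha$ implies $X\Omega \in \cH^\Delta = \ker(\partial U(h)) = \cH^G$ by anti-ellipticity (Corollary~\ref{cor:inheritfixed}), whence $U(g)XU(g)^{-1}\Omega = X\Omega$ for all $g$ --- is exactly the paper's. The gap is in the step you yourself flag as ``the hard part'': upgrading this vector-level identity to an operator identity. You propose to first prove $(\cM')^\alpha \subeq \cA'$, so that $\Omega$ is separating for a single algebra containing $X$ and all its $G$-conjugates, and you want to reach $\cA'$ by propagating commutation with $\cM_g$ from $g\in N$ to all of $G$ via ``the group property of $\{g : X \in \cM_g'\}$''. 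That set is not a group: knowing $X\in \cM_g'\cap\cM_{g'}'$ gives no control over $\cM_{gg'}'$, so the propagation has no basis. Moreover $X\in\cA'=(\cM')_G$ is essentially equivalent to the conclusion $X\in(\cM')^G$ you are after (the paper derives $(\cM')^G\subeq(\cM')_G=\cA'$ from the result, not before it), so the reduction is circular.

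The common algebra you need is supplied directly by (Reg), not by $\cA'$: for $g$ in the $e$-neighborhood $N$, both $\cM'$ and $\cM_g'$ are contained in $\cM_N'=\bigl(\bigcap_{g\in N}\cM_g\bigr)'$, and $\Omega$ is separating for $\cM_N'$ precisely because (Reg) asserts that $\Omega$ is cyclic for $\cM_N$. Hence $X-U(g)XU(g)^{-1}\in\cM_N'$ annihilates $\Omega$ and therefore vanishes, for every $g\in N$. The set $\{g\in G : U(g)XU(g)^{-1}=X\}$ \emph{is} a subgroup; it contains $N$, and $G$ is connected, so $X$ commutes with all of $U(G)$. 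This is the paper's argument. Your mirror map $X\mapsto JX^*J$ is not needed inside (b); the paper uses it only afterwards, in (c), to transport $(\cM')^G=(\cM')^\alpha$ to $\cM^G=\cM^\alpha$.
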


\begin{prf} (a) $\cA' \subeq \cM'$ holds by definition, and
    $\cA'$ is $U(G)$-invariant.

  \nin   (b) By (Mod), we have $(\cM')^G \subeq (\cM')^\alpha$.
  To show the converse, suppose that $A \in \cM'$ is fixed by $\alpha$.
  As $h$ is anti-elliptic, $A\Omega \in \cH^\Delta = \cH^G$
  (Corollary~\ref{cor:inheritfixed}), which  implies that
  \[ U(g)A U(g)^{-1} \Omega = U(g)A \Omega = A \Omega.\]
  If $g \in N$, {with $N$ as in (Reg)}, then $\cM' \cup \cM'_g \subeq \cM_N'$ and $\Omega$ is separating
  for $\cM_N'$, so that we obtain
  \[  U(g)A U(g)^{-1} = A.\]
  We conclude that $A$ commutes with $U(N)$, and since the connected group $G$
  is generated by the identity neighborhood~$N$,
  it follows that $A$ commutes with $U(G)$. {This shows that
    $(\cM')^G = (\cM')^\alpha$.}

As $\cA$ is $G$-invariant, so it holds $\cA'\subeq \cM'$. Further,
 {\[ (\cA')^G \subeq   (\cM')^G \subeq (\cM')_G = \cA'\]}
    by \eqref{eq:caprime}. This implies that
    $(\cA')^G = (\cM')^G$.    
  
  \nin (c) Using the relation $\cM = J \cM' J$ and the fact that $J$ normalizes $U(G)$
  (Theorem~\ref{thm:2.1-alg})
  and commutes with $U(\exp \R h)$, the equality
  $\cM^G = \cM^\alpha$ follows from (b) by conjugating with~$J$.   
  Further $\cZ(\cM) \subeq \cM^\alpha$ follows from the fact that 
  modular automorphisms fix the center pointwise
  (\cite[Prop.~5.3.28]{BR96}).
\end{prf}

\begin{prop} \mlabel{prop:3.12}
  Suppose that {\rm(Uni)}, {\rm(M)}, {\rm(Fix)},
  {\rm(Mod)} and {\rm(Reg)} are satisfied,
  that $h$ is anti-elliptic, and that $\Delta \not=\1$.
  For the assertions
  \begin{itemize}
  \item[\rm(a)] The net $(\cM_g)_{g \in G}$ is irreducible, i.e., $\cA = B(\cH)$.
  \item[\rm(b)] $\cA' = (\cM')_G = \bigcap_{g \in G} \cM_g' = \C \1$. 
  \item[\rm(c)] $\cM_G = \bigcap_{g \in G} \cM_g = \C \1$. 
  \item[\rm(d)] $\cH^G = \C \Omega$. 
  \item[\rm(e)] $\cM$ is a type III$_1$ factor. 
  \end{itemize}
  we have the implications: 
  \[ {\rm(a)} \Leftrightarrow{\rm(b)} \Leftrightarrow{\rm(c)}
    \Rarrow     {\rm(d)} \Rarrow{\rm(e)}.\]
\end{prop}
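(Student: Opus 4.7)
The plan is to establish the equivalences (a) $\Leftrightarrow$ (b) $\Leftrightarrow$ (c) first, and then prove the two implications (c) $\Rightarrow$ (d) and (d) $\Rightarrow$ (e) in succession.

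For the equivalences: (a) $\Leftrightarrow$ (b) will follow immediately from the double commutant theorem combined with the identification $\cA' = (\cM')_G$ recorded in~\eqref{eq:caprime}. For (b) $\Leftrightarrow$ (c), I plan to use Theorem~\ref{thm:2.1-alg}, which gives $J U(g) J = U(\tau_h^G(g))$ for the modular conjugation $J = J_{\cM,\Omega}$. Combined with the Tomita--Takesaki identity $J\cM J = \cM'$, conjugation by $J$ sends $\cM_g$ to $(\cM')_{\tau_h^G(g)}$, and since $\tau_h^G$ is an automorphism of $G$, I obtain $J \cM_G J = (\cM')_G$; thus $\cM_G = \C\1$ if and only if $(\cM')_G = \C\1$.

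For (c) $\Rightarrow$ (d), assume $\cM_G = \C\1$. The inclusion $\cM^G \subseteq \cM_G$ is immediate: any $m \in \cM^G$ satisfies $U(g)^{-1} m U(g) = m \in \cM$ for all $g$. Hence $\cM^G = \C\1$, and Lemma~\ref{lem:5.1}(c) then gives $\cM^\alpha = \cM^G = \C\1$; in particular $\cZ(\cM) \subseteq \cM^\alpha = \C\1$, so $\cM$ is already a factor. The next step is the standard modular-theoretic identity $\oline{\cM^\alpha \Omega} = \ker(\Delta - \1)$ for a cyclic separating $\Omega$, which combined with $\cM^\alpha = \C\1$ forces $\ker(\Delta - \1) = \C\Omega$. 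Finally, Theorem~\ref{thm:3.8}(c) supplies $\cH^G = \ker(\partial U(h))$, and the relation $\Delta = e^{2\pi i \partial U(h)}$ yields $\ker(\partial U(h)) = \ker(\Delta - \1)$. Concatenating, $\cH^G = \C\Omega$.

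For (d) $\Rightarrow$ (e), the same identification $\cH^G = \ker(\partial U(h)) = \ker(\Delta - \1)$ converts the hypothesis into $\ker(\Delta - \1) = \C\Omega$. The assumption $\Delta \neq \1$ forces $\cH \neq \C\Omega$ and hence $\cM \neq \C\1$ because $\Omega$ is cyclic for $\cM$. Proposition~\ref{prop:4.1}(e) in the appendix, which characterizes type $\mathrm{III}_1$ factors by precisely these conditions, then delivers~(e). The only non-routine input in the whole argument is the modular-theoretic identity $\oline{\cM^\alpha \Omega} = \ker(\Delta - \1)$ used in the second paragraph; I would obtain it either from the Pedersen--Takesaki conditional expectation $E \colon \cM \to \cM^\alpha$, characterized by $E(m)\Omega = e_0 m \Omega$ with $e_0$ the projection onto $\ker(\Delta - \1)$, or equivalently from Connes' characterization of type $\mathrm{III}_1$ factors via ergodicity of the modular flow.
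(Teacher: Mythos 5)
Your proposal is correct and follows essentially the same route as the paper: (a)$\Leftrightarrow$(b) via \eqref{eq:caprime}, (b)$\Leftrightarrow$(c) via $J\cM_G J = (\cM')_G$ from Theorem~\ref{thm:2.1-alg}, the chain $\cM_G = \C\1 \Rightarrow \cM^G = \cM^\alpha = \C\1 \Rightarrow \cH^\Delta = \oline{\cM^\alpha\Omega} = \C\Omega = \cH^G$ using Lemma~\ref{lem:5.1}(c) and Proposition~\ref{prop:4.1}(a), and Proposition~\ref{prop:4.1}(e) for the type III$_1$ conclusion. The only cosmetic difference is that you obtain $\cH^G = \ker(\partial U(h)) = \cH^\Delta$ by routing through Theorem~\ref{thm:3.8}(c) (which requires observing that (Reg) yields (Reg') with $\cN = \cM_N$), whereas the paper cites Corollary~\ref{cor:inheritfixed} directly; likewise the identity $\oline{\cM^\alpha\Omega} = \cH^\Delta$ is already available as Proposition~\ref{prop:4.1}(a), so no appeal to Pedersen--Takesaki is needed.
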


Note that {\rm(d)} is stronger than $\cZ(\cM) = \C \1$.
  
\begin{prf}   (a) $\Leftrightarrow$ (b) follows from $\cA' = \bigcap_{g \in G} (\cM_g)'
  = (\cM')_G$.

    \nin  (b) $\Leftrightarrow$ (c): As $J U(G) J = U(G)$ by
  Theorem~\ref{thm:2.1-alg} and $J\cM J = \cM'$, we have
  $J \cM_G J = (\cM')_G$. Therefore (b) and (c) are equivalent. 

  \nin  (c) $\Rarrow$ (d):  
  From Proposition~\ref{prop:4.1}(a)
    and  Lemma~\ref{lem:5.1}(c), we know that
    \begin{equation}
      \label{eq:chg}
 \cH^G = \cH^\Delta
\   \ {\buildrel \ref{prop:4.1}\over  =}\ \  \oline{\cM^\alpha\Omega} 
= \oline{\cM^G\Omega}.
    \end{equation}
  Therefore $\cM^G \subeq \cM_G = \C \1$ implies that
  $\cH^G = \C \Omega$.

\nin  (d) $\Rarrow$ (e): As $h$ is anti-elliptic, we have
  $\cH^G = \cH^\Delta$ (Corollary~\ref{cor:inheritfixed}),
  so that  Proposition~\ref{prop:4.1}(e) implies that
  $\cM$ is a factor of type  III$_1$. 
\end{prf}

  \begin{rem} \mlabel{rem:tildea}
    If $G = \R$ acts as the modular group of $(\cM,\Omega)$, then
    $\cA = \cM$, $\tilde \cA = (\cM \cup \cM')''$, and
    $\tilde\cA' = \cZ(\cM)$. So $\tilde\cA' = \C\1$ is equivalent to
    $\cM$ being a factor, but, in general, this does not imply that
    $\cH^G = \cH^\Delta = \C \Omega$ because we may have
    $\cM^\alpha \not= \C \1$ (cf.\ Remark~\ref{rem:nonsym}(b)). 
\end{rem}

\begin{rem} \label{rem:nonsym}
  (a) The implication (e) $\Rightarrow$ (c) holds if there exists a $g\in G$
  such that $\cM_g = U(g)\cM U(g)^{-1}\subeq \cM'$. 
{Then $\cM_G \subeq \cZ(\cM)$, and if $\cM$ is a factor, it follows that
    $\cM_G = \C \1$, so that (e) implies (c).} 
    
  If the Euler element $h$ is not symmetric, i.e.,
  there exists no $g\in G$ such that $\Ad(g)h=-h$,
  then (e) does not always imply (a).
  For instance, let $\RR^{1,1}\supset \cO\rightarrow\cM(\cO)$
  be the free field of  mass $m>0$ in $1+1$ dimensions and let
  $U$ be the mass $m$ representation of the
  identity component of the Poincar\'e group
  $\cP_+^\up=\RR^{1,1}\rtimes \cL_+^\up$. The algebras $\cM(W_R)$ and $\cM (W_L)$
  corresponding to the right
  and left wedges are invariant under the Lorentz action
  and of type III$_1$. This follows from uniqueness of the vacuum
  state and Proposition~\ref{prop:3.12}.
  In particular, the ``one wedge net'' $W_R \rightarrow \cM(W_R)$
 together with the representation $U|_{\cL_+^\up}$ satisfies (Uni), (M), (Fix), (Mod) and  (Reg)
  but the algebra generated by  $\Ad(U(\cL_+^\up))\cM(W_R)=\cM(W_R)$
  is properly contained in $\cB(\cH)$ (see also Example~\ref{ex:3.7}).

  \nin (b)
{  The implication   ``(e) $\Rarrow$ (d)'' is related to
  the   ergodicity of the state on he type III$_1$-factor
  $\cM$ specified by $\Omega$: By \eqref{eq:chg},
  ergodicity of the state defined by $\Omega$
  is equivalent to $\cH^G = \C\Omega$. 
This does in   general not follow from (e) because  non-ergodic
  states always exist for a type III$_1$-factors
(Remark~\ref{rem:a.2}). Concretely, such states can be
    obtained as follows:} 
    Consider a type III$_1$ factor $\cM\subset\cB(\cH)$
    and the algebra $M_2(\C)$ of complex $2\times 2$-matrices.
    Then $\tilde \cM=\cM\otimes M_2(\C)$ is a type III$_1$
    factor (\cite[Thm.~V.2.30]{Ta02}). For a faithful normal state $\omega$
    on $\cM$, we  consider the state on $\tilde\cM$ specified by 
    \[ (\omega\otimes \phi_{11})(m\otimes x)=\omega(x)x_{11}.\]
    This is a non-ergodic (non-faithful) state on the type
    III$_1$ factor~$\tilde \cM$. 

\nin(c) Suppose that $\cM = \cM_G$, i.e., that
$\cM$ is normalized by $U(G)$.
  Then $G = G_\cM$ and $\Omega \in \cH^G$ imply
$G = G_{\sV_\cM}$, so that $h$ is central in $\g$
and therefore $\tau_h = \id_G$.
The example described in point (a) with $G=\cL_e$ is of this type.
\end{rem}

\subsection{The {degenerate}  case} \label{sect:disnet}

Proposition~\ref{prop:3.12} describes  the {non-degenerate} case, where
$\cH^G=\bC\Omega$. If $\cH^G$ is not one-dimensional, we now obtain a
direct integral decomposition, in accordance with
the AQFT literature, see \cite[Cor.~6.2.10]{Lo08b},
\cite[Sect.~4.4]{Ara76}, \cite[Sect.~5]{BB99}. 

The following  proposition extends \ref{prop:3.12} to the case where the vacuum $\Omega$
is not cyclic. We will comment on conditions (a) and (b) in Remark \ref{rem:disint} below. 

\begin{proposition} \label{prop:disnet} 
  Suppose that $\cH$ is separable.
  Let $(\alpha_t)_{t \in \R}$ be the modular automorphisms
  of $\cM$ with respect to the cyclic separating vector~$\Omega$
  and $(U,\cH)$ a unitary representation of $G$, such that: 
  {    \begin{itemize}
  \item[\rm(a)]   {\rm(Uni)}, {\rm(M)}, {\rm(Fix)}, {\rm(Reg)}  and {\rm(Mod)}
  and $h$ is anti-elliptic in $\g$.
  \item[\rm(b)] $\cM'=\cM_{g_0}$ for some $g_0\in G$.
  \end{itemize} 
\nin  Then we have direct integral decompositions
  \[ \cM=\int_X^\oplus\cM_x\, d\mu(x),\qquad U=\int_X^\oplus U_x\, d\mu(x),
{    \quad \mbox{ and }  \quad
  \cA=\int_X^\oplus B(\cH_x) d\mu(x).}\]
We have a measurable decomposition $X = X_0 \dot\cup X_1$, where 
$\dim \cH_x = 1$ for $x \in X_0$ and the representations
$(U_x)_{x \in X_0}$ are trivial. For $x \in X_1$, the algebras
$\cM_x$ are factors of type III$_1$ and
$(\cM_x, \Omega_x, \uline U_x)$ satisfies {\rm(Uni), (M), (Fix), (Reg)}
and {\rm(Mod)}, where $\uline U_x$ is the representation of
$G/\ker(U_x)$ induced by $U_x$. }
\end{proposition}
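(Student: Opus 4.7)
The plan is to perform a central disintegration of $\cM$ over its center and verify fiber-wise that the hypotheses required to invoke our Theorem~\ref{thm:3.8} (via Proposition~\ref{prop:3.12} and Proposition~\ref{prop:4.1}(e)) hold almost surely.

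First I would identify the relevant commutative algebra. Condition (b) gives $\cM' = \cM_{g_0}$, hence $(\cM_g)' = \cM_{gg_0}$, so
\[
\cA' \;=\; \bigcap_{g \in G} \cM_g' \;=\; \bigcap_{g'\in G} \cM_{g'} \;=\; \cM_G.
\]
Moreover, $\cA \supeq \cM \vee \cM'$ forces $\cA' \subeq \cM \cap \cM' = \cZ(\cM)$, while Lemma~\ref{lem:5.1}(c) yields $\cZ(\cM) \subeq \cM^G \subeq \cM_G$ because $\Ad(U(g))$-fixed elements of $\cM$ lie in every $\cM_g$. This establishes
\[
\cZ(\cM) \;=\; \cM_G \;=\; (\cM')_G \;=\; \cA',
\]
so $\cZ(\cM)$ commutes with $\cM$, $\cM'$, and $U(G)$.

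Next, invoking separability and writing $\cZ(\cM) \cong L^\infty(X,\mu)$ on a standard Borel probability space, I would apply the standard central disintegration machinery to obtain simultaneous decompositions
\[
\cH = \int_X^\oplus \cH_x\, d\mu(x), \quad
\cM = \int_X^\oplus \cM_x\, d\mu(x),\quad
U = \int_X^\oplus U_x\, d\mu(x),\quad
\Omega = \int_X^\oplus \Omega_x\, d\mu(x),
\]
in which the $\cM_x$ are factors and $\Omega_x$ is cyclic-separating for $\cM_x$ a.s. Using a countable dense subgroup of $G$ and a countable weakly-dense subset of $\cM$ to control nullsets, the modular data decompose compatibly, and (Fix), (Mod), (Reg) transport to the fibers: $U_x(g)\Omega_x = \Omega_x$, $\Delta_{\cM_x,\Omega_x} = e^{2\pi i\partial U_x(h)}$, and $\Omega_x$ is cyclic for $(\cM_x)_N := \bigcap_{g \in N}(\cM_x)_g$. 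The diagonal action of $\cZ(\cM) \cong L^\infty(X,\mu)$ together with $\cZ(\cM) = \cM_G = (\cM')_G$ forces $(\cM_x)_G = ((\cM_x)')_G = \C\1$ a.s.

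I would then set $X_0 := \{x \: \dim \cH_x = 1\}$ and $X_1 := X \setminus X_0$. On $X_0$, $\Omega_x$ spans $\cH_x$, so $\cM_x = \C$, $U_x$ acts trivially, and $\cA_x = B(\cH_x) = \C$. On $X_1$, $\cM_x$ is a non-trivial factor, $((\cM_x)')_G = \C\1$ gives $\cA_x = B(\cH_x)$, and to conclude that $\cM_x$ is of type~III$_1$ I would verify $\cM_x^\alpha = \C\1$ and $\Delta_{\cM_x,\Omega_x} \neq \1$ and invoke Proposition~\ref{prop:4.1}(e). For $A \in \cM_x^\alpha$, commutativity with the modular flow $U_x(\exp\R h)$ and $U_x$-invariance of $\Omega_x$ yield $U_x(\exp\R h)A\Omega_x = A\Omega_x$; Moore's Theorem~\ref{thm:moore} applied to the representation $U_x$ of $G$ gives $A\Omega_x \in \cH_x^{N_h}$, and anti-ellipticity of $h$ in $\g$ gives $G = N_h\exp(\R h)$, so $A\Omega_x \in \cH_x^G$. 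Then $U_x(g)A U_x(g)^{-1}\Omega_x = A\Omega_x$ for $g \in N$ together with $\Omega_x$ being separating for $(\cM_x)_N$ forces $U_x(g) A U_x(g)^{-1} = A$ on $N$, and connectedness of $G$ gives $A \in \cM_x^G \subeq (\cM_x)_G = \C\1$. Hence $\ker(\Delta_x - \1) = \oline{\cM_x^\alpha\Omega_x} = \C\Omega_x \subsetneq \cH_x$, so $\Delta_x \neq \1$, and Proposition~\ref{prop:4.1}(e) yields type~III$_1$. Passing to $G/\ker(U_x)$ gives (Uni) for $\uline U_x$, while the remaining conditions transport directly.

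The main obstacle will be the measurability and nullset bookkeeping in the direct-integral decomposition: the decompositions of $\cM_G$, $(\cM')_G$, the modular objects, and the (Reg) cyclicity condition must all hold off a \emph{single} $\mu$-nullset, and $x \mapsto \dim \cH_x$ must be measurable so that $X_0$ and $X_1$ are Borel. These are handled in standard fashion by choosing countable dense generating subsets of $G$ and $\cM$, but care is required so that the same $N \subeq G$ from (Reg) works at almost every fiber.
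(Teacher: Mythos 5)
Your overall strategy---central disintegration over $\cZ(\cM)$ followed by a fiberwise application of the type III$_1$ criterion of Proposition~\ref{prop:4.1}(e)---is the same as the paper's, and your identification $\cZ(\cM)=\cM_G=(\cM')_G=\cA'$ is a correct (and slightly more direct) variant of the paper's identity \eqref{eq:aaaa}. There is, however, a genuine gap in your fiberwise verification that $\cM_x^\alpha=\C\1$. You take $A\in\cM_x^\alpha$, correctly deduce $A\Omega_x\in\cH_x^G$ via Moore's Theorem and anti-ellipticity, and then claim that $U_x(g)AU_x(g)^{-1}\Omega_x=A\Omega_x$ together with ``$\Omega_x$ being separating for $(\cM_x)_N$'' forces $U_x(g)AU_x(g)^{-1}=A$ for $g\in N$. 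This cancellation is not available: to remove $\Omega_x$ you need it to be separating for a von Neumann algebra containing \emph{both} $A\in\cM_x$ and $U_x(g)AU_x(g)^{-1}\in(\cM_x)_g$, and neither $(\cM_x)_N=\bigcap_{g\in N}(\cM_x)_g$ nor its commutant contains these two algebras. What (Reg) actually provides is that $\Omega_x$ is cyclic for $(\cM_x)_N$, hence separating for $\bigl((\cM_x)_N\bigr)'$, which contains $(\cM_x)'$ and $\bigl((\cM_x)'\bigr)_g$ for $g\in N$ --- so the cancellation works on the commutant side, not on the $\cM_x$ side.

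The repair is exactly the two-step argument of Lemma~\ref{lem:5.1}: run your argument for $A\in(\cM_x')^\alpha$, where $A$ and its conjugates do lie in $\bigl((\cM_x)_N\bigr)'$, to obtain $(\cM_x')^\alpha=(\cM_x')^G$, and then transfer this to $\cM_x$ by conjugating with $J_x=J_{\cM_x,\Omega_x}$, using that $J_x$ normalizes $U_x(G)$ (Theorem~\ref{thm:2.1-alg}) and commutes with the modular group. This is what the paper does implicitly by invoking Proposition~\ref{prop:3.12} fiberwise. With this correction, and the measurability bookkeeping you already flag (countable dense subsets of $G$ and of the relevant algebras so that Lemma~\ref{lem:denseunion} applies and all identities hold off a single $\mu$-nullset), your proof is complete and matches the paper's.
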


\begin{proof} From $\cM' = \cM_{g_0}$ for some $g_0 \in G$,
  we derive that {$\cA' \subeq \cZ := \cM \cap \cM'$. 
    Using
    Lemma~\ref{lem:5.1}(b),(c)}, we obtain 
  \begin{equation}
    \label{eq:a'}
 \cA' = (\cA')^G \subeq \cZ = \cZ^G
 \subeq (\cM')^G = (\cM')^\alpha = (\cA')^G = \cA',
  \end{equation}
  so that 
  \begin{equation}\label{eq:aaaa} \cZ^G = \cZ = (\cM')^\alpha= \cA' . \end{equation}

By \cite[Thm.~4.4.3]{BR87}, there exists a
  {finite} standard measure space $(X,\mu)$,  a unitary $\Phi$
  such that $$\Phi\cH=\int_X^\oplus \cH_xd\mu(x)$$ and $U\cZ U^*$ acts on the direct integral as the algebra {$L^\infty(X,\mu)$} of diagonal operator.  From \cite[Thm.~4.4.6(a)]{BR87}, passing to the commutant one can easily see that $\cA {= \cZ'}$ can be represented as  {the} direct integral von Neumann algebra
  x{of decomposable operators:} 
\[ \Phi\cA \Phi^* = \int_X^{\oplus} B(\cH_x)\,  d\mu(x). \]

If $\cC$ is a von Neumann subalgebra of $\cA$, then $\Phi\cC \Phi^*\subset \Phi\cA \Phi^*$ and there exists a measurable family of von Neumann algebras  $X \ni x\mapsto \cC_x\subset B(\cH_x)$ for almost every $x\in X$
\cite[Thms.~8.21, 8.23]{Ta02} .
In particular $U\cC U^*=\int_X^\oplus\cC_x\, d\mu(x)$. Since $U$ does not depends on the subalgebra hereafter in the proof we will work on the direct integral Hilbert space, i.e. we will assume $\cH=\int_X^\oplus \cH_x\,d\mu(x)$.

With this argument
we can also assume that on the same standard finite measure space $(X,\mu)$  we have 
\begin{equation}\label{eq:dec}
 (\cM,\cH) = \int_X^{\oplus} (\cM_x, \cH_x)\, d\mu(x),\end{equation}
for which $\cZ\cong L^\infty(X,\mu)$ is the diagonal algebra
and almost every $\cM_x$ is a factor \cite[Cor. 8.20]{Ta02}.

As $\cZ$ commutes with $U(G)$, we have
  \begin{equation}
    \label{eq:ginz'}
 U(G) \subeq \cZ' = \cA'' = \cA.    
\end{equation}
Hence the separable $C^*$-algebra
$ C^*(U(G))$ is contained in $\cZ' = \cA$, so that
\cite[Cor.~4.4.8]{BR87} 
yields a direct integral decomposition
of the unitary representation
\[ (U,\cH) = \int_X^{\oplus} (U_x, \cH_x)\, d\mu(x).\]
For $x \in X$, the kernel $\ker U_x$ may not be discrete, so that
(Uni) holds for $(U_x, \cH_x)$ only as a representation $\uline U_x$
of $G/\ker(U_x)$. 

Since $U$ is a direct integral representation, we have 
\begin{equation}\label{eq:dec2}
 (\cM_g,\cH) = \int_X^{\oplus} ((\cM_g)_x, \cH_x)\, d\mu(x).\end{equation}

By Proposition~\ref{prop:4.1}(a),
  $\Omega \in \cH^G \subeq \cH^\Delta$
  is a cyclic separating vector for $\cZ = (\cM')^\alpha$. 
  Writing $\Omega = (\Omega_x)_{x \in X}$, it follows that
  almost no $\Omega_x$ vanishes, and thus 
\[ \cH^G= \int_X^\oplus \C \Omega_x\, d\mu(x) \cong L^2(X,\mu).\]

Replacing $\cN$ in (Reg) by the von Neumann algebra 
$\cM_N= \bigcap_{g \in N} \cM_g$, where $N \subeq G$ is an
$e$-neighborhood satisfying (Reg), we see that
$\cM_N \subeq \cZ'$ also decomposes according to the direct integral.
We also obtain 
\[ \cM_N = \int_X^\oplus (\cM_x)_N\, d\mu(x), \] from
Lemma \ref{lem:denseunion}. 
Theorem~\ref{thm:3.8} now shows that $\partial U(h)$ also  decomposes in
such a way that
\begin{equation}
  \label{eq:h-red}
  \ker(\partial U_x(h)) = \C \Omega_x
\end{equation}
for almost every $x \in X$.

Since $\Omega$ is cyclic and separating for $\cM$,
the vectors $\Omega_x \in \cH_x$ must be cyclic separating for the
von Neumann algebras $\cM_x$ for almost every $x \in X$
 (easy argument by contradiction, we also refer to {\cite[Thm.~VIII.4.8]{Ta03}} for a more general case).
We therefore obtain (Uni), (M), (Fix), (Mod) and (Reg) for
the algebras $\cM_x \subeq B(\cH_x)$ and
the representations $\uline U_x$ of $G/\ker(U_x)$ on $\cH_x$.  
Finally, since $\cA'$ is the diagonal algebra 
\[ \C \1 = (\cA_x)'  = \bigcap_{g \in G} (\cM'_x)_g  \]
holds for almost every $x \in X$ (Lemma \ref{lem:denseunion}
and \cite[Thm.~4.4.5]{BR87}). 

The condition $\Delta_x \not= \1$ is by \eqref{eq:h-red}
equivalent to $\dim \cH_x > 1$, and in this case
Proposition~\ref{prop:3.12} applies to the configuration
in the Hilbert space $\cH_x$ and shows that $\cM_x$
is a type III$_1$-factor. If $\dim \cH_x = 1$, then
$\cM_x = \C \1$  and $\partial U_x(h) = 0$ implies
the triviality of the representation $U_x$ because
\[ \cH^G_x = \ker(\partial U(h))_x = \C \Omega_x = \cH_x \] 
(Theorem~\ref{thm:3.8}(c)).

We now define $X_1 := \{ x \in X \:  \dim \cH_x > 1\}$ and
$X_0 := \{ x \in X \:  \dim \cH_x = 1\}$.
Then the triples $(\cM_x, \cH_x, U_x)$ satisfy 
(M), (Fix), (Reg), (Mod), and (Uni) for the
representation $\uline U_x$ of $G/\ker(U_x)$.
\end{proof}

\begin{remark}\label{rem:disint}
  (a) If $h$ is not a symmetric Euler element,
  the condition $\cM'\subset\cM_{g_0}$ may not hold
  (Remark \ref{rem:nonsym}(a)). 

  \nin (b) In Proposition \ref{prop:disnet} it was crucial that
  $\cM'=\cM_{g_0}$ for some $g_0 \in G$,
  in order to obtain the disintegration. Furthermore,
  $\cA'=\cZ = \cZ^G$ implies $U(G)\subset\cA$.
  In the general case it is not clear when the group 
  $U(G)$ is contained in $\cA$.
  In \cite[Prop.~4.1]{BB99}, this follows from the
  KMS property of the wedge modular groups together with their geometric action,
   {  where it is used that  boosts generate the Lorentz group}
  to see that $U(G) \subeq \cA'' = \cA$.
  In our argument  $U(G) \subeq \cA'' = \cA$ does not need that $G$ is generated by an orbit of Euler elements.

  \nin (c) In the proof of Proposition \ref{prop:disnet}, we disintegrated $\cM=\int_X^\oplus\cM_x\, d\mu(x)$ and  $U=\int_X^\oplus U_x\, d\mu(x)$  in order to apply
Proposition \ref{prop:3.12} fiberwise
and conclude that, for almost every {$x \in X_1$,
the algebra $\cM_x$ is a type III$_1$ factor.} 
We actually have deduced (M), (Fix), (Reg) , (Mod) for almost every the triple $(\cM_x, U_x, \Omega_x)$ and (Uni) for $(\cM_x, \uline U_x, \Omega_x)$. In particular we could apply Proposition \ref{prop:3.12} for almost every triple $(\cM_x, \uline U_x, \Omega_x)$, where all the properties (M), (Fix), (Reg), (Mod) and (Uni) hold. Actually, it is not needed to assume (Uni) on $U_x$ to conclude the  type III$_1$ property of $\cM_x$.  Along this paper, (Uni) is necessary to ensure that {$\dd U$ is
  injective and in particular that $\dd U(h)$ determines $h$
  uniquely.} 
 In the proof of Proposition \ref{prop:disnet} we only need that
 \begin{equation}\label{eq:Zx}(\cZ)_x=(\cM^\alpha)_x=\C\cdot \textbf{1}_{\cH_x}
 \end{equation}
  to apply  Proposition \ref{prop:4.1}(e).   We can conclude \eqref{eq:Zx} as follows: let $g_0\in G$, such that $\cM'=\cM_{g_0}\in \cA$, then we have $\cM'_x=(\cM_{g_0})_x $, hence $\cZ(\cM_x)=\cZ(\cM)_x=\cZ_x$ for a.e.~$x\in X$. Furthermore, $\cM^\alpha=\int_X^\oplus(\cM^\alpha)_x\, d\mu(x)$, and since $\cZ=\cM^\alpha=\C\cdot \textbf{1}$, then $(\cZ)_x=(\cM^\alpha)_x=\C\cdot \textbf{1}_{\cH_x}$  for almost every $x\in X$.

  \nin (d) Condition (b) in Proposition~\ref{prop:disnet} implies that $\cM'\subset \cA$. If $\cM'\not\subset \cA$ then Proposition  \ref{prop:disnet} does not hold  in the present form. One may to consider the larger von Neumann algebra
    $\tilde\cA$  generated by the $G$-transforms of $\cM$ and $\cM'$.
    Lemma \ref{lem:5.1}(c) then implies that
    $G$ acts trivially on $\cZ(\cM)$,
    so that \eqref{eq:tildea'} entails 
$\tilde\cA' = \cZ(\cM)$. 
Then $\tilde \cA$ contains $U(G)$, and one can repeat large portions of
the proof of Proposition \ref{prop:disnet} to disintegrate
the triple $(\cM,U,\tilde \cA)$.
However, in this situation the conclusion one can draw
from $\cZ(\cM_x) = \C \1$, i.e., if $\cM_x$ is a factor, are weaker.
In particular, $\cM_x^\alpha$ can be larger than $\C \1$, so that
$\cM_x$ need not be of type III$_1$ (cf.\ Remark~\ref{rem:tildea}). 
\end{remark}

\section{Outlook}\label{sect:out}

This paper develops a language concerning properties of
  nets of standard subspaces that provides  
descriptions on several levels of abstraction. It also incorporates 
a series of recent results from a new point of view.
\cite{BB99,BEM98} aim to deduce properties of QFT on
de Sitter/anti-de Sitter spacetime from the thermal
property of the vacuum state for a geodesic observer. In \cite{BS04},
the authors deduce AQFT properties
from the assumption on the state on the quasi-local algebra to be passive for a uniformly accelerated observer in $n$-dimensional anti-de Sitter spacetime
for $n\geq2$. 
\cite{Str08} aims to unify the previous approaches  by considering passive states for an observer traveling along 
worldlines in order to prove the
thermal property of the vacuum and the Reeh-Schlieder property.
His purpose was also to look for an abstract setting that, at the end,
was lacking concrete examples. Our context may provide the
proper setting in which such questions can be investigated and
where one has a large zoo of diverse examples. 

{If one starts with a standard subspace $\sV$ and a
  unitary representation $(U,\cH)$ of $G$, 
  then there are many ways to formulate conditions
  on a net of standard subspaces containing $\sV$
  that ensure the Bisognano--Wichmann property,
  or at least modular covariance,
  in the sense that the modular groups associated to wedge regions
  act geometrically;  see \cite{Mo18, MN21}.
  Results in these directions have recently been
  established in \cite{MN22}, and our Euler Element Theorem
  (Theorem~\ref{thm:2.1}) can also be considered as a tool
  to verify the Bisognano--Wichmann property.
  However, a satisfying answer to the  long-standing questions
  related to modular covariance for nets of 
  standard subspaces and the Bisognano--Wichmann property
  in free and interacting nets of von Neumann algebras
  requires further research. For a recent 
  approach to the situation for Minkowski spacetime
  through scattering theory, we refer to \cite{DM20} and references therein.}

In this paper, we do not analyze locality properties. 
Indeed, in our AQFT context it may happen that, 
on the same symmetric space $M$,
  there are no causally complementary wedge regions.
This happens if  the Euler element corresponding to the wedge $W$
is not symmetric, so that there exists no $g\in G$ with 
$gW=W'$ (cf.~\cite{MNO23b}). If $h$ is a symmetric Euler element and
  the center of $G$ is non-trivial, many complementary wedges appear. This
has been studied in \cite{MN21} at the abstract level,
but an analysis on symmetric spaces is still missing. Once a one-particle net is established one would aim to make a second quantization procedure which should take care of a
one-particle Spin-Statistics Theorem anticipated in \cite{MN21}.
Interesting new possibilities for {twisted} second quantization procedures
may be derived from the recent paper \cite{CSL23}.

Wedges on causal homogeneous space have been described in \cite{NO23,MNO23a,MNO23b}. Then the construction of covariant local nets
of {standard subspaces on open regions  have}
been described in \cite{FNO23,NO23}. {Having now}
understood that Euler elements are the {natural generators}
of the geometric flows of modular Hamiltonians (see Theorem \ref{thm:2.1} and Theorem \ref{thm:3.8}) on a causal homogeneous space, one is interested in a general geometric description of entropy and energy inequalities on symmetric spaces and their relation with the representation theory of Lie groups
(\cite{MTW22, CF20, CLRR22}).

\appendix
\section{Factor types and modular groups}
\mlabel{app:a.1}

We assume that $\Omega \in \cH$ is a cyclic and separating unit
vector for the von Neumann algebra $\cM \subeq B(\cH)$. 
 We consider the automorphism group $(\alpha_t)_{t \in \R}$ of $\cM$ defined by
the modular group via
\[  \alpha_t(M) = \Delta^{it} M \Delta^{-it}, \quad t \in \R, M \in \cM.\]
We write $\cM^\alpha$ for the subalgebra of $\alpha$-fixed elements
and $\cH^\Delta := \ker(\Delta - \1)$ for the subspace of
fixed vectors of the modular group. 

\begin{prop} \mlabel{prop:4.1}
  The following assertions hold:
\begin{itemize}
\item[\rm(a)] $\cM^\alpha \Omega \subeq \cH^\Delta$ is a dense subspace.
\item[\rm(b)] $\cH^\Delta = \C \Omega$ if and only if $\cM^\alpha = \C \1$, i.e., that
  $(\cM,\R,\alpha)$ is ergodic. 
\item[\rm(c)] $\cM^\alpha \supeq \cZ(\cM) = \cM \cap \cM'$. In particular, 
  $\cM$ is a factor if $(\cM,\R,\alpha)$ is ergodic.
\item[\rm(d)] The von Neumann algebra $\cM$ is semi-finite if and only if
the modular automorphisms $(\alpha_t)_{t \in \R}$ are inner,
  i.e., can be implemented by  a unitary one-parameter group of $\cM$.
If $\Delta$
is non-trivial and inner, then $\cM^\alpha \not=\C \1$.
\item[\rm(e)] If $\cH^\Delta = \C \Omega$ 
  and $\Delta\not=\1$, then $\cM$ is a factor of type {\rm III}$_1$.
  \end{itemize}
\end{prop}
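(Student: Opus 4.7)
The plan is to establish (a)--(d) using Tomita--Takesaki theory and the mean ergodic theorem, and then to deduce (e) by combining these with a structural characterization of type III$_1$ factors.

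For (a), the inclusion $\cM^\alpha\Omega \subseteq \cH^\Delta$ is immediate from $\Delta\Omega = \Omega$: for $M \in \cM^\alpha$, $\Delta^{it}M\Omega = M\Delta^{it}\Omega = M\Omega$. For density, let $P$ denote the orthogonal projection onto $\cH^\Delta$; the mean ergodic theorem applied to the unitary group $(\Delta^{it})_{t\in\R}$ gives $\frac{1}{T}\int_0^T \Delta^{it}\xi\, dt \to P\xi$ for every $\xi \in \cH$. The operator-valued averages $M_T := \frac{1}{T}\int_0^T \alpha_t(M)\, dt$, interpreted as weak integrals, lie in $\cM$ with $\|M_T\| \leq \|M\|$; by weak-$*$ compactness of the unit ball one extracts a cluster point $E(M) \in \cM$ satisfying $\alpha_s(E(M)) = E(M)$, so $E(M) \in \cM^\alpha$, and $E(M)\Omega = PM\Omega$ by mean-ergodic convergence. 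Since $\cM\Omega$ is dense in $\cH$, $\cM^\alpha\Omega$ is dense in $\cH^\Delta = P\cH$. Part (b) follows from (a) using that $\Omega$ is separating: if $\cH^\Delta = \C\Omega$, then $\cM^\alpha\Omega \subseteq \C\Omega$ forces every $M \in \cM^\alpha$ into $\C\1$; conversely, $\cM^\alpha = \C\1$ yields $\cM^\alpha\Omega = \C\Omega$, which is already closed, hence equals $\cH^\Delta$ by (a). Part (c) combines the classical fact that modular automorphisms fix $\cZ(\cM)$ pointwise (\cite[Prop.~5.3.28]{BR96}, already cited in the paper) with (b).

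For (d), the equivalence of semi-finiteness of $\cM$ with innerness of the modular automorphism group is Takesaki's classical theorem, which I would simply cite. For the second assertion, if $\alpha_t = \mathrm{Ad}(V_t)$ for a strongly continuous unitary one-parameter group $V_t \in \cM$, then the group property forces $V_t$ to be fixed under its own adjoint action, hence $V_t \in \cM^\alpha$; if $\Delta \neq \1$ the action is non-trivial, so some $V_t$ is non-scalar, forcing $\cM^\alpha \neq \C\1$. For (e), combining (b), (c), and the contrapositive of (d) with $\Delta \neq \1$: the hypothesis $\cH^\Delta = \C\Omega$ gives $\cM^\alpha = \C\1$, hence by (c) $\cM$ is a factor, and by (d) $\cM$ is not semi-finite, so it is of type III. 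The \emph{main obstacle} is the last step, distinguishing type III$_1$ from types III$_\lambda$ with $\lambda \in [0,1)$. Here I would invoke the characterization of type III$_1$ factors via ergodicity of modular automorphisms: a type III factor admitting a faithful normal state with trivial centralizer $\cM^\alpha = \C\1$ is necessarily of type III$_1$. This follows from Connes' classification \cite{Co73} combined with the continuous decomposition (flow-of-weights) analysis: for a type III$_\lambda$ factor with $\lambda \in [0,1)$, the non-trivial structure of the flow forces every faithful normal state to have a non-trivial centralizer containing a type II piece, contradicting $\cM^\alpha = \C\1$. Citing this final external ingredient completes the proof.
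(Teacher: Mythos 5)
Your proposal is correct and follows the same overall architecture as the paper's proof: (a) $\Rightarrow$ (b), (c) by the pointwise fixing of the center, (d) by citation plus a direct argument, and (e) by assembling (b), (c), (d) with a final type-classification step. Two steps are discharged differently. For (a), you give a self-contained mean-ergodic argument (averaging $\alpha_t(M)$ over $[0,T]$, extracting a $\sigma$-weak cluster point $E(M)\in\cM^\alpha$ with $E(M)\Omega = PM\Omega$), where the paper simply cites \cite[Prop.~6.6.4]{Lo08b}; your argument is sound and is essentially a proof of that cited result, so this buys self-containedness at the cost of a few lines. For (e), you compress the exclusion of types III$_0$ and III$_\lambda$ into a single cited characterization (``type III with a faithful normal state of trivial centralizer is III$_1$''), whereas the paper deliberately unfolds this into two cases: III$_0$ is excluded because the center of $\cM^\alpha$ would have to be non-atomic (\cite[Prop.~XII.3.15]{Ta03}), and III$_\lambda$, $\lambda\in(0,1)$, because $\cM^\alpha=\C\1$ factorial forces $\sigma(\Delta_\omega)=S(\cM)$, hence a periodic modular group whose fixed-point algebra is a II$_1$ factor. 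Your cited statement is true and is exactly what the two-case argument proves, though your gloss of the III$_0$ case (``centralizer containing a type II piece'') is not quite the right mechanism there --- the obstruction in the III$_0$ case is the diffuseness of $\cZ(\cM^\alpha)$, not a type II summand. Two further minor points: in (d), $\Delta\not=\1$ does not by itself give non-triviality of $\alpha$ on $\cM$; but if $\alpha$ were trivial then $\cM^\alpha=\cM\not=\C\1$ (since $\oline{\cM\Omega}=\cH\not=\C\Omega$), so the conclusion survives. And in (c), the ``in particular'' needs only $\cZ(\cM)\subeq\cM^\alpha$, not (b).
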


\begin{prf} (a)  The inclusion
  $\cM^\alpha \Omega \subeq \cH^\Delta$ is clear.
  That $\cM^\alpha\Omega$ is dense in $\cH^\Delta$ follows from
  \cite[Prop.~6.6.4]{Lo08b}, applied with $G = \R$ and $U_t = \Delta^{it}$.
  
  \nin (b) This follows from (a) and the fact that $\Omega$ is a
  separating vector. 

  \nin (c) Here we use that modular groups fix the 
  center pointwise; see \cite[Prop.~5.3.28]{BR96}.

  \nin (d) The first assertion follows from \cite[Thm.~3.1.6]{Su87}.
 If  $(\alpha_t)_{t \in \R}$ is inner and non-trivial,
  then the spectral projections of the corresponding infinitesimal generator
    are contained in $\cM^\alpha$, showing that $\cM^\alpha \not=\C \1$. 

  \nin (e) From (b) we infer that $\cM^\alpha = \C \1$, so that
  (c) implies that $\cM$ is a factor. 
    By (d) it is of type III because $\Delta$ is non-trivial
    (here we use $\cM \not=\C\1$),
    but  cannot be inner by  ergodicity.
    \begin{footnote}
      {At this point \cite[Prop.~6.6.5]{Lo08b} implies that $\cM$
        is of type III$_1$, but as Longo's argument is very condensed,
        we provide some more details.}
    \end{footnote}
  We have to exclude the
  types III$_0$ and III$_\lambda$ for $\lambda \in (0,1)$.
  By \cite[Prop.~XII.3.15]{Ta03}, if $\cM$ is of type III$_0$,
  then the center of $\cM^\alpha$ is non-atomic. As this is not
  the case for $\cM^\alpha = \C$, this case is excluded.

  Let $\Gamma(\cM) \subeq \R^\times_+ \cong \hat\R$ denote the 
    Connes spectrum of $\alpha$ on $\cM$, which by
    \cite[Prop.~3.3.3]{Su87} coincides with the spectrum of $\alpha$ on $\cM$. 
   Now \cite[Prop.~3.4.7]{Su87} asserts that, if
    $\cM$ and $\cM^\alpha$ are factors, then
    \[ \Gamma(\cM) = S(\cM) \cap \R^\times_+
    = \R^\times_+ \cap \sigma(\Delta_\omega) \]
    for any faithful separating normal state $\omega$.
    If $\cM$ is of type III$_\lambda$, then
    $\Gamma(\cM) = \lambda^\Z$ (cf.\ \cite[Def.~3.3.10]{Su87}),
    so that the modular group $\alpha$ is periodic.
    By \cite[Exer.~XII.2]{Ta03},     this implies that
    $\cM^\alpha$ is a factor of type II$_1$, contradicting
    $\cM^\alpha = \C \1$. So type III$_\lambda$ is also ruled out.
    Alternatively, one can use \cite[Lemma~4.2.3]{Co73}, asserting that,
    if $\cM$ is a factor and $1$ is isolated in $\sigma(\Delta_\omega)$,
    then $\cM^\alpha$ contains a maximal abelian subalgebra of $\cM$.
    In our context this contradicts $\cM^\alpha  = \C \1$.     
\end{prf}

\begin{rem} \mlabel{rem:a.2}
  We have seen above that $\cM$ is a type III$_1$-factor
  if $(\cM,\R,\alpha)$ is ergodic.
  According to \cite{MV23}, the converse also holds
  in the sense that, if $\cM$ is a type III$_1$-factor,
  then the set of ergodic states is a dense $G_\delta$
  in the set of all faithful normal states. 
{That there are also faithful normal states that are not
    ergodic follows from \cite[Cor.~8]{CS78},
    that asserts for each hyperfinite factor $\cR$
    the existence of faithful normal states of $\cM$ with
    $\cM^\alpha \supeq \cR$.}
\end{rem}

\begin{rem} From Proposition~\ref{prop:4.1}(a) it follows that the
  $J$-fixed vector $\Omega$ is cyclic and separating in $\cH^\Delta$
  for the subalgebra $\cM^\alpha$.
  Hence $J\cM J = \cM'$ implies that the same holds
  of $(\cM')^\alpha$ because $J\cH^\Delta = \cH^\Delta$.
  We therefore have a standard form representation of
  $\cM^\alpha$ on $\cH^\Delta$. Note that
  the standard subspace $\sV = \sV_{M,\Omega}$ satisfies
  \[ \sV \cap \cH^\Delta = \sV^{\Delta} = \sV \cap \sV'
  = \sV \cap \cH^J\]
  and contains the standard subspace
  $\oline{\cM^\alpha_h.\Omega}$ of $\cH^\Delta$.
  This implies that the corresponding modular operator is
  trivial, so that $\omega_\Omega(A) := \la \Omega, A \Omega \ra$
  is a trace on $\cM^\alpha$ (\cite[Prop.~5.3.3]{BR96}).
\end{rem}
  
\begin{rem}
 Suppose that $\cM = \cR(\sV)$ is a second quantization algebra.
  Then $\cR(\sV \cap \sV') = \cR(\sV) \cap \cR(\sV)'$ by the Duality Theorem,
  so that $\cR(\sV)$ is a factor if and only if $\sV$ is symplectic, which
  is equivalent to
  \[ \ker(\Delta_\sV -\1) = \{0\}.\]
  We also have
  $\Delta = \Gamma(\Delta_\sV)$ for the corresponding standard subspace $\sV$.
  Therefore 
  $\cF(\cH)^\Delta = \C \Omega$ implies that $\cH^{\Delta_\sV} = \{0\}$,
  which is equivalent to $\cR(\sV)$ being a factor,
  but we have seen in Proposition~\ref{prop:4.1}(a) that
  $\cF(\cH)^\Delta = \C \Omega$ even implies that $\cM$ is a factor of
  type III$_1$.
  
  If $\cR(\sV)$ is a factor of type I, then the modular group
  is inner and, if $\sV \not=\{0\}$, it follows that $\cR(\sV)^\alpha \not=\C\1$.
  In view of Proposition~\ref{prop:4.1}(a),
  this implies that $\cF(\cH)^\Delta \not=\C\Omega$.
\end{rem}

\section{Smooth and analytic vectors}
\mlabel{app:b}

For a unitary representation $(U,\cH)$ of a Lie group $G$,
  we write $\cH^\infty \subeq \cH$ for the subspace of {\it smooth vectors,} i.e.,
  elements $\xi \in \cH$ whose orbit map
  \[ U^\xi \: G \to \cH, g \mapsto U(g)\xi \]
  is smooth.
  For $x \in \g$, we write
  $\partial U(x)$ for the infinitesimal generator of the one-parameter
  group $U(\exp tx)$, so that $U(\exp tx) = e^{t\partial U(x)}$. 
  On this dense subspace  we have the {\it derived representation}
  \[ \dd U \: \g_\C \to \End(\cH^\infty), \quad
    \dd U(x + i y)\xi := \partial U(x)\xi + i \partial U(y) \xi \quad \mbox{
    for } \quad x,y \in \g, \xi \in \cH^\infty \]
  for the derived representation of $\g_\C$ on this dense subspace.
  We also write $\cH^\omega \subeq \cH^\infty$ for the subspace of analytic
  vectors which is dense in $\cH$
  (\cite[Thm.~4]{Nel59}, \cite{Ga60}).
  As $\cH^\infty$ is dense and $U(G)$-invariant,
  $\partial U(x)$ is the closure of $\dd U(x)$
  (\cite[Thm.~VIII.10]{RS75}). 

    For an analytic vector $\xi \in \cH^\omega$, we then have 
  \[ U^\xi(\exp x) = U(\exp x)\xi
    = \sum_{n = 0}^\infty\frac{1}{n!}
    (\dd U(x))^n \xi \]
  for every $x$ in a sufficiently small $0$-neighborhood
  $U_\g^\xi \subeq \g$. Analytic continuation implies that,
  after possibly shrinking $U_\g^\xi$, the power series on the right
  converges on the $0$-neighborhood $U^\xi_{\g_\C} := U_\g^\xi + i U_\g^\xi
  \subeq \g_\C$ and defines a holomorphic function
  \begin{equation}
    \label{eq:etaxi}
 \eta_\xi \: U^\xi_{\g_\C} \to \cH, \quad
 \eta_\xi(z) := \sum_{n = 0}^\infty\frac{1}{n!}    (\dd U(z))^n \xi.
  \end{equation}

  If $\ker(U)$ is discrete, then $\dd U$ is injective on $\g$.
  But for $z \in \g_\C$ the adjoint $\dd U(z)^\dagger$ on $\dd U(z)$ on
  the pre-Hilbert space $\cH^\infty$ satisfies
  \[ \dd U(x + iy)^\dagger = -\dd U(x) + i \dd U(y)
  = \dd U(-x+iy)\quad \mbox{ for } \quad
    x,y \in \g.\]
  This implies that $\dd U \:  \g_\C \to \End(\cH^\infty)$ is also injective
  because $0 = \dd U(x+iy) = \dd U(x) + i \dd U(y)$
  implies that the hermitian and the skew-hermitian part of this operator
  on $\cH^\infty$ vanish, and thus $\dd U(x) = \dd U(y) = 0$.

\begin{lem}\mlabel{lem:1.1b}
    For $z \in \g_\C$, let $\dd U^\omega(z)$ denote the restriction of
    $\dd U(z)$ to $\cH^\omega$. Then
    \begin{equation}
      \label{eq:dualrel}
      \dd U(z) \subeq \dd U^\omega(-\oline z)^*
    \end{equation}
    In particular, the representation $\dd U^\omega$ of $\g_\C$ is injective if 
    $\ker(U)$ is discrete. If this is the case, then
    $\dd U^\omega(z)$ is skew-symmetric if and only if $z \in \g$.
  \end{lem}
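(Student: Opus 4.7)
The plan is to prove the three assertions in order, using the adjoint relation $\dd U(x+iy)^\dagger = \dd U(-x+iy) = \dd U(-\oline z)$ on $\cH^\infty$ established in the paragraph preceding the lemma, together with the injectivity of $\dd U \:\g_\C \to \End(\cH^\infty)$ when $\ker U$ is discrete.

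For the inclusion \eqref{eq:dualrel}, I would take $\xi \in \cH^\infty$ and $\eta \in \cH^\omega \subeq \cH^\infty$ and simply rewrite
\[
  \la \dd U(z)\xi,\eta\ra
  = \la \xi, \dd U(-\oline z)\eta \ra
  = \la \xi, \dd U^\omega(-\oline z)\eta \ra,
\]
which shows that $\xi$ lies in the domain of $\dd U^\omega(-\oline z)^*$ and that the two operators agree on $\xi$. The density of $\cH^\omega$ makes $\dd U^\omega(-\oline z)^*$ well defined, so this is the entire argument for the first claim.

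For injectivity of $\dd U^\omega$, assume $\dd U^\omega(z) = 0$ and $\ker U$ discrete. Applying \eqref{eq:dualrel} with $z$ replaced by $-\oline z$, I get $\dd U(-\oline z) \subeq \dd U^\omega(z)^* = 0$, so $\dd U(-\oline z)$ vanishes on all of $\cH^\infty$. The injectivity of $\dd U$ on $\cH^\infty$ (already noted in the text, via splitting into skew-hermitian and hermitian parts and using discreteness of $\ker U$) then yields $z = 0$.

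For the skew-symmetry criterion, write $z = x+iy$ with $x,y \in \g$. If $z \in \g$, i.e., $y = 0$, then $\dd U^\omega(z)$ is the restriction of the skew-adjoint operator $\partial U(x)$ to $\cH^\omega$, so it is skew-symmetric. Conversely, suppose $\dd U^\omega(z)$ is skew-symmetric, so $\la \xi, \dd U^\omega(z)\xi\ra \in i\R$ for every $\xi \in \cH^\omega$. Since $\la \xi, \dd U(x)\xi\ra \in i\R$ (skew-symmetry of $\partial U(x)$) and $\la \xi, i\dd U(y)\xi\ra \in \R$ (because $i\partial U(y)$ is selfadjoint, hence $i\dd U(y)$ symmetric on $\cH^\omega$), the purely real summand must vanish, giving $\la \xi, i\dd U(y)\xi\ra = 0$ for all $\xi \in \cH^\omega$. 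Polarization of this bounded-below hermitian form on the pre-Hilbert space $\cH^\omega$ yields $\dd U^\omega(y) = 0$, and part~(2) then forces $y = 0$, i.e.\ $z \in \g$.

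I do not expect serious obstacles: the only subtlety is making sure the adjoint identity, originally stated on $\cH^\infty$, transfers correctly when restricted to the dense subspace $\cH^\omega$, and the polarization step in the third part, where one must confirm that a symmetric operator on a dense subspace with vanishing quadratic form is zero, which is standard.
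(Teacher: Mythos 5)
Your proof is correct and follows essentially the same route as the paper: the relation \eqref{eq:dualrel} is read off from the formal adjoint identity $\dd U(x+iy)^\dagger = \dd U(-x+iy)$ on $\cH^\infty$, and the injectivity of $\dd U^\omega$ is deduced from that of $\dd U$ exactly as in the text. The only (harmless) deviation is in the skew-symmetry criterion, where the paper uses \eqref{eq:dualrel} directly to show that skew-symmetry is equivalent to $z - \oline z \in \ker(\dd U^\omega) = \{0\}$, whereas you split $z = x+iy$ and eliminate the real part of the quadratic form by polarization; both arguments are equally short and reduce to the same injectivity statement.
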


  \begin{prf} We have 
    \[ \la \xi, \dd U^\omega(z) \eta \ra =  \la \dd U(-\oline z)\xi,  \eta \ra
      \quad \mbox{ for all }  \quad \xi \in \cH^\infty, \eta \in \cH^\omega, \]
    which is \eqref{eq:dualrel}.
    In particular, we see that $\dd U^\omega(z) = 0$ implies $\dd U(z) = 0$,
    so that $\ker(\dd U)= \ker(\dd U^\omega)$.
    Suppose that $\ker(U)$ is discrete, so that $\dd U$ and
    $\dd U^\omega$ are  injective. Then $\dd U^\omega(z)$ is skew-symmetric if and only
    if $z - \oline z \in \ker(\dd U^\omega) = \{0\}$, which is equivalent to
    $z \in \g$.
  \end{prf}

\section{Some facts on direct integrals}\label{app:stsub}

Let $\cH = \int^\oplus_X \cH_m \, d\mu(m)$ be a direct
integral of Hilbert spaces on a standard measure space
$(X,\mu)$. We call a closed real subspace
$\sH \subeq \cH$ {\it decomposable} if it is of the form
\[ \sH = \int_X^\oplus \sH_m\, d\mu(m), \]
where $(\sH_m)_{m \in X}$ is a measurable field of closed real subspaces.
Now let $(\sH^k)_{k \in K}$ be an at most countable family of decomposable
real subspaces. Then we have (\cite[Lemma~B.3]{MT19}): 
\begin{itemize}
\item[\rm(DI1)] $\sH' = \int_X^\oplus \sH_m'\, d\mu(m)$. 
\item[\rm(DI2)] $\bigcap_{k \in K} \sH^k = \int_X^\oplus
  \bigcap_{k \in K} \sH_m^k  \, d\mu(m)$. 
\item[\rm(DI3)] $\oline{\sum_k \sH^k} = \int_X^\oplus
  \oline{\sum_k \sH_m^k}  \, d\mu(m).$ 
\end{itemize}

\begin{lem} \mlabel{lem:di1}
  The subspace $\sH$ is cyclic/separating/standard
  if and only if $\mu$-almost all $\sH_m$ have this property. 
\end{lem}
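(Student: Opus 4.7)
The plan is to reduce both conditions to statements about decomposable subspaces being either $\{0\}$ or all of $\cH$, so that the equivalence with an almost-everywhere fibre statement becomes automatic. The key observation that makes this painless is that if $\sH = \int^\oplus_X \sH_m\, d\mu(m)$ is decomposable, then so is $i \sH = \int^\oplus_X i\sH_m\, d\mu(m)$, because multiplication by $i$ is a pointwise operation on the measurable field. Hence the pair $\{\sH, i\sH\}$ is eligible for the intersection and sum-closure formulas (DI2) and (DI3).

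First I would handle separability. Applying (DI2) to $\{\sH, i\sH\}$ gives
\[
\sH \cap i\sH \;=\; \int_X^\oplus (\sH_m \cap i\sH_m)\, d\mu(m).
\]
A decomposable real subspace of $\cH$ equals $\{0\}$ if and only if its fibres vanish $\mu$-almost everywhere (this is the standard fact that $\int^\oplus \sK_m\, d\mu = \{0\}$ iff $\sK_m = \{0\}$ a.e., which follows directly from the definition of the direct integral norm on real subspaces). Thus $\sH$ is separating iff $\sH_m$ is separating for $\mu$-almost every $m \in X$.

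Next I would treat cyclicity. Applying (DI3) to $\{\sH, i\sH\}$ yields
\[
\oline{\sH + i\sH} \;=\; \int_X^\oplus \oline{\sH_m + i\sH_m}\, d\mu(m).
\]
A decomposable closed subspace equals the whole direct integral $\cH = \int^\oplus \cH_m\, d\mu$ if and only if its fibres equal $\cH_m$ almost everywhere. Therefore $\oline{\sH + i\sH} = \cH$ iff $\oline{\sH_m + i\sH_m} = \cH_m$ almost everywhere, which is the statement that $\sH_m$ is cyclic a.e.

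Combining these two equivalences and intersecting the two conull sets on which separability and cyclicity hold fibrewise proves the statement for standardness. The only real care to take is bookkeeping of null sets and the verification that the fibre fields appearing above are measurable, but both are already built into (DI2) and (DI3); so I do not expect a genuine obstacle, and the argument is essentially a direct application of the three facts (DI1)--(DI3) combined with the two elementary "vanishing" and "filling" criteria for decomposable subspaces.
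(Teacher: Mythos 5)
Your proof is correct. For the separating property and for standardness you argue exactly as the paper does: apply (DI2) to the pair $\{\sH, i\sH\}$ and use the fact that a decomposable subspace vanishes iff its fibres vanish almost everywhere. The only place you diverge is cyclicity: you apply (DI3) to $\{\sH, i\sH\}$ to get $\oline{\sH + i\sH} = \int_X^\oplus \oline{\sH_m + i\sH_m}\, d\mu(m)$ and then invoke the dual ``filling'' criterion (a decomposable closed complex subspace equals $\cH$ iff its fibres equal $\cH_m$ a.e.), whereas the paper instead uses (DI1) together with the duality ``$\sH$ cyclic $\Leftrightarrow$ $\sH'$ separating'' to reduce cyclicity to the separating case already handled. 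Both routes are one-line arguments of the same depth; yours is marginally more self-contained in that it does not pass through the symplectic complement, at the price of needing the filling criterion (which is standard: the projection onto a decomposable closed subspace is the direct integral of the fibre projections, and it equals $\1$ iff almost every fibre projection does). There is no gap in either step.
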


\begin{prf} (a) First we deal with the separating property.  By
  (DI2) we have
  \[ \sH \cap i\sH = \int_X^\oplus (\sH_m \cap i \sH_m)\, d\mu(m),\]
  and this space is trivial if and only if $\mu$-almost all spaces
  $\sH_m \cap i \sH_m$ are trivial, which means that
  $\sH_m$ is separating.

  \nin (b) The subspace $\sH$ is cyclic if and only if $\sH'$ is separating.
  By (DI1) and (a) this means that $\mu$-almost all $\sH_m'$ are separating,
  i.e., that $\sH_m$ is cyclic.

  \nin (c) By (a) and (b) $\sH$ is standard if and only if
  $\mu$-almost all $\sH_m$ are cyclic and separating, i.e., standard.
\end{prf}

\begin{lem} \mlabel{lem:dirint2} For a countable family
  $(\sH^k)_{k \in K}$ of decomposable cyclic closed real subspaces, the
  intersection $\sV := \bigcap_{k \in K} \sH^k$
  is cyclic if and only if, 
  for $\mu$-almost every $m \in X$, the subspace
$\sV_m := \bigcap_{k \in K} \sH_m^k$ is cyclic. 
\end{lem}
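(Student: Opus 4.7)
The plan is to derive this lemma as a direct two-step consequence of the properties of decomposable subspaces that have already been recorded, namely (DI2) and Lemma~\ref{lem:di1}.

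First, I would observe that since each $\sH^k$ is assumed to be decomposable and the index set $K$ is at most countable, property (DI2) applies and yields that the intersection $\sV = \bigcap_{k \in K} \sH^k$ is itself a decomposable closed real subspace, with the explicit fiber decomposition
\[ \sV = \int_X^\oplus \sV_m\, d\mu(m), \quad \text{where} \quad \sV_m := \bigcap_{k \in K} \sH_m^k. \]
The countability of $K$ is essential here, since only countable intersections of measurable fields of closed subspaces produce a measurable field.

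Second, once $\sV$ is written in this decomposable form, Lemma~\ref{lem:di1} applies verbatim and gives the equivalence: $\sV$ is cyclic in $\cH$ if and only if $\mu$-almost every $\sV_m$ is cyclic in $\cH_m$. This completes the proof.

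Observe that the hypothesis that the individual $\sH^k$ are cyclic plays no explicit role in the argument — it is part of the motivating setting rather than a technical ingredient. There is no real obstacle to overcome here; the content of the lemma is essentially a packaging of (DI2) together with Lemma~\ref{lem:di1}, and the only point requiring any care is the countability of $K$ in invoking~(DI2).
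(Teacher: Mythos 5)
Your proposal is correct and follows exactly the paper's own two-step argument: apply (DI2) to write $\sV = \int_X^\oplus \sV_m\, d\mu(m)$, then invoke Lemma~\ref{lem:di1}. Your side remarks (the role of countability in (DI2), and that cyclicity of the individual $\sH^k$ is not actually used) are accurate.
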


\begin{prf} By (DI2), we have $\sV = \int_X^\oplus \sV_m\, d\mu(m)$,
  so that the assertion follows from Lemma~\ref{lem:di1}.   
\end{prf}

For a direct integral
\[ (U,\cH) = \int_X^\oplus (U_m, \cH_m)\, d\mu(m) \]
of (anti-)unitary representations of $G_{\tau_h}$, the canonical
standard subspace $\sV = \sV(h, U)\subeq \cH$ from \eqref{eq:def-V(h,U)}
is specified by the decomposable operator $J \Delta^{1/2}
= U(\tau_h) e^{\pi i\, \partial U(h)}$, hence decomposable: 
\begin{equation}
  \label{eq:v-dirint}
  \sV = \int_X^\oplus \sV_m\, d\mu(m).
\end{equation}

\begin{lem} \mlabel{lem:g-inter}
  Assume that $G$ has at most countably many components.
  For any subset $A \subeq G$ and a real subspace $\sH \subeq \cH$, we put
  \begin{equation}
    \label{eq:VA}
    \sH_A := \bigcap_{a \in A} U(g)\sH.
  \end{equation}
  Then the following assertions hold:
  \begin{itemize}
  \item[\rm(a)] If $\sH$ is decomposable, then
    $\sH_A = \int_X^\oplus \sH_{m,A}\, d\mu(m)$. 
  \item[\rm(b)] $\sH_A$ is cyclic if and only if $\mu$-almost all
    $\sH_{m,A}$ are cyclic. 
  \end{itemize}
\end{lem}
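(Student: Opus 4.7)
The assertion of (a) reduces to (DI2) once we replace the possibly uncountable intersection defining $\sH_A$ by a countable one. Since $G$ is a finite-dimensional Lie group with at most countably many connected components, $G$ is second countable, so we may choose a countable subset $A_0 \subeteq A$ that is dense in $A$ in the subspace topology inherited from $G$.

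I claim that $\sH_A = \sH_{A_0}$ and, for $\mu$-almost every $m \in X$, $\sH_{m,A} = \sH_{m,A_0}$. The inclusions $\sH_A \subeq \sH_{A_0}$ and $\sH_{m,A} \subeq \sH_{m,A_0}$ are trivial. For the converse, let $w \in \sH_{A_0}$ and $g \in A$; pick a sequence $(g_n)$ in $A_0$ with $g_n \to g$. Strong continuity of $U$ gives $U(g_n^{-1})w \to U(g^{-1})w$, and since $U(g_n^{-1})w \in \sH$ for all $n$ while $\sH$ is closed, we deduce $U(g^{-1})w \in \sH$, i.e., $w \in U(g)\sH$. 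The same argument applies fiberwise for every $m$ for which $U_m$ is strongly continuous, i.e., for $\mu$-almost all~$m$. Now every $U(g)$ with $g \in G$ is decomposable, with $U(g)\sH = \int_X^\oplus U_m(g)\sH_m\, d\mu(m)$, so each $U(g)\sH$ is a decomposable closed real subspace. Applying (DI2) to the countable family $(U(g)\sH)_{g \in A_0}$ yields
\begin{equation*}
\sH_{A_0} \;=\; \bigcap_{g \in A_0} U(g)\sH \;=\; \int_X^\oplus \bigcap_{g \in A_0} U_m(g)\sH_m\, d\mu(m) \;=\; \int_X^\oplus \sH_{m,A_0}\, d\mu(m).
\end{equation*}
Combining with $\sH_A = \sH_{A_0}$ and $\sH_{m,A} = \sH_{m,A_0}$ ($\mu$-a.e.) gives~(a).

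Part (b) then follows immediately from (a) together with Lemma~\ref{lem:di1}: the decomposable subspace $\sH_A = \int_X^\oplus \sH_{m,A}\, d\mu(m)$ is cyclic in $\cH$ if and only if $\mu$-almost every fiber $\sH_{m,A}$ is cyclic in~$\cH_m$.

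The only non-routine step is the countability reduction, which hinges on second countability of~$G$ and on strong continuity of both $U$ and of $\mu$-almost all $U_m$; once this is established, the rest of the argument is a formal application of (DI2) and Lemma~\ref{lem:di1}.
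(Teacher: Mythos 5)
Your proof is correct and follows essentially the same route as the paper: both reduce the intersection to a countable dense subset of $A$ (using second countability of $G$, strong continuity of $U$, and closedness of $\sH$), then apply (DI2) and Lemma~\ref{lem:di1}. Your version merely spells out the continuity argument and the fiberwise step slightly more explicitly than the paper does.
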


\begin{prf} (a) As $G$ has at most countably many components,
  it carries a separable metric. Hence there exists a countable
  subset $B \subeq A$ which is dense in $A$.
  For $\xi \in \cH$, we have
  \[ \xi \in \sH_A \quad \mbox{ if and only if } \quad
    U(A)^{-1}\xi \subeq \sH.\]
  Now the closedness of $\sH$ and the density of $B$ in $A$ show that
  this is equivalent to $U(B)^{-1}\xi \subeq \sH$, i.e., to
  $\xi \in \sH_B$. This shows that $\sH_A = \sH_B$.
  We likewise obtain $\sH_{m,A} = \sH_{m,B}$ for every $m \in X$.
  Hence the assertion follows by applying (DI2) to $\sH_B = \sH_A$.

\nin (b) follows from (a) and Lemma~\ref{lem:di1}.   
\end{prf}

We refer to \cite{BR87} for basic definition on direct integral objects.

\begin{lemma} \label{lem:denseunion}Let $\cH=\int_X^\oplus\cH_xd\mu(x)$, a  direct integral von Neumann algebra $\cA=\int_X^\oplus\cA_xd\mu(x)$ and a strongly continuous, unitary, direct integral representation of a connected Lie group $G$, $(U,\cH)=\int_X^\oplus(U_x,\cH_x)d\mu(x)$. Let $N\subset G$ a subset, then $$\bigcap_{g\in N}\cA_g=\int_X^\oplus \bigcap_{g\in N} (A_g)_xd\mu(x)$$ where $\cA_g=U(g)\cA U(g)^*$.
\end{lemma}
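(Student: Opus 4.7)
The plan is to first verify that each individual algebra $\cA_g = U(g)\cA U(g)^*$ is itself a decomposable von Neumann algebra with the expected fibres, then reduce the (possibly uncountable) intersection over $N$ to a countable one via separability and strong continuity, and finally invoke the standard result that countable intersections of decomposable algebras are decomposable with fibrewise intersections.

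First, since $U = \int_X^\oplus U_x\, d\mu(x)$, each $U(g)$ is decomposable with $U(g) = \int_X^\oplus U_x(g)\, d\mu(x)$, and $\cA$ decomposable implies $\cA_g$ decomposable with
\[ (\cA_g)_x = U_x(g)\, \cA_x\, U_x(g)^* \quad \text{for $\mu$-a.e.\ $x \in X$} \]
(cf.\ \cite[Thm.~4.4.5]{BR87} or \cite[Ch.~IV]{Ta02}). Passing to a $\mu$-conull set, we may assume this identity holds for every $x$ simultaneously for a chosen countable family of $g$'s.

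Second, I would exploit that the connected Lie group $G$ is second countable, so every subset $N \subeq G$ is separable; pick a countable set $N_0 \subeq N$ whose closure contains $N$. I claim
\begin{equation}\label{eq:countred}
 \bigcap_{g \in N} \cA_g = \bigcap_{g \in N_0} \cA_g.
\end{equation}
The inclusion ``$\subeq$'' is clear. For ``$\supeq$'', take $M \in \bigcap_{g \in N_0} \cA_g$ and $g \in N$, and pick $g_n \in N_0$ with $g_n \to g$. Then $U(g_n)^* M U(g_n) \in \cA$, and strong continuity of $U$ gives $U(g_n)^* M U(g_n) \to U(g)^* M U(g)$ in the strong operator topology. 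Since $\cA$ is SOT-closed, $U(g)^* M U(g) \in \cA$, i.e.\ $M \in \cA_g$.

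Third, having reduced to the countable case, I would apply (DI2) from Appendix~\ref{app:stsub}, which says that countable intersections of decomposable real (and hence also complex von Neumann) subspaces are decomposable with fibrewise intersections:
\[ \bigcap_{g \in N_0} \cA_g = \int_X^\oplus \bigcap_{g \in N_0} (\cA_g)_x\, d\mu(x) = \int_X^\oplus \bigcap_{g \in N_0} U_x(g)\,\cA_x\, U_x(g)^*\, d\mu(x). \]
Finally I would undo the reduction fibrewise: for $\mu$-almost every $x$, the fibre representation $U_x$ may be chosen strongly continuous (a standard property of direct integral decompositions of strongly continuous representations of second countable groups), and $\cA_x$ is a von Neumann algebra, so exactly the same density argument as in \eqref{eq:countred} applied in $\cH_x$ yields
\[ \bigcap_{g \in N_0} U_x(g)\,\cA_x\, U_x(g)^* = \bigcap_{g \in N} U_x(g)\,\cA_x\, U_x(g)^* = \bigcap_{g \in N} (\cA_g)_x \]
for $\mu$-a.e.\ $x$. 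Combining these three steps yields the desired equality.

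The main obstacle I anticipate is the measure-theoretic bookkeeping needed to ensure that the $\mu$-null exceptional set on which the fibrewise identities $(\cA_g)_x = U_x(g)\cA_x U_x(g)^*$ or the strong continuity of $U_x$ may fail can be chosen uniformly, rather than depending on $g \in N$ (which may be uncountable). This is handled precisely by first performing the reduction to the countable $N_0$ at the global level via \eqref{eq:countred}, and only invoking the fibrewise structure for the countable family $N_0$, at which point the exceptional sets form a $\mu$-null union.
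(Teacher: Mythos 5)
Your proposal is correct and follows essentially the same route as the paper: reduce to a countable dense subset $N_0\subseteq N$ using separability of $G$ and continuity of $g\mapsto U(g)^*MU(g)$ together with the operator-topology closedness of $\cA$ (the paper uses the weak topology where you use the strong one — both work), then decompose the countable intersection fibrewise, and finally repeat the density argument in each fibre $\cH_x$. The only quibble is your citation: (DI2) of Appendix~\ref{app:stsub} concerns closed \emph{real subspaces} of $\cH$, not von Neumann subalgebras of $B(\cH)$, so it does not literally cover your third step; the paper instead invokes \cite[Prop.~4.4.6(b)]{BR87} for countable intersections of decomposable von Neumann algebras, which is the statement you actually need.
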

\begin{proof}

As $G$ has at most countably many components, 
  it carries a separable metric. Hence there exists a countable
  subset $N_0 \subeq N$ which is dense in $N$. 
  For $A \in B(\cH)$, the map
  \[ F \: G \to B(\cH), \quad F(g) = U(g) A U(g)^*,\]
  is weak operator continuous,
  so that the set of all $g \in G$ with
 $F(g) \in \bigcap_{g \in N_0} \cA_g$ 
  is a closed subset, hence contains $N$. We conclude that
  \[ \bigcap_{g \in N_0} \cA_g = \bigcap_{g \in N} \cA_g. \]
  We likewise obtain for every $x \in X$ the relation
  \[ \bigcap_{g \in N_0} \cA_{x,g}
    = \bigcap_{g \in N} \cA_{x,g} 
    \quad \mbox{ for } \quad \cA_{x,g} = U_x(g) \cA_x U_x(g)^*. \]
  From \cite[Prop.~4.4.6(b)]{BR87} we thus obtain
  \[ 
    \bigcap_{g \in N} \cA_g 
    =  \bigcap_{g \in N_0} \cA_g  
    = \int_X^\oplus  \bigcap_{g \in N_0} \cA_{x,g} \,
    d\mu(x) 
    = \int_X^\oplus  \bigcap_{g \in N} \cA_{x,g} \,
    d\mu(x).\]
  Finally, we observe that, for every $g \in G$
  \[ \cA_g
    = \int_X^\oplus (\cA_g)_x \, d\mu(x)
    = \int_X^\oplus \cA_{x,g}\, d\mu(x) \]
  follows by the uniqueness of the direct integral decomposition.
\end{proof}

\section{Some facts on (anti-)unitary representations} 
\mlabel{app:d}

\subsection{Standard subspaces in tensor products}

\begin{lem} \mlabel{lem:tensprostand} 
Suppose that $(U,\cH) = \otimes_{j =  1}^n (U_j, \cH_j)$ is a
tensor product of  (anti-)unitary  representations of $G_{\tau_h}$.
Then the standard subspace $\sV = \sV(h,U)$ is a tensor product
\[  \sV = \sV_1 \otimes \cdots \otimes \sV_n, \]
and for every non-empty subset $A \subeq G$
the subset $\sV_A := \bigcap_{g \in A} U(g)\sV$ satisfies
\begin{equation}
  \label{eq:vtens}
  \sV_A \supeq \sV_{1,A} \otimes \cdots \otimes \sV_{n,A}.
\end{equation}
\end{lem}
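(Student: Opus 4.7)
The argument is essentially a two-step bookkeeping exercise, so I will proceed by induction on $n$, with the bulk of the work at $n=2$.

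\textbf{Step 1 (Identification of $\sV$ as a tensor product).} The modular objects of $\sV = \sV(h,U)$ are
\[ \Delta_\sV = e^{2\pi i \partial U(h)}, \qquad J_\sV = U(\tau_h^G). \]
Since $U = U_1 \otimes \cdots \otimes U_n$, the infinitesimal generator decomposes as
\[ \partial U(h) = \sum_{j=1}^n \1 \otimes \cdots \otimes \partial U_j(h) \otimes \cdots \otimes \1, \]
and the summands commute, so
\[ \Delta_\sV = \Delta_{\sV_1} \otimes \cdots \otimes \Delta_{\sV_n}. \]
Likewise $J_\sV = J_{\sV_1} \otimes \cdots \otimes J_{\sV_n}$ because $U(\tau_h^G)$ factors across the tensor product. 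By the preceding lemma (on modular data of tensor products of standard subspaces), the standard subspace with these modular data is precisely $\sV_1 \otimes \cdots \otimes \sV_n$. Hence $\sV = \sV_1 \otimes \cdots \otimes \sV_n$.

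\textbf{Step 2 (The inclusion).} For any $g \in G$ we have $U(g) = U_1(g) \otimes \cdots \otimes U_n(g)$, so if $v_j \in \sV_{j,A}$ for $j = 1,\ldots,n$, then for every $g \in A$
\[ U(g)^{-1}(v_1 \otimes \cdots \otimes v_n) = U_1(g)^{-1}v_1 \otimes \cdots \otimes U_n(g)^{-1}v_n \in \sV_1 \otimes \cdots \otimes \sV_n = \sV. \]
Therefore $v_1 \otimes \cdots \otimes v_n \in \sV_A$. Since $\sV_A = \bigcap_{g \in A} U(g)\sV$ is a closed real subspace of $\cH$ and since $\sV_{1,A} \otimes \cdots \otimes \sV_{n,A}$ is by definition the closed real span of such elementary tensors, we conclude
\[ \sV_{1,A} \otimes \cdots \otimes \sV_{n,A} \subeq \sV_A. \]

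There is no real obstacle here; the only subtle point is to invoke the tensor product identification of standard subspaces from the previous lemma in Step~1 so that the inclusion in Step~2 genuinely sits inside $\sV$ rather than a larger real subspace. That equality of real subspaces being known, the rest is immediate from the multiplicativity of $U$ across the tensor factors.
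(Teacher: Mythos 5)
Your proposal is correct and follows essentially the same route as the paper: the identification $\sV = \sV_1 \otimes \cdots \otimes \sV_n$ via the factorization of the modular data $(\Delta, J)$ together with the preceding lemma on tensor products of standard subspaces, and then the inclusion \eqref{eq:vtens} by checking that elementary tensors with factors in $\sV_{j,A}$ satisfy $U(A)^{-1}\xi \subeq \sV$ and passing to the closed real span. The paper's own proof is just a terser version of your Step 2, leaving Step 1 implicit.
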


\begin{prf} We have $\xi \in \sV_A$ if and only if
  $U(A)^{-1} \xi \subeq \sV$.
  This shows that any
  $\xi = \xi_1 \otimes \cdots \otimes \xi_n$ with $\xi_j \in \sV_{j,A}$
  is contained in $\sV_A$, which is \eqref{eq:vtens}.
\end{prf}

\subsection{Existence of standard subspaces for unitary
  representations} 

The following theorem characterizes those Euler elements
which, in every unitary representation, generate a modular group
of some standard subspace. 

\begin{thm} \mlabel{thm:hsym} {\rm(Euler elements
      generating modular groups)} 
    Let $G$ be a connected Lie group and $h \in \g$ an Euler element.
    We consider the following assertions:
\begin{itemize}
\item[\rm(a)] $h \in [\g_1(h), \g_{-1}(h)]$. 
\item[\rm(b)] For all quotients $\fq = \g/\fn$, $\fn \trile \g$,
  in which the image of $h$ is central, we have $h \in \fn$, so that
  its image in $\fq$ vanishes. 
\item[\rm(c)] For all unitary representation $(U,\cH)$ of
  $G$, the selfadjoint operator $i\partial U(h)$ is unitarily
  equivalent to $-i\partial U(h)$.
\item[\rm(d)] For all unitary representation $(U,\cH)$ of
  $G$, there exists a standard subspace $\sV$ such that 
  $\Delta_\sV = e^{2\pi i \partial U(h)}$.
\end{itemize}
Then we have the implications
\[ (a) \Leftrightarrow (b) \Rarrow (c) \Leftrightarrow (d),\]
and if $G$ is simply connected, then all assertions are equivalent.
\end{thm}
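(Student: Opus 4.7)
The plan is to establish the chain $(a) \Leftrightarrow (b) \Rightarrow (c) \Leftrightarrow (d)$ in full generality, and then the converse $(c) \Rightarrow (a)$ under the simple connectedness of $G$.

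For $(a) \Leftrightarrow (b)$ I would argue purely algebraically. If (a) holds and $\fn \trile \g$ is any ideal making the image of $h$ central in $\g/\fn$, then $[h,\g] \subeq \fn$; in particular $\g_{\pm 1}(h) = \pm[h,\g_{\pm 1}(h)] \subeq \fn$, hence $[\g_1(h), \g_{-1}(h)] \subeq \fn$, so $h \in \fn$, giving (b). Conversely, I would apply (b) to the specific ideal $\fn_h = \g_1(h) + [\g_1(h), \g_{-1}(h)] + \g_{-1}(h)$ from Lemma~\ref{lem:eul-nh}: by construction $[h,\g] \subeq \fn_h$, so the image of $h$ is central in $\g/\fn_h$, and (b) yields $h \in \fn_h$; since $h \in \g_0(h)$ while the other two summands lie in $\g_{\pm 1}(h)$, the grading decomposition forces $h \in [\g_1(h), \g_{-1}(h)]$.

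For $(a) \Rightarrow (c)$ I would extract from (a) elements $x \in \g_1(h)$, $y \in \g_{-1}(h)$ with $[x,y] = h$; together with $h$ these span a copy of $\fsl_2(\R) \subeq \g$, with the standard triple obtained by rescaling to $(H,E,F) = (2h, 2x, y)$. The corresponding analytic subgroup of the connected group $G$ contains the Weyl element $w := \exp(E)\exp(-F)\exp(E)$, and the usual $\SL_2(\R)$ computation shows $\Ad(w)H = -H$, so $\Ad(w)h = -h$. For any unitary representation $(U,\cH)$, the linear unitary $U(w)$ then satisfies $U(w)\, \partial U(h)\, U(w)^{-1} = \partial U(\Ad(w)h) = -\partial U(h)$, which is (c). The equivalence $(c) \Leftrightarrow (d)$ is a standard modular-theoretic fact: by Proposition~\ref{prop:11}, a standard subspace with $\Delta_\sV = e^{2\pi i\, \partial U(h)}$ exists if and only if there is a conjugation $J$ with $J \Delta_\sV J = \Delta_\sV^{-1}$, which, using $JiJ = -i$, amounts to $J$ commuting with $\partial U(h)$; existence of such a conjugation is equivalent to the unitary equivalence of the selfadjoint operator $-i\, \partial U(h)$ and its negative by \cite[Prop.~3.1]{NO15}, which is precisely condition~(c).

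Finally, under simple connectedness of $G$, I would prove $(c) \Rightarrow (a)$ by contraposition. Assuming (a) fails, the equivalence with (b) applied to $\fn_h$ gives that the image $\bar h$ in $\fq := \g/\fn_h$ is nonzero and central. Simple connectedness of $G$ lifts the Lie algebra projection to a Lie group homomorphism $\pi \: G \to Q$, where $Q$ is the simply connected Lie group with Lie algebra $\fq$. The one-parameter subgroup $\exp(\R \bar h)$ lies in $Z(Q)_e$; let $Z_0$ be its closure, a closed abelian central subgroup of $Q$. Pontryagin duality for $Z_0$ produces a continuous character $\chi \: Z_0 \to \T$ with $\partial \chi(\bar h) = i\lambda$ for some $\lambda \neq 0$. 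The induced representation $V := \mathrm{Ind}_{Z_0}^Q \chi$ is a unitary representation of $Q$ on which the central subgroup $Z_0$ acts as the scalar $\chi$; pulling back along $\pi$ yields a unitary representation $U := V \circ \pi$ of $G$ with $\partial U(h) = i\lambda \cdot \1$. Then $i\, \partial U(h) = -\lambda \cdot \1$ and $-i\, \partial U(h) = \lambda \cdot \1$ are distinct scalar operators on the same Hilbert space, hence not unitarily equivalent, so (c) fails. The main obstacle here will be the case where $\exp(\R \bar h)$ is not closed in $Q$, so that $Z_0$ is a higher-dimensional torus (or a product of tori and $\R$-factors) inside $Z(Q)_e$: one must verify via Pontryagin duality that some continuous character of $Z_0$ has nonzero derivative at $\bar h$, which amounts to the fact that characters of closed abelian Lie groups separate tangent vectors at the identity. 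The remaining steps are routine.
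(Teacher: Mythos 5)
Your arguments for $(a)\Leftrightarrow(b)$ and $(c)\Leftrightarrow(d)$ are correct and essentially identical to the paper's, and your construction for $(c)\Rightarrow(b)$ under simple connectedness (inducing a character of the closure of $\exp(\R\bar h)$ up to $Q=G/N$) is a valid, if slightly more explicit, variant of the paper's argument. The problem is the step $(a)\Rightarrow(c)$.

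Condition (a) says only that $h$ lies in the \emph{linear span} $[\g_1(h),\g_{-1}(h)]$, i.e.\ $h=\sum_i[x_i,y_i]$; it does not provide a single pair $x\in\g_1(h)$, $y\in\g_{-1}(h)$ with $[x,y]=h$. The existence of such a pair is equivalent to $h$ being a \emph{symmetric} Euler element: if $(2h,2x,y)$ is an $\fsl_2$-triple, your Weyl element $w$ gives $\Ad(w)h=-h$, so $-h\in\cO_h$; conversely, symmetry yields such a triple by \cite[Cor.~3.14]{MN21}. But (a) is strictly weaker than symmetry. The paper's own remark following Corollary~\ref{cor:4.11} gives the counterexample: for $\g=\fsl_3(\R)$ simplicity forces $\fn_h=\g$, so every Euler element satisfies (a), yet no Euler element of $\fsl_3(\R)$ is symmetric. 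For such an $h$ there is no $w\in G$ with $\Ad(w)h=-h$, so no group element can conjugate $\partial U(h)$ to $-\partial U(h)$, and your argument cannot be repaired by choosing the $x_i,y_i$ more cleverly. The actual proof of $(b)\Rightarrow(c)$ in the paper must therefore produce the unitary equivalence of $i\partial U(h)$ with $-i\partial U(h)$ without any geometric symmetry $h\mapsto -h$: it proceeds by induction on $\dim G$, splits off $\cH^{N_x}$ using Moore's Eigenvector Theorem for a nonzero $x\in\g_{\pm1}(h)$, and on the orthogonal complement restricts to the two-dimensional subgroup $B\cong\Aff(\R)_e$ generated by $\exp(\R x)\exp(\R h)$, where the explicit classification of irreducible representations with $\partial U(x)$ injective shows that $i\partial U(h)$ is equivalent to $i\frac{d}{dp}$ on $L^2(\R)$, whose (Lebesgue) spectrum is symmetric. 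You would need to supply an argument of this kind to close the gap.
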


\begin{prf}  (a) $\Leftrightarrow$ (b):
  The $\pm 1$-eigenspaces for the image of $h$ in $\fq$
  are the spaces $\fq_{\pm 1} = \g_{\pm 1}(h)/\fn_{\pm 1}(h)$.
  That the image of $h$ is central in $\fq$ means that
  both these spaces are trivial,
  i.e., that $\g_{\pm 1}(h) \subeq \fn$. As $\fn$ is a subalgebra,
  this means that 
  \[  \fri := \g_1(h) + \g_{-1}(h) + [\g_1(h), \g_{-1}(h)] \subeq \fn.\]
  As $\fri$ is an ideal of $\g$, condition (b) means that
  $h \in \fri$, but as $h \in \g_0(h)$, this is equivalent to (a).

  \nin (b) $\Rarrow$ (c): We argue by induction on $\dim G$. 
  Passing to the quotient group 
  $G/\ker(U)$, we may w.l.o.g.\ assume that $U$ has discrete kernel.
  If $h$ is central, then $h = 0$, so that (c) holds trivially because
  $\pm i \partial U(h) = 0$. 
  
  So we may assume that $h$ is not central. Hence
  there exists a non-zero $x \in \g_{\pm 1}(h)$.
  We consider the $2$-dimensional subalgebra 
  $\fb := \R h + \R x \cong \aff(\R)$ and the corresponding
  integral subgroup $B := \exp(\R x) \exp(\R h)$, which is
  isomorphic to $\Aff(\R)_e$. 

We may w.l.o.g.\ assume that 
$\cH^G = \{0\}$ because (c) obviously holds for
trivial representations. Then Moore's Theorem~\ref{thm:moore} implies that
\begin{equation}
  \label{eq:moore-x}
  \ker(\partial U(x)) \subeq \cH^{N_x},
\end{equation}
where $N_x \trile G$ is a normal integral subgroup
whose Lie algebra $\fn_x$ is the 
smallest ideal of~$\g$ such that the image $\oline x$ of $x$ in
the quotient Lie algebra $\g/\fn_x$ is elliptic.
As $x = \pm [h,x]$ is $\ad$-nilpotent
(the $h$-eigenspace decomposition implies that $(\ad x)^3 = 0$),
its image $\oline x$ in $\g/\fn_x$ must be central. So
$\oline x = \pm [\oline h,\oline x] = 0$ implies $x \in \fn_x$. 
Using that $N_x$ is a normal subgroup, we see that $\cH^{N_x}$ is $G$-invariant, and
the representation of $G$ on this space factors through a
representation of the quotient group $G/\oline{N_x}$
of strictly smaller dimension.
By the induction hypothesis, our assertion holds for this representation.

We may therefore consider the representation of $G$
on the orthogonal complement $(\cH^{N_x})^\bot$.
In view of \eqref{eq:moore-x}, we may assume that
$\ker(\partial U(x)) = \{0\}$.
Then the restriction of $U$ to the $2$-dimensional subgroup~$B$
is a direct sum or irreducible representations of~$B$
in which $x$ acts non-trivially, and every such representation
is equivalent to one of the representations
$(U_\pm, L^2(\R))$, where 
\begin{equation}
  \label{eq:affonl2b}
  (U_\pm(\exp(sx)\exp(th))f)(p) = e^{\pm i s e^p} f(p + t)
  \quad \mbox{ for } \quad s,t,p \in \R
\end{equation} 
(cf.~\cite[Prop.~2.38]{NO17}). 
For both these representations, 
the operator $i\partial U_\pm(h)$ is equivalent to the selfadjoint
operator $i\frac{d}{dp}$ on $L^2(\R,dp)$. 
This implies that $i\partial U(h)$ is unitarily
equivalent to $-i\partial U(h)$.

 \nin (c) $\Leftrightarrow$ (d):
The existence of a standard subspace
  $\sV$ with $\Delta_\sV = e^{2\pi i \partial U(h)}$
  is equivalent to the existence of a conjugation $J$
  commuting with $\partial U(h)$.
  In view of \cite[Prop.~3.1]{NO15}, {this is equivalent to the
    existence of a unitary operator $S$ with $S i\partial U(h) S^{-1}
    = -i\partial U(h)$. Therefore (c) and (d) are equivalent.} 

  \nin (c) $\Rarrow$ (b): We assume that $G$ is simply connected.
  If (b) is not satisfied, then there exists a
 quotient $\fq = \g/\fn$ in which the image $\oline h$ of
 $h$ is central but non-zero.
 Hence the corresponding quotient group $Q := G/N$
 (as $G$ is simply connected, $N$ is closed and $Q$ exists \cite{HN12}) has a
 non-trivial irreducible unitary representation $(U,\cH)$
 with $\partial U(\oline h) \not=0$. The irreducibility
 of $U$ implies that $\partial U(\oline h) = i\lambda \1$ for some
 $\lambda \in \R^\times$. Then $-i \partial U(\oline h) = \lambda\1$
 is not unitarily equivalent to $-\lambda\1 = i\partial U(\oline h)$. 
 Composing $U$ with the quotient map $G \to Q$, we see that (c) cannot
 be satisfied. This shows that (c) implies (b). 
\end{prf}

\begin{cor} 
    If $\g$ is semisimple and $h \in \g$ is an Euler element,
    then there exists for every  unitary representation $(U,\cH)$ of $G$
    a standard subspace $\sV$ with $\Delta_\sV = e^{2\pi i \partial U(h)}$.
  \end{cor}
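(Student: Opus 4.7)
The plan is to apply Theorem~\ref{thm:hsym}, specifically the implication (b) $\Rightarrow$ (d), which holds without any simply-connectedness hypothesis on $G$. So the task reduces to verifying condition (b) for semisimple $\g$ and an Euler element $h \in \g$: whenever $\fn \trianglelefteq \g$ is an ideal such that the image $\bar h$ of $h$ in $\fq := \g/\fn$ is central, then $h \in \fn$.

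This verification will be essentially immediate from the structure of semisimple Lie algebras. Since $\g$ is semisimple, every ideal $\fn \trianglelefteq \g$ is a direct sum of simple ideals of $\g$, hence itself semisimple, and consequently the quotient $\fq = \g/\fn$ is semisimple as well. A semisimple Lie algebra has trivial center (each simple summand being centerless), so any central element of $\fq$ must vanish. In particular, $\bar h \in \fz(\fq)$ forces $\bar h = 0$, i.e.~$h \in \fn$. Thus the hypothesis of (b) implies its conclusion trivially, so condition (b) holds.

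Applying Theorem~\ref{thm:hsym}, the implication (b) $\Rightarrow$ (d) then yields that for every unitary representation $(U,\cH)$ of $G$ there exists a standard subspace $\sV$ with $\Delta_\sV = e^{2\pi i\,\partial U(h)}$, which is the claim. There is no substantive obstacle; the corollary is a direct reduction via the general theorem, with the only ingredient being that semisimplicity automatically makes the hypothesis of~(b) vacuous. One could instead verify condition (a), i.e.~$h \in [\g_1(h), \g_{-1}(h)]$, by decomposing $\g$ into its simple ideals and, on each simple factor where $h$ has a nonzero component, using the Jacobson--Morozov-type construction to produce an $\fsl_2$-triple $(h_\gamma, e_\gamma, f_\gamma)$ with $e_\gamma \in (\g_\gamma)_1(h_\gamma)$ and $f_\gamma \in (\g_\gamma)_{-1}(h_\gamma)$; however, the route through~(b) is considerably more economical.
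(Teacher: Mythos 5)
Your proof is correct and follows exactly the paper's own route: verify condition (b) of Theorem~\ref{thm:hsym} by observing that all quotients of a semisimple Lie algebra are semisimple, hence centerless, and then invoke the implication (b) $\Rightarrow$ (d), which holds without simple connectedness. The remark about an alternative verification via condition (a) is a nice aside but not needed.
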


  \begin{prf} As all quotients of the semisimple Lie algebra $\g$
    are semisimple, hence have trivial center, condition
    (b) in Theorem~\ref{thm:hsym} is satisfied.     
  \end{prf}

  \begin{ex} \mlabel{ex:d.4} (An example where $(c)\Rarrow (b)$ fails) 
    We consider the group $G_1 := \T^2 \times \tilde\SL_2(\R)$.
    Then $Z := Z(\tilde\SL_2(\R)) \cong \Z$, and there exists a homomorphism
    $\gamma \: Z \to \T^2$ with dense range because the element
    $(e^{\pi i \sqrt{2}}, e^{\pi i \sqrt{3}})$ generates a dense subgroup of~$\T^2$.
    Now \[ D := \{ (\gamma(z),z) \: z \in Z \} \]  is a discrete central subgroup
    in $G_1$, so that $G := G_1/D$ is a connected reductive Lie group
    with Lie algebra $\g = \R^2 \oplus \fsl_2(\R)$. Its commutator group
    $(G,G)$ is the integral subgroup corresponding to $\fsl_2(\R)$.
    As it contains a dense subgroup of the torus $\T^2$, it is dense in~$G$.

    Let $h = h_z + h_s \in \g$ be an Euler element with  $h_z \not=0$
    and $h_s \not=0$. Then $\g_{\pm 1}(h) = \g_{\pm 1}(h_s) \subeq \fsl_2(\R)$
    shows that (b) fails. We now verify (c), so that
    (c) does not imply (b) for all connected Lie groups.

    Pick a non-zero $x \in \g$ with
    $[h,x] = x$. As in the proof of ``(b) $\Rarrow$ (c)'' above,
    we see that $x \in \fn_x$, so that $\fsl_2(\R) = [\g,\g] \subeq \fn_x$.
    Hence $(G,G) \subeq N_x$, and the density of $(G,G)$ implies
    $\oline{N_x} = G$. We conclude that, for every unitary representation
    $(U,\cH)$ of $G$, we have $\ker(\partial U(x)) = \cH^G$.
    Clearly, (c) holds for the trivial representation of $G$ on $\cH^G$,
    and by the argument under ``(b) $\Rarrow$ (c)'' it also holds 
    for the representation on $\ker(\partial U(x))^\bot$.
    Therefore (c) holds for~$G$.
\end{ex}
  
\begin{rem} (a) If $G$ is a connected Lie group with Lie algebra
    $\g$, then its simply connected covering $q_G \: \tilde G \to G$
    is a simply connected Lie group with Lie algebra~$\g$.
    All unitary representations
    of $G$ yield by composition with $q_G$ unitary representations
    of $\tilde G$, but not all representations of $\tilde G$
    are obtained this way. If (c) holds for $G$, it may still fail
    for~$\tilde G$ {(Example~\ref{ex:d.4}).}

\nin (b) For a semidirect product
    $\g = \fr \rtimes \fs$ with $\fr$ solvable and
    $\fs$ semisimple, where $h$ is an Euler element contained in
    $\fs$, the equivalence of (a) and (b) in Theorem~\ref{thm:hsym}
    implies that $h \in [\fs_1(h), \fs_{-1}(h)] \subeq [\g_1(h),\g_{-1}(h)]$,
    so that Theorem~\ref{thm:hsym} applies to any simply connected Lie
    group $G$ with Lie algebra $\g$.

    This argument applies in particular to the Poincar\'e Lie algebra
    $\g = \R^{1,d} \rtimes \so_{1,d}(\R)$ and the Euler element
    $h \in \so_{1,d}(\R)$ generating a boost. 
  \end{rem}

\subsection{A criterion for real irreducibility}

The following lemma is needed in the discussion of
Example~\ref{ex:4.24} below. 

\begin{prop} \mlabel{prop:real-irred} 
    Any irreducible unitary representation  
    $(U,\cH)$  of $G$ for which $C_U \not= - C_U$
    is also irreducible as a real representation.
\end{prop}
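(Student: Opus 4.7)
\medskip

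The plan is to argue by contradiction: assume there exists a non-trivial closed real $G$-invariant subspace $\sH \subsetneq \cH$ and derive $C_U = -C_U$.

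First I would show that under the complex irreducibility of $U$, any non-trivial closed real $G$-invariant subspace $\sH \subseteq \cH$ must be standard. Indeed, both $\sH \cap i\sH$ and $\overline{\sH + i\sH}$ are closed complex $G$-invariant subspaces of $\cH$, hence each equals $\{0\}$ or $\cH$. If $\sH \neq \{0\}$ then $\overline{\sH+i\sH} \neq \{0\}$, forcing $\overline{\sH+i\sH}=\cH$, so $\sH$ is cyclic. If $\sH \neq \cH$, then $\sH \cap i\sH \subseteq \sH$ cannot equal $\cH$, so $\sH \cap i\sH = \{0\}$, and $\sH$ is separating. Hence $\sH$ is standard.

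Next, since $U(g)\sH = \sH$ for every $g \in G$, Lemma~\ref{lem:sym} gives $U(g)J_\sH U(g)^{-1} = J_\sH$, so the antiunitary conjugation $J := J_\sH$ commutes with $U(G)$. Differentiating $JU(\exp tx)J^{-1} = U(\exp tx)$ at $t=0$ yields $J\,\partial U(x)\,J^{-1} = \partial U(x)$ on a core of analytic vectors. Writing $\partial U(x) = i H$ with $H = -i\,\partial U(x)$ self-adjoint, and using that $J$ is antilinear (so $JiJ^{-1} = -i$), this relation rearranges to $JHJ^{-1} = -H$. In particular $H$ and $-H$ are antiunitarily equivalent, hence have identical spectra. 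Therefore $-i\,\partial U(x) \geq 0$ if and only if $-i\,\partial U(-x) \geq 0$, i.e.\ $x \in C_U$ if and only if $-x \in C_U$. This gives $C_U = -C_U$, contradicting the hypothesis.

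The main (essentially the only) technical point is the correct handling of the antilinearity of $J$ when passing from the group-level identity $JU(g)J^{-1}=U(g)$ to the generators. The rest is an elementary application of Schur-type reasoning at the real-vs-complex level, combined with the intrinsic definition of the modular data of a standard subspace recalled in Proposition~\ref{prop:11} and Lemma~\ref{lem:sym}.
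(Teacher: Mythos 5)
Your proof is correct, but it takes a different route from the paper's. The paper argues directly: it complexifies the underlying real representation to get $U^\R_\C \cong U \oplus \oline{U}$, notes that $C_{\oline U} = -C_U \neq C_U$ forces $U \not\cong \oline U$, concludes that the commutant of $U^\R_\C$ is $\C^2$ and hence that the real commutant of $U^\R(G)$ is just $\C\1$, which has no non-trivial projections. You instead argue by contradiction through the standard-subspace machinery: a non-trivial proper closed real invariant subspace is automatically standard (a nice Schur-type observation using that $\sH\cap i\sH$ and $\oline{\sH+i\sH}$ are complex invariant subspaces), its modular conjugation $J_\sH$ commutes with $U(G)$ by Lemma~\ref{lem:sym}, and the antilinearity of $J_\sH$ then flips the sign of each generator, $J H J^{-1} = -H$ for $H = -i\,\partial U(x)$, forcing spectral symmetry and hence $C_U = -C_U$. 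Both arguments ultimately exploit the same fact — that an antiunitary self-intertwiner of $U$ (equivalently $U \cong \oline U$) would negate the positive cone — but yours buys a little extra insight (every invariant real subspace of a complex-irreducible representation is standard, and the obstruction to real reducibility is exactly an antiunitary element of the commutant), at the cost of being longer; the paper's commutant computation is shorter and also explains why quaternionic-type representations remain really irreducible. Your handling of the antilinearity when passing from $JU(\exp tx)J^{-1} = U(\exp tx)$ to $JHJ^{-1} = -H$, and the identification of real spectra under antiunitary conjugation, are both correct.
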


\begin{prf}
  Let $(U^\R, \cH^\R)$ be the underlying real representation. 
  Then its complexification is of the form
  $U^\R_\C \cong U \oplus \oline U$, as complex representations,
  where $C_{\oline U} = - C_U$. As $C_U \not = - C_U$, the representations
  $U$ and $\oline U$ are not equivalent.
    Therefore the commutant of $U^\R_\C$ is isomorphic to~$\C^2$,
  and this implies that the commutant of $U^\R(G)$ in $B(\cH^\R)$ 
  cannot be larger than $\C \1$. Hence it contains no non-trivial
  projections, and thus  $(U^\R,\cH^\R)$ is irreducible.   
\end{prf}

\begin{cor} \mlabel{cor:affine-irrep} 
    For any irreducible unitary positive energy representation 
    $(U,\cH)$  of $\tilde\SL_2(\R)$, and any Euler element
    $h \in \fsl_2(\R)$, the restriction to the subgroup
  $P = \exp(\R h) \exp(\g_1(h))$ is irreducible as a 
  real orthogonal representation.   
\end{cor}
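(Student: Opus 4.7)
The plan is to apply Proposition~\ref{prop:real-irred} to the restriction $U\res_P$, whose Lie algebra is $\fp := \R h + \g_1(h) \cong \aff(\R)$. This reduces the corollary to verifying two properties: first, that $U\res_P$ is irreducible as a complex representation; and second, that the positive cone $C_{U\res_P} \subseteq \fp$ satisfies $C_{U\res_P} \ne - C_{U\res_P}$.

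The second property is immediate from positive energy. Since $(U,\cH)$ is a positive-energy representation of $\tilde\SL_2(\R)$, the cone $C_U \subseteq \fsl_2(\R)$ is non-zero, and by Remark~\ref{rem:quant}(a) it coincides (up to sign) with the unique pointed, generating, $\Ad$-invariant cone in $\fsl_2(\R)$. In particular $C_+ := C_U \cap \g_1(h)$ is a non-zero half-line, and since $\g_1(h) \subseteq \fp$ we obtain $C_{U\res_P} \supseteq C_U \cap \fp \supseteq C_+$, while positive energy precludes $-C_+ \subseteq C_{U\res_P}$. Hence $C_{U\res_P} \ne -C_{U\res_P}$.

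For the first property, I would invoke the classification of irreducible unitary positive-energy representations of $\tilde\SL_2(\R)$ as lowest weight representations. Fix a non-zero $x \in \g_1(h)$ and set $A := -i\partial U(x)$. The relation $[h,x] = x$ dilates $A$ under conjugation by $U(\exp(th))$ via $p \mapsto e^t p$; combined with positive energy (which forces $\Spec(A) \subseteq [0,\infty)$) and the non-triviality of $U$ (which rules out $\Spec(A) = \{0\}$), this yields $\Spec(A) = [0,\infty)$. The lowest weight structure further ensures that $A$ has \emph{simple} spectrum, since a lowest-weight vector is cyclic under $\partial U(\g_1(h))$. Consequently $U\res_P$ is unitarily equivalent to the unique (up to equivalence) irreducible unitary representation of $P \cong \Aff(\R)_e$ with translation spectrum in $\R_+$, furnished by Mackey's theory for $\R \rtimes \R^+$ (cf.~the explicit realization in Example~\ref{ex:4.24}), and is therefore irreducible.

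The main obstacle is establishing the multiplicity-one property for $A$, which relies on the lowest weight structure of $U$. A more direct alternative avoiding invocation of the classification is as follows: any $P$-invariant closed complex subspace $\cK \subseteq \cH$ is invariant under every spectral projection $P_A(E)$ of $A$, and conjugation of these projections by $U(\exp th)$ realizes the dilation $p \mapsto e^t p$ on $\R_+$. Since this dilation action on $\R_+$ is transitive (hence ergodic on measurable subsets of $\R_+$), the orthogonal projection onto $\cK$ must be a scalar on each joint-spectral component of $(A, \partial U(h))$; combined with the irreducibility of $U$ as a $\tilde\SL_2(\R)$-representation and cyclicity of the lowest-weight subspace, this forces $\cK \in \{0, \cH\}$.
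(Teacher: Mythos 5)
Your proposal is correct and follows essentially the same route as the paper: identify $U\res_P$ with the unique irreducible positive-energy representation of $P \cong \Aff(\R)_e$ and then invoke Proposition~\ref{prop:real-irred} via the asymmetry $C_{U\res_P} \neq -C_{U\res_P}$. The only difference is that the paper simply cites the irreducibility of $U\res_P$ as a known fact (with the explicit $L^2(\R_+,\C)$ model), whereas you sketch a justification through the lowest-weight structure and simplicity of the spectrum of $-i\partial U(x)$.
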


\begin{prf} We know that, in all cases,
  the representation $U_P := U\res_P$ of $P \cong \Aff(\R)_e =
  \R \times \R_+$ is equivalent to the
  representation on $L^2(\R_+,\C)$, given by
  \[ (U_P(b,a)f)(p) = a^{1/2} e^{ibp} f(ap).\]
  Hence $(U_P,\cH)$ is the unique irreducible positive energy
  representation of~$P$. Now the assertion follows from
  Proposition~\ref{prop:real-irred}.
\end{prf}


\begin{thebibliography}{aaaaaaaa}

\bibitem[Ara76]{Ara76} Araki, H., {\it Relative entropy of states of von Neumann
    algebras}, Publ. RIMS, Kyoto Univ. {\bf 11} (1976), 809--833


\bibitem[ANS23]{ANS23} Adamo, M.S., K.-H. Neeb, and J. Schober,
  {\it Reflection positivity  and its relation to disc,
    half plane and the strip}, in preparation 
  
\bibitem[BN23]{BN23} Belti\c t\u a, D., and K.-H.\ Neeb,
  {\it Holomorphic extension of one-parameter operator groups},
to appear in Pure and Applied Funct. Anal.;  arXiv:2304.09597

\bibitem[Bl06]{Bl06} Blackadar, B, ``Operator Algebras---Theory of
  C$^*$-algebras and von Neumann Algebras,''
  Encyclopaedia Math. Sci. {\bf 122},
  Springer-Verlag, Berlin, 2006 

\bibitem[Bo09]{Bo09} Borchers, H.-J., {\it On the net of von Neumann
  algebras associated with a wedge and wedge-causal manifold},
  Preprint, 2009; available at http://www.theorie.physik.uni-goettingen.de/
forschung/qft/publications/2009

\bibitem[BB99]{BB99} Borchers, H.-J., and D. Buchholz, {\it 
Global properties of vacuum states in de Sitter space}, 
Ann. Inst. H. Poincar\'e Phys. Th\'eor. {\bf 70} (1999), 23--40 

\bibitem[Bo90]{Bo90} Bourbaki, N., 
``Groupes et alg\`ebres de Lie, Chap.\ IV-VI,''   
Masson, Paris, 1990
  
\bibitem[BR87]{BR87}
  Bratteli, O., and D.~W.~Robinson,  ``Operator Algebras and Quantum
  Statistical Mechanics. Vol. 1'' 2nd ed.,
  Texts and Monographs in Physics, Springer-Verlag, New York-Heidelberg, 1987
  
\bibitem[BR96]{BR96}
Bratteli, O., and D.~W.~Robinson, ``Operator Algebras and Quantum Statistical
Mechanics II,'' 2nd ed.,
Texts and Monographs in Physics, Springer-Verlag, 1996 

\bibitem[BEM98]{BEM98}  Bros, J.,  Epstein, H., and U.~Moschella, {\it
    Analyticity properties and thermal effects for general
    quantum field theory on de Sitter space-time},
  Commun. Math. Phys. {\bf 196} (1998), 535--570

\bibitem[BGL02]{BGL02} Brunetti, R., Guido, D., and R. Longo, {\it 
Modular localization and Wigner particles}, Rev. Math. Phys. {\bf  14} (2002), 759--785

\bibitem[BDF87]{BDF87}  Buchholz, D., D'antoni, C., Fredenhagen, K., \textit{The Universal Structure of Local Algebras}, Commun. Math. Phys. Ill, 123-135 (1987)

\bibitem[BMS01]{BMS01} Buchholz, D., Mund, J., and S.J.~Summers, {\it
  Transplantation of local nets and geometric modular 
  action on Robertson-Walker Space-Times},
  Fields Inst. Commun. {\bf 30} (2001), 65--81 

\bibitem[BDLR92]{BDLR92} Buchholz D., Doplicher S., Longo R., Roberts J.E., A new look at Goldstone theorem, Reviews in Mathematical Physics, Special Issue (1992) 49-83. 

\bibitem[BS04]{BS04} Buchholz, D., and S.J. Summers, {\it Stable
    quantum systems in anti-de Sitter space: causality, independence,
    and spectral properties}, J. Math. Phys. {\bf 45:12} (2004), 4810--4831

\bibitem[CF20]{CF20} Ceyhan, F., and
  T. Faulkner,  {\it Recovering the QNEC from the
    ANEC},  Comm. Math. Phys. {\bf 377:2} (2020), 999–1045; arXiv:1812.04683

\bibitem[CLRR22]{CLRR22} Ciolli, F., Longo, R., Ranallo, A., and G.~Ruzzi, {\it 
Relative entropy and curved spacetimes}, 
J. Geom. Phys. {\bf 172} (2022), Paper No. 104416, 16 pp

\bibitem[CLR20]{CLR20} Ciolli, F., R.~Longo, and G.~Ruzzi, {\it The
  information in a wave}, Comm. Math. Phys. {\bf 379:3} (2020), 979--1000;
 arXiv:1703.10656

\bibitem[Co73]{Co73} Connes, A., {\it Une classification des facteurs de type
III}, Annales scientifiques de l’\'E.N.S. 4i\'eme s\'erie {\bf 6:2}
(1973), 133--252

\bibitem[CR94]{CR94} Connes, A., and C.~Rovelli, 
  {\it von Neumann algebra automorphisms and time-thermodynamics
    relation in generally covariant quantum theories}, 
Classical Quantum Gravity {\bf 11:12} (1994), 2899--2917

\bibitem[CS78]{CS78} Connes, A., and E.~St\o{}rmer, {\it
    Homogeneity of the state space of factors of type III$_1$},
  J. Funct. Anal. {\bf 28} (1978), 187--196


\bibitem[CSL23]{CSL23}Correa da Silva, R.,  Lechner,G., {\it Modular
    structure and inclusions of twisted Araki-Woods algebras},
 Comm. Math. Phys. {\bf 402:3} (2023), 2339--2386;  arXiv:2212.02298 

\bibitem[Da96]{Da96} Davidson, D.R., {\it Endomorphism semigroups and lightlike
  translations}, Letters in Math. Phys. {\bf 38} (1996), 77--90

\bibitem[Di64]{Di64} Dixmier, J., ``Les $C^*$-alg\`ebres et leurs
repr\'esentations,'' Gauthier-Villars, Paris, 1964

\bibitem[DD63]{DD63} Dixmier, J., and A. Douady, {\it Champs continus 
d'espaces hilbertiens et de $C^*$-alg\`ebres}, Bull. Soc. Math. France 
{\bf 91} (1963), 227--284

\bibitem[DM20]{DM20} Dybalski, W., and V. Morinelli, \textit{Bisognano--Wichmann property for asymptotically complete massless QFT},  
Comm. Math. Phys. {\bf 380(3)} (2020), 1267--1294

\bibitem[Dr77]{Dr77} Driessler, W. ``On the Type of Local Algebras in Quantum Field Theory", Commun.math.Phys.53,295497 (1977)

\bibitem[FN{\'O}23]{FNO23} Frahm, J., K.-H. Neeb, and G.~\'Olafsson,
  {\it Nets of standard subspaces on 
    non-compactly causal   symmetric spaces},
    to appear in  ``Toshiyuki Kobayashi Festschrift'', 
    Progress in Mathematics, Springer-Nature,
    https://arxiv.org/abs/2303.10065 
  
\bibitem[Fr85]{Fr85} Fredenhagen, K., ``On the modular structure of local algebras of observables", Communications in Mathematical Physics volume 97, pages79–89 (1985)  
  
\bibitem[Ga60]{Ga60} G\aa{}rding, L., {\it Vecteurs analytiques
    dans les repr\'esentations 
des grou\-pes de Lie}, Bull. Soc. math. France {\bf 88}
(1960), 73--93 


\bibitem[Go69]{Go69} Goodman, R., {\it Analytic
    and entire vectors for representations of Lie 
groups}, Trans.\ Amer.\ Math.\ Soc.\ {\bf 143} (1969), 55--76 

\bibitem[GL95]{GL95} Guido, D., and R. Longo, {\it An algebraic spin 
and statistics theorem},  Comm. Math. Phys. {\bf 172:3} (1995), 517--533

\bibitem[GLW98]{GLW98} Guido, D.,  Longo, R., Wiesbrock, H.-W., Extensions of Conformal Nets and Superselection Structures, Communications in Mathematical Physics volume 192, 217–244 (1998)

\bibitem[HN93]{HN93} Hilgert, J., and K.-H. Neeb, 
``Lie Semigroups 
and Their Applications,'' Lecture Notes in 
Math.\ {\bf 1552}, Springer Verlag, Berlin,
Heidelberg, New York, 1993 

\bibitem[HN12]{HN12} Hilgert, J., and K.-H. Neeb, 
``Structure and Geometry of Lie Groups,'' Springer, 2012

\bibitem[Ho81]{Ho81} Hochschild,G.P., Basic theory of algebraic groups and Lie algebras, Graduate Texts in Mathematics, vol. 75, Springer, New York, 1981

\bibitem[LL15]{LL15} Lechner G., Longo R. Localization in nets of standard spaces, Communications in Mathematical Physics 336 (2015), 27–61.

\bibitem[LRT78]{LRT78} Leyland, P., J. Roberts, and D. Testard,
  {\it Duality for quantum free fields}, Unpublished manuscript, Marseille(1978)
  
\bibitem[Lo82]{Lo82} Longo, R., {\it Algebraic and modular structure of von Neumann algebras of physics}, Operator algebras and applications, Part 2 (Kingston, Ont., 1980), pp. 551–566, Proc. Sympos. Pure Math., 38 American Mathematical Society, Providence, RI, 1982

\bibitem[Lo97]{Lo97} Longo, R., {\it An Analogue of the Kac–Wakimoto Formula and Black Hole Conditional Entropy}, Commun. Math. Phys. 186, 451 – 479 (1997).

\bibitem[Lo08]{Lo08}  Longo, R., {\it Real Hilbert subspaces, modular theory, 
SL(2, R) and CFT} 
in ``Von Neumann Algebras in Sibiu'', 33-91, Theta Ser. Adv. Math. {\bf 10}, 
Theta, Bucharest 

\bibitem[Lo08b]{Lo08b}  Longo, R., ``Lectures on Conformal Nets.
  Part II. Nets of von Neumann Algebras,''
unpublished notes,  2008

\bibitem[Lo20]{Lo20} Longo, R., {\it Entropy distribution of localised states},
  Comm. Math. Phys. {\bf 373:2} (2020),  473--505; arXiv:1809.03358

\bibitem[LMR16]{LMR16} Longo, R., V.~Morinelli, and K.-H.~Rehren, {\it
    Where infinite spin particles are localizable},
  Comm. Math. Phys. {\bf 345:2} (2016), 587--614

\bibitem[LMPR19]{LMPR19} Longo, R.,
  V. Morinelli, F. Preta, K.-H. Rehren, {\it Split property for free finite helicity fields},
  Ann. Henri Poincar\'e {\bf 20:8} (2019), 2555--2258 

\bibitem[LM23]{LM23} Longo, R., and G.~Morsella,  {\it The Massless Modular Hamiltonian}, Comm. Math.~Phys. {\bf 400} (2023), 1181--1201 

\bibitem[Lo19]{Lo19} Longo, R., {\it Entropy of coherent excitations},
  Lett. Math. Phys. {\bf 109:12} (2019), 2587--2600; arXiv:1901.02366


\bibitem[Lo69]{Lo69} Loos, O., ``Symmetric Spaces I: General Theory,'' 
W. A. Benjamin, Inc., New York, Amsterdam, 1969

\bibitem[LX18]{LX18} Longo, R., and X.~Feng,  {\it Relative entropy in
    CFT}, Adv. Math. {\bf 337} (2018), 139--170;  arXiv:1712.07283

\bibitem[MV23]{MV23} Marrakchi, A., and S.~Vaes,
  {\it Ergodic states on type III$_1$ factors and ergodic actions},
  Preprint, arXiv:2305.14217 

 \bibitem[Mo80]{Mo80} Moore, C.~C., {\it The Mautner phenomenon for 
general unitary representations}, Pac. J. Math. {\bf 86} (1) (1980), 155--169

\bibitem[Mo18]{Mo18} Morinelli, V., {\it The Bisognano--Wichmann property 
on nets of standard subspaces, some sufficient conditions}, 
Ann. Henri Poincar\'e {\bf 19:3} (2018), 937--958

\bibitem[MMTS21]{MMTS21} Morinelli, V., Morsella, G., Stottmeister, A., and
  Y.~Tanimoto, {\it Scaling limits of lattice quantum fields by wavelets},
  Commun. in Math. Phys. {\bf 387:1} (2021), 299--360

\bibitem[MN21]{MN21} Morinelli, V., and K.-H. Neeb, {\it 
Covariant homogeneous nets of standard subspaces}, 
Comm. Math. Phys. {\bf 386} (2021), 305--358; 
arXiv:math-ph.2010.07128 

\bibitem[MN22]{MN22} Morinelli, V.,
  K.-H.~Neeb, ``A family of non-modular covariant AQFTs",\\  Anal.~Math.~Phys.~12, 124 (2022).\\ \url{https://doi.org/10.1007/s13324-022-00727-0}

\bibitem[MNO23a]{MNO23a} Morinelli, V., K.-H. Neeb, and G.\ \'Olafsson, 
  {\it From Euler elements and $3$-gradings to non-compactly causal
    symmetric spaces}, Journal of Lie Theory {\bf 23:1} (2023), 377--432; 
arXiv:2207.1403 

\bibitem[MNO23b]{MNO23b} Morinelli, V., K.-H. Neeb, and G.\ \'Olafsson, 
  {\it Modular geodesics and wedge domains
    in general non-compactly causal symmetric spaces}, 
in preparation 

\bibitem[MNO23c]{MNO23c} Morinelli, V., K.-H. Neeb, and G.\ \'Olafsson, 
{\it Orthogonal pairs of Euler elements and wedge domains}, 
in preparation 

\bibitem[MTW22]{MTW22} Morinelli, V., Y. Tanimoto, and B.  Wegener,  {\it
    Modular operator for null plane algebras in free fields},
  Comm. Math. Phys. {\bf 395} (2022), 331--363 

\bibitem[MT19]{MT19} Morinelli, V., and Y. Tanimoto, {\it
    Scale and M\"obius covariance in two-dimensional
    Haag-Kastler net},
  Commun. Math. Phys. {\bf 371:2} (2019), 619--650

\bibitem[Mu01]{Mu01} Mund, J., {\it A Bisognano--Wichmann Theorem for
    massive theories}, Ann. Henri Poincar\'e {\bf 2} (2001), 907--926

  
  \bibitem[Ne99]{Ne99} Neeb, K.-H., ``Holomorphy and Convexity in Lie Theory,'' 
Expositions in Mathematics {\bf 28}, de Gruyter Verlag, Berlin, 1999 


\bibitem[Ne10]{Ne10} Neeb, K.-H.,  {\it 
On differentiable vectors for representations of infinite dimensional 
Lie groups}, J. Funct. Anal. {\bf 259} (2010), 2814--2855

\bibitem[Ne21]{Ne21} Neeb, K.-H., 
{\it Finite dimensional semigroups 
of unitary endomorphisms of standard subspaces}, 
Representation Theory {\bf 25} (2021), 300--343; 
arXiv:OA:1902.02266

\bibitem[Ne22]{Ne22} Neeb, K.-H., 
{\it Semigroups in 3-graded Lie groups and endomorphisms of standard 
  subspaces}, 
Kyoto Math. Journal {\bf 62:3} (2022), 577--613;
arXiv:OA:1912.13367 

\bibitem[N\'O15]{NO15} Neeb, K.-H., G. \'Olafsson,  {\it 
Reflection positivity for the circle group}, 
in ``Proceedings of the 30th International Colloquium on Group 
Theoretical Methods,'' Journal of Physics: Conference Series 
{\bf 597} (2015), 012004; 17pp, arXiv:math.RT.1411.2439 

\bibitem[N\'O17]{NO17} Neeb, K.-H., and G.\, \'Olafsson,  {\it 
(anti-)unitary representations and modular theory}, 
in ``50th Sophus Lie Seminar'', Eds. K. Grabowska et al, 
J. Grabowski, A. Fialowski and K.-H. Neeb; 
Banach Center Publications {\bf 113} (2017), 291--362; 
arXiv:math-RT:1704.01336 

\bibitem[N\'O21]{NO21} Neeb, K.-H., and G.\, \'Olafsson, 
{\it Nets of standard subspaces on Lie groups}, 
Advances in Math. {\bf 384} (2021), 107715, arXiv:2006.09832

\bibitem[N\'O22]{NO22} Neeb, K.-H., and G.\, \'Olafsson, 
  {\it Wedge domains in non-compactly causal symmetric spaces},
Geometriae Dedicata {\bf 217:2} (2023), Paper No. 30;
arXiv:2205.07685


\bibitem[N\'O23]{NO23} Neeb, K.-H., and G.\, \'Olafsson, 
{\it Wedge domains in compactly causal symmetric spaces}, 
Int. Math. Res. Notices {\bf 2023:12} (2023), 10209–-10312; 
arXiv:math-RT:2107.13288

\bibitem[N\'O23b]{NO23b} Neeb, K.-H.,  and G. \'Olafsson, {\it
    Algebraic Quantum Field Theory and Causal Symmetric Spaces},
  Eds Kielanowski, P., et.al., ``Geometric Methods in Physics XXXIX.
  WGMP 2022'', Trends in Mathematics; Birkh\"auser/Springer, 207--231

\bibitem[N\'O\O21]{NOO21} Neeb, K.-H., G.\, \'Olafsson, and B. \O{}rsted, 
{\it Standard subspaces of Hilbert spaces of holomorphic 
functions on tube domains}, Communications in Math. Phys. 
{\bf 386} (2021), 1437--1487; arXiv:2007.14797 

\bibitem[Nel59]{Nel59} Nelson, E., {\it Analytic vectors}, Annals of Math. 
{\bf 70:3} (1959), 572--615 

\bibitem[Os73]{Os73}Osterwalder, K., {\it Duality for free Bose fields},
  Commun. Math. Phys. {\bf 29} (1973), 1--14 

\bibitem[RS75]{RS75} Reed, S., and B. Simon, 
``Methods of Modern Mathematical Physics I: Functional Analysis,'' 
Academic Press, New York, 1973

\bibitem[Str08]{Str08} Strich, R., {\it
    Passive states for essential observers},
  J. Math. Phys. {\bf 49:2} (2008), 022301, 16 pp.


\bibitem[SW87]{SW87} Summers, S., and E.~H.~Wichmann, {\it
    Concerning the condition of additivity in quantum field theory},
  Annales de l’I.H.P., Section A {\bf 47:2} (1987), 113--124

\bibitem[Su87]{Su87} Sunder, V.S., ``An Invitation to von Neumann Algebras,''
  Springer, Universitext, 1987

\bibitem[Ta02]{Ta02} Takesaki, M., ``Theory of Operator Algebras. I,''
Encyclopedia of Mathematical Sciences {\rm 124},
Operator Algebras and Non-commutative Geometry {\rm 5},
Springer, Berlin, 2002

\bibitem[Ta03]{Ta03} Takesaki, M., ``Theory of Operator Algebras. II,''
Encyclopedia of Mathematical Sciences {\rm 125},
Operator Algebras and Non-commutative Geometry {\rm 6},
Springer, Berlin, 2003
	
\bibitem[TW97]{TW97}	 Thomas, L.~J., and Wichmann, E.~H., ``On the causal structure of Minkowski spacetime", J. Math. Phys. 38 (1997) 5044–5086.
	
\bibitem[Va85]{Va85} Varadarajan, V. S., ``Geometry of Quantum Theory,'' 
Springer Verlag,  1985 

\bibitem[Zi84]{Zi84} Zimmer, R.J., ``Ergodic Theory and Semisimple Groups,''
  Monographs in Math. {\bf 81}, Springer Science+, 1984
  
\end{thebibliography}
\end{document}